\theoremstyle{definition}
\newtheorem{definition}{\bf Definition}[section]
\newtheorem{theorem}{\bf Theorem}[section]
\newtheorem{lemma}{\bf Lemma}[section]
\newtheorem{corollary}{\bf Corollary}[section]
\newtheorem{remark}{\bf Remark}[section]
\newtheorem*{Proof}{Proof}  
\renewenvironment{proof}{\begin{Proof}}{\end{Proof}}
\def\diag{\mathop{\rm diag}}
\providecommand{\keywords}[1]
{
  \small	
  \textbf{Key words: } #1
}
\providecommand{\subclass}[1]
{
  \small	
  \textbf{Mathematics Subject Classification (2010): } #1
}
\title{A second-order accurate structure-preserving scheme for the Cahn--Hilliard equation with a dynamic boundary condition}
\author[1]{Makoto Okumura\thanks{okumura@cas.cmc.osaka-u.ac.jp}}
\author[2]{Takeshi Fukao\thanks{fukao@kyokyo-u.ac.jp}}
\author[3]{Daisuke Furihata\thanks{furihata@cmc.osaka-u.ac.jp}}
\author[4]{Shuji Yoshikawa\thanks{yoshikawa@oita-u.ac.jp}}
\affil[1]{Department of Pure and Applied Mathematics, Graduate School of Information Science and Technology, Osaka University, 1-5 Yamadaoka, Suita, Osaka, 565-0871, Japan}
\affil[2]{Department of Mathematics, Faculty of Education, Kyoto University of Education, 1 Fujinomori, Fukakusa, Fushimi-ku, Kyoto, 612-8522, Japan}
\affil[3]{Cybermedia Center, Osaka University, 1-32 Machikaneyama, Toyonaka, Osaka, 560-0043, Japan}
\affil[4]{Division of Mathematical Sciences, Faculty of Science and Technology, Oita University, 700 Dannoharu, Oita, 870-1192, Japan}
\date{}
\begin{document}
\maketitle
\begin{abstract}
We propose a structure-preserving finite difference scheme for the Cahn--Hilliard equation with a dynamic boundary condition using the discrete variational derivative method (DVDM)\cite{B}. 
In this approach, it is important and essential how to discretize the energy which characterizes the equation. 
By modifying the conventional manner and using an appropriate summation-by-parts formula, we can use a standard central difference operator as an approximation of an outward normal derivative on the discrete boundary condition of the scheme. 
We show that our proposed scheme is second-order accurate in space, although the previous structure-preserving scheme by Fukao--Yoshikawa--Wada \cite{A} is first-order accurate in space. 
Also, we show the stability, the existence, and the uniqueness of the solution for the proposed scheme. 
Computation examples demonstrate the effectiveness of the proposed scheme. 
Especially through computation examples, we confirm that numerical solutions can be stably obtained by our proposed scheme.
\end{abstract}
\keywords{Finite difference method, Structure-preserving scheme, Cahn--Hilliard equation, Dynamic boundary condition, Error estimate}\\
\subclass{65M06, 65M12}

\section{Introduction}
Let $L>0$ be the length of the one-dimensional material. 
In this paper, we study the following Cahn--Hilliard equation \cite{Can}: 
\begin{numcases}
{ } 
\partial_{t}u = \partial_{x}^{2}p & in $(0,L) \times (0,\infty)$, \label{Eq_i}\\
p = -\gamma \partial_{x}^{2}u + F^{\prime}(u) & in $(0,L) \times (0,\infty)$, \label{Eq_ii}
\end{numcases}
under the dynamic boundary condition and the homogeneous Neumann boundary condition: 
\begin{numcases}
{ } 
\partial_{t}u(0,t) = \left. \partial_{x}u(x,t)\right|_{x=0} & in $(0,\infty)$, \label{BCi}\\
\partial_{t}u(L,t) = -\left. \partial_{x}u(x,t)\right|_{x=L} & in $(0,\infty)$, \label{BCi'}\\
\left. \partial_{x}p(x,t) \right|_{x=0} = \left. \partial_{x}p(x,t) \right|_{x=L} = 0 & in $(0,\infty)$. \label{BCii}
\end{numcases}
The unknown functions $u$: $[0,L] \times [0,\infty) \to \mathbb{R}$ and $p$: $[0,L] \times [0,\infty) \to \mathbb{R}$ are the order parameter and the chemical potential, respectively. 
Moreover, $\gamma$ is a positive constant. 
Furthermore, $F$: $\mathbb{R} \to \mathbb{R}$ is a given potential, and $F'$ is its derivative. 
For example, $F$ can be a double-well potential, i.e., 
\begin{equation*}
F(s) = \frac{q}{4}s^{4} - \frac{r}{2}s^{2} \quad {\rm for\ all\ } s \in \mathbb{R}, 
\end{equation*}
where $q$ and $r$ are positive constants. 
Throughout this article, we assume that the potential $F$ is bounded from below. 
Let us define the ``local energy" $G$ and the ``global energy" $J$, which characterize the equations \eqref{Eq_i}--\eqref{Eq_ii}, as follows: 
\begin{align}
G(u, \partial_{x}u) & := \frac{\gamma}{2}\left| \partial_{x}u \right|^{2} + F(u), \label{G}\\
J(u) & := \int_{0}^{L}G(u,\partial_{x}u)dx. \label{J}
\end{align}
We remark that the above words ``local energy" and ``global energy" are ones for space, not for time, and that this ``global" is different from the one of the words ``global boundedness" and ``global existence," which appear later. 
Also, let us define the ``mass" $M$ as follows: \vspace{-1mm}
\begin{equation}
M(u) := \int_{0}^{L}udx. \label{Mass} \vspace{-1mm}
\end{equation}
Then, the solution of the equations \eqref{Eq_i}--\eqref{Eq_ii} satisfies the following properties: \vspace{-1mm}
\begin{gather}
\frac{d}{dt}M(u(t)) = 0, \label{MC}\\[-1pt]
\frac{d}{dt}J(u(t)) = - \gamma |\partial_{t}u(0,t)|^{2} - \gamma |\partial_{t} u(L,t)|^{2} - \int_{0}^{L}|\partial_{x}p(x,t)|^{2}dx \leq 0, \label{D}
\end{gather}
under the boundary conditions \eqref{BCi}--\eqref{BCii}. 
In this paper, we design a structure-preserving finite difference scheme for \eqref{Eq_i}--\eqref{BCii} based on the discrete variational derivative method (DVDM) proposed by Furihata and Matsuo \cite{B}. 
Throughout this article, the term ``structure-preserving" means that the scheme inherits the conservative property such as \eqref{MC} or the dissipative property such as \eqref{D}. 
In \cite{C,Yos3}, Yoshikawa has mentioned that the merits of the structure-preserving scheme are that we often obtain the stability of numerical solutions automatically and that various strategies for the continuous case such as the energy method can be applied to the scheme similarly. 
Actually, Yoshikawa and co-authors have applied the energy method to show the existence and uniqueness of the solution and the error estimate for the numerical scheme (see \cite{A,Yan,Yos1,C,Yos3}). 

From a mathematical point of view, the problem \eqref{Eq_i}--\eqref{BCii} with initial conditions has been studied in \cite{Che3,Chi,Col_Fuk,Col_Gil_Spe1,Col_Gil_Spe2,Col_Gil_Spe3,Gal,Gil_Mir_Sch,Gil,Mir_Zel1,Mir_Zel2,Pru,Rac,Wu}. 
First, in the case of $F(s) = (q/4)s^{4} - (r/2)s^{2}$, Racke and Zheng \cite{Rac} have proved the global existence and uniqueness of the solution to the problem, and Pr\"{u}ss et al.\ \cite{Pru} have obtained the result on the maximal $L^p$-regularity and proved the existence of a global attractor. 
Also, Wu and Zheng \cite{Wu} and Chill et al.\ \cite{Chi} have proved the convergence of the solution of the problem to an equilibrium as time goes to infinity. 
Moreover, various results of the existence, uniqueness, and regularity of the solution, the existence of a global attractor, the convergence to steady states, and the optimal control have been obtained under more general assumptions on the nonlinearities in \cite{Che3,Col_Fuk,Col_Gil_Spe1,Col_Gil_Spe2,Col_Gil_Spe3,Gil_Mir_Sch,Gil,Mir_Zel1,Mir_Zel2}. 
Especially, Gal \cite{Gal} has compared the problem under other dynamic boundary conditions with that under our target dynamic boundary conditions. 
Here, we remark that in these papers, the problem is considered in the multi-dimensional case, where the boundary conditions \eqref{BCi} and \eqref{BCi'} include the Laplace--Beltrami operator, which plays the role of diffusion on the boundary. 

From a numerical point of view, there are some numerical studies of the Cahn--Hilliard equation with dynamic boundary conditions (see, for example, \cite{Che1,Che2,Che3,Isr,Nab,A}). 
In \cite{Che1,Isr}, Cherfils et al.\ and Israel et al.\ have considered the finite element space semi-discretizations of the problem in the two-dimensional or three-dimensional case and proved the error estimate. 
Moreover, the unconditional stability of fully discrete schemes based on the backward Euler scheme for the time discretization, and the convergence of the solution to a steady-state have been obtained. 
See also \cite{Che3} for other numerical results. 
Besides, Cherfils and Petcu have obtained the results of the problem with other dynamic boundary conditions by a finite element approach \cite{Che2}. 
In addition, Nabet has performed an interesting analysis for the problem in the two-dimensional case by using the finite-volume method and proved the convergence of the numerical solution \cite{Nab}. 
She also has given the error estimate in \cite{Nab2}.
More specifically, she has proved the first-order convergence in the sense of the $H^1$-norms. 
In \cite{A}, Fukao et al.\ have already proposed a structure-preserving scheme for \eqref{Eq_i}--\eqref{BCii} based on DVDM in the one-dimensional case. 
We remark that they use a forward difference operator as an approximation of an outward normal derivative on the discrete boundary condition of their scheme and that their scheme is first-order accurate in space. 
In DVDM, it is essential how to discretize the energy which characterizes the equation. 
Modifying the conventional manner and using an appropriate summation-by-parts formula, we can use a standard central difference operator as an approximation of an outward normal derivative on the boundary. 
Moreover, we show that our proposed scheme is second-order in space. 

The rest of this paper proceeds as follows. 
In Section 2, we propose a structure-preserving scheme for \eqref{Eq_i}--\eqref{BCii}, whose solution satisfies the discrete version of the conservative property \eqref{MC} and the dissipative property \eqref{D}. 
In Section 3, we prove that the solution of the proposed scheme satisfies the global boundedness.  
In Section 4, we prove that the scheme has a unique solution under a specific condition. 
In Section 5, we prove the error estimate for the scheme. 
In Section 6, we show that computation examples demonstrate the effectiveness of the scheme. 
In Appendices, we prove some lemmas we have used in this paper and mention the suggestion on comparison of long-time behaviors between the dynamic boundary condition and the Neumann boundary one. 

\section{Proposed scheme}
In this section, we propose a structure-preserving scheme for \eqref{Eq_i}--\eqref{BCii} and show that it has two properties corresponding to \eqref{MC} and \eqref{D}. 

\subsection{Preparation}
Let $K$ be a natural number. 
We define $U_{k}^{(n)} \ (k = -1, 0, 1, \ldots, K, K+1, n=0,1,\ldots)$ to be the approximation to $u(x,t)$ at location $x=k\Delta x$ and time $t = n\Delta t$, where $\Delta x$ is a space mesh size, i.e., $\Delta x := L/K$, and $\Delta t$ is a time mesh size. 
They are also written in vector as $\bm{U}^{(n)} := (U_{0}^{(n)},\ldots,U_{K}^{(n)})^{\top}$ or $\bm{U}^{(n)} := (U_{-1}^{(n)}, U_{0}^{(n)},\ldots,U_{K}^{(n)},U_{K+1}^{(n)})^{\top}$. 
Guess the meaning of $\bm{U}^{(n)}$ from the context. 
$U_{-1}^{(n)}$ and $U_{K+1}^{(n)}$ are artificial quantities and determined by the imposed discrete boundary condition. 
Let us define the difference operators $\delta_{k}^{+}$, $\delta_{k}^{-}$, $\delta_{k}^{\langle 1 \rangle}$, and $\delta_{k}^{\langle 2 \rangle}$ concerning subscript $k$ by  
\begin{align*}
\delta_{k}^{+}f_{k} := \frac{f_{k+1} - f_{k}}{\Delta x}, & & & \delta_{k}^{-}f_{k} := \frac{f_{k} - f_{k-1}}{\Delta x}, \\[-1pt]
\delta_{k}^{\langle 1 \rangle}f_{k} := \frac{f_{k+1} - f_{k-1}}{2\Delta x}, & & & \delta_{k}^{\langle 2 \rangle}f_{k} := \frac{f_{k+1} - 2f_{k} + f_{k-1}}{(\Delta x)^2} \quad (k = 0, 1, \ldots, K) 
\end{align*}
for all $\{f_{k}\}_{k=-1}^{K+1} \! \in \! \mathbb{R}^{K+3}$. 
Similarly, we define the difference operator $\delta_{n}^{+}$ corresponding superscript $(n)$ by 
\begin{equation*}
\delta_{n}^{+}f^{(n)} := \frac{ f^{(n+1)} - f^{(n)} }{\Delta t}. 
\end{equation*}
As a discretization of the integral, we adopt the summation operator $\sum _ {k=0}^{K}{}^{\prime \prime}$: $\mathbb{R}^{K+1+2s} \to \mathbb{R}$ defined by 
\begin{equation*}
\sum _ {k=0}^{K}{}^{\prime \prime} f_{k} := \frac{1}{2}f_{0} +\sum_{k=1}^{K-1}f_{k} +\frac{1}{2}f_{K} \quad \mbox{for\ all\ } \{f_{k}\}_{k=-s}^{K+s} \in \mathbb{R}^{K+1+2s},\ \mbox{where\ } s = 0,1. 
\end{equation*}
For later use, we define the difference quotient. 
Let $\Omega$ be a domain in $\mathbb{R}$. 
For a function $F \in C^{1}(\Omega)$ and $\xi, \eta \in \Omega$, the difference quotient $d F/d (\xi, \eta)$ of $F$ at $(\xi, \eta)$ is defined as follows: 
\begin{numcases}
    {\frac{dF}{d (\xi, \eta)} :=}
      \frac{F(\xi) - F(\eta)}{\xi - \eta} & ($\xi \neq \eta$), \nonumber\\[-1pt]
     F'(\eta) & ($\xi = \eta$). \nonumber
\end{numcases}
Here, let us define two discrete local energies $G_{\rm d}^{\pm}$: $\mathbb{R}^{K+3} \to \mathbb{R}^{K+1}$ by 
\begin{align*}
G _ {{\rm d},k}^{+}(\bm{U}) & := \frac{\gamma}{2}(\delta_{k}^{+}U_{k})^{2} + F\left( U_{k}\right) \quad (k=0,\ldots,K), \\[-1pt]
G _ {{\rm d},k}^{-}(\bm{U}) & := \frac{\gamma}{2}(\delta_{k}^{-}U_{k})^{2} + F\left( U_{k}\right) \quad (k=0,\ldots,K),  
\end{align*} 
for all $\bm{U} \in \mathbb{R}^{K+3}$. 
Note that $G _ {{\rm d},k}^{\pm}(\bm{U})$ are elements of the vectors $G _ {\rm d}^{\pm}(\bm{U})$, respectively. 
Furthermore, we define the discrete global energy $J_{\rm d}$: $\mathbb{R}^{K+3} \to \mathbb{R}$ as follows: 
\begin{equation}
J_{\rm d}(\bm{U}) := \frac{1}{2}\left\{\sum_{k=0}^{K-1}G_{{\rm d}, k}^{+}(\bm{U})\Delta x + \sum_{k=1}^{K}G_{{\rm d}, k}^{-}(\bm{U})\Delta x \right\}. \label{Jd}
\end{equation}
Also, we define a discrete mass $M_{\rm d}$: $\mathbb{R}^{K+1+2s} \to \mathbb{R}$ by 
\begin{equation*}
M_{\rm d}(\bm{U}) := \sum _ {k=0}^{K}{}^{\prime \prime}U_{k}\Delta x \quad \mbox{for\ all\ } \bm{U} \in \mathbb{R}^{K+1+2s},\ \mbox{where\ } s=0,1. 
\end{equation*}
\begin{remark}
We remark that we have constructed a structure-preserving scheme for \eqref{Eq_i}--\eqref{BCii}, which we have used a standard central difference operator as an approximation of an outward normal derivative on the boundary conditions by adopting the above discrete global energy $J_{\rm d}$ and using another summation-by-parts formula (Corollary \ref{col:2.1}).
\end{remark}
From the idea in DVDM\cite{B}, we take a discrete variation to derive a structure-preserving scheme for \eqref{Eq_i}--\eqref{BCii}. 
That is, we calculate the difference $J_{\rm d}(\bm{U}) - J_{\rm d}(\bm{V})$ for all $\bm{U}, \bm{V} \in \mathbb{R}^{K+3}$. 
For the purpose, we use the following lemmas. 
All the proofs can be obtained by direct calculation and here omitted. 

\begin{lemma}\label{lem:2.1}
The following equality holds: 
\begin{equation*}
\frac{1}{2}\left(\sum _ {k=0}^{K-1}f_{k}\Delta x + \sum _ {k=1}^{K}f_{k}\Delta x\right) = \sum _ {k=0}^{K}{}^{\prime \prime}f_{k}\Delta x \quad \mbox{for\ all\ } \{f_{k}\}_{k=0}^{K} \in \mathbb{R}^{K+1}. 
\end{equation*}
\end{lemma}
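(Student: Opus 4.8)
The plan is to verify the identity by direct index manipulation, exactly as the authors anticipate. First I would factor out the common $\Delta x$ from every term, which reduces the claim to the purely combinatorial statement
\begin{equation*}
\frac{1}{2}\left(\sum_{k=0}^{K-1}f_{k} + \sum_{k=1}^{K}f_{k}\right) = \frac{1}{2}f_{0} + \sum_{k=1}^{K-1}f_{k} + \frac{1}{2}f_{K},
\end{equation*}
whose right-hand side is precisely the definition of the trapezoidal summation operator $\sum_{k=0}^{K}{}^{\prime\prime}$ applied to $\{f_{k}\}_{k=0}^{K}$.

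The key step is then to combine the two sums on the left by comparing their index ranges. The first sum runs over $k = 0, 1, \ldots, K-1$ and the second over $k = 1, 2, \ldots, K$; the interior indices $1, \ldots, K-1$ appear in both, while the index $0$ occurs only in the first sum and the index $K$ only in the second. Adding the two sums therefore produces $f_{0} + f_{K} + 2\sum_{k=1}^{K-1}f_{k}$, and multiplying by $\tfrac{1}{2}$ gives exactly the target expression. Reinstating the factor $\Delta x$ completes the verification.

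I expect no genuine obstacle here: the result is a one-line algebraic rearrangement that merely records the familiar bookkeeping of the trapezoidal rule, namely that the two endpoint nodes receive weight $\tfrac{1}{2}$ while each interior node receives weight $1$. The only point requiring a small amount of care is keeping the summation ranges aligned when merging the two sums, so that the endpoint terms $f_0$ and $f_K$ are not inadvertently double-counted.
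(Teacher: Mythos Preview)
Your proposal is correct and is precisely the direct calculation the paper has in mind; the paper itself simply states that the result follows by direct calculation and omits the details. There is nothing to add.
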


\begin{lemma}[Second-order summation by parts formula]\label{lem:2.2}
Let us denote $f_{K} - f_{0}$ by $[f_{k}]_{0}^{K}$. 
The following second-order summation by parts formulas hold: 
\begin{align*}
\sum_{k=0}^{K-1}(\delta_{k}^{+}f_{k})(\delta_{k}^{+}g_{k})\Delta x & = -\sum_{k=0}^{K-1}(\delta_{k}^{\langle 2 \rangle}f_{k})g_{k}\Delta x + [(\delta_{k}^{-}f_{k})g_{k}]_{0}^{K}, \\
\sum_{k=1}^{K}(\delta_{k}^{-}f_{k})(\delta_{k}^{-}g_{k})\Delta x & = -\sum_{k=1}^{K}(\delta_{k}^{\langle 2 \rangle}f_{k})g_{k}\Delta x + [(\delta_{k}^{+}f_{k})g_{k}]_{0}^{K}
\end{align*}
for all $\{f_{k}\}_{k=-1}^{K+1}, \{g_{k}\}_{k=-1}^{K+1} \in \mathbb{R}^{K+3}$. 
\end{lemma}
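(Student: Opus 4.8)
The plan is to prove both identities by discrete integration by parts, that is, by Abel summation combined with a single index shift, treating the two formulas symmetrically: the second is obtained from the first by interchanging the roles of the forward difference $\delta_k^+$ and the backward difference $\delta_k^-$ and shifting the summation range. I will describe the argument for the first identity only.

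First I would absorb one factor of $\Delta x$ into the forward difference acting on $g$, writing $(\delta_k^+ g_k)\Delta x = g_{k+1}-g_k$, so that
\[
\sum_{k=0}^{K-1}(\delta_k^+ f_k)(\delta_k^+ g_k)\,\Delta x = \sum_{k=0}^{K-1}(\delta_k^+ f_k)(g_{k+1}-g_k).
\]
Splitting the right-hand side into two sums and shifting $k\mapsto k-1$ in the one containing $g_{k+1}$ yields
\[
\sum_{k=1}^{K}(\delta_{k-1}^+ f_{k-1})\,g_k - \sum_{k=0}^{K-1}(\delta_k^+ f_k)\,g_k .
\]
Peeling off the $k=K$ term from the first sum and the $k=0$ term from the second, the overlapping interior sums combine, and one is left with the boundary contribution $(\delta_{K-1}^+ f_{K-1})g_K-(\delta_0^+ f_0)g_0$ together with the interior sum $-\sum_{k=1}^{K-1}\bigl(\delta_k^+ f_k-\delta_{k-1}^+ f_{k-1}\bigr)g_k$.

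The key algebraic observation is that the interior increment telescopes into the central second difference: a one-line computation gives $\delta_k^+ f_k-\delta_{k-1}^+ f_{k-1}=(\Delta x)\,\delta_k^{\langle 2\rangle}f_k$, so the interior sum is exactly $-\sum_{k=1}^{K-1}(\delta_k^{\langle 2\rangle}f_k)g_k\,\Delta x$. What remains, and this is the only delicate point, is to reconcile this with the stated right-hand side, whose $\delta^{\langle 2\rangle}$-sum runs over the full range $k=0,\dots,K-1$ and whose flux is written with the one-sided difference $\delta_k^- f_k$ rather than $\delta_k^+ f_k$. At the upper endpoint this is immediate, since $\delta_{K-1}^+ f_{K-1}=\delta_K^- f_K$ matches the $g_K$-coefficient in $[(\delta_k^- f_k)g_k]_0^K$. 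At the lower endpoint the stated right-hand side carries the extra summand $-(\delta_0^{\langle 2\rangle}f_0)g_0\,\Delta x$; I would verify directly that adding this to the stated boundary term $-(\delta_0^- f_0)g_0$ reproduces precisely $-(\delta_0^+ f_0)g_0$, because the artificial value $f_{-1}$ hidden in $\delta_0^{\langle 2\rangle}f_0$ cancels against the $f_{-1}$ appearing in $\delta_0^- f_0$.

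Accordingly, I expect the main obstacle to be purely bookkeeping rather than conceptual: one must keep the index shift and the two endpoint extractions mutually consistent so that the ghost value $f_{-1}$ (and, for the second identity, $f_{K+1}$) enters only through $\delta^{\langle 2\rangle}$ at the boundary and then cancels against the one-sided boundary flux. Once this endpoint arithmetic is checked, the identity holds for arbitrary $\{f_k\}_{k=-1}^{K+1},\{g_k\}_{k=-1}^{K+1}\in\mathbb{R}^{K+3}$, and the second formula follows verbatim after swapping $+\leftrightarrow-$ and replacing the range $k=0,\dots,K-1$ by $k=1,\dots,K$.
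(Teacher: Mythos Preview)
Your argument is correct: the Abel-summation/index-shift computation you outline is exactly the ``direct calculation'' the paper has in mind, and your treatment of the endpoint terms (in particular the verification that $-(\delta_0^{\langle 2\rangle}f_0)g_0\,\Delta x-(\delta_0^- f_0)g_0=-(\delta_0^+ f_0)g_0$, so that the ghost value $f_{-1}$ cancels) is precisely the bookkeeping needed. The paper itself omits the proof, stating only that it follows by direct calculation, so there is nothing further to compare.
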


\begin{corollary}[Second-order summation by parts formula]\label{col:2.1}
The following second-order summation by parts formula holds: 
\begin{equation*}
\sum_{k=0}^{K-1}\left(\delta_{k}^{+}f_{k}\right)\left(\delta_{k}^{+}g_{k}\right)\Delta x = -\sum_{k=0}^{K}{}^{\prime\prime}\left(\delta_{k}^{\langle 2 \rangle}f_{k}\right)g_{k}\Delta x + \left[\left(\delta_{k}^{\langle 1 \rangle}f_{k}\right) g_{k}\right]_{0}^{K} \quad \mbox{for\ all\ } \{f_{k}\}_{k=-1}^{K+1}, \{g_{k}\}_{k=-1}^{K+1} \in \mathbb{R}^{K+3}. 
\end{equation*}
\end{corollary}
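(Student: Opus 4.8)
The plan is to derive the claimed identity by symmetrizing the two second-order summation-by-parts formulas of Lemma \ref{lem:2.2}. First I would observe that the two left-hand sides there are in fact identical: using the index identity $\delta_{k}^{-}h_{k}=\delta_{k-1}^{+}h_{k-1}$ and relabelling, the sum $\sum_{k=1}^{K}(\delta_{k}^{-}f_{k})(\delta_{k}^{-}g_{k})\Delta x$ becomes $\sum_{k=0}^{K-1}(\delta_{k}^{+}f_{k})(\delta_{k}^{+}g_{k})\Delta x$. Hence both formulas express the same quantity $S:=\sum_{k=0}^{K-1}(\delta_{k}^{+}f_{k})(\delta_{k}^{+}g_{k})\Delta x$, and averaging them gives
\begin{equation*}
S = -\frac{1}{2}\left(\sum_{k=0}^{K-1}(\delta_{k}^{\langle 2 \rangle}f_{k})g_{k}\Delta x + \sum_{k=1}^{K}(\delta_{k}^{\langle 2 \rangle}f_{k})g_{k}\Delta x\right) + \frac{1}{2}\left([(\delta_{k}^{-}f_{k})g_{k}]_{0}^{K} + [(\delta_{k}^{+}f_{k})g_{k}]_{0}^{K}\right).
\end{equation*}

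Next I would treat the two terms on the right separately. For the interior sums, applying Lemma \ref{lem:2.1} with $f_{k}$ there replaced by $(\delta_{k}^{\langle 2 \rangle}f_{k})g_{k}$ collapses the averaged pair into the trapezoidal sum $\sum_{k=0}^{K}{}^{\prime\prime}(\delta_{k}^{\langle 2 \rangle}f_{k})g_{k}\Delta x$, which is exactly the first term on the right-hand side of the corollary. For the boundary terms, I would expand each bracket as its value at $K$ minus its value at $0$ and group by endpoint; the key identity is $\frac{1}{2}(\delta_{k}^{+}f_{k}+\delta_{k}^{-}f_{k}) = \delta_{k}^{\langle 1 \rangle}f_{k}$, which holds by definition since both sides equal $(f_{k+1}-f_{k-1})/(2\Delta x)$. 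Applying this at $k=0$ and $k=K$ rewrites the averaged boundary contribution as $(\delta_{K}^{\langle 1 \rangle}f_{K})g_{K} - (\delta_{0}^{\langle 1 \rangle}f_{0})g_{0} = [(\delta_{k}^{\langle 1 \rangle}f_{k})g_{k}]_{0}^{K}$, the second term in the corollary. Combining the two computations yields the assertion.

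Since both ingredients are the already-proved Lemmas \ref{lem:2.1} and \ref{lem:2.2} together with two elementary index identities, there is no genuine obstacle here. The only points requiring care are verifying that the two left-hand sides of Lemma \ref{lem:2.2} coincide, so that averaging returns $S$ rather than an unrelated combination, and correctly bookkeeping the endpoint evaluations in the bracket notation $[\,\cdot\,]_{0}^{K}$.
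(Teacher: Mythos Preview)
Your proof is correct and is precisely the intended derivation: the paper omits the proof (stating that it follows by direct calculation), but positions the corollary immediately after Lemmas~\ref{lem:2.1} and~\ref{lem:2.2}, indicating it is obtained by averaging the two formulas of Lemma~\ref{lem:2.2} and simplifying via Lemma~\ref{lem:2.1} and the identity $\tfrac{1}{2}(\delta_{k}^{+}+\delta_{k}^{-})=\delta_{k}^{\langle 1\rangle}$, exactly as you do.
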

\noindent 
Using Lemma \ref{lem:2.1}, we obtain the following lemma. 
\begin{lemma}\label{lem:2.5} The definition \eqref{Jd} of $J_{\rm d}$ is rewritten as follows: 
\begin{equation*}
J_{\rm d}(\bm{U}) = \sum_{k=0}^{K-1}\frac{\gamma}{2}\left(\delta_{k}^{+}U_{k}\right)^{2}\Delta x + \sum_{k=0}^{K}{}^{\prime\prime} F\left( U_{k}\right) \Delta x \quad \mbox{for\ all\ } \bm{U} \in \mathbb{R}^{K+3}. 
\end{equation*}
\end{lemma}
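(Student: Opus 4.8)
The plan is to substitute the definitions of $G_{{\rm d},k}^{+}$ and $G_{{\rm d},k}^{-}$ into \eqref{Jd} and then split the resulting expression into two independent contributions: the quadratic gradient part carrying the factor $\gamma/2$, and the potential part carrying $F(U_k)$. Since the definition \eqref{Jd} is linear in the two discrete local energies, I can treat each contribution separately and recombine at the end.

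First I would handle the potential terms. Both $G_{{\rm d},k}^{+}(\bm{U})$ and $G_{{\rm d},k}^{-}(\bm{U})$ contribute exactly the same $F(U_k)$, so collecting these pieces from \eqref{Jd} yields $\frac{1}{2}\bigl(\sum_{k=0}^{K-1}F(U_k)\Delta x + \sum_{k=1}^{K}F(U_k)\Delta x\bigr)$. This is precisely the left-hand side of Lemma \ref{lem:2.1} with the choice $f_k = F(U_k)$, and so it equals $\sum_{k=0}^{K}{}^{\prime\prime}F(U_k)\Delta x$, which is the second summand of the asserted identity.

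Next I would treat the gradient terms, namely $\frac{\gamma}{4}\bigl(\sum_{k=0}^{K-1}(\delta_k^{+}U_k)^2\Delta x + \sum_{k=1}^{K}(\delta_k^{-}U_k)^2\Delta x\bigr)$. The key observation is the index-shift identity $\delta_k^{-}U_k = \frac{U_k - U_{k-1}}{\Delta x} = \delta_{k-1}^{+}U_{k-1}$; reindexing $k \mapsto k-1$ in the backward-difference sum then gives $\sum_{k=1}^{K}(\delta_k^{-}U_k)^2 = \sum_{k=0}^{K-1}(\delta_k^{+}U_k)^2$. Hence the two gradient sums coincide, and the factor $\frac{1}{2}$ merges them into $\frac{\gamma}{2}\sum_{k=0}^{K-1}(\delta_k^{+}U_k)^2\Delta x$, which is the first summand of the claim. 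Adding the two contributions completes the identity.

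This statement is essentially a bookkeeping computation rather than a substantive result, so I do not anticipate a genuine obstacle; the only step requiring care is the reindexing identity for the backward differences, where one must verify that the boundary indices match up so that the shifted range is exactly $k = 0, \ldots, K-1$. Everything else follows by direct substitution and the already-stated Lemma \ref{lem:2.1}.
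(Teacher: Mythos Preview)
Your proof is correct and matches the paper's approach: the paper states that Lemma~\ref{lem:2.5} follows from Lemma~\ref{lem:2.1} and omits the details, and your argument supplies exactly those details---applying Lemma~\ref{lem:2.1} to the potential terms and collapsing the gradient terms via the index shift $\delta_{k}^{-}U_{k}=\delta_{k-1}^{+}U_{k-1}$.
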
\noindent
By using Lemma \ref{lem:2.5} and Corollary \ref{col:2.1}, we have the following lemma: 
\begin{lemma}\label{lem:2.6}
The following equality holds: 
\begin{equation}
J_{\rm d}(\bm{U}) \! - \! J_{\rm d}(\bm{V}) 
	= \sum _ {k=0}^{K}\!{}^{\prime \prime}\!\!\left\{\! -\gamma\delta_{k}^{\langle 2 \rangle}\!\!\left(\! \frac{U_{k} \! + \! V_{k}}{2}\!\right) \! + \! \frac{d F}{d (U_{k},V_{k})} \!\right\}\!\left( U_{k} \! - \! V_{k}\right) \! \Delta x + \! \left[\gamma\!\left\{\!\delta_{k}^{\langle 1 \rangle}\!\!\left(\! \frac{U_{k} \! + \! V_{k}}{2} \!\right)\!\!\right\}\!\left( U_{k} \! - \! V_{k} \right)\right]_{0}^{K} \label{disvari} \vspace{-1mm}
\end{equation}
for all $\bm{U}, \bm{V} \in \mathbb{R}^{K+3}$. 
\end{lemma}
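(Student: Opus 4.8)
The plan is to start directly from the reformulation of $J_{\rm d}$ given in Lemma \ref{lem:2.5}, namely
\begin{equation*}
J_{\rm d}(\bm{U}) = \sum_{k=0}^{K-1}\frac{\gamma}{2}\left(\delta_{k}^{+}U_{k}\right)^{2}\Delta x + \sum_{k=0}^{K}{}^{\prime\prime} F\left( U_{k}\right) \Delta x,
\end{equation*}
and simply subtract the corresponding expression for $\bm{V}$. This splits the difference $J_{\rm d}(\bm{U}) - J_{\rm d}(\bm{V})$ into a ``gradient part'' coming from the first sum and a ``potential part'' coming from the second sum, which I would treat independently before recombining them at the end.

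For the potential part, I would use the definition of the difference quotient to write $F(U_{k}) - F(V_{k}) = \left(dF/d(U_{k},V_{k})\right)(U_{k}-V_{k})$ termwise; since both copies of the second sum already carry the weight $\sum_{k=0}^{K}{}^{\prime\prime}$, this contribution is immediately in the desired form. For the gradient part, the key algebraic step is the factorization $\left(\delta_{k}^{+}U_{k}\right)^{2} - \left(\delta_{k}^{+}V_{k}\right)^{2} = \left(\delta_{k}^{+}U_{k}+\delta_{k}^{+}V_{k}\right)\left(\delta_{k}^{+}U_{k}-\delta_{k}^{+}V_{k}\right)$. Using the linearity of $\delta_{k}^{+}$, I would rewrite the first factor as $2\,\delta_{k}^{+}\!\left((U_{k}+V_{k})/2\right)$ and the second as $\delta_{k}^{+}(U_{k}-V_{k})$, so that the whole gradient part becomes $\sum_{k=0}^{K-1}\gamma\left\{\delta_{k}^{+}\!\left((U_{k}+V_{k})/2\right)\right\}\left\{\delta_{k}^{+}(U_{k}-V_{k})\right\}\Delta x$.

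The essential analytic step is then to apply Corollary \ref{col:2.1} to this last sum, taking $f_{k} = (U_{k}+V_{k})/2$ and $g_{k} = U_{k}-V_{k}$. This converts the $\delta_{k}^{+}$--$\delta_{k}^{+}$ product into a $\sum_{k=0}^{K}{}^{\prime\prime}$ sum against $-\gamma\,\delta_{k}^{\langle 2 \rangle}\!\left((U_{k}+V_{k})/2\right)$ plus the boundary term $\left[\gamma\left\{\delta_{k}^{\langle 1 \rangle}\!\left((U_{k}+V_{k})/2\right)\right\}(U_{k}-V_{k})\right]_{0}^{K}$. Since the potential part is already weighted by $\sum_{k=0}^{K}{}^{\prime\prime}$, the two interior contributions combine under a single such sum, yielding exactly \eqref{disvari}. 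I do not expect a genuine obstacle here: the proof is a direct computation, and the only point requiring care is bookkeeping — matching the half-weighted endpoints of the $\sum_{k=0}^{K}{}^{\prime\prime}$ summation with the separate index ranges $0,\ldots,K-1$ and $1,\ldots,K$ used in the two halves of \eqref{Jd}, which is precisely what Corollary \ref{col:2.1} (together with Lemma \ref{lem:2.1}) is designed to reconcile, and making sure the central-difference boundary term is assembled correctly.
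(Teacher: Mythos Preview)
Your proposal is correct and follows essentially the same approach as the paper, which simply states that the lemma follows by using Lemma~\ref{lem:2.5} and Corollary~\ref{col:2.1}. Your decomposition into the gradient and potential parts, the factorization of the difference of squares, and the application of Corollary~\ref{col:2.1} with $f_{k}=(U_{k}+V_{k})/2$ and $g_{k}=U_{k}-V_{k}$ are exactly the intended steps.
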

\noindent
This equality \eqref{disvari} is essential for the discrete energy dissipation (Theorem \ref{thm:2.1}). 

\subsection{Proposed scheme}
The concrete form of our scheme for \eqref{Eq_i}--\eqref{BCii} is 
\begin{gather}
\frac{U_ {k}^{(n+1)} - U_ {k}^{(n)}}{\Delta t} = \delta _ {k}^{\langle 2 \rangle}P_ {k}^{(n)} \quad (k=0,\ldots, K,\ n=0,1, \ldots), \label{dCH1}\\
P_ {k}^{(n)} = -\gamma \delta_{k}^{\langle 2 \rangle}\!\left( \frac{U_{k}^{(n+1)} + U_{k}^{(n)}}{2}\right) + \frac{dF}{d(U_{k}^{(n+1)},U_{k}^{(n)})} \quad (k=0,\ldots, K,\ n=0,1, \ldots), \label{dCH2}\\
\frac{U_ {0}^{( n+1 )} - U_ {0}^{( n )}}{\Delta t} = \left. \delta _ {k}^{\langle 1 \rangle}\left( \frac{U_ {k}^{( n+1 )} + U_ {k}^{( n )}}{2} \right) \right|_{k=0} \quad (n = 0,1, \ldots), \label{dDBC}\\
\frac{U_ {K}^{( n+1 )} - U_ {K}^{( n )}}{\Delta t} = - \left. \delta _ {k}^{\langle 1 \rangle}\left( \frac{U_ {k}^{( n+1 )} + U_ {k}^{( n )}}{2} \right) \right|_{k=K} \quad (n = 0,1, \ldots), \label{dDBC'}\\
\delta _ {k}^{\langle 1 \rangle}P_ {k}^{(n)} = 0 \quad (k=0,K,\ n=0,1, \ldots). \label{dNBC}
\end{gather}
\begin{remark}
In previous result \cite{A}, Fukao et al.\ constructed another structure-preserving scheme. 
They used a forward difference operator as an approximation of an outward normal derivative on the boundary conditions. 
On the other hand, we have constructed a structure-preserving scheme in which we used a central difference operator as an approximation of an outward normal derivative. 
\end{remark} 
\noindent 
Then, the proposed scheme \eqref{dCH1}--\eqref{dNBC} has the following property corresponding to \eqref{D}, i.e., 

\begin{theorem}\label{thm:2.1}
The solution of the scheme \eqref{dCH1}--\eqref{dNBC} satisfies the following inequality: 
\begin{equation}
\delta_{n}^{+}J_{\rm d}(\bm{U}^{(n)}) = - \gamma \left| \delta_{n}^{+}U_{0}^{(n)}\right|^{2} - \gamma \left| \delta_{n}^{+}U_{K}^{(n)}\right|^{2} - \sum_{k=0}^{K-1}\left|\delta_{k}^{+}P_{k}^{(n)}\right|^{2}\Delta x \leq 0 \quad (n = 0,1,\ldots).  \label{dD} \vspace{-1mm}
\end{equation} 
\end{theorem}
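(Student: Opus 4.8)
The plan is to begin from the discrete variation identity \eqref{disvari} in Lemma \ref{lem:2.6}, specialized to $\bm{U} = \bm{U}^{(n+1)}$ and $\bm{V} = \bm{U}^{(n)}$. Dividing both sides by $\Delta t$ and writing $U_k^{(n+1)} - U_k^{(n)} = \Delta t\,\delta_n^+ U_k^{(n)}$ yields an expression for $\delta_n^+ J_{\rm d}(\bm{U}^{(n)})$ as a weighted interior sum plus a boundary term. The crucial observation is that the quantity inside the braces of the interior sum is exactly $P_k^{(n)}$ by the definition \eqref{dCH2} of the scheme, so that the interior contribution becomes $\sum_{k=0}^{K}{}^{\prime\prime} P_k^{(n)}(\delta_n^+ U_k^{(n)})\Delta x$.

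Next I would use the evolution equation \eqref{dCH1} to replace $\delta_n^+ U_k^{(n)}$ by $\delta_k^{\langle 2 \rangle} P_k^{(n)}$, turning the interior sum into $\sum_{k=0}^{K}{}^{\prime\prime}(\delta_k^{\langle 2 \rangle} P_k^{(n)}) P_k^{(n)}\Delta x$. This is precisely the right-hand side of the summation-by-parts formula in Corollary \ref{col:2.1} with $f_k = g_k = P_k^{(n)}$; applying it produces $-\sum_{k=0}^{K-1}(\delta_k^+ P_k^{(n)})^2\Delta x + [(\delta_k^{\langle 1 \rangle} P_k^{(n)}) P_k^{(n)}]_0^K$. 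The discrete homogeneous Neumann condition \eqref{dNBC} forces $\delta_k^{\langle 1 \rangle} P_k^{(n)} = 0$ at $k = 0$ and $k = K$, so the boundary term of this formula vanishes and the interior part collapses to $-\sum_{k=0}^{K-1}|\delta_k^+ P_k^{(n)}|^2\Delta x$.

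It then remains to treat the boundary term $[\gamma\{\delta_k^{\langle 1 \rangle}((U_k^{(n+1)} + U_k^{(n)})/2)\}(\delta_n^+ U_k^{(n)})]_0^K$ inherited from \eqref{disvari}. Here the discrete dynamic boundary conditions \eqref{dDBC} and \eqref{dDBC'} do the work: at $k = 0$ they give $\delta_0^{\langle 1 \rangle}((U_0^{(n+1)} + U_0^{(n)})/2) = \delta_n^+ U_0^{(n)}$, and at $k = K$ they give $\delta_K^{\langle 1 \rangle}((U_K^{(n+1)} + U_K^{(n)})/2) = -\delta_n^+ U_K^{(n)}$. Substituting these, the evaluated bracket reduces to $-\gamma|\delta_n^+ U_K^{(n)}|^2 - \gamma|\delta_n^+ U_0^{(n)}|^2$. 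Collecting the interior and boundary contributions reproduces the claimed identity \eqref{dD}, and since each of the three terms on the right is manifestly nonpositive, the inequality follows at once.

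I do not anticipate any genuine obstacle, since every ingredient (Lemma \ref{lem:2.6}, Corollary \ref{col:2.1}, and the three discrete boundary conditions) is tailored to this computation; the only care required is bookkeeping, namely matching the double-prime summation with the one-sided interior sums and checking the signs in the boundary evaluations. The conceptual point worth emphasizing is that the scheme was engineered precisely so that the central-difference operator $\delta_k^{\langle 1 \rangle}$ arising in the boundary term of \eqref{disvari} coincides with the central-difference discretization of the outward normal derivative used in \eqref{dDBC}--\eqref{dDBC'}; this compatibility is exactly what allows the boundary contributions to close up into the desired dissipative form.
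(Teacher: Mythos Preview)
Your proposal is correct and follows essentially the same route as the paper: specialize Lemma~\ref{lem:2.6} to $\bm{U}^{(n+1)},\bm{U}^{(n)}$, identify the bracketed term as $P_k^{(n)}$ via \eqref{dCH2}, convert $\delta_n^+U_k^{(n)}$ to $\delta_k^{\langle 2\rangle}P_k^{(n)}$ via \eqref{dCH1}, apply Corollary~\ref{col:2.1} together with \eqref{dNBC} to obtain the interior dissipation, and use \eqref{dDBC}--\eqref{dDBC'} to close the remaining boundary term. The only cosmetic difference is the order in which the two boundary contributions are processed.
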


\begin{proof}
Using Corollary \ref{col:2.1}, Lemma \ref{lem:2.6}, \eqref{dCH1}--\eqref{dNBC}, we have 
\begin{align*}
\delta_{n}^{+}J_{\rm d}(\bm{U}^{(n)}) 
	& = \frac{ J_{\rm d}\left(\bm{U}^{(n+1)}\right) - J_{\rm d}\left(\bm{U}^{(n)}\right) }{\Delta t} \\[-1pt]
	& = \sum_{k=0}^{K}{}^{\prime\prime} \left\{ -\gamma\delta_{k}^{\langle 2 \rangle}\left(\frac{U_{k}^{(n+1)} + U_{k}^{(n)}}{2}\right) + \frac{dF}{d (U_{k}^{(n+1)},U_{k}^{(n)})} \right\}\frac{ U_{k}^{(n+1)} - U_{k}^{(n)} }{\Delta t}\Delta x \\[-2pt]
	& \quad + \left[\gamma\left\{ \delta_{k}^{\langle 1 \rangle}\left(\frac{U_{k}^{(n+1)} + U_{k}^{(n)}}{2} \right)\right\}\frac{ U_{k}^{(n+1)} - U_{k}^{(n)} }{\Delta t}\right]_{0}^{K} \\[-2pt]
	& = \sum_{k=0}^{K}{}^{\prime\prime} P_{k}^{(n)}\left(\delta_{k}^{\langle 2 \rangle}P_{k}^{(n)}\right)\Delta x -\gamma \left|\delta_{n}^{+}U_{0}^{(n)} \right|^{2} -\gamma \left|\delta_{n}^{+}U_{K}^{(n)} \right|^{2} \\[-3pt]
	& = \left[ \left(\delta_{k}^{\langle 1 \rangle}\! P_{k}^{(n)}\right)\! P_{k}^{(n)} \right]_{0}^{K} \! - \sum_{k=0}^{K-1}\left|\delta_{k}^{+}P_{k}^{(n)}\right|^{2} \Delta x -\gamma \! \left|\delta_{n}^{+}U_{0}^{(n)} \right|^{2} -\gamma \! \left|\delta_{n}^{+}U_{K}^{(n)} \right|^{2} \\[-3pt]
	& = - \sum_{k=0}^{K-1}\left|\delta_{k}^{+}P_{k}^{(n)}\right|^{2}\Delta x -\gamma \left|\delta_{n}^{+}U_{0}^{(n)} \right|^{2} -\gamma \left|\delta_{n}^{+}U_{K}^{(n)} \right|^{2}
\end{align*}
for all $n = 0,1,\ldots$. \hfill $\Box$\\
\end{proof} 
\noindent 
Furthermore, the proposed scheme \eqref{dCH1}--\eqref{dNBC} has the following property corresponding to \eqref{MC}, i.e.,
\begin{theorem}\label{thm:2.2}
The solution of the scheme \eqref{dCH1}--\eqref{dNBC} satisfies the following equality. 
\begin{equation*}
\delta_{n}^{+}M_{\rm d}(\bm{U}^{(n)}) = 0 \quad (n= 0,1,\ldots).
\end{equation*}
\end{theorem}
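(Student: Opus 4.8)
The plan is to imitate the continuous mass-conservation computation, in which
\[
\frac{d}{dt}M(u(t)) = \int_{0}^{L}\partial_{t}u\,dx = \int_{0}^{L}\partial_{x}^{2}p\,dx = \left[\partial_{x}p\right]_{x=0}^{x=L} = 0
\]
by the Neumann condition \eqref{BCii}. The key observation is that mass conservation there rests on the boundary condition for the chemical potential $p$, not on the dynamic conditions \eqref{BCi}--\eqref{BCi'} for $u$; accordingly the discrete proof should rest on \eqref{dNBC} and not on \eqref{dDBC}--\eqref{dDBC'}. First I would expand the difference quotient using the definition of $M_{\rm d}$ together with the linearity of the summation operator $\sum_{k=0}^{K}{}^{\prime\prime}$, which gives
\[
\delta_{n}^{+}M_{\rm d}(\bm{U}^{(n)}) = \frac{M_{\rm d}(\bm{U}^{(n+1)}) - M_{\rm d}(\bm{U}^{(n)})}{\Delta t} = \sum_{k=0}^{K}{}^{\prime\prime}\left(\delta_{n}^{+}U_{k}^{(n)}\right)\Delta x.
\]

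Next, since the scheme equation \eqref{dCH1} is imposed for every $k = 0,\ldots,K$ (including the two endpoints), I would substitute $\delta_{n}^{+}U_{k}^{(n)} = \delta_{k}^{\langle 2 \rangle}P_{k}^{(n)}$ throughout to reduce the right-hand side to $\sum_{k=0}^{K}{}^{\prime\prime}\bigl(\delta_{k}^{\langle 2 \rangle}P_{k}^{(n)}\bigr)\Delta x$. I would then apply Corollary \ref{col:2.1} with $f_{k} = P_{k}^{(n)}$ and the constant sequence $g_{k} \equiv 1$: because $\delta_{k}^{+}g_{k} = 0$, the left-hand side of the corollary vanishes identically and the identity collapses to
\[
\sum_{k=0}^{K}{}^{\prime\prime}\left(\delta_{k}^{\langle 2 \rangle}P_{k}^{(n)}\right)\Delta x = \left[\delta_{k}^{\langle 1 \rangle}P_{k}^{(n)}\right]_{0}^{K}.
\]
Finally, invoking the discrete Neumann condition \eqref{dNBC}, namely $\delta_{k}^{\langle 1 \rangle}P_{k}^{(n)} = 0$ for $k = 0, K$, the boundary term on the right vanishes, and I conclude $\delta_{n}^{+}M_{\rm d}(\bm{U}^{(n)}) = 0$ for all $n = 0,1,\ldots$.

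This computation is entirely routine, so there is no substantive obstacle; the only genuine decision is to use Corollary \ref{col:2.1} with the constant test sequence $g_{k}\equiv 1$, which is exactly what converts the trapezoidally weighted sum of second differences of $P^{(n)}$ into the discrete boundary flux $[\delta_{k}^{\langle 1 \rangle}P_{k}^{(n)}]_{0}^{K}$ annihilated by \eqref{dNBC}. It is worth emphasizing that the dynamic boundary conditions \eqref{dDBC}--\eqref{dDBC'} play no role here, mirroring the continuous situation; the whole content of the proof is the compatibility between the trapezoidal weights built into $M_{\rm d}$ and the central second-difference operator $\delta_{k}^{\langle 2 \rangle}$ through the summation-by-parts formula of Corollary \ref{col:2.1}.
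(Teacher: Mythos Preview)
Your proof is correct and essentially identical to the paper's: the paper expands $\delta_{n}^{+}M_{\rm d}(\bm{U}^{(n)})$, substitutes \eqref{dCH1}, applies the identity $\sum_{k=0}^{K}{}^{\prime\prime}\delta_{k}^{\langle 2 \rangle}f_{k}\,\Delta x = [\delta_{k}^{\langle 1 \rangle}f_{k}]_{0}^{K}$ (stated separately as Lemma~\ref{lem:2.3}), and finishes with \eqref{dNBC}. Your use of Corollary~\ref{col:2.1} with $g_{k}\equiv 1$ simply rederives Lemma~\ref{lem:2.3}, so the two arguments coincide.
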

\noindent 
For the proof, we use the following lemma. 
This lemma can be shown by direct calculation and here omitted. 
\begin{lemma}[Summation of a difference {\cite[Propositon 3.1]{B}}]\label{lem:2.3}
The following fundamental formula holds: 
\begin{equation*}
\sum_{k=0}^{K}{}^{\prime\prime}\delta_{k}^{\langle 2 \rangle}f_{k}\Delta x = \left[ \delta_{k}^{\langle 1 \rangle}f_{k} \right]_{0}^{K} \quad \mbox{for\ all\ } \{f_{k}\}_{k=-1}^{K+1} \in \mathbb{R}^{K+3}. 
\end{equation*}
\end{lemma}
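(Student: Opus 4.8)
The plan is to reduce the identity to a single telescoping sum by first rewriting the discrete second derivative as the forward difference of one auxiliary sequence. The key algebraic observation is that $\delta_{k}^{-}f_{k} = \delta_{k-1}^{+}f_{k-1}$, so that
\[
\delta_{k}^{\langle 2 \rangle}f_{k}\,\Delta x = \delta_{k}^{+}f_{k} - \delta_{k}^{-}f_{k} = a_{k} - a_{k-1}, \qquad a_{k} := \delta_{k}^{+}f_{k} \quad (k=-1,0,\ldots,K).
\]
Thus the summand on the left-hand side is an exact difference of the sequence $\{a_{k}\}_{k=-1}^{K}$. This is the discrete counterpart of writing $\partial_{x}^{2}f = \partial_{x}(\partial_{x}f)$ before integrating, and it is what makes the sum collapse to boundary data.

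Next I would substitute this into the trapezoidal summation and write out its three pieces explicitly:
\[
\sum_{k=0}^{K}{}^{\prime\prime}\bigl(a_{k} - a_{k-1}\bigr) = \tfrac{1}{2}\bigl(a_{0} - a_{-1}\bigr) + \sum_{k=1}^{K-1}\bigl(a_{k} - a_{k-1}\bigr) + \tfrac{1}{2}\bigl(a_{K} - a_{K-1}\bigr).
\]
The interior sum telescopes to $a_{K-1} - a_{0}$. Combining it with the two endpoint halves, the $a_{0}$ and $a_{K-1}$ contributions partially cancel, and the whole expression collapses to
\[
\tfrac{1}{2}\bigl(a_{K} + a_{K-1}\bigr) - \tfrac{1}{2}\bigl(a_{0} + a_{-1}\bigr).
\]

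Finally I would recognize each averaged pair as a central difference through the identity $\delta_{k}^{\langle 1 \rangle}f_{k} = \tfrac{1}{2}\bigl(\delta_{k}^{+}f_{k} + \delta_{k}^{-}f_{k}\bigr)$. Since $a_{K-1} = \delta_{K}^{-}f_{K}$ and $a_{-1} = \delta_{0}^{-}f_{0}$, we get $\tfrac{1}{2}(a_{K} + a_{K-1}) = \delta_{K}^{\langle 1 \rangle}f_{K}$ and $\tfrac{1}{2}(a_{0} + a_{-1}) = \delta_{0}^{\langle 1 \rangle}f_{0}$, so the sum equals $\delta_{K}^{\langle 1 \rangle}f_{K} - \delta_{0}^{\langle 1 \rangle}f_{0} = [\delta_{k}^{\langle 1 \rangle}f_{k}]_{0}^{K}$, which is the claimed right-hand side. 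I do not expect any genuine obstacle here; the only point that needs care is the bookkeeping of the half-weights at $k=0$ and $k=K$, since it is precisely those factors of $\tfrac{1}{2}$ that turn the naive one-sided endpoint values into the symmetric central-difference form $\delta^{\langle 1 \rangle}$ rather than a forward or backward difference.
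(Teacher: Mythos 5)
Your proof is correct: the rewriting $\delta_{k}^{\langle 2\rangle}f_{k}\,\Delta x = a_{k}-a_{k-1}$ with $a_{k}=\delta_{k}^{+}f_{k}$, the telescoping of the trapezoidal sum, and the identification $\tfrac12(\delta_{k}^{+}f_{k}+\delta_{k}^{-}f_{k})=\delta_{k}^{\langle 1\rangle}f_{k}$ at $k=0,K$ are all valid, and the half-weights are handled exactly right. The paper omits the proof as a ``direct calculation'' (citing Furihata--Matsuo, Proposition 3.1), and your telescoping argument is precisely that standard calculation, so you have taken essentially the same route.
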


\paragraph{Proof of Theorem \ref{thm:2.2}.}
Using \eqref{dCH1}, \eqref{dNBC}, and Lemma \ref{lem:2.3}, we obtain 
\begin{equation*}
\delta_{n}^{+}M_{\rm d}(\bm{U}^{(n)}) 
	= \sum_{k=0}^{K}{}^{\prime\prime}\frac{U_{k}^{(n+1)} - U_{k}^{(n)}}{\Delta t}\Delta x  
	= \sum_{k=0}^{K}{}^{\prime\prime}\delta_{k}^{\langle 2 \rangle}P_{k}^{(n)}\Delta x \\
	= \left[\delta_{k}^{\langle 1 \rangle}P_{k}^{(n)}\right]_{0}^{K} \\
	= 0
\end{equation*}
for all $n= 0,1,\ldots$. \hfill $\Box$\\

\section{Stability of the proposed scheme}
In this section, we show that, if the proposed scheme has a solution, then it satisfies the global boundedness. 
Firstly, let us give definitions of the discrete $L^{\infty}$-norm and the discrete Dirichlet semi-norm. 

\begin{definition}
We define the discrete $L^{\infty}$-norm $\|\cdot\|_{L_{\rm d}^{\infty}}$ by 
\begin{equation*}
\|\bm{f}\|_{L_{\rm d}^{\infty}} := \max_{0 \leq k \leq K}|f_{k}| \quad \mbox{for\ all\ } \bm{f} = \{f_{k}\}_{k=0}^{K} \in \mathbb{R}^{K+1}. 
\end{equation*}
For all $\bm{f} = \{f_{k}\}_{k=0}^{K} \in \mathbb{R}^{K+1}$, we define the discrete Dirichlet semi-norm $\|D\bm{f}\|$ of $\bm{f}$ by 
\begin{equation*}
\| D\bm{f}\| := \sqrt{ \sum _ {k=0}^{K-1}|\delta_{k}^{+}f_{k}|^{2}\Delta x}, 
\end{equation*}
where $D\bm{f}$ is denoted by $D\bm{f} := \{\delta_{k}^{+}f_{k}\}_{k=0}^{K-1} \in \mathbb{R}^{K}$. 
\end{definition} 
\noindent 
For the proof of the global boundedness of the numerical solution, we use the following lemmas. 
\begin{lemma}\label{lem:3.1}
The solution to the scheme \eqref{dCH1}--\eqref{dNBC} satisfies the following inequality: 
\begin{equation}
\left\| D\bm{U}^{(n)}\right\| \leq \left\{\frac{2}{\gamma}\left( J_{\rm d}(\bm{U}^{(0)}) + L\left| \min\left\{\inf_{\xi \in \mathbb{R}}F(\xi), 0\right\} \right| \right)\right\}^{\frac{1}{2}} \quad (n = 0,1,\ldots). \label{DS_bound}
\end{equation}
\end{lemma}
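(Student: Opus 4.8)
The plan is to read the Dirichlet semi-norm directly off the rewritten energy and then control that energy by its initial value. Concretely, Lemma \ref{lem:2.5} expresses the discrete global energy as
$$J_{\rm d}(\bm{U}^{(n)}) = \frac{\gamma}{2}\left\|D\bm{U}^{(n)}\right\|^{2} + \sum_{k=0}^{K}{}^{\prime\prime}F\left(U_{k}^{(n)}\right)\Delta x,$$
so isolating the gradient term gives
$$\frac{\gamma}{2}\left\|D\bm{U}^{(n)}\right\|^{2} = J_{\rm d}(\bm{U}^{(n)}) - \sum_{k=0}^{K}{}^{\prime\prime}F\left(U_{k}^{(n)}\right)\Delta x.$$
It therefore suffices to bound the right-hand side from above by $J_{\rm d}(\bm{U}^{(0)}) + L\left|\min\{\inf_{\xi}F(\xi),0\}\right|$.

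First I would bound the energy term. By Theorem \ref{thm:2.1} we have $\delta_{n}^{+}J_{\rm d}(\bm{U}^{(n)}) \le 0$ for every $n$, so the sequence $\{J_{\rm d}(\bm{U}^{(n)})\}_{n}$ is non-increasing; a telescoping argument (equivalently, a trivial induction on $n$) then yields $J_{\rm d}(\bm{U}^{(n)}) \le J_{\rm d}(\bm{U}^{(0)})$ for all $n = 0,1,\ldots$. This is where the structure-preserving (dissipative) property of the scheme enters decisively.

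Next I would bound the potential term from below. Writing $m := \inf_{\xi \in \mathbb{R}}F(\xi)$, which is finite because $F$ is assumed bounded from below, each summand satisfies $-F(U_{k}^{(n)}) \le -m \le \left|\min\{m,0\}\right|$. The last inequality is the only mildly delicate point: if $m \ge 0$ then $\left|\min\{m,0\}\right| = 0 \ge -m$, while if $m < 0$ then $\left|\min\{m,0\}\right| = -m$. Since the second-order (trapezoidal) quadrature weights total the interval length, $\sum_{k=0}^{K}{}^{\prime\prime}\Delta x = \left(\tfrac12 + (K-1) + \tfrac12\right)\Delta x = K\Delta x = L$, I obtain
$$-\sum_{k=0}^{K}{}^{\prime\prime}F\left(U_{k}^{(n)}\right)\Delta x \le \left|\min\{m,0\}\right|\sum_{k=0}^{K}{}^{\prime\prime}\Delta x = L\,\left|\min\{m,0\}\right|.$$

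Finally I would combine the two estimates: substituting both into the isolated-gradient identity gives $\tfrac{\gamma}{2}\left\|D\bm{U}^{(n)}\right\|^{2} \le J_{\rm d}(\bm{U}^{(0)}) + L\left|\min\{m,0\}\right|$, and taking the square root (legitimate since $\gamma > 0$) delivers \eqref{DS_bound}. I do not anticipate any genuine obstacle: the argument is essentially a discrete energy estimate, and the only care required is the elementary reduction of $-F$ to the constant $\left|\min\{\inf F,0\}\right|$ together with the bookkeeping that the trapezoidal weights sum to $L$.
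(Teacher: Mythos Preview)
Your proof is correct and follows essentially the same route as the paper: rewrite $J_{\rm d}$ via Lemma~\ref{lem:2.5}, use the dissipative property (Theorem~\ref{thm:2.1}) to bound $J_{\rm d}(\bm{U}^{(n)})$ by its initial value, and bound the potential sum below using $\min\{\inf F,0\}$ together with $\sum_{k=0}^{K}{}^{\prime\prime}\Delta x = L$. Your explicit case analysis for $-m \le |\min\{m,0\}|$ and the verification of the trapezoidal weight sum are slightly more detailed than the paper's presentation, but the argument is the same.
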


\begin{proof}
From the dissipative property (Theorem \ref{thm:2.1}) and the assumption on $F$, we obtain 
\begin{align*}
J_{\rm d}( \bm{U}^{(0)} ) \geq J_{\rm d}( \bm{U}^{(n)} ) \! 
	= \! \frac{\gamma}{2}\sum_{k=0}^{K-1}\left|\delta_{k}^{+}U_{k}^{(n)}\right|^{2}\!\Delta x + \sum_{k=0}^{K}\!{}^{\prime\prime}F\!\left(\! U_{k}^{(n)}\!\right) \!\Delta x \! 
	& \geq \! \frac{\gamma}{2}\left\| D\bm{U}^{(n)}\right\|^{2} \! + \! \min\!\left\{ \inf_{\xi \in \mathbb{R}}F(\xi), 0\right\}\sum_{k=0}^{K}\!{}^{\prime\prime}\Delta x \\
	& = \! \frac{\gamma}{2}\left\| D\bm{U}^{(n)}\right\|^{2} \! + \! \min\!\left\{ \inf_{\xi \in \mathbb{R}}F(\xi), 0\right\}L. 
\end{align*}
for $n=0,1,\ldots$. 
Namely, 
\begin{equation*}
\frac{\gamma}{2}\left\| D\bm{U}^{(n)}\right\|^{2} \leq J_{\rm d}(\bm{U}^{(0)}) - \min\left\{ \inf_{\xi \in \mathbb{R}}F(\xi), 0\right\}L \leq J_{\rm d}(\bm{U}^{(0)}) + \left|\min\left\{ \inf_{\xi \in \mathbb{R}}F(\xi), 0\right\}\right| L \quad (n=0,1,\ldots). 
\end{equation*}
Therefore, the inequality \eqref{DS_bound} holds. \hfill $\Box$\\
\end{proof}

\begin{lemma}[Discrete Poincar\'{e}--Wirtinger inequality {\cite[Lemma 3.3]{B}}]\label{lem:3.2}
The following inequality holds: 
\begin{equation}
\left| f_{l} - \frac{1}{L}\sum _ {k=0}^{K}{}^{\prime \prime}f_{k} \Delta x \right|^{2} \leq L\left\| D\bm{f}\right\|^{2} \quad (l=0, \ldots, K) \label{PW_ineq}
\end{equation}
for all $\bm{f} = \{f_{k}\}_{k=0}^{K} \in \mathbb{R}^{K+1}$. 
\end{lemma}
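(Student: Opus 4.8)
The plan is to read the quantity $\frac{1}{L}\sum_{k=0}^{K}{}^{\prime\prime}f_{k}\Delta x$ as a weighted average of the grid values $f_{0},\ldots,f_{K}$. The trapezoidal weights attached by $\sum_{k=0}^{K}{}^{\prime\prime}$, namely $\tfrac{1}{2}\Delta x,\Delta x,\ldots,\Delta x,\tfrac{1}{2}\Delta x$, are all nonnegative and, since $\sum_{k=0}^{K}{}^{\prime\prime}\Delta x = L$, the numbers $\Delta x/L$ (with the half-weights at the two endpoints) form a discrete probability measure on $\{0,\ldots,K\}$. Thus $\frac{1}{L}\sum_{k=0}^{K}{}^{\prime\prime}f_{k}\Delta x$ is exactly the expectation of $\bm{f}$ against this measure, and $f_{l}-\frac{1}{L}\sum_{k=0}^{K}{}^{\prime\prime}f_{k}\Delta x$ is the deviation of $f_{l}$ from that mean.

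First I would establish the pairwise estimate $|f_{l}-f_{m}|^{2}\le L\|D\bm{f}\|^{2}$ for every pair of indices $l,m$. Writing $f_{l}-f_{m}$ as a telescoping sum of forward differences, $f_{l}-f_{m}=\sum_{k}(\delta_{k}^{+}f_{k})\Delta x$ over the indices lying between $m$ and $l$, and applying the Cauchy--Schwarz inequality gives $|f_{l}-f_{m}|^{2}\le |l-m|\Delta x\sum_{k=0}^{K-1}|\delta_{k}^{+}f_{k}|^{2}\Delta x$. Since $|l-m|\Delta x\le K\Delta x=L$ and the remaining factor is precisely $\|D\bm{f}\|^{2}$, the pairwise bound follows.

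Next, using that the weights normalize to one, I would write $f_{l}-\frac{1}{L}\sum_{k=0}^{K}{}^{\prime\prime}f_{k}\Delta x=\frac{1}{L}\sum_{m=0}^{K}{}^{\prime\prime}(f_{l}-f_{m})\Delta x$; that is, the deviation of $f_{l}$ from the mean is itself the weighted average of the pairwise differences. Applying Jensen's inequality for the convex map $t\mapsto t^{2}$ against this probability measure (equivalently, a weighted Cauchy--Schwarz inequality) yields $\bigl|f_{l}-\frac{1}{L}\sum_{k=0}^{K}{}^{\prime\prime}f_{k}\Delta x\bigr|^{2}\le \frac{1}{L}\sum_{m=0}^{K}{}^{\prime\prime}|f_{l}-f_{m}|^{2}\Delta x$. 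Substituting the pairwise bound $|f_{l}-f_{m}|^{2}\le L\|D\bm{f}\|^{2}$ on the right-hand side and invoking $\sum_{m=0}^{K}{}^{\prime\prime}\Delta x=L$ once more collapses the whole expression to $L\|D\bm{f}\|^{2}$, which is \eqref{PW_ineq}.

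I expect the only delicate point to be the bookkeeping at the two endpoints: the summation $\sum{}^{\prime\prime}$ carries the half-weights at $k=0$ and $k=K$, so before invoking Jensen's inequality one must confirm that these weights remain nonnegative and sum to one, which they do, so no convexity argument is spoiled. The telescoping step also requires separating the cases $l>m$, $l<m$, and $l=m$, but each is routine and all three produce the same estimate.
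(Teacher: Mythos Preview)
Your argument is correct. The telescoping identity together with Cauchy--Schwarz gives the pairwise bound $|f_{l}-f_{m}|^{2}\le L\|D\bm{f}\|^{2}$, and since the trapezoidal weights $\tfrac{1}{L}\Delta x$ (with half-weights at the endpoints) are nonnegative and sum to one, the Jensen step is legitimate and collapses the estimate to \eqref{PW_ineq} exactly as you describe.

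The paper itself does not give a proof: it simply refers the reader to \cite[Lemma~3.3]{B}, so there is no ``paper's own proof'' to compare against beyond that citation. Your argument is the standard one for discrete Poincar\'{e}--Wirtinger inequalities and is almost certainly what appears in the cited source; in any case it is self-contained and complete.
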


\begin{proof}
We can obtain \eqref{PW_ineq} from the proof of Furihata and Matsuo \cite{B}. \hfill $\Box$\\
\end{proof}
\noindent 
From Lemma \ref{lem:3.1} and Lemma \ref{lem:3.2}, we can obtain the following global boundedness. 

\begin{theorem}[Global boundedness]\label{thm:3.1} 
The solution to the scheme \eqref{dCH1}--\eqref{dNBC} satisfies the following inequality: \vspace{-1mm}
\begin{equation}
\left\| \bm{U}^{(n)}\right\|_{L_{\rm d}^{\infty}} \leq \frac{1}{L}\left| M_{\rm d}( \bm{U}^{(0)} ) \right| + \left\{\frac{2L}{\gamma}\left( J_{\rm d}(\bm{U}^{(0)}) + L\left| \min\left\{\inf_{\xi \in \mathbb{R}}F(\xi), 0\right\} \right| \right)\right\}^{\frac{1}{2}} \quad (n=0,1,\ldots). \label{GB} \vspace{-0.5mm}
\end{equation}
\end{theorem}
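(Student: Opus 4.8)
The plan is to synthesize the two preceding lemmas with the mass-conservation identity of Theorem~\ref{thm:2.2}. Fix $n$ and an index $l \in \{0,\ldots,K\}$. First I would apply the discrete Poincar\'e--Wirtinger inequality (Lemma~\ref{lem:3.2}) to $\bm{f} = \bm{U}^{(n)}$, observing that the mean appearing there is exactly $\frac{1}{L}\sum_{k=0}^{K}{}''U_{k}^{(n)}\Delta x = \frac{1}{L}M_{\rm d}(\bm{U}^{(n)})$ by the definition of the discrete mass. This yields
\[
\left| U_{l}^{(n)} - \frac{1}{L}M_{\rm d}(\bm{U}^{(n)}) \right| \leq \sqrt{L}\,\left\| D\bm{U}^{(n)}\right\|.
\]

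Next, the triangle inequality gives $|U_{l}^{(n)}| \leq \frac{1}{L}|M_{\rm d}(\bm{U}^{(n)})| + \sqrt{L}\,\|D\bm{U}^{(n)}\|$. Here I would invoke Theorem~\ref{thm:2.2}, which asserts $\delta_{n}^{+}M_{\rm d}(\bm{U}^{(n)}) = 0$ and hence $M_{\rm d}(\bm{U}^{(n)}) = M_{\rm d}(\bm{U}^{(0)})$ for every $n$; this lets me replace the mass term by its initial value $\frac{1}{L}|M_{\rm d}(\bm{U}^{(0)})|$, producing the first summand of \eqref{GB}.

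Finally I would bound the semi-norm term by Lemma~\ref{lem:3.1}: multiplying the estimate \eqref{DS_bound} by $\sqrt{L}$ and absorbing this factor inside the square root converts $\sqrt{L}\,\|D\bm{U}^{(n)}\|$ into precisely the second summand of \eqref{GB}. Since the resulting right-hand side no longer depends on $l$, taking the maximum over $0 \le l \le K$ on the left — which is by definition $\|\bm{U}^{(n)}\|_{L_{\rm d}^{\infty}}$ — delivers \eqref{GB}.

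I do not expect any genuine obstacle here, as the claim is a direct combination of the dissipation-driven semi-norm bound, the conservation of discrete mass, and the discrete Poincar\'e--Wirtinger inequality. The only point needing mild care is the bookkeeping that the mean used in Lemma~\ref{lem:3.2} coincides with $M_{\rm d}(\bm{U}^{(n)})/L$ and that mass conservation permits freezing this mean at $n=0$; once these identifications are in place, the inequality \eqref{GB} follows immediately.
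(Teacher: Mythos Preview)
Your proposal is correct and follows essentially the same approach as the paper: apply the discrete Poincar\'e--Wirtinger inequality (Lemma~\ref{lem:3.2}) together with mass conservation (Theorem~\ref{thm:2.2}) to control $\|\bm{U}^{(n)}\|_{L_{\rm d}^{\infty}}$ by $\frac{1}{L}|M_{\rm d}(\bm{U}^{(0)})| + L^{1/2}\|D\bm{U}^{(n)}\|$, and then bound the semi-norm via Lemma~\ref{lem:3.1}. Your write-up is in fact slightly more careful about the absolute value on the mass term than the paper's own display.
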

\begin{proof}

From the discrete Poincar\'{e}--Wirtinger inequality \eqref{PW_ineq} and the conservative property (Theorem \ref{thm:2.2}), we have \vspace{-1mm}
\begin{equation}
\left\| \bm{U}^{(n)} \right\|_{L_{\rm d}^{\infty}} 
	\leq \frac{1}{L}M_{\rm d}(\bm{U}^{(n)}) + L^{\frac{1}{2}}\left\| D\bm{U}^{(n)} \right\| 
	= \frac{1}{L}M_{\rm d}(\bm{U}^{(0)}) + L^{\frac{1}{2}}\left\| D\bm{U}^{(n)} \right\| 
	\leq \frac{1}{L}\left| M_{\rm d}(\bm{U}^{(0)}) \right| + L^{\frac{1}{2}}\left\| D\bm{U}^{(n)} \right\| \label{Linf_bound} \vspace{-0.5mm}
\end{equation}
for $n=0,1,\ldots$. 
By applying Lemma \ref{lem:3.1} to \eqref{Linf_bound}, we can obtain \eqref{GB}. \hfill $\Box$\\
\end{proof}
\begin{remark}
Theorem \ref{thm:3.1} means that our proposed scheme is numerically stable for any time step $n$. 
Note that we can obtain a more precise evaluation if the initial data is sufficiently smooth (see Appendix A for details). 
\end{remark}

\section{Existence and uniqueness of the solution for the proposed scheme}
In this section, using the energy method in \cite{A,Oku,Yan,Yos1,C,Yos3}, we prove that the proposed scheme \eqref{dCH1}--\eqref{dNBC} has a unique solution under a specific condition on $\Delta t$. 

\subsection{Preparation}
Let $\Omega$ be a domain in $\mathbb{R}$. 
We define $\bar{F}''$ for $F \in C^{2}(\Omega )$ and give several lemmas necessary for the proof of the existence and uniqueness of the solution. 
\begin{definition}
For a function $F \in C^{2}(\Omega )$, we define $\bar{F}''$: $\Omega^{4} \to \mathbb{R}$ by \vspace{-1.5mm}
\begin{numcases}
    {\bar{F}'' (\xi, \tilde{\xi}; \eta, \tilde{\eta}) :=} 
      \frac{1}{\xi - \tilde{\xi}}\left\{ \left( \frac{dF}{d(\xi, \eta)} + \frac{dF}{d(\xi, \tilde{\eta})} \right) - \left( \frac{dF}{d(\tilde{\xi}, \eta)} + \frac{dF}{d(\tilde{\xi}, \tilde{\eta})} \right) \right\} & ($\xi \neq \tilde{\xi}$), \nonumber\\
      {\partial}_{\xi} \left. \left( \frac{dF}{d(\xi, \eta)} + \frac{dF}{d(\xi, \tilde{\eta})} \right) \right|_{\xi = \tilde{\xi}} & ($\xi = \tilde{\xi}$). \nonumber
\end{numcases}~\vspace{-4mm}\\
for all $(\xi, \tilde{\xi}, \eta, \tilde{\eta}) \in \Omega^{4}$. 
\end{definition}
\noindent 
Since proofs of the following lemmas can be found in \cite{C}, we omit them. 
\begin{lemma}[{\cite[Lemma 2.4]{C}}] \label{lem:4.1}
If $F \in C^{2}(\Omega)$, then $\bar{F}'' \in C(\Omega^{4})$. 
Moreover, we have \vspace{-1mm}
\begin{equation*}
\left| \bar{F}'' (\xi, \tilde{\xi}; \eta, \tilde{\eta}) \right| \leq \sup_{\eta, \tilde{\eta} \in \Omega}\sup_{\xi \in \Omega}\left| \frac{\partial}{\partial \xi} \left( \frac{dF}{d(\xi, \eta)} + \frac{dF}{d(\xi, \tilde{\eta})} \right) \right| \leq \sup_{\xi \in \Omega}\left| F'' (\xi) \right| \quad \mbox{for\ all\ } (\xi, \tilde{\xi}, \eta, \tilde{\eta}) \in \Omega^{4}. \vspace{-0.5mm}
\end{equation*}
\end{lemma}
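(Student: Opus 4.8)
The plan is to recognize $\bar{F}''$ as an iterated difference quotient and to exploit the integral representation of difference quotients. Fix $\eta, \tilde{\eta} \in \Omega$ and set $g(\xi) := \frac{dF}{d(\xi, \eta)} + \frac{dF}{d(\xi, \tilde{\eta})}$. Comparing the two cases in the definition, one sees that $\bar{F}''(\xi, \tilde{\xi}; \eta, \tilde{\eta})$ is exactly the difference quotient $dg/d(\xi, \tilde{\xi})$ of $g$ (the case $\xi\neq\tilde\xi$ is $\frac{g(\xi)-g(\tilde\xi)}{\xi-\tilde\xi}$, and the case $\xi=\tilde\xi$ is $g'(\tilde\xi)$). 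Thus the whole statement reduces to controlling a single difference quotient together with the smoothness of the inner quotients $\frac{dF}{d(\cdot,\cdot)}$.

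First I would establish the integral representation. For $F \in C^{1}$ one has, for every $\xi, \eta \in \Omega$,
\begin{equation*}
\frac{dF}{d(\xi, \eta)} = \int_{0}^{1} F'\bigl(\eta + \theta(\xi - \eta)\bigr)\, d\theta,
\end{equation*}
which follows from the fundamental theorem of calculus when $\xi \neq \eta$ and reduces to $F'(\eta)$ when $\xi = \eta$; in particular this single formula is valid across the diagonal. When $F \in C^{2}$, differentiation under the integral sign shows $\frac{dF}{d(\xi, \eta)} \in C^{1}(\Omega^{2})$ with
\begin{equation*}
\frac{\partial}{\partial \xi}\frac{dF}{d(\xi, \eta)} = \int_{0}^{1} \theta\, F''\bigl(\eta + \theta(\xi - \eta)\bigr)\, d\theta.
\end{equation*}
Consequently $g$ is $C^{1}$ in $\xi$ and jointly continuous in $(\xi,\eta,\tilde\eta)$ together with $g'$, and applying the same representation to $g$ gives
\begin{equation*}
\bar{F}''(\xi, \tilde{\xi}; \eta, \tilde{\eta}) = \frac{dg}{d(\xi, \tilde{\xi})} = \int_{0}^{1} g'\bigl(\tilde{\xi} + \theta(\xi - \tilde{\xi})\bigr)\, d\theta .
\end{equation*}
Since the integrand is jointly continuous in all four variables and in $\theta$, joint continuity of $\bar{F}''$ on $\Omega^{4}$ follows at once, giving $\bar{F}'' \in C(\Omega^{4})$.

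Then I would derive the two inequalities by estimating inside the integral. Taking absolute values in the last representation and bounding the integrand by its supremum over $\xi$ yields
\begin{equation*}
\left| \bar{F}''(\xi, \tilde{\xi}; \eta, \tilde{\eta}) \right| \leq \sup_{\xi \in \Omega}\left| \frac{\partial}{\partial \xi}\left(\frac{dF}{d(\xi, \eta)} + \frac{dF}{d(\xi, \tilde{\eta})}\right)\right|,
\end{equation*}
and taking the supremum over $\eta, \tilde{\eta}$ gives the first inequality. For the second, I would invoke the $\theta$-weighted representation of $\partial_{\xi}\frac{dF}{d(\xi,\eta)}$ above: since $\int_{0}^{1} \theta\, d\theta = 1/2$, each of the two terms is bounded by $\tfrac{1}{2}\sup_{\xi}|F''(\xi)|$, so their sum is bounded by $\sup_{\xi}|F''(\xi)|$.

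The only delicate point is the behaviour across the diagonals $\xi = \eta$, $\xi = \tilde{\eta}$, and $\xi = \tilde{\xi}$, where the naive quotient formulas are singular. The integral representation is precisely what removes this difficulty: it is a single smooth formula valid on all of $\Omega^{2}$ (respectively $\Omega^{4}$), so no separate limiting arguments at the diagonals are needed, and differentiation under the integral sign is justified by the continuity of $F''$. I expect verifying this uniform validity of the representation, and hence the legitimacy of differentiating under the integral, to be the main technical step; everything else is a matter of taking absolute values inside the integrals.
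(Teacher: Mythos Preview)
The paper does not prove this lemma at all; it simply cites \cite{C} (Lemma~2.4) and omits the argument. Your proposal is correct and is exactly the standard proof one would expect in the cited reference: recognise $\bar F''$ as the difference quotient of $g(\xi)=\frac{dF}{d(\xi,\eta)}+\frac{dF}{d(\xi,\tilde\eta)}$, use the integral representation $\frac{dF}{d(\xi,\eta)}=\int_0^1 F'(\eta+\theta(\xi-\eta))\,d\theta$ to obtain a formula valid across the diagonals, and then differentiate under the integral and estimate. The only small point worth making explicit is that a domain in $\mathbb{R}$ is an interval, hence convex, so the convex combinations appearing in the integral representations remain in $\Omega$; otherwise your argument is complete as written.
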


\begin{lemma}[{\cite[Proposition 2.5]{C}}] \label{lem:4.2}
Assume that $F \in C^{2}(\Omega)$. For any $\xi$, $\tilde{\xi}$, $\eta$, $\tilde{\eta} \in \Omega$, we have \vspace{-1mm}
\begin{equation*}
\frac{dF}{d(\xi, \eta)} - \frac{dF}{d(\tilde{\xi}, \tilde{\eta})} = \frac{1}{2}\bar{F}'' (\xi, \tilde{\xi}; \eta, \tilde{\eta})(\xi - \tilde{\xi}) + \frac{1}{2}\bar{F}'' (\eta, \tilde{\eta}; \xi, \tilde{\xi})(\eta - \tilde{\eta}). \vspace{-0.5mm}
\end{equation*}
\end{lemma}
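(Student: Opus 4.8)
The plan is to reduce the whole identity to the elementary symmetry $\frac{dF}{d(\xi,\eta)} = \frac{dF}{d(\eta,\xi)}$ of the difference quotient, which is immediate from its definition: the off-diagonal formula $\frac{F(\xi)-F(\eta)}{\xi-\eta}$ is manifestly symmetric in its two arguments, and on the diagonal both sides equal $F'(\xi)$. Granting this, the right-hand side collapses to the left-hand side by a direct cancellation.

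First I would treat the generic case $\xi \neq \tilde{\xi}$ and $\eta \neq \tilde{\eta}$, in which both occurrences of $\bar{F}''$ are given by the off-diagonal branch of its definition. Multiplying out the first factor gives
\[
\tfrac{1}{2}\bar{F}''(\xi,\tilde{\xi};\eta,\tilde{\eta})(\xi-\tilde{\xi}) = \tfrac{1}{2}\left\{\frac{dF}{d(\xi,\eta)} + \frac{dF}{d(\xi,\tilde{\eta})} - \frac{dF}{d(\tilde{\xi},\eta)} - \frac{dF}{d(\tilde{\xi},\tilde{\eta})}\right\},
\]
and, after applying the symmetry of the difference quotient to each summand of the second factor (so that $\frac{dF}{d(\eta,\xi)} = \frac{dF}{d(\xi,\eta)}$, $\frac{dF}{d(\eta,\tilde{\xi})} = \frac{dF}{d(\tilde{\xi},\eta)}$, and so on),
\[
\tfrac{1}{2}\bar{F}''(\eta,\tilde{\eta};\xi,\tilde{\xi})(\eta-\tilde{\eta}) = \tfrac{1}{2}\left\{\frac{dF}{d(\xi,\eta)} + \frac{dF}{d(\tilde{\xi},\eta)} - \frac{dF}{d(\xi,\tilde{\eta})} - \frac{dF}{d(\tilde{\xi},\tilde{\eta})}\right\}.
\]
Adding these two expressions, the mixed terms $\frac{dF}{d(\xi,\tilde{\eta})}$ and $\frac{dF}{d(\tilde{\xi},\eta)}$ cancel while $\frac{dF}{d(\xi,\eta)}$ and $\frac{dF}{d(\tilde{\xi},\tilde{\eta})}$ double, leaving exactly $\frac{dF}{d(\xi,\eta)} - \frac{dF}{d(\tilde{\xi},\tilde{\eta})}$, as required.

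The main obstacle is the degenerate cases $\xi = \tilde{\xi}$ and/or $\eta = \tilde{\eta}$, where one or both copies of $\bar{F}''$ are defined through the derivative branch and the computation above does not apply verbatim. The cleanest route is a continuity argument: by Lemma \ref{lem:4.1} the map $\bar{F}''$ is continuous on $\Omega^{4}$, and the difference quotient is continuous on $\Omega^{2}$ (the representation $\frac{dF}{d(\xi,\eta)} = \int_{0}^{1} F'(\eta + t(\xi-\eta))\,dt$ makes joint continuity manifest, using $F \in C^{2} \subset C^{1}$). Hence both sides of the claimed identity are continuous on $\Omega^{4}$, and since they agree on the dense open set where $\xi \neq \tilde{\xi}$ and $\eta \neq \tilde{\eta}$, they agree everywhere. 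Should one prefer to avoid continuity, I would instead check these cases by hand: when $\xi = \tilde{\xi}$ the first term vanishes and evaluating the second factor (via the off-diagonal branch if $\eta \neq \tilde{\eta}$, or the derivative branch if $\eta = \tilde{\eta}$) reproduces $\frac{dF}{d(\xi,\eta)} - \frac{dF}{d(\xi,\tilde{\eta})}$, which equals the left-hand side, and the fully diagonal case reduces to $0 = 0$. I expect this boundary-case bookkeeping, rather than the central cancellation, to be the only delicate point.
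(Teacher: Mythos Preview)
Your proof is correct. The paper does not prove this lemma at all---it simply cites \cite[Proposition 2.5]{C} and omits the argument---so there is nothing to compare against; your direct verification via the symmetry $\frac{dF}{d(\xi,\eta)} = \frac{dF}{d(\eta,\xi)}$ together with the continuity argument (or the explicit boundary-case check) is a complete and standard way to establish the identity.
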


\begin{lemma}[{\cite[Lemma 2.3]{C}}] \label{lem:4.3}
The following inequality holds: \vspace{-0.5mm}
\begin{equation*}
\left\| D(\bm{f}\bm{g})\right\| \leq \|\bm{f}\|_{L_{\rm d}^{\infty}}\left\| D\bm{g}\right\| + \|\bm{g}\|_{L_{\rm d}^{\infty}} \left\| D\bm{f}\right\| \quad \mbox{for\ all\ } \bm{f} = \{f_{k}\}_{k=0}^{K}, \bm{g} = \{g_{k}\}_{k=0}^{K} \in \mathbb{R}^{K+1}, \vspace{-0.5mm}
\end{equation*}
where $\bm{fg} = \{f_{k}g_{k}\}_{k=0}^{K} \in \mathbb{R}^{K+1}$. 
\end{lemma}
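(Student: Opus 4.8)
The plan is to reduce the inequality to a pointwise estimate on each forward difference $\delta_{k}^{+}(f_{k}g_{k})$ and then to pass to the discrete Dirichlet semi-norm by means of the triangle (Minkowski) inequality for the underlying weighted $\ell^{2}$ structure. First I would record a discrete Leibniz rule. Expanding the numerator of $\delta_{k}^{+}(f_{k}g_{k}) = (f_{k+1}g_{k+1} - f_{k}g_{k})/\Delta x$ by inserting and subtracting $f_{k+1}g_{k}$ gives
\begin{equation*}
\delta_{k}^{+}(f_{k}g_{k}) = f_{k+1}\,\delta_{k}^{+}g_{k} + g_{k}\,\delta_{k}^{+}f_{k} \quad (k = 0, \ldots, K-1).
\end{equation*}
Taking absolute values and bounding $|f_{k+1}| \leq \|\bm{f}\|_{L_{\rm d}^{\infty}}$ and $|g_{k}| \leq \|\bm{g}\|_{L_{\rm d}^{\infty}}$ yields the pointwise estimate
\begin{equation*}
\left|\delta_{k}^{+}(f_{k}g_{k})\right| \leq \|\bm{f}\|_{L_{\rm d}^{\infty}}\,\left|\delta_{k}^{+}g_{k}\right| + \|\bm{g}\|_{L_{\rm d}^{\infty}}\,\left|\delta_{k}^{+}f_{k}\right| \quad (k = 0, \ldots, K-1).
\end{equation*}

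Next I would square this, multiply by $\Delta x$, and sum over $k = 0, \ldots, K-1$. Since both sides are nonnegative, the squared inequality is preserved under summation, so
\begin{equation*}
\left\|D(\bm{fg})\right\|^{2} \leq \sum_{k=0}^{K-1}\left(\|\bm{f}\|_{L_{\rm d}^{\infty}}\left|\delta_{k}^{+}g_{k}\right| + \|\bm{g}\|_{L_{\rm d}^{\infty}}\left|\delta_{k}^{+}f_{k}\right|\right)^{2}\Delta x.
\end{equation*}
The right-hand side is the squared weighted $\ell^{2}$-norm (weight $\Delta x$) of the sum of the two vectors $\{\|\bm{f}\|_{L_{\rm d}^{\infty}}|\delta_{k}^{+}g_{k}|\}_{k}$ and $\{\|\bm{g}\|_{L_{\rm d}^{\infty}}|\delta_{k}^{+}f_{k}|\}_{k}$. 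Applying Minkowski's inequality for this norm and factoring the constants $\|\bm{f}\|_{L_{\rm d}^{\infty}}$ and $\|\bm{g}\|_{L_{\rm d}^{\infty}}$ out of the respective terms produces $\|\bm{f}\|_{L_{\rm d}^{\infty}}\|D\bm{g}\| + \|\bm{g}\|_{L_{\rm d}^{\infty}}\|D\bm{f}\|$, which is exactly the claimed bound.

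The only genuinely delicate point, and the step I would treat most carefully, is the discrete Leibniz rule itself: unlike the continuous product rule, the discrete identity is asymmetric and carries the shifted factor $f_{k+1}$ rather than $f_{k}$. The observation that closes the argument is that the discrete $L^{\infty}$-norm is taken over the full index range $0 \leq k \leq K$, so the shifted index $k+1 \in \{1, \ldots, K\}$ is still controlled by $\|\bm{f}\|_{L_{\rm d}^{\infty}}$; were the endpoint excluded from the norm, this estimate would fail. One could equally symmetrize by inserting $f_{k}g_{k+1}$, giving $\delta_{k}^{+}(f_{k}g_{k}) = g_{k+1}\delta_{k}^{+}f_{k} + f_{k}\delta_{k}^{+}g_{k}$; either choice leads to the same final inequality. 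Everything else is routine.
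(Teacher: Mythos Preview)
Your proof is correct. The paper does not actually prove this lemma; it simply cites \cite[Lemma~2.3]{C} and omits the argument, so there is no ``paper proof'' to compare against. Your approach---the discrete Leibniz identity $\delta_{k}^{+}(f_{k}g_{k}) = f_{k+1}\,\delta_{k}^{+}g_{k} + g_{k}\,\delta_{k}^{+}f_{k}$, followed by the pointwise $L_{\rm d}^{\infty}$ bound and Minkowski's inequality in the weighted $\ell^{2}$ norm---is exactly the standard argument one would find in the cited reference, and your remark that the shifted index $k+1$ still lies in the range $\{0,\ldots,K\}$ covered by $\|\bm{f}\|_{L_{\rm d}^{\infty}}$ correctly identifies the only point requiring care.
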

\noindent 
The following lemma follows from the same argument as Lemma 2.6 in \cite{C}. 
\begin{lemma}[{\cite[Lemma 3.3 (2)]{A}}] \label{lem:4.4}
Assume that $F \in C^{3}(\Omega)$. 
For any $\bm{f}_{1} = \{f_{1,k}\}_{k=0}^{K}$, $\bm{f}_{2} = \{f_{2,k}\}_{k=0}^{K}$, $\bm{f}_{3} = \{f_{3,k}\}_{k=0}^{K}$, $\bm{f}_{4} = \{f_{4,k}\}_{k=0}^{K} \in \mathbb{R}^{K+1}$, all the elements of which are in $\Omega$, we have \vspace{-1mm}
\begin{equation*}
\left\| D \bar{F}'' (\bm{f}_{1}, \bm{f}_{2}; \bm{f}_{3}, \bm{f}_{4}) \right\| \leq \frac{1}{6}\sup_{\xi \in \Omega}\left| F'''(\xi )\right| \left( 2\left\| D\bm{f}_{1}\right\| + 2\left\| D\bm{f}_{2}\right\| + \left\| D\bm{f}_{3}\right\| + \left\| D\bm{f}_{4}\right\| \right). \vspace{-0.5mm}
\end{equation*}
\end{lemma}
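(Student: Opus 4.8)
The plan is to view $\bar{F}''$ as a genuine function $\Phi := \bar{F}'' : \Omega^{4} \to \mathbb{R}$ of four real variables, so that the vector $\bar{F}''(\bm{f}_{1},\bm{f}_{2};\bm{f}_{3},\bm{f}_{4})$ has $k$-th component $\Phi(f_{1,k},f_{2,k},f_{3,k},f_{4,k})$, and then to control its forward differences $\delta_{k}^{+}$ through the partial derivatives of $\Phi$. First I would upgrade Lemma \ref{lem:4.1}: there $F \in C^{2}$ gives $\bar{F}'' \in C(\Omega^{4})$, and the same reasoning carried one derivative further shows that $F \in C^{3}$ yields $\Phi \in C^{1}(\Omega^{4})$, so the four partials $\partial_{\xi}\Phi,\partial_{\tilde{\xi}}\Phi,\partial_{\eta}\Phi,\partial_{\tilde{\eta}}\Phi$ exist and are continuous, including across the coincidence diagonals $\xi=\tilde{\xi}$, $\eta=\tilde{\eta}$, $\xi=\eta$, \dots, where the defining quotients are only apparently singular.

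The key estimate is a bound on these partial derivatives. The cleanest route is to recognize $\Phi$ as a sum of two second-order divided differences: writing $F[a,b] := dF/d(a,b)$ and $F[a,b,c]$ for the corresponding second divided difference, one has for $\xi \neq \tilde{\xi}$
\[
\bar{F}''(\xi,\tilde{\xi};\eta,\tilde{\eta}) = F[\xi,\tilde{\xi},\eta] + F[\xi,\tilde{\xi},\tilde{\eta}],
\]
and both expressions extend to continuous functions on all of $\Omega^{4}$. Differentiating a divided difference in one of its nodes repeats that node ($\partial_{a}F[a,b,c]=F[a,a,b,c]$), and the mean value property $F[x_{0},\dots,x_{m}]=F^{(m)}(\theta)/m!$ for some $\theta$ in the convex hull of the nodes then gives $|F[\,\cdot\,]| \le \tfrac16 \sup_{\Omega}|F'''|$ for every third divided difference. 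Since $\xi$ and $\tilde{\xi}$ each occur in \emph{both} summands while $\eta$ (resp.\ $\tilde{\eta}$) occurs only in the first (resp.\ second), this yields $\sup|\partial_{\xi}\Phi|,\sup|\partial_{\tilde{\xi}}\Phi| \le \tfrac13\sup_{\Omega}|F'''|$ and $\sup|\partial_{\eta}\Phi|,\sup|\partial_{\tilde{\eta}}\Phi| \le \tfrac16\sup_{\Omega}|F'''|$, which are exactly the weights $2,2,1,1$ (over $6$) in the claim, matched to $\bm{f}_{1},\bm{f}_{2},\bm{f}_{3},\bm{f}_{4}$ respectively. If one prefers to avoid divided-difference machinery, the same constants follow from the Taylor-with-remainder bookkeeping already used to prove Lemma \ref{lem:4.1}, carried one order further.

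With these bounds in hand I would estimate the forward difference pointwise. Set $\bm{a}_{k} := (f_{1,k},\dots,f_{4,k})$. Since $\Omega$ is an interval and hence convex, the segment joining $\bm{a}_{k}$ to $\bm{a}_{k+1}$ lies in $\Omega^{4}$, and the fundamental theorem of calculus along it gives
\[
\delta_{k}^{+}\!\big[\Phi(f_{1,k},\dots,f_{4,k})\big] = \sum_{i=1}^{4}\big(\delta_{k}^{+}f_{i,k}\big)\int_{0}^{1}(\partial_{i}\Phi)\big((1-s)\bm{a}_{k} + s\,\bm{a}_{k+1}\big)\,ds,
\]
whence the pointwise bound $|\delta_{k}^{+}\Phi_{k}| \le \tfrac16\sup_{\Omega}|F'''|\,(2|\delta_{k}^{+}f_{1,k}| + 2|\delta_{k}^{+}f_{2,k}| + |\delta_{k}^{+}f_{3,k}| + |\delta_{k}^{+}f_{4,k}|)$. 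Finally, multiplying by $\Delta x$, summing over $k=0,\dots,K-1$, taking the square root, and applying the triangle inequality (Minkowski) for the norm $\|D\,\cdot\,\|$ converts this into the asserted estimate on $\|D\bar{F}''(\bm{f}_{1},\bm{f}_{2};\bm{f}_{3},\bm{f}_{4})\|$.

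I expect the main obstacle to be the second step: pinning down the sharp constants $\tfrac13$ and $\tfrac16$ for the partial derivatives, and in particular verifying that $\Phi$ is $C^{1}$ up to all the coincidence diagonals so that the integral representation is legitimate there. The divided-difference identity makes both the smoothness and the constants transparent; the alternative direct computation is elementary but requires expanding each quotient by Taylor's theorem to precisely the right order to extract the factor $F'''(\theta)/6$.
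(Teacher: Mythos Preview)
Your proof is correct. The divided-difference identity $\bar{F}''(\xi,\tilde{\xi};\eta,\tilde{\eta})=F[\xi,\tilde{\xi},\eta]+F[\xi,\tilde{\xi},\tilde{\eta}]$ is exactly right (just rearrange the definition, noting $dF/d(a,b)=F[a,b]$), and from it the node-repetition rule $\partial_{a}F[a,b,c]=F[a,a,b,c]$ together with $|F[x_0,\dots,x_3]|\le\tfrac{1}{6}\sup_{\Omega}|F'''|$ immediately gives the weights $2,2,1,1$ over $6$, matching the asymmetry of the statement. The passage from the pointwise bound on $\delta_k^+$ to the $\|D\,\cdot\,\|$-bound via Minkowski is routine. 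Your care about $C^1$-regularity across the coincidence diagonals and about convexity of $\Omega$ (a domain in $\mathbb{R}$ is an interval) is well placed and the standard divided-difference theory covers both.

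As for comparison: the paper does not give its own proof here. It cites the lemma from \cite[Lemma~3.3\,(2)]{A} and says only that it ``follows from the same argument as Lemma~2.6 in \cite{C}.'' That argument works directly with the difference quotients and Taylor remainders rather than invoking the divided-difference formalism explicitly. Your route packages the same content more transparently: once the representation as a sum of two second divided differences is written down, the sharp constants $\tfrac13$ and $\tfrac16$ drop out without any ad hoc Taylor bookkeeping, and the $C^1$-smoothness of $\Phi$ is a one-line consequence of standard Hermite--Genocchi type facts rather than a separate verification. The two arguments are equivalent in substance; yours is a cleaner organization of it.
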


\subsection{Existence and uniqueness of the solution}

\begin{theorem} 
Assume that the potential function $F$ is in $C^{3}$.  
For any given $\bm{U}^{(0)} = \{U_{k}^{(0)}\}_{k=-1}^{K+1} \in \mathbb{R}^{K+3}$, let \vspace{-0.5mm}
\begin{equation*}
B_{0} := \left\{\frac{2}{\gamma}\left( J_{\rm d}(\bm{U}^{(0)}) + L\left| \min\left\{\inf_{\xi \in \mathbb{R}}F(\xi), 0\right\} \right| \right)\right\}^{\frac{1}{2}}, \quad \tilde{B}_{0} := \frac{1}{L}\left| M_{\rm d}(\bm{U}^{(0)}) \right| + L^{\frac{1}{2}}B_{0}. \vspace{-0.5mm}
\end{equation*}
If $\Delta t$ satisfies \vspace{-0.5mm}
\begin{equation}
\max\left\{\frac{3}{2}\max_{|\xi| \leq 2\tilde{B}_{0}}\left| F''(\xi) \right| , \ \frac{1}{2}\max_{|\xi| \leq 2\tilde{B}_{0}}\left| F''(\xi) \right| + \frac{ 5L^{\frac{1}{2}}B_{0} }{6}\max_{|\xi| \leq 2\tilde{B}_{0}}\left| F'''(\xi) \right| \right\} \sqrt{\frac{\Delta t}{2\gamma}} <1, \label{tcon} \vspace{-0.5mm}
\end{equation}
then there exists a unique solution $\{U_{k}^{(n)}\}_{k=-1}^{K+1} \in \mathbb{R}^{K+3} \ (n = 1,2,\ldots)$ satisfying \eqref{dCH1}--\eqref{dNBC}. 
\end{theorem}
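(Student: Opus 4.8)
The plan is to fix $\bm{U}^{(n)}$, regard $\bm{U}^{(n+1)}$ as the unknown, and recast the one-step problem as a fixed-point equation to which the Banach contraction mapping theorem applies; this delivers existence and uniqueness simultaneously, with the smallness condition \eqref{tcon} supplying the contraction constant. First I would eliminate the auxiliary quantities: substituting \eqref{dCH2} into \eqref{dCH1} removes the potentials $\{P_k^{(n)}\}_{k=0}^{K}$, the discrete Neumann condition \eqref{dNBC} fixes the ghost values $P_{-1}^{(n)},P_{K+1}^{(n)}$, and the dynamic conditions \eqref{dDBC}, \eqref{dDBC'} express $U_{-1}^{(n+1)},U_{K+1}^{(n+1)}$ through the interior nodes. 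What remains is a closed system for $(U_0^{(n+1)},\dots,U_K^{(n+1)})$ whose only nonlinearity is the difference quotient $dF/d(U_k^{(n+1)},U_k^{(n)})$. I would then define a map $\Phi$ by freezing this nonlinear term at a guess $\bm{V}$ and solving the resulting \emph{linear} implicit system, whose operator is $\mathcal{L}:=I+\tfrac{\gamma\Delta t}{2}(\delta^{\langle 2\rangle})^{2}$ augmented by the boundary rows coming from \eqref{dDBC}, \eqref{dDBC'}, \eqref{dNBC}; its unique solvability follows from positive definiteness, the $1/\Delta t$ term from the time difference providing coercivity. A short summation argument (as in Theorem \ref{thm:2.2}, using Lemma \ref{lem:2.3} and \eqref{dNBC}) shows $\Phi$ preserves the discrete mass, so differences of images are mean-zero and the discrete Poincar\'{e}--Wirtinger inequality (Lemma \ref{lem:3.2}) is available in the sharp form $\|\bm{f}\|_{L^{\infty}_{\rm d}}\le L^{1/2}\|D\bm{f}\|$.

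The core of the argument is the estimate that produces the factor $\sqrt{\Delta t/(2\gamma)}$. Given two guesses $\bm{V}^{(1)},\bm{V}^{(2)}$, set $\bm{W}:=\Phi(\bm{V}^{(1)})-\Phi(\bm{V}^{(2)})$ and let $\bm{Q}$ be the associated difference of chemical potentials. By Lemma \ref{lem:4.2} the difference of the frozen source terms collapses to $\bm{S}$ with $S_k=\tfrac12\bar{F}''(V^{(1)}_k,V^{(2)}_k;U^{(n)}_k,U^{(n)}_k)(V^{(1)}_k-V^{(2)}_k)$, since the companion term carries the factor $U^{(n)}_k-U^{(n)}_k=0$. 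Testing the difference of \eqref{dCH1} against $\bm{Q}$ and applying Corollary \ref{col:2.1}, the boundary contributions generated by the dynamic conditions \eqref{dDBC}, \eqref{dDBC'} and the Neumann condition \eqref{dNBC} combine into nonnegative boundary terms exactly as in the proof of Theorem \ref{thm:2.1}; after writing $\bm{W}=\Delta t\,\delta^{\langle 2\rangle}\bm{Q}$, a second summation by parts turns the source pairing into $-\Delta t\langle D\bm{Q},D\bm{S}\rangle$, and Young's inequality absorbs the term $\Delta t\|D\bm{Q}\|^{2}$. The outcome is
\[
\|D\bm{W}\|\le\sqrt{\tfrac{\Delta t}{2\gamma}}\,\|D\bm{S}\|.
\]

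It remains to estimate $\|D\bm{S}\|$ and to convert the Dirichlet-seminorm control into the $L^{\infty}_{\rm d}$ control that keeps the iterates in the range where the derivative bounds hold. Using the discrete product rule (Lemma \ref{lem:4.3}) on $S_k$, bounding $\|\bar{F}''\|_{L^{\infty}_{\rm d}}$ by Lemma \ref{lem:4.1} (which yields the $\max|F''|$ factors) and $\|D\bar{F}''\|$ by Lemma \ref{lem:4.4} (which yields the $\max|F'''|$ factor), and invoking the global boundedness Theorem \ref{thm:3.1} to supply the a priori bounds $B_0$ and $\tilde{B}_0$ together with Lemma \ref{lem:3.2}, I would obtain two estimates for the map: one governing the Dirichlet-seminorm contraction and one governing the growth of $\|\cdot\|_{L^{\infty}_{\rm d}}$ (so that all arguments of $F''$ and $F'''$ stay in $[-2\tilde{B}_0,2\tilde{B}_0]$). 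These produce exactly the two competing constants $\tfrac32\max_{|\xi|\le 2\tilde{B}_0}|F''(\xi)|$ and $\tfrac12\max_{|\xi|\le 2\tilde{B}_0}|F''(\xi)|+\tfrac{5L^{1/2}B_0}{6}\max_{|\xi|\le 2\tilde{B}_0}|F'''(\xi)|$, so that \eqref{tcon} forces their maximum times $\sqrt{\Delta t/(2\gamma)}$ below $1$. Hence $\Phi$ maps a suitable closed ball into itself and is a contraction there; the Banach fixed-point theorem gives a unique fixed point, i.e.\ a unique $(U_0^{(n+1)},\dots,U_K^{(n+1)})$, and the ghost values are recovered from the boundary conditions. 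Induction on $n$, with the uniform bounds from Theorem \ref{thm:3.1} ensuring \eqref{tcon} holds at every step, completes the proof.

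I expect the main obstacle to be the energy estimate in the presence of the dynamic boundary conditions. Unlike a pure Neumann problem, the boundary nodes carry their own evolution equations, so the summation-by-parts boundary terms produced by Corollary \ref{col:2.1} must be reorganized to combine with the boundary dynamics into the correct nonnegative contributions before any absorption is possible; getting the sharp factor $\sqrt{\Delta t/(2\gamma)}$ then hinges on balancing the biharmonic term $\Delta t\|D\bm{Q}\|^{2}$ against the source via Young's inequality. The remaining delicate point is the bookkeeping that isolates precisely the numerical coefficients $\tfrac32$ and $\tfrac{5}{6}$ in \eqref{tcon} from Lemmas \ref{lem:4.1}, \ref{lem:4.3} and \ref{lem:4.4}.
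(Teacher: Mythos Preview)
Your proposal is correct and follows essentially the same route as the paper: freeze the nonlinearity at a guess, solve the resulting linear system to define the iteration map, use mass conservation together with the discrete Poincar\'e--Wirtinger inequality to work in the mean-zero subspace, and derive the key bound $\|D\bm{W}\|\le\sqrt{\Delta t/(2\gamma)}\,\|D\bm{S}\|$ via Corollary~\ref{col:2.1} and Young's inequality; the two constants in \eqref{tcon} arise exactly as you describe, the first from the self-mapping estimate $\|D\tilde{\bm U}\|\le B_0+\tfrac32\max|F''|\sqrt{\Delta t/(2\gamma)}\,B_0\le 2B_0$ and the second from the contraction estimate via Lemmas~\ref{lem:4.1}, \ref{lem:4.3}, \ref{lem:4.4}. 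One point to sharpen: the linear operator you must invert is \emph{not} symmetric---the dynamic boundary rows from \eqref{dDBC}, \eqref{dDBC'} break symmetry---so ``positive definiteness'' is not the right word; the paper handles this by writing the matrix as $I+\beta D_2\tilde D_2$ and showing via a similarity transform and Sylvester's law of inertia that $D_2\tilde D_2$ has nonnegative real eigenvalues (Appendix~B), though your energy identity would also suffice to show uniqueness (hence invertibility) for the homogeneous linear system.
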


\begin{remark} 
The assumption \eqref{tcon} is independent of the space mesh size $\Delta x$. 
Also, it is one of the advantages of the numerical method we apply that the condition on $\Delta t$ can be derived explicitly as above. 
\end{remark}

\begin{proof} 
We show the existence of a $(K+3)$-vector $\bm{U}^{(n+1)} = \{U_{k}^{(n+1)}\}_{k=-1}^{K+1} \in \mathbb{R}^{K+3}$ for any given $\bm{U}^{(n)} = \{U_{k}^{(n)}\}_{k=-1}^{K+1} \in \mathbb{R}^{K+3}$ that satisfies \eqref{dCH1}--\eqref{dNBC}. 
For the purpose, we define the nonlinear mapping $\Psi$: $\{U_{k}\}_{k=0}^{K} \mapsto \{\tilde{U}_{k}\}_{k=-1}^{K+1}$ by 
\begin{gather}
\frac{\tilde{U}_ {k} - U_ {k}^{(n)}}{\Delta t} = \delta_{k}^{\langle 2 \rangle}\tilde{P}_{k}^{(n)} \quad (k = 0,\ldots, K), \label{NLM1}\\
\tilde{P}_{k}^{(n)} = -\gamma \delta_{k}^{\langle 2 \rangle} \left(\frac{\tilde{U}_{k} + U_{k}^{(n)}}{2}\right) + \frac{dF}{d( U_{k},U_{k}^{(n)})} \quad (k = 0,\ldots, K), \label{NLM2}\\
\frac{\tilde{U}_ {0} - U_ {0}^{(n)}}{\Delta t} = \left. \delta_{k}^{\langle 1 \rangle} \left( \frac{\tilde{U}_{k} \! + \! U_{k}^{(n)}}{2}\right)  \right|_{k=0}  , \label{NLM3}\\
\frac{\tilde{U}_ {K} - U_ {K}^{(n)}}{\Delta t}  =  -  \left. \delta_{k}^{\langle 1 \rangle} \left(\frac{\tilde{U}_{k} + U_{k}^{(n)}}{2}\right) \right|_{k=K} , \label{NLM3'}\\
\delta_{k}^{\langle 1 \rangle}\tilde{P}_{k}^{(n)} = 0 \quad (k=0,K). \label{NLM4}
\end{gather} 
Firstly, we show that the mapping $\Psi$ is well-defined. 
For any fixed $\bm{U} = \{ U_{k} \}_{k=0}^{K} \in \mathbb{R}^{K+1}$, from \eqref{NLM3} and \eqref{NLM3'}, $\tilde{U}_{-1}$ and $\tilde{U}_{K+1}$ can be explicitly written as 
\begin{gather}
\tilde{U}_{-1} = - U_{-1}^{(n)} + \tilde{U}_{1} + U_{1}^{(n)} - \frac{4\Delta x}{\Delta t}\left( \tilde{U}_ {0} - U_ {0}^{(n)} \right), \label{U_-1}\\
\tilde{U}_{K+1} = - U_{K+1}^{(n)} + \tilde{U}_{K-1} + U_{K-1}^{(n)} - \frac{4\Delta x}{\Delta t}\left( \tilde{U}_ {K} - U_ {K}^{(n)} \right). \label{U_K+1}
\end{gather} 
Thus, it is sufficient to show that $\tilde{U}_{k} \ (k=0,\ldots,K)$ can be explicitly written by given $\bm{U}$ and $\bm{U}^{(n)}$.  
Using \eqref{NLM4}--\eqref{U_K+1}, we eliminate terms at $k=-1, K+1$ in \eqref{NLM1} and \eqref{NLM2}. 
Thus, we have 
\begin{gather}
\frac{\tilde{U}_ {0} - U_ {0}^{(n)}}{\Delta t} = \frac{2}{(\Delta x)^{2}}\left( \tilde{P}_{1}^{(n)} - \tilde{P}_{0}^{(n)} \right), \label{NLM1-1}\\
\frac{\tilde{U}_ {k} - U_ {k}^{(n)}}{\Delta t} = \delta_{k}^{\langle 2 \rangle}\tilde{P}_{k}^{(n)} \quad (k = 1,\ldots, K-1), \label{NLM1-2}\\
\frac{\tilde{U}_ {K} - U_ {K}^{(n)}}{\Delta t} = \frac{2}{(\Delta x)^{2}}\left( \tilde{P}_{K-1}^{(n)} - \tilde{P}_{K}^{(n)} \right), \label{NLM1-3}\\
\tilde{P}_{0}^{(n)} = -\frac{2\gamma}{(\Delta x)^{2}}\left\{ \left(\frac{\tilde{U}_{1} + U_{1}^{(n)}}{2}\right) - \left(\frac{\tilde{U}_{0} + U_{0}^{(n)}}{2}\right) \right\} + \frac{2\gamma}{\Delta x\Delta t}\left( \tilde{U}_ {0} - U_ {0}^{(n)} \right) + \frac{dF}{d( U_{0},U_{0}^{(n)})}, \label{NLM2-1}\\
\tilde{P}_{k}^{(n)} = -\gamma \delta_{k}^{\langle 2 \rangle} \left(\frac{\tilde{U}_{k} + U_{k}^{(n)}}{2}\right) + \frac{dF}{d( U_{k},U_{k}^{(n)})} \quad (k = 1,\ldots, K-1), \label{NLM2-2}\\
\tilde{P}_{K}^{(n)} = -\frac{2\gamma}{(\Delta x)^{2}}\left\{ \left(\frac{\tilde{U}_{K-1} + U_{K-1}^{(n)}}{2}\right) - \left(\frac{\tilde{U}_{K} + U_{K}^{(n)}}{2}\right) \right\} + \frac{2\gamma}{\Delta x\Delta t}\left( \tilde{U}_ {K} - U_ {K}^{(n)} \right) + \frac{dF}{d( U_{K},U_{K}^{(n)})}. \label{NLM2-3}
\end{gather}
Here, we give the following matrix expression of \eqref{NLM1-1}--\eqref{NLM2-3}: 
\begin{equation*}
A\bm{\tilde{U}} = f(\bm{U}, \bm{U}^{(n)}). 
\end{equation*}
By using \eqref{NLM2-1}--\eqref{NLM2-3}, we eliminate $\tilde{P}_{k}^{(n)}$ in \eqref{NLM1-1}--\eqref{NLM1-3}. 
Then, the $(K+1) \times (K+1)$ matrix $A$ is defined by 
\begin{equation}
A := I + \beta \begin{pmatrix}
\displaystyle 6 + \frac{2}{\alpha} & -8 & 2 & & & & & & \\[5pt]
\displaystyle -4-\frac{1}{\alpha} & 7 & -4 & 1 & & & & & \\
1 & -4 & 6 & -4 & 1 & & & \\
& 1 & -4 & 6 & -4 & 1 & & & \\
& & \ddots & \ddots & \ddots & \ddots & \ddots & & \\
& & & 1 & -4 & 6 & -4 & 1 & \\
& & & & 1 & -4 & 6 & -4 & 1 \\
& & & & & 1 & -4 & 7 & \displaystyle -4-\frac{1}{\alpha} \\[5pt]
& & & & & & 2 & -8 & \displaystyle 6 + \frac{2}{\alpha} \\
\end{pmatrix}, \label{matrixA}
\end{equation}
where $I$ is the $(K+1)$-dimensional identity matrix. 
Besides, $\alpha$ and $\beta$ are defined by $\alpha := \Delta t/(4\Delta x)$ and $\beta := \gamma\Delta t/(2(\Delta x)^{4})$, respectively. 
If the matrix $A$ is nonsingular, then the mapping $\Psi$ is well-defined. 
In fact, $A$ is nonsingular (see Appendix B, Lemma \ref{Ap:E}). 

Next, we prove the existence and uniqueness of the solution to the proposed scheme by the fixed-point theorem for a contraction mapping. 
From \eqref{U_-1} and \eqref{U_K+1}, it is sufficient to show the existence of a $(K+1)$-dimensional vector $\bm{U} = \{U_{k}\}_{k=0}^{K} \in \mathbb{R}^{K+1}$ satisfying $\tilde{U}_{k} = U_{k} \ (k=0, \ldots, K)$. 
For the purpose, we define the mapping $\Theta: \mathbb{R}^{K+1} \to \mathbb{R}^{K+1}$ by 
\begin{equation}
\Theta\left(\bm{V}\right) := \bm{V} + \frac{1}{L}M_{\rm d}(\bm{U}^{(0)})\bm{1} \quad \mbox{for\ all\ } \bm{V} \in \mathbb{R}^{K+1}, \label{The}
\end{equation}
where $\bm{1} := (1,1,\ldots,1)^{\top} \in \mathbb{R}^{K+1}$. 
Then, its inverse mapping $\Theta^{-1}$ is written as 
\begin{equation}
\Theta^{-1}\left(\bm{V}\right) = \bm{V} - \frac{1}{L}M_{\rm d}(\bm{U}^{(0)})\bm{1} \quad \mbox{for\ all\ } \bm{V} \in \mathbb{R}^{K+1}. \label{Theinv}
\end{equation}
Let us define the mapping $\Phi: \mathbb{R}^{K+1} \to \mathbb{R}^{K+1}$ by 
\begin{equation}
\Phi\left(\bm{V}\right) := \Theta^{-1} \left(\left\{ \Psi_{k} \left(\Theta\left(\bm{V}\right)\right)\right\}_{k=0}^{K}\right) \quad \mbox{for\ all\ } \bm{V} \in \mathbb{R}^{K+1}, \label{Phi}
\end{equation}
where $\Psi_{k} \left(\Theta\left(\bm{V}\right)\right)$ is the $k$-th element of the vector $(\Psi \circ \Theta)(\bm{V}) = \Psi (\Theta(\bm{V}))$. 
Moreover, let 
\begin{equation*}
X_{0} := \left\{\bm{f} \in \mathbb{R}^{K+1};  \left\| D\bm{f}\right\| \leq 2B_{0}, \ M_{\rm d}(\bm{f})=0\right\}. 
\end{equation*}
We show that $\Phi$ is a contraction mapping on $X_{0}$ under the assumption \eqref{tcon} on $\Delta t$. 
If $\Phi$ is a contraction mapping, $\Phi$ has a unique fixed-point $\bm{V}^{\ast}$ in the closed ball $X_{0}$ from the Banach fixed point theorem.
That is, $\bm{V}^{\ast}$ satisfies $\Phi(\bm{V}^{\ast}) = \bm{V}^{\ast}$. 
From \eqref{Theinv} and \eqref{Phi}, we have 
\begin{equation}
\Phi(\bm{V}^{\ast}) = \Theta^{-1} \left(\left\{ \Psi_{k} \left(\Theta\left(\bm{V}^{\ast}\right)\right)\right\}_{k=0}^{K}\right) = \left\{ \Psi_{k} \left(\Theta\left(\bm{V}^{\ast}\right)\right)\right\}_{k=0}^{K} - \frac{1}{L}M_{\rm d}(\bm{U}^{(0)})\bm{1}. \label{fixed1}
\end{equation}
Furthermore, from \eqref{The}, we obtain 
\begin{equation}
\bm{V}^{\ast} = \Theta\left(\bm{V}^{\ast}\right) - \frac{1}{L}M_{\rm d}(\bm{U}^{(0)})\bm{1}. \label{fixed2}
\end{equation}
Hence, from \eqref{fixed1} and \eqref{fixed2}, it holds that $\left\{ \Psi_{k} \left(\Theta\left(\bm{V}^{\ast}\right)\right)\right\}_{k=0}^{K} = \Theta\left(\bm{V}^{\ast}\right)$. 
Namely, $\Theta\left(\bm{V}^{\ast}\right)$ is the solution $\bm{U}^{(n+1)}$ to the scheme \eqref{dCH1}--\eqref{dNBC}. 
Firstly, we show that $\Phi(X_{0}) \subset X_{0}$. 
For the purpose, we check $\| D( \Phi(\bm{V}) ) \| \leq 2B_{0}$ and $M_{\rm d}( \Phi(\bm{V}) ) = 0$ for any fixed $\bm{V} \in X_{0}$. 
Let $\bm{U} := \Theta (\bm{V})$. 
Then, from \eqref{The}, we have 
\begin{equation}
U_{k} = V_{k} + \frac{1}{L}M_{\rm d}(\bm{U}^{(0)}) \quad (k=0,\ldots,K). \label{ThetaV}
\end{equation}
Hence, it holds that 
\begin{equation}
\delta_{k}^{+}U_{k} = \delta_{k}^{+}\left( V_{k} + \frac{1}{L}M_{\rm d}(\bm{U}^{(0)}) \right) = \delta_{k}^{+}V_{k} \quad (k=0,\ldots,K-1). \label{DUV}
\end{equation}
Let us define $\tilde{\bm{U}} := \Psi (\bm{U})$ and $\tilde{\bm{V}} := \Phi(\bm{V}) = \Theta^{-1} (\{ \tilde{U}_{k} \}_{k=0}^{K})$. 
Then, from \eqref{Theinv}, we obtain 
\begin{equation}
\tilde{V}_{k} = \tilde{U}_{k} - \frac{1}{L}M_{\rm d}(\bm{U}^{(0)}) \quad (k=0,\ldots,K). \label{Vtil}
\end{equation}
Hence, we have 
\begin{equation}
\delta_{k}^{+}\tilde{V}_{k} = \delta_{k}^{+}\left(\tilde{U}_{k} - \frac{1}{L}M_{\rm d}(\bm{U}^{(0)} ) \right) = \delta_{k}^{+}\tilde{U}_{k} \quad (k=0,\ldots,K-1). \label{DUVt}
\end{equation}
In addition, it follows from \eqref{NLM1}, \eqref{NLM4}, Lemma \ref{lem:2.3}, and Theorem \ref{thm:2.2} that 
\begin{equation*}
M_{\rm d}(\tilde{\bm{U}}) = \sum_{k=0}^{K}{}^{\prime\prime}\tilde{U}_{k}\Delta x 
	= \sum_{k=0}^{K}{}^{\prime\prime}U_{k}^{(n)} \Delta x + \Delta t\sum_{k=0}^{K}{}^{\prime\prime}\delta_{k}^{\langle 2 \rangle}\tilde{P}_{k}^{(n)} \Delta x 
	= M_{\rm d}(\bm{U}^{(n)}) + \Delta t\left[\delta_{k}^{\langle 1 \rangle}\tilde{P}_{k}^{(n)}\right]_{0}^{K} 
	= M_{\rm d}(\bm{U}^{(0)}). 
\end{equation*}
Thus, it holds from \eqref{Vtil} and this inequality that 
\begin{equation*}
M_{\rm d}(\tilde{\bm{V}}) 
	= \sum_{k=0}^{K}{}^{\prime\prime}\tilde{V}_{k}\Delta x 
	= \sum_{k=0}^{K}{}^{\prime\prime}\left(\tilde{U}_{k} - \frac{1}{L}M_{\rm d}(\bm{U}^{(0)}) \right)\Delta x 
	= M_{\rm d}(\tilde{\bm{U}})- M_{\rm d}(\bm{U}^{(0)})
	= M_{\rm d}(\bm{U}^{(0)}) - M_{\rm d}(\bm{U}^{(0)}) 
	= 0.
\end{equation*}
Therefore, all that is left is to show $\| D\tilde{ \bm{V} } \| \leq 2B_{0}$. 
From Corollary \ref{col:2.1} and \eqref{NLM1}--\eqref{NLM3'}, we have 
\begin{align*}
& \frac{1}{\Delta t}\left( \left\| D\bm{\tilde{U}}\right\|^{2} - \left\| D\bm{U}^{(n)}\right\|^{2} \right) = 2\sum _ {k=0}^{K-1}\!\left\{ \delta_{k}^{+}\left(\frac{\tilde{U}_{k} + U_{k}^{(n)}}{2}\right) \right\} \left\{ \delta_{k}^{+}\left(\frac{\tilde{U}_{k} - U_{k}^{(n)}}{\Delta t}\right)\!\right\}\Delta x \\
	& = -2\sum _ {k=0}^{K}{}^{\prime \prime}\left\{\delta_{k}^{\langle 2 \rangle}\left(\frac{\tilde{U}_{k} + U_{k}^{(n)}}{2}\right)\right\}\frac{\tilde{U}_ {k} - U_ {k}^{(n)}}{\Delta t}\Delta x + 2\left[\left\{\delta_{k}^{\langle 1 \rangle}\left(\frac{\tilde{U}_{k} + U_{k}^{(n)}}{2}\right)\right\}\frac{\tilde{U}_ {k} - U_ {k}^{(n)}}{\Delta t}\right]_{0}^{K} \\
	& = -2\sum _ {k=0}^{K}{}^{\prime \prime}\left\{ -\frac{1}{\gamma}\tilde{P}_{k}^{(n)} + \frac{1}{\gamma}\frac{dF}{d( U_{k},U_{k}^{(n)})} \right\}\left(\delta_{k}^{\langle 2 \rangle}\tilde{P}_{k}^{(n)}\right)\Delta x - 2\left( \frac{\tilde{U}_ {K} - U_ {K}^{(n)}}{\Delta t}\right)^{2} - 2\left( \frac{\tilde{U}_ {0} - U_ {0}^{(n)}}{\Delta t}\right)^{2} \\
	& \leq \frac{2}{\gamma}\sum _ {k=0}^{K}{}^{\prime \prime}\tilde{P}_{k}^{(n)}\left(\delta_{k}^{\langle 2 \rangle}\tilde{P}_{k}^{(n)}\right)\Delta x - \frac{2}{\gamma}\sum _ {k=0}^{K}{}^{\prime \prime}\frac{dF}{d( U_{k},U_{k}^{(n)})}\left(\delta_{k}^{\langle 2 \rangle}\tilde{P}_{k}^{(n)}\right)\Delta x. 
\end{align*}
Now, from Corollary \ref{col:2.1} and \eqref{NLM4}, it holds that 
\begin{equation*}
\frac{2}{\gamma}\sum _ {k=0}^{K}{}^{\prime \prime}\tilde{P}_{k}^{(n)}\left(\delta_{k}^{\langle 2 \rangle}\tilde{P}_{k}^{(n)}\right)\Delta x 
	= -\frac{2}{\gamma} \sum _ {k=0}^{K-1}\left(\delta_{k}^{+}\tilde{P}_{k}^{(n)}\right)^{2}\Delta x + \frac{2}{\gamma}\left[\left( \delta_{k}^{\langle 1 \rangle}\tilde{P}_{k}^{(n)} \right)\tilde{P}_{k}^{(n)}\right]_{0}^{K} 
	= -\frac{2}{\gamma}\left\| D\tilde{\bm{P}}^{(n)}\right\|^{2}. 
\end{equation*}
Furthermore, from Corollary \ref{col:2.1}, \eqref{NLM4}, and the Young inequality:
$ab \leq (\varepsilon/2)a^{2} + (1/(2\varepsilon))b^{2}$ for all $a,b \in \mathbb{R}$, and $\varepsilon >0$, 
we obtain 
\begin{align*}
- \frac{2}{\gamma}\sum _ {k=0}^{K}\!{}^{\prime \prime}\!\frac{dF}{d( U_{k},U_{k}^{(n)})}\!\left(\!\delta_{k}^{\langle 2 \rangle}\tilde{P}_{k}^{(n)}\!\right)\!\Delta x 
	& = \frac{2}{\gamma} \sum _ {k=0}^{K-1}\!\!\left(\!\delta_{k}^{+}\tilde{P}_{k}^{(n)}\!\right)\!\left\{\! \delta_{k}^{+}\!\!\left(\! \frac{dF}{d( U_{k},U_{k}^{(n)})} \!\right)\!\!\right\} \!\Delta x - \frac{2}{\gamma}\!\left[\!\left(\! \delta_{k}^{\langle 1 \rangle}\tilde{P}_{k}^{(n)} \!\right)\!\frac{dF}{d( U_{k},U_{k}^{(n)})}\!\right]_{0}^{K} \\
	& \leq \frac{2}{\gamma}\sum _ {k=0}^{K-1}\left[\left(\delta_{k}^{+}\tilde{P}_{k}^{(n)}\right)^{2} + \frac{1}{4}\left\{ \delta_{k}^{+}\left( \frac{dF}{d( U_{k},U_{k}^{(n)})}\right) \right\}^{2}\right]\Delta x \\
	& = \frac{2}{\gamma}\left\| D\tilde{\bm{P}}^{(n)}\right\|^{2} + \frac{1}{2\gamma}\left\| D\left( \frac{dF}{d\left( \bm{U},\bm{U}^{(n)}\right)} \right)\right\|^{2}.
\end{align*}
From the above, we have 
\begin{equation*}
\frac{1}{\Delta t}\left( \left\| D\bm{\tilde{U}}\right\|^{2} - \left\| D\bm{U}^{(n)}\right\|^{2} \right) \leq \frac{1}{2\gamma}\left\| D\left( \frac{dF}{d\left( \bm{U},\bm{U}^{(n)}\right)} \right)\right\|^{2}.
\end{equation*}
Consequently, using the triangle inequality: $\sqrt{a^{2} + b^{2}} \leq |a| + |b|$ for all $a,b \in \mathbb{R}$, we get 
\begin{equation}
\left\| D\bm{\tilde{U}}\right\| \leq \left\| D\bm{U}^{(n)}\right\| + \sqrt{\frac{\Delta t}{2\gamma}}\left\| D\left( \frac{dF}{d\left( \bm{U},\bm{U}^{(n)}\right)} \right)\right\|. \label{seminorm_a}
\end{equation}
Thus, from \eqref{DUVt} and \eqref{seminorm_a}, it is sufficient to show that the right-hand side of \eqref{seminorm_a} is not greater than $2B_{0}$. 
For $k=0,\ldots, K-1$, using Lemma \ref{lem:4.2}, we have 
\begin{equation}
\delta_{k}^{+}\left(\frac{dF}{d(U_{k}, U_{k}^{(n)})}\right) = \frac{1}{2}\bar{F}'' (U_{k+1},U_{k} ; U_{k+1}^{(n)}, U_{k}^{(n)})\delta_{k}^{+}U_{k} + \frac{1}{2}\bar{F}'' (U_{k+1}^{(n)}, U_{k}^{(n)} ; U_{k+1}, U_{k})\delta_{k}^{+}U_{k}^{(n)}. \label{dp_df}
\end{equation}
Hence, using \eqref{dp_df} and the Minkowski inequality, we obtain 
\begin{align*}
& \left\| D\!\left( \frac{dF}{d\!\left(\! \bm{U},\bm{U}^{(n)}\right)} \!\right) \!\right\| \! = \! \left\{ \sum _ {k=0}^{K-1}\!\left| \frac{1}{2}\bar{F}'' (U_{k+1},U_{k} ; U_{k+1}^{(n)}, U_{k}^{(n)})\delta_{k}^{+}U_{k} + \frac{1}{2}\bar{F}'' (U_{k+1}^{(n)}, U_{k}^{(n)} ; U_{k+1}, U_{k})\delta_{k}^{+}U_{k}^{(n)} \right|^{2} \!\! \Delta x \!\right\}^{\! \frac{1}{2}} \\
	& \leq \left\{ \sum _ {k=0}^{K-1}\left| \frac{1}{2}\bar{F}'' (U_{k+1},U_{k} ; U_{k+1}^{(n)}, U_{k}^{(n)})\delta_{k}^{+}U_{k} \right|^{2}\!\! \Delta x \!\right\}^{\! \frac{1}{2}} + \left\{ \sum _ {k=0}^{K-1}\left| \frac{1}{2}\bar{F}'' (U_{k+1}^{(n)}, U_{k}^{(n)} ; U_{k+1}, U_{k})\delta_{k}^{+}U_{k}^{(n)} \right|^{2}\!\! \Delta x \!\right\}^{\! \frac{1}{2}} \\
	& \leq \frac{1}{2}\max_{0 \leq k \leq K-1}\left| \bar{F}'' (U_{k+1},U_{k} ; U_{k+1}^{(n)}, U_{k}^{(n)}) \right| \left\| D\bm{U} \right\| + \frac{1}{2}\max_{0 \leq k \leq K-1}\left| \bar{F}'' (U_{k+1}^{(n)}, U_{k}^{(n)} ; U_{k+1}, U_{k}) \right| \left\| D\bm{U}^{(n)} \right\| . 
\end{align*}
Next, we consider $|\bar{F}'' (U_{k+1},U_{k} ; U_{k+1}^{(n)}, U_{k}^{(n)})|$ and $|\bar{F}'' (U_{k+1}^{(n)}, U_{k}^{(n)} ; U_{k+1}, U_{k})|$. 
It follows from Lemma \ref{lem:3.2} that 
\begin{equation*}
\left\| \bm{U} \right\|_{L_{\rm d}^{\infty}} \leq \frac{1}{L}M_{\rm d}(\bm{U}) + L^{\frac{1}{2}}\left\| D\bm{U} \right\| .
\end{equation*}
Since it holds from $\bm{V} \in X_{0}$ that $M_{\rm d}(\bm{V}) = 0$, we have 
\begin{equation*}
M_{\rm d}(\bm{U}) 
	= \sum_{k=0}^{K}{}^{\prime\prime}U_{k}\Delta x 
	= M_{\rm d}(\bm{V}) + M_{\rm d}(\bm{U}^{(0)}) 
	= M_{\rm d}(\bm{U}^{(0)})
\end{equation*}
by \eqref{ThetaV}. 
Because it holds from $\bm{V} \in X_{0}$ and \eqref{DUV} that $\| D\bm{U}\| = \| D\bm{V}\| \leq 2B_{0}$, we obtain 
\begin{equation*}
\left\| \bm{U} \right\|_{L_{\rm d}^{\infty}} \leq \frac{1}{L}M_{\rm d}(\bm{U}) + 2L^{\frac{1}{2}}B_{0} \leq \frac{1}{L}\left| M_{\rm d}(\bm{U}^{(0)})\right| + 2L^{\frac{1}{2}}B_{0} \leq 2\tilde{B}_{0}.
\end{equation*}
Also, using Theorem \ref{thm:3.1}, we get $\| \bm{U}^{(n)} \|_{L_{\rm d}^{\infty}} \leq \tilde{B}_{0}$.
Therefore, from Lemma \ref{lem:4.1}, we obtain 
\begin{equation*}
\left|\bar{F}'' (U_{k+1},U_{k} ; U_{k+1}^{(n)}, U_{k}^{(n)})\right| \leq \max_{|\xi| \leq 2\tilde{B}_{0}}\left| F'' (\xi)\right| , \quad \left|\bar{F}'' (U_{k+1}^{(n)}, U_{k}^{(n)} ; U_{k+1}, U_{k})\right| \leq \max_{|\xi| \leq 2\tilde{B}_{0}}\left| F'' (\xi)\right|  
\end{equation*}
for $k=0,\ldots, K-1$. 
Hence, it holds that 
\begin{equation}
\left\| D\left( \frac{dF}{d\!\left( \bm{U},\bm{U}^{(n)}\right)} \right) \right\| \leq \frac{1}{2}\max_{|\xi| \leq 2\tilde{B}_{0}}\left| F'' (\xi)\right| \left( \left\| D\bm{U} \right\| + \| D\bm{U}^{(n)} \| \right) . \label{seminorm_b}
\end{equation}
Consequently, using \eqref{seminorm_a}, \eqref{seminorm_b}, and Lemma \ref{lem:3.1}, the following estimate holds: 
\begin{equation*}
\left\| D\bm{\tilde{U}}\right\| 
	\leq \| D\bm{U}^{(n)}\| + \sqrt{\frac{\Delta t}{2\gamma}} \cdot \frac{1}{2}\max_{|\xi| \leq 2\tilde{B}_{0}}\left| F'' (\xi)\right|\left( \| D\bm{U}\| + \| D\bm{U}^{(n)}\| \right) 
	\leq B_{0} + \frac{\displaystyle 3\max_{|\xi| \leq 2\tilde{B}_{0}}\left| F'' (\xi)\right|}{2}\sqrt{\frac{\Delta t}{2\gamma}}B_{0}. 
\end{equation*}
Now, from \eqref{DUVt} and the assumption \eqref{tcon}, we have $\| D\bm{\tilde{V}}\| = \| D\bm{\tilde{U}}\| \leq 2B_{0}$. 
From the above, it holds that $\Phi(\bm{V}) = \bm{\tilde{V}} \in X_{0}$, i.e., $\Phi(X_{0}) \subset X_{0}$.  
Next, we prove that $\Phi$ is contractive. 
For any $\bm{V}_{1}, \bm{V}_{2} \in X_{0}$, let $\bm{U}_{1} := \Theta (\bm{V}_{1})$ and $\bm{U}_{2} := \Theta (\bm{V}_{2})$. 
From \eqref{The}, it holds that 
\begin{equation}
U_{i,k} = V_{i,k} + \frac{1}{L}M_{\rm d}(\bm{U}^{(0)}) \quad (k=0,\ldots,K, \ i=1,2). \label{U12} 
\end{equation} 
It follows from \eqref{U12} that 
\begin{equation}
\delta_{k}^{+}U_{i,k} = \delta_{k}^{+}\left( V_{i,k} + \frac{1}{L}M_{\rm d}(\bm{U}^{(0)}) \right) = \delta_{k}^{+}V_{i,k} \quad (k=0,\ldots, K-1,\ i=1,2).  \label{DUV12}
\end{equation}
Furthermore, from \eqref{U12} and $\bm{V}_{i} \in X_{0} \ (i=1,2)$, we have 
\begin{equation}
M_{\rm d}(\bm{U}_{i}) 
	= \sum_{k=0}^{K}{}^{\prime\prime}U_{i,k}\Delta x 
	= M_{\rm d}(\bm{V}_{i}) + M_{\rm d}(\bm{U}^{(0)}) 
	= M_{\rm d}(\bm{U}^{(0)}) \quad (i=1,2). \label{MasC} 
\end{equation}
Moreover, it follows from $U_{1,k} - U_{2,k} = V_{1,k} - V_{2,k} \ (k=0,\ldots,K)$ that $\|D(\bm{U}_{1} - \bm{U}_{2})\| = \|D(\bm{V}_{1} - \bm{V}_{2})\|$. 
Now, let us define $\tilde{\bm{U}}_{i} := \Psi (\bm{U}_{i})$ and $\tilde{\bm{V}}_{i} := \Phi (\bm{V}_{i}) = \Theta^{-1} (\{ \tilde{U}_{i,k} \}_{k=0}^{K}) \ (i=1,2)$. 
Then, from \eqref{Theinv}, we have 
\begin{equation*}
\tilde{V}_{i,k} = \tilde{U}_{i,k} - \frac{1}{L}M_{\rm d}(\bm{U}^{(0)}) \quad (k=0,\ldots, K,\ i=1,2). 
\end{equation*}
Hence, it holds that $\tilde{V}_{1,k} - \tilde{V}_{2,k} = \tilde{U}_{1,k} - \tilde{U}_{2,k} \ (k=0,\ldots,K)$. 
Namely, $\|D(\tilde{\bm{V}}_{1} - \tilde{\bm{V}}_{2})\| = \|D(\tilde{\bm{U}}_{1} - \tilde{\bm{U}}_{2})\|$. 
Now, from the definition of $\Psi$, the vector $\{\tilde{U}_{i,k}\}_{k=-1}^{K+1} = \{\Psi_{k}(\bm{U}_{i})\}_{k=-1}^{K+1}$ satisfies \eqref{NLM1}--\eqref{NLM4} ($i=1,2$). 
Subtracting these relations, we obtain \vspace{-1mm}
\begin{gather}
\frac{\tilde{U}_ {1,k} - \tilde{U}_ {2,k}}{\Delta t} = \delta_{k}^{\langle 2 \rangle}\left(\tilde{P}_{1,k}^{(n)} - \tilde{P}_{2,k}^{(n)} \right) \quad (k = 0,\ldots, K), \label{NLM1-12}\\[-2pt]
\delta_{k}^{\langle 2 \rangle} \left( \tilde{U}_{1,k} - \tilde{U}_{2,k}\right) = \frac{2}{\gamma}\left\{ -\left(\tilde{P}_{1,k}^{(n)} - \tilde{P}_{2,k}^{(n)}\right) + \left( \frac{dF}{d( U_{1,k},U_{k}^{(n)})} - \frac{dF}{d( U_{2,k},U_{k}^{(n)})}\right) \right\} \quad (k = 0,\ldots, K), \label{NLM2-12}\\[-2pt]
\frac{\tilde{U}_ {1,0} - \tilde{U}_ {2,0}}{\Delta t} = \left. \delta_{k}^{\langle 1 \rangle} \left( \frac{\tilde{U}_{1,k} - \tilde{U}_{2,k}}{2}\right)  \right|_{k=0}, \label{NLM3-12}\\[-2pt]
\frac{\tilde{U}_ {1,K} - \tilde{U}_ {2,K}}{\Delta t} = - \left. \delta_{k}^{\langle 1 \rangle} \left(\frac{\tilde{U}_{1,k} - \tilde{U}_{2,k}}{2}\right) \right|_{k=K}, \label{NLM3'-12}\\[-2pt]
\delta_{k}^{\langle 1 \rangle} \left( \tilde{P}_{1,k}^{(n)} - \tilde{P}_{2,k}^{(n)} \right) = 0 \quad (k = 0,K). \label{NLM4-12}
\end{gather}
From Corollary \ref{col:2.1}, \eqref{NLM1-12}--\eqref{NLM4-12}, and the Young inequality, we have \vspace{-1mm}
\begin{align*}
& \left\| D(\bm{\tilde{U}}_{1} - \bm{\tilde{U}}_{2})\right\|^{2} = \sum _ {k=0}^{K}{}^{\prime \prime}\left\{\delta_{k}^{+}(\tilde{U}_ {1,k} - \tilde{U}_ {2,k})\right\}^{2}\Delta x \\[-2pt]
	& = -\sum _ {k=0}^{K}\!{}^{\prime \prime}\!\left\{\delta_{k}^{\langle 2 \rangle}\!\!\left(\tilde{U}_{1,k} - \tilde{U}_ {2,k}\right)\!\right\}\!\left( \tilde{U}_ {1,k} - \tilde{U}_ {2,k}\right)\Delta x + \left[\left\{\delta_{k}^{\langle 1 \rangle}\!\!\left(\tilde{U}_{1,k} - \tilde{U}_ {2,k}\right)\!\right\}\!\left( \tilde{U}_ {1,k} - \tilde{U}_ {2,k}\right) \right]_{0}^{K} \\[-2pt]
	& = \frac{2\Delta t}{\gamma}\sum _ {k=0}^{K}\!{}^{\prime \prime}\!\left(\tilde{P}_{1,k}^{(n)} - \tilde{P}_{2,k}^{(n)}\right) \! \left\{\delta_{k}^{\langle 2 \rangle}\!\!\left(\tilde{P}_{1,k}^{(n)} - \tilde{P}_ {2,k}^{(n)}\right)\!\right\}\Delta x \\[-2pt]
	& \quad - \frac{2\Delta t}{\gamma}\sum _ {k=0}^{K}\!{}^{\prime \prime} \! \left(\! \frac{dF}{d( U_{1,k},U_{1,k}^{(\! n\! )})} \! - \! \frac{dF}{d( U_{2,k},U_{2,k}^{(\! n\! )})}\!\right) \!\! \left\{\! \delta_{k}^{\langle 2 \rangle}\!\!\left(\! \tilde{P}_{1,k}^{(\! n\! )} \! - \! \tilde{P}_ {2,k}^{(\! n\! )}\!\right)\!\right\}\!\Delta x \! - \! \frac{2}{\Delta t}\!\!\left( \tilde{U}_ {1,K} \! - \! \tilde{U}_ {2,K}\right)^{\! 2} \! - \! \frac{2}{\Delta t}\!\!\left( \tilde{U}_ {1,0} \! - \! \tilde{U}_ {2,0}\right)^{\! 2} \\[-2pt] 
	& \leq -\frac{2\Delta t}{\gamma}\sum _ {k=0}^{K-1}\left\{\delta_{k}^{+}\left(\tilde{P}_{1,k}^{(n)} - \tilde{P}_ {2,k}^{(n)}\right)\right\}^{2}\Delta x + \frac{2\Delta t}{\gamma}\left[ \left\{\delta_{k}^{\langle 1 \rangle}\!\!\left(\tilde{P}_{1,k}^{(n)} - \tilde{P}_ {2,k}^{(n)}\right)\!\right\} \! \left(\tilde{P}_{1,k}^{(n)} - \tilde{P}_ {2,k}^{(n)}\right) \! \right]_{0}^{K}\\[-2pt]
	& \quad + \frac{2\Delta t}{\gamma}\sum _ {k=0}^{K-1}\! \left\{\delta_{k}^{+}\left(\! \frac{dF}{d( U_{1,k},U_{1,k}^{(n)})} - \frac{dF}{d( U_{2,k},U_{2,k}^{(n)})}\!\right) \! \right\}\! \left\{\delta_{k}^{+}\!\!\left(\tilde{P}_{1,k}^{(n)} - \tilde{P}_ {2,k}^{(n)}\right)\!\right\}\Delta x \\[-2pt]
	& \quad - \frac{2\Delta t}{\gamma}\left[\left\{\delta_{k}^{\langle 1 \rangle}\!\!\left(\tilde{P}_{1,k}^{(n)} - \tilde{P}_ {2,k}^{(n)}\right)\!\right\} \left(\! \frac{dF}{d( U_{1,k},U_{1,k}^{(n)})} - \frac{dF}{d( U_{2,k},U_{2,k}^{(n)})}\!\right)\right]_{0}^{K} \\[-2pt]
	& \leq -\frac{2\Delta t}{\gamma}\!\left\| D\! \left(\! \tilde{\bm{P}}_{1}^{(n)} \! - \! \tilde{\bm{P}}_{2}^{(n)} \!\right)\right\|^{2} \! + \! \frac{2\Delta t}{\gamma}\!\sum _ {k=0}^{K-1}\!\! \left[\frac{1}{4}\!\left\{\! \delta_{k}^{+}\!\!\left(\! \frac{dF}{d( U_{1,k},U_{1,k}^{(n)})} - \frac{dF}{d( U_{2,k},U_{2,k}^{(n)})}\!\right) \! \right\}^{\! 2} \!\! + \! \left\{\! \delta_{k}^{+}\!\!\left(\! \tilde{P}_{1,k}^{(n)} \! - \! \tilde{P}_ {2,k}^{(n)} \!\right)\!\right\}^{\! 2}\right] \! \Delta x \\[-2pt]
	& = \frac{\Delta t}{2\gamma}\left\| D \left( \frac{dF}{d( \bm{U}_{1},\bm{U}_{1}^{(n)})} - \frac{dF}{d( \bm{U}_{2},\bm{U}_{2}^{(n)})} \right)\right\|^{2}. 
\end{align*}
Namely, 
\begin{equation}
\left\| D(\bm{\tilde{U}}_{1} - \bm{\tilde{U}}_{2})\right\| \leq \sqrt{\frac{\Delta t}{2\gamma}}\left\| D \left( \frac{dF}{d( \bm{U}_{1},\bm{U}_{1}^{(n)})} - \frac{dF}{d( \bm{U}_{2},\bm{U}_{2}^{(n)})} \right)\right\|. \label{difD12}
\end{equation}
Using Lemma \ref{lem:4.2}, we get 
\begin{equation*}
\frac{dF}{d(U_{1,k}, U_{k}^{(n)})} - \frac{dF}{d(U_{2,k}, U_{k}^{(n)})} = \frac{1}{2}\bar{F}'' (U_{1,k}, U_{2,k}; U_{k}^{(n)}, U_{k}^{(n)})(U_{1,k} - U_{2,k})  \quad (k=0,\ldots, K). 
\end{equation*}
Hence, it follows from Lemma \ref{lem:4.3} that 
\begin{align}
& \left\| D\left(\frac{d F}{d (\bm{U}_ {1},\bm{U}^{(n)})} - \frac{d F}{d (\bm{U}_ {2},\bm{U}^{(n)})}\right)\right\| = \frac{1}{2}\left\| D\left\{\bar{F}'' (\bm{U}_{1}, \bm{U}_{2}; \bm{U}^{(n)}, \bm{U}^{(n)})(\bm{U}_{1} - \bm{U}_{2})\right\}\right\| \nonumber\\
& \leq \frac{1}{2}\left\|\bar{F}'' (\bm{U}_{1}, \bm{U}_{2}; \bm{U}^{(n)}, \bm{U}^{(n)})\right\|_{L_{\rm d}^{\infty}}\left\| D(\bm{U}_{1} - \bm{U}_{2})\right\| + \frac{1}{2}\left\|\bm{U}_{1} - \bm{U}_{2}\right\|_{L_{\rm d}^{\infty}}\left\| D\bar{F}'' (\bm{U}_{1}, \bm{U}_{2}; \bm{U}^{(n)}, \bm{U}^{(n)})\right\|. \label{nonlinear_a}
\end{align}
We consider $\|\bar{F}'' (\bm{U}_{1}, \bm{U}_{2}; \bm{U}^{(n)}, \bm{U}^{(n)})\|_{L_{\rm d}^{\infty}}$ and $\| D\bar{F}'' (\bm{U}_{1}, \bm{U}_{2}; \bm{U}^{(n)}, \bm{U}^{(n)})\|$. 
From $\bm{V}_{i} \in X_{0} \ (i=1,2)$ and \eqref{DUV12}, we have $\| D\bm{U}_{i}\| = \| D\bm{V}_{i}\| \leq 2B_{0} \ (i=1,2)$. 
Therefore, using Lemma \ref{lem:3.2} and \eqref{MasC}, we obtain 
\begin{equation*}
\left\| \bm{U}_{i} \right\|_{L_{\rm d}^{\infty}} \leq \frac{1}{L}M_{\rm d}(\bm{U}_{i}) + L^{\frac{1}{2}}\left\| D\bm{U}_{i} \right\| \leq \frac{1}{L}\left| M_{\rm d}(\bm{U}^{(0)})\right| + 2L^{\frac{1}{2}}B_{0} \leq 2\tilde{B}_{0} \quad (i=1,2).
\end{equation*}
Hence, using Theorem \ref{thm:3.1} and Lemma \ref{lem:4.1}, we get 
\begin{equation}
\left\|\bar{F}'' (\bm{U}_{1}, \bm{U}_{2}; \bm{U}^{(n)}, \bm{U}^{(n)})\right\|_{L_{\rm d}^{\infty}} \leq \max_{|\xi| \leq 2\tilde{B}_{0}}\left| F'' (\xi)\right|. \label{nonlinear_b}
\end{equation}
Furthermore, from Lemma \ref{lem:3.1} and Lemma \ref{lem:4.4}, the following estimate holds: 
\begin{equation}
\left\| D\bar{F}'' (\bm{U}_{1}, \bm{U}_{2}; \bm{U}^{(n)}, \bm{U}^{(n)})\right\| 
	\leq \frac{1}{3}\max_{|\xi| \leq 2\tilde{B}_{0}}\left| F''' (\xi)\right| \left( \left\| D\bm{U}_{1} \right\| + \left\| D\bm{U}_{2} \right\| + \left\| D\bm{U}^{(n)} \right\| \right) 
	\leq \frac{5}{3}\max_{|\xi| \leq 2\tilde{B}_{0}}\left| F''' (\xi)\right| B_{0}. \label{nonlinear_c}
\end{equation}
Now, it follows from $U_{1,k} - U_{2,k} = V_{1,k} - V_{2,k} \ (k=0,\ldots,K)$ and $\bm{V}_{i} \in X_{0} \ (i=1,2)$ that 
\begin{equation*}
M_{\rm d}(\bm{U}_{1} - \bm{U}_{2}) = M_{\rm d}(\bm{V}_{1} - \bm{V}_{2}) = M_{\rm d}(\bm{V}_{1}) - M_{\rm d}(\bm{V}_{2}) = 0. 
\end{equation*} 
Hence, from Lemma \ref{lem:3.2}, we have 
\begin{equation}
\left\| \bm{U}_{1} - \bm{U}_{2} \right\|_{L_{\rm d}^{\infty}} \leq \frac{1}{L}M_{\rm d}(\bm{U}_{1} - \bm{U}_{2}) + L^{\frac{1}{2}}\left\| D\left(\bm{U}_{1} - \bm{U}_{2} \right)\right\| = L^{\frac{1}{2}}\left\| D\left(\bm{U}_{1} - \bm{U}_{2} \right)\right\|. \label{Uinf}
\end{equation}
Thus, using \eqref{nonlinear_a}--\eqref{Uinf}, we get the following estimate: 
\begin{equation}
\left\| D\!\left(\! \frac{d F}{d (\bm{U}_ {1},\bm{U}^{(n)})} - \frac{d F}{d (\bm{U}_ {2},\bm{U}^{(n)})}\!\right) \!\right\| 
	\leq \left( \frac{\displaystyle \max_{|\xi| \leq 2\tilde{B}_{0}}\left| F'' (\xi)\right|}{2} + \frac{\displaystyle 5L^{\frac{1}{2}}B_{0}\max_{|\xi| \leq 2\tilde{B}_{0}}\left| F''' (\xi)\right|}{6} \right) \left\| D\left(\bm{U}_{1} - \bm{U}_{2} \right)\right\|. \label{difDF12}
\end{equation}
Consequently, from \eqref{difD12} and \eqref{difDF12}, we obtain 
\begin{align*}
\left\| D(\bm{\tilde{V}}_{1} - \bm{\tilde{V}}_{2})\right\| 
	= \left\| D(\bm{\tilde{U}}_{1} - \bm{\tilde{U}}_{2})\right\| 
	& \leq \left( \frac{\displaystyle \max_{|\xi| \leq 2\tilde{B}_{0}}\left| F'' (\xi)\right|}{2} + \frac{\displaystyle 5L^{\frac{1}{2}}B_{0}\max_{|\xi| \leq 2\tilde{B}_{0}}\left| F''' (\xi)\right|}{6} \right) \sqrt{\frac{\Delta t}{2\gamma}} \left\| D\left(\bm{U}_{1} - \bm{U}_{2} \right)\right\| \\ 
	& = \left( \frac{\displaystyle \max_{|\xi| \leq 2\tilde{B}_{0}}\left| F'' (\xi)\right|}{2} + \frac{\displaystyle 5L^{\frac{1}{2}}B_{0}\max_{|\xi| \leq 2\tilde{B}_{0}}\left| F''' (\xi)\right|}{6} \right) \sqrt{\frac{\Delta t}{2\gamma}} \left\| D\left(\bm{V}_{1} - \bm{V}_{2} \right)\right\|. 
\end{align*}
Since it holds from the assumption \eqref{tcon} on $\Delta t$ that 
\begin{equation*}
\left( \frac{\displaystyle \max_{|\xi| \leq 2\tilde{B}_{0}}\left| F'' (\xi)\right|}{2} + \frac{\displaystyle 5L^{\frac{1}{2}}B_{0}\max_{|\xi| \leq 2\tilde{B}_{0}}\left| F''' (\xi)\right|}{6} \right) \sqrt{\frac{\Delta t}{2\gamma}} < 1, 
\end{equation*}
the mapping $\Phi$ is contraction into $X_{0}$. 
This completes the proof. \hfill $\Box$\\ 
\end{proof} 
\noindent 
The following corollary follows from the same argument as Corollary 3.3 in \cite{C}. 
\begin{corollary}\label{col:4.2}
Assume that $F(s) = (q/4)s^{4} - (r/2)s^{2}$ for all $s \in \mathbb{R}$, where $q$ and $r$ are positive constants. 
If $\Delta t$ satisfies \vspace{-1mm}
\begin{equation*}
\max\left\{\frac{3r}{2},\ \frac{17q}{2}\tilde{B}_{0}^{2} + \frac{r}{2},\ \frac{51q}{4}\tilde{B}_{0}^{2} - \frac{r}{2} \right\} \sqrt{\frac{\Delta t}{2\gamma}} <1, \vspace{-1mm}
\end{equation*}
then there exists a unique solution $\{U_{k}^{(n)}\}_{k=-1}^{K+1} \in \mathbb{R}^{K+3} \ (n = 1,2,\ldots)$ satisfying \eqref{dCH1}--\eqref{dNBC}. 
\end{corollary}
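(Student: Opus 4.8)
The plan is to re-run the fixed-point argument of the preceding theorem, but to replace the generic estimates of Lemma \ref{lem:4.1} and Lemma \ref{lem:4.4} (which are phrased through $\sup|F''|$ and $\sup|F'''|$) by the \emph{exact} difference quotient $\bar F''$ of the quartic; this is what produces the sharper constants $17/2$ and $51/4$. First I would record $F'(s)=qs^3-rs$, $F''(s)=3qs^2-r$, $F'''(s)=6qs$, and then compute the difference quotient directly. Using $dF/d(a,b)=(a+b)\{(q/4)(a^2+b^2)-(r/2)\}$, a short calculation gives
\begin{equation*}
\bar F''(\xi,\tilde\xi;\eta,\tilde\eta)=\frac{q}{4}\Bigl\{2(\xi^2+\xi\tilde\xi+\tilde\xi^2)+\eta^2+\tilde\eta^2+(\eta+\tilde\eta)(\xi+\tilde\xi)\Bigr\}-r,
\end{equation*}
and in particular $\bar F''(\xi,\tilde\xi;\eta,\eta)=(q/2)(\xi^2+\xi\tilde\xi+\tilde\xi^2+\eta^2+\eta\xi+\eta\tilde\xi)-r$. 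Since the form in braces is a positive combination of squares it is nonnegative, so $\bar F''\ge -r$; evaluating the arguments on the box dictated by the global bound (the entries of $\bm U_1,\bm U_2$ lie in $[-2\tilde B_0,2\tilde B_0]$ and those of $\bm U^{(n)}$ in $[-\tilde B_0,\tilde B_0]$ by Theorem \ref{thm:3.1} and Lemma \ref{lem:3.1}) the form is at most $34\tilde B_0^2$, whence
\begin{equation*}
\bigl\|\bar F''(\bm U_1,\bm U_2;\bm U^{(n)},\bm U^{(n)})\bigr\|_{L_{\rm d}^\infty}\le\max\Bigl\{r,\ \tfrac{17q}{2}\tilde B_0^2-r\Bigr\}.
\end{equation*}

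Next I would bound the Dirichlet seminorm of $\bar F''$ by the discrete product rule (Lemma \ref{lem:4.3}) applied termwise to the explicit quadratic, rather than by Lemma \ref{lem:4.4}. With $\|\bm U_i\|_{L_{\rm d}^\infty}\le 2\tilde B_0$, $\|\bm U^{(n)}\|_{L_{\rm d}^\infty}\le\tilde B_0$, $\|D\bm U_i\|\le 2B_0$ and $\|D\bm U^{(n)}\|\le B_0$, summing the six monomial contributions yields the sharp bound $\|D\bar F''(\bm U_1,\bm U_2;\bm U^{(n)},\bm U^{(n)})\|\le 17q\tilde B_0 B_0$ (as opposed to the $20q\tilde B_0B_0$ that Lemma \ref{lem:4.4} would give). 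Feeding these two estimates, together with $\|\bm U_1-\bm U_2\|_{L_{\rm d}^\infty}\le L^{1/2}\|D(\bm U_1-\bm U_2)\|$ from Lemma \ref{lem:3.2} and the elementary $L^{1/2}B_0\le\tilde B_0$ (immediate from the definition of $\tilde B_0$), into the invariance step \eqref{seminorm_b} and the contraction step \eqref{difDF12} of the theorem exactly as written there, the two entries of the maximum in \eqref{tcon} specialize to
\begin{equation*}
\tfrac32\max\Bigl\{r,\ \tfrac{17q}{2}\tilde B_0^2-r\Bigr\}\qquad\text{and}\qquad \tfrac12\max\Bigl\{r,\ \tfrac{17q}{2}\tilde B_0^2-r\Bigr\}+\tfrac{17q}{2}\tilde B_0^2 .
\end{equation*}

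Finally I would resolve the inner maximum according to the sign of $(17q/2)\tilde B_0^2-2r$. When $r$ dominates, the two entries become $3r/2$ and $r/2+(17q/2)\tilde B_0^2$; when $(17q/2)\tilde B_0^2-r$ dominates, they become $(51q/4)\tilde B_0^2-3r/2$ and $(51q/4)\tilde B_0^2-r/2$. Since $r>0$, the first member of this last pair is dominated by the second, so across all cases the governing quantity is the maximum of the three numbers $3r/2$, $(17q/2)\tilde B_0^2+r/2$ and $(51q/4)\tilde B_0^2-r/2$; requiring this times $\sqrt{\Delta t/(2\gamma)}$ to be less than $1$ is precisely the stated hypothesis, and the conclusion then follows from the theorem. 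The hard part will be the sharpness: the generic bounds of Lemma \ref{lem:4.1} and Lemma \ref{lem:4.4} produce the constants $12$ and $20$ rather than $17/2$ and $17$, and these do \emph{not} reproduce the claimed condition, so the explicit evaluation of $\bar F''$ and the careful maximization of its quadratic form on the \emph{anisotropic} box (arguments of different sizes $2\tilde B_0$ versus $\tilde B_0$) is the indispensable and most delicate step, together with the verification that exactly one of the four (step $\times$ case) combinations is redundant.
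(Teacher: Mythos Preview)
Your argument is correct and is precisely the specialization that the paper has in mind: the paper does not write out a proof but simply defers to the argument of Corollary~3.3 in \cite{C}, which is exactly the explicit computation of $\bar F''$ for the quartic $F(s)=\tfrac{q}{4}s^4-\tfrac{r}{2}s^2$ together with the anisotropic $L^\infty$ and Dirichlet bounds you carry out. Your identification that the generic Lemma~\ref{lem:4.1}/Lemma~\ref{lem:4.4} constants ($12q\tilde B_0^2$ and $20q\tilde B_0 B_0$) are too crude, and that one must maximize the explicit quadratic form of $\bar F''$ over the box $|\xi|,|\tilde\xi|\le 2\tilde B_0$, $|\eta|,|\tilde\eta|\le\tilde B_0$ to obtain $34\tilde B_0^2$ (hence $(17q/2)\tilde B_0^2$) and must apply Lemma~\ref{lem:4.3} monomial by monomial to get $17q\tilde B_0 B_0$, is exactly the point.

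Two very minor remarks. First, the phrase ``positive combination of squares'' for the form $2(\xi^2+\xi\tilde\xi+\tilde\xi^2)+\eta^2+\tilde\eta^2+(\xi+\tilde\xi)(\eta+\tilde\eta)$ is slightly loose; what you need (and what is true) is that it is a positive semidefinite quadratic form, which follows e.g.\ from writing it as $(\xi+\tilde\xi+\tfrac{1}{2}(\eta+\tilde\eta))^2+\xi^2+\tilde\xi^2+\eta^2+\tilde\eta^2-\tfrac{1}{4}(\eta+\tilde\eta)^2\ge 0$. Second, in the invariance step the swapped term $\bar F''(U_{k+1}^{(n)},U_k^{(n)};U_{k+1},U_k)$ actually obeys the slightly better bound $\max\{r,(11q/2)\tilde B_0^2-r\}$; you tacitly majorize it by $\max\{r,(17q/2)\tilde B_0^2-r\}$, which is harmless and is what leads to the clean three-term maximum in the statement.
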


\section{Error estimate}
In this section, we show the error estimate. 
We also use the energy method in \cite{A,Oku,Yan,Yos1,C,Yos3}. 
Fix a natural number $N \in \mathbb{N}$. 
We compute $\bm{U}^{(n)}$ up to $n=N$ by our proposed scheme \eqref{dCH1}--\eqref{dNBC} and estimate the error between it and the solution to the problem \eqref{Eq_i}--\eqref{BCii} up to $T=N\Delta t$.
Let $u$ and $p$ be the solutions to the problem \eqref{Eq_i}--\eqref{BCii} with an initial condition. 
Besides, assume that $u \in C^{4}([0,L] \times [0,T])$. 
Also, we assume that the potential $F$ is sufficiently smooth. 
Moreover, for all $t \in [0,T]$, we denote a function $\tilde{p}(\cdot ,t)$ in the domain $[-\Delta x,L + \Delta x] \times [0,T]$ which satisfies that $\partial_{x}^{5}\tilde{p}$ exists and is continuous on $[-\Delta x,L + \Delta x] \times [0,T]$ and that the following properties hold: 
\begin{gather}
\tilde{p}(x,t) = p(x,t) \quad \mbox{for\ all\ } x \in [0,L], \nonumber\\
\tilde{p}(-\Delta x, t) = \tilde{p}(\Delta x, t), \quad \tilde{p}(L + \Delta x, t) = \tilde{p}(L - \Delta x, t). \label{pb}
\end{gather}
Also, we extend the solution $u$ in $[0,L] \times [0,T]$ to a function $\tilde{u}$ in $[-\Delta x,L + \Delta x] \times [0,T]$ that satisfies $\tilde{u} \in C^{4}( [-\Delta x,L + \Delta x] \times [0,T] )$. 
For example, we define an extension $\tilde{u}$ of $u$ by 
\begin{equation}
  \tilde{u}(x,t) := \begin{cases}
    \displaystyle u(-x,t) + 2x\partial_{x}u(0,t) + \frac{x^{3}}{3}\partial_{x}^{3}u(0,t) & (-\Delta x \leq x < 0), \\[8pt]
    u(x,t) & (0 \leq x \leq L), \\
    \displaystyle u(2L - x,t) + 2(x - L)\partial_{x}u(L,t) + \frac{(x - L)^{3}}{3}\partial_{x}^{3}u(L,t) & (L < x \leq L + \Delta x), 
  \end{cases} \label{util}
\end{equation}
for $t \in [0,T]$, where $\partial_{x}f(a)$ means $\left. \partial_{x}f(x)\right|_{x=a}$.  
Let $U_{k}^{(0)} = \tilde{u}(k\Delta x, 0) \ (k = -1,0,\ldots, K,K+1)$. 
In addition, we define the errors $e_{u,k}^{(n)}$ and $e_{p,k}^{(n)}$ by 
\begin{gather*}
e_{u,k}^{(n)} := U_{k}^{(n)} - \tilde{u}(k\Delta x, n\Delta t) \quad (k=-1,0,\ldots, K,K+1,\ n=0,1,\ldots, N), \\
e_{p,k}^{(n)} := P_{k}^{(n)} - \tilde{p}\left( k\Delta x, \left( n+\frac{1}{2}\right)\Delta t\right) \quad (k=-1,0,\ldots, K,K+1,\ n=0,1,\ldots, N-1).
\end{gather*}~\vspace{-2mm}\\
For simplicity, we use the expression $\tilde{u}_{k}^{(n)} := \tilde{u}(k\Delta x, n\Delta t)$ from now on.  
Also, the expression $\delta_{k}^{\ast}f_{l}$ means $\left. \delta_{k}^{\ast}f_{k} \right|_{k=l}$, where the symbol ``$\ast$" denotes $+$, $\langle 1 \rangle$, or $\langle 2 \rangle$. 
Then, the following lemmas hold (the proofs can be found in Appendix C): 

\begin{lemma} \label{lem:5.1}
Assume that $u \in C^{4}([0,L] \times [0,T])$. 
Then, we obtain the following equations on the errors: 
\begin{gather*}
\frac{e_{u,k}^{(n+1)} - e_{u,k}^{(n)}}{\Delta t} = \delta_{k}^{\langle 2 \rangle}e_{p,k}^{(n)} + \xi_{1,k}^{\left( n + \frac{1}{2}\right)} \quad (k=0,\ldots, K,\ n=0,1,\ldots, N-1), \\
e_{p,k}^{(\! n\! )} \! = \! - \gamma \delta_{k}^{\langle 2 \rangle}\!\!\left(\!\! \frac{e_{u,k}^{(\! n+1\! )} \!\! + \! e_{u,k}^{(\! n\! )}}{2}\!\!\right) \! + \! \left(\! \frac{dF}{d(U_{k}^{(\! n+1\! )}\! ,U_{k}^{(\! n\! )})} \! - \! \frac{dF}{d(u_{k}^{(\! n+1\! )}\! ,u_{k}^{(\! n\! )})} \!\right) \! + \! \xi_{2,k}^{\left(\! n + \frac{1}{2}\!\right)} \quad (k \! = \! 0,\ldots, K,\ n \! = \! 0,1,\ldots, N \! - \! 1), \\
\frac{e_{u,0}^{(n+1)} - e_{u,0}^{(n)}}{\Delta t} = \delta_{k}^{\langle 1 \rangle}\left(\frac{e_{u,0}^{(n+1)} + e_{u,0}^{(n)}}{2}\right) + \xi_{3,0}^{\left( n + \frac{1}{2}\right)} \quad (n=0,1,\ldots, N-1), \\
\frac{e_{u,K}^{(n+1)} - e_{u,K}^{(n)}}{\Delta t} = -\delta_{k}^{\langle 1 \rangle}\left(\frac{e_{u,K}^{(n+1)} + e_{u,K}^{(n)}}{2}\right) + \xi_{3,K}^{\left( n + \frac{1}{2}\right)} \quad (n=0,1,\ldots, N-1), \\
\delta_{k}^{\langle 1 \rangle}e_{p,k}^{(n)} = 0 \quad (k=0,K,\ n=0,1,\ldots, N-1). 
\end{gather*}
where $\xi_{1}$, $\xi_{2}$, and $\xi_{3}$ are defined as follows: 
\begin{gather*}
\xi_{1,k}^{\left( n + \frac{1}{2}\right)} := \partial_{t}u_{k}^{\left( n + \frac{1}{2}\right)}  - \frac{u_{k}^{(n+1)} - u_{k}^{(n)}}{\Delta t} + \delta_{k}^{\langle 2 \rangle}\tilde{p}_{k}^{\left( n + \frac{1}{2}\right)}  - \partial_{x}^{2}p_{k}^{\left( n + \frac{1}{2}\right)} \quad (k=0,\ldots, K), \\
\xi_{2,k}^{\left( n + \frac{1}{2}\right)} := \gamma\left\{\partial_{x}^{2}u_{k}^{\left( n + \frac{1}{2}\right)} - \delta_{k}^{\langle 2 \rangle}\left(\frac{\tilde{u}_{k}^{(n+1)} + \tilde{u}_{k}^{(n)}}{2}\right) \right\} + \frac{dF}{d(u_{k}^{(n+1)},u_{k}^{(n)})} - F'( u_{k}^{\left( n + \frac{1}{2}\right)}) \quad (k=0,\ldots, K), \\
\xi_{3,0}^{\left( n + \frac{1}{2}\right)} := \partial_{t}u_{0}^{\left( n + \frac{1}{2}\right)}  - \frac{u_{0}^{(n+1)} - u_{0}^{(n)}}{\Delta t} + \delta_{k}^{\langle 1 \rangle}\left(\frac{\tilde{u}_{0}^{(n+1)} + \tilde{u}_{0}^{(n)}}{2}\right) - \partial_{x}u_{0}^{\left( n + \frac{1}{2}\right)}, \\
\xi_{3,K}^{\left( n + \frac{1}{2}\right)} := \partial_{t}u_{K}^{\left( n + \frac{1}{2}\right)}  - \frac{u_{K}^{(n+1)} - u_{K}^{(n)}}{\Delta t} + \partial_{x}u_{K}^{\left( n + \frac{1}{2}\right)} - \delta_{k}^{\langle 1 \rangle}\left(\frac{\tilde{u}_{K}^{(n+1)} + \tilde{u}_{K}^{(n)}}{2}\right). 
\end{gather*}
\end{lemma}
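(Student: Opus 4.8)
The plan is to derive each error equation by inserting the decompositions $U_k^{(n)} = e_{u,k}^{(n)} + \tilde{u}_k^{(n)}$ and $P_k^{(n)} = e_{p,k}^{(n)} + \tilde{p}_k^{(n+1/2)}$, valid at every node $k = -1, 0, \ldots, K, K+1$, into the scheme \eqref{dCH1}--\eqref{dNBC}, and then using the linearity of the difference operators $\delta_k^{\langle 1 \rangle}$ and $\delta_k^{\langle 2 \rangle}$ to split each relation into an error part identical in form to the scheme and a remainder part built purely from $\tilde{u}$ and $\tilde{p}$. It then remains to check that each remainder coincides with the corresponding $\xi_i$, which one does by invoking the continuous equation at the half-time level.

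For the first equation I would substitute into \eqref{dCH1}. Since $\tilde{u}_k^{(n)} = u_k^{(n)}$ for $k = 0, \ldots, K$, the remainder is $\delta_k^{\langle 2 \rangle}\tilde{p}_k^{(n+1/2)} - (u_k^{(n+1)} - u_k^{(n)})/\Delta t$; adding and subtracting the exact identity $\partial_t u_k^{(n+1/2)} = \partial_x^2 p_k^{(n+1/2)}$ from \eqref{Eq_i} recasts it as $\xi_{1,k}^{(n+1/2)}$. The second equation is treated the same way starting from \eqref{dCH2}: after separating the nonlinear term as $dF/d(U_k^{(n+1)}, U_k^{(n)}) = (dF/d(U_k^{(n+1)}, U_k^{(n)}) - dF/d(u_k^{(n+1)}, u_k^{(n)})) + dF/d(u_k^{(n+1)}, u_k^{(n)})$, one uses the constitutive law \eqref{Eq_ii} at the half-step, namely $\tilde{p}_k^{(n+1/2)} = p_k^{(n+1/2)} = -\gamma \partial_x^2 u_k^{(n+1/2)} + F'(u_k^{(n+1/2)})$, to rewrite the remaining $\tilde{p}$ and $\tilde{u}$ terms exactly as $\xi_{2,k}^{(n+1/2)}$.

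The two dynamic-boundary equations follow identically, substituting into \eqref{dDBC} and \eqref{dDBC'} and absorbing the $\tilde{u}$-remainders into $\xi_{3,0}$ and $\xi_{3,K}$ via the continuous dynamic boundary conditions \eqref{BCi} and \eqref{BCi'}, i.e.\ $\partial_t u_0^{(n+1/2)} = \partial_x u_0^{(n+1/2)}$ and $\partial_t u_K^{(n+1/2)} = -\partial_x u_K^{(n+1/2)}$. For the final equation I would substitute into \eqref{dNBC}; here the remainder is $\delta_k^{\langle 1 \rangle}\tilde{p}_k^{(n+1/2)}$ at $k = 0, K$, which vanishes outright by the reflection property \eqref{pb} (giving $\tilde{p}_{-1} = \tilde{p}_1$ and $\tilde{p}_{K+1} = \tilde{p}_{K-1}$), so no truncation term arises.

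No step presents a genuine obstacle; once the decompositions are inserted the work is purely algebraic. The only care required is in the bookkeeping: tracking that $\tilde{u} = u$ and $\tilde{p} = p$ hold only at the physical nodes $k = 0, \ldots, K$, whereas the ghost values $\tilde{u}_{-1}, \tilde{u}_{K+1}, \tilde{p}_{-1}, \tilde{p}_{K+1}$ entering the boundary stencils of $\delta_k^{\langle 1 \rangle}$ and $\delta_k^{\langle 2 \rangle}$ are the prescribed extension values; and in evaluating the exact relations \eqref{Eq_i}--\eqref{BCi'} consistently at $t = (n+1/2)\Delta t$ so that the added-and-subtracted terms align with the stated forms of $\xi_1$, $\xi_2$, and $\xi_3$.
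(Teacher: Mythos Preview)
Your proposal is correct and follows essentially the same add-and-subtract strategy as the paper's proof (given in Appendix~C as Lemma~\ref{Ap:F}). The one presentational difference is that the paper, noting that \eqref{Eq_i}--\eqref{Eq_ii} are stated only on the open interval $(0,L)$, derives the first two error equations first for $k=1,\ldots,K-1$ and then obtains the endpoint cases $k=0,K$ by evaluating at $x=\varepsilon\Delta x$ and $x=(K-\varepsilon)\Delta x$ and letting $\varepsilon\to 0$; under the assumed regularity $u\in C^{4}([0,L]\times[0,T])$ this limiting step is equivalent to your direct use of the PDE at the endpoints, so the distinction is purely one of bookkeeping rather than substance.
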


\begin{lemma} \label{lem:5.2} 
Assume that $u \in C^{4}([0,L] \times [0,T])$. 
Furthermore, we suppose that the potential function $F$ is in $C^{3}$.  
Denote the bounds by 
\begin{equation}
\max_{0 \leq n \leq N}\left\{ \left\| D\bm{U}^{(n)}\right\|, \left\| D\bm{u}^{(n)}\right\| \right\} \leq C_{1}, \quad \max_{0 \leq n \leq N}\left\{ \left\| \bm{U}^{(n)}\right\|_{L_{\rm d}^{\infty}}, \left\| \bm{u}^{(n)}\right\|_{L_{\rm d}^{\infty}} \right\} \leq C_{2}. \label{Bou}
\end{equation}
Also, let 
\begin{equation*}
C_{3} := \frac{\displaystyle C_{1}L^{\frac{1}{2}}\max_{|\xi| \leq C_{2}}\left| F'''(\xi)\right| + \max_{|\xi| \leq C_{2}}\left| F''(\xi)\right|}{2}. 
\end{equation*}
Then, for any fixed $\varepsilon > 0$, the following inequality holds: 
\begin{equation*}
\left\{ 1 - \Delta t\!\left( \frac{C_{3}^{2}}{\gamma} + \varepsilon \right) \right\} \left\| D\bm{e}_{u}^{(n+1)}\right\|^{2} 
	\leq \left\{ 1 + \Delta t\left( \frac{C_{3}^{2}}{\gamma} + \varepsilon \right) \right\}\left\| D\bm{e}_{u}^{(n)}\right\|^{2} + \Delta t R^{\left( n + \frac{1}{2}\right)} \quad (n=0,1, \ldots, N-1), 
\end{equation*}
where 
\begin{align}
R^{\left( n + \frac{1}{2}\right)} 
	& := \frac{1}{2\gamma}\left( 1 + \frac{2C_{3}^{2}}{\varepsilon\gamma} \right)\left(\Delta t C_{1}\max_{|\xi| \leq C_{2}}\left| F'''(\xi)\right| \sum_{j=0}^{n}\left\|\bm{\xi}_{1}^{\left( j + \frac{1}{2}\right)}\right\|_{L_{\rm d}^{\infty}} + \left\| D\bm{\xi}_{2}^{\left( n + \frac{1}{2}\right)}\right\| \right)^{2} + \frac{1}{\varepsilon}\left\| D\bm{\xi}_{1}^{\left( n + \frac{1}{2}\right)}\right\|^{2} \nonumber\\
	& \quad + \left|\xi_{1,0}^{\left( n + \frac{1}{2}\right)}\right|^{2} + \left|\xi_{1,K}^{\left( n + \frac{1}{2}\right)}\right|^{2} + \left|\xi_{3,0}^{\left( n + \frac{1}{2}\right)}\right|^{2} + \left|\xi_{3,K}^{\left( n + \frac{1}{2}\right)}\right|^{2}. \label{Rn}
\end{align}
\end{lemma}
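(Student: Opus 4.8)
The plan is to run a discrete energy estimate on $\|D\bm{e}_u^{(n)}\|^2$, following the same pattern as the contraction estimate in the existence--uniqueness proof. First I would compute $\frac{1}{\Delta t}(\|D\bm{e}_u^{(n+1)}\|^2-\|D\bm{e}_u^{(n)}\|^2)$, rewrite each summand through $a^2-b^2=(a+b)(a-b)$ so that it equals $2\sum_{k=0}^{K-1}\{\delta_k^+(\frac{e_{u,k}^{(n+1)}+e_{u,k}^{(n)}}{2})\}\{\delta_k^+(\frac{e_{u,k}^{(n+1)}-e_{u,k}^{(n)}}{\Delta t})\}\Delta x$, and then apply Corollary \ref{col:2.1} to transfer the outer forward difference onto a $\delta_k^{\langle2\rangle}$ acting on the time average of $\bm{e}_u$. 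This produces an interior $\sum^{\prime\prime}$-sum together with a boundary term $[\,\cdot\,]_0^K$. Into the boundary term I would insert the third and fourth error identities of Lemma \ref{lem:5.1}, which replace $\delta_k^{\langle1\rangle}$ of the time average at $k=0,K$ by the time differences $\delta_n^+e_{u,0}^{(n)}$, $\delta_n^+e_{u,K}^{(n)}$ minus the truncation terms $\xi_{3,0}^{(n+1/2)}$, $\xi_{3,K}^{(n+1/2)}$; this yields the non-positive squares $-2|\delta_n^+e_{u,0}^{(n)}|^2-2|\delta_n^+e_{u,K}^{(n)}|^2$, which I keep as a reservoir, plus cross terms that the Young inequality bounds by $|\xi_{3,0}^{(n+1/2)}|^2+|\xi_{3,K}^{(n+1/2)}|^2$.

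For the interior sum I would substitute the chemical-potential error identity (second equation of Lemma \ref{lem:5.1}) to write $-\gamma\delta_k^{\langle2\rangle}$ of the time average as $e_{p,k}^{(n)}-\Delta_k^F-\xi_{2,k}^{(n+1/2)}$, where $\Delta_k^F:=\frac{dF}{d(U_k^{(n+1)},U_k^{(n)})}-\frac{dF}{d(u_k^{(n+1)},u_k^{(n)})}$, and then substitute the first equation of Lemma \ref{lem:5.1} for the time difference of $\bm{e}_u$ as $\delta_k^{\langle2\rangle}e_{p,k}^{(n)}+\xi_{1,k}^{(n+1/2)}$. The leading contribution $\frac{2}{\gamma}\sum^{\prime\prime}e_{p,k}^{(n)}\delta_k^{\langle2\rangle}e_{p,k}^{(n)}\Delta x$ becomes, after Corollary \ref{col:2.1} and the discrete Neumann identity $\delta_k^{\langle1\rangle}e_{p,k}^{(n)}=0$ (fifth equation of Lemma \ref{lem:5.1}), the single dissipative term $-\frac{2}{\gamma}\|D\bm{e}_p^{(n)}\|^2$. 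The cross terms carrying $\delta_k^{\langle2\rangle}e_{p,k}^{(n)}$ against $\Delta_k^F$ and $\xi_{2,k}^{(n+1/2)}$ I would integrate by parts once more (the boundary part again vanishing by the Neumann identity) and balance against this dissipation by the Young inequality, which leaves the nonlinear remainder $\frac{1}{2\gamma}\|D(\bm{\Delta}^F+\bm{\xi}_2^{(n+1/2)})\|^2$; the residual products $\Delta_k^F\xi_{1,k}^{(n+1/2)}$ and $\xi_{2,k}^{(n+1/2)}\xi_{1,k}^{(n+1/2)}$ arising from the substitution I estimate by Young and fold into $R^{(n+1/2)}$.

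The delicate point, and the step I expect to be the main obstacle, is that a bare factor $\bm{e}_p^{(n)}$ survives in $\frac{2}{\gamma}\sum^{\prime\prime}e_{p,k}^{(n)}\xi_{1,k}^{(n+1/2)}\Delta x$, while only $\|D\bm{e}_p^{(n)}\|$ (not the full $L^2$-type norm of $\bm{e}_p^{(n)}$) is controlled by the dissipation. To remove it I would re-insert the second identity of Lemma \ref{lem:5.1} for $e_{p,k}^{(n)}$, turning this term into $-2\sum^{\prime\prime}\{\delta_k^{\langle2\rangle}(\frac{e_{u,k}^{(n+1)}+e_{u,k}^{(n)}}{2})\}\xi_{1,k}^{(n+1/2)}\Delta x$ plus lower-order products, and apply Corollary \ref{col:2.1} a third time. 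The interior part $2\sum_{k=0}^{K-1}\{\delta_k^+(\frac{e_{u,k}^{(n+1)}+e_{u,k}^{(n)}}{2})\}(\delta_k^+\xi_{1,k}^{(n+1/2)})\Delta x$, split by the Young inequality with parameter $\varepsilon$, contributes $\frac{\varepsilon}{2}(\|D\bm{e}_u^{(n+1)}\|^2+\|D\bm{e}_u^{(n)}\|^2)$ (the source of the extra $\varepsilon$ in the coefficient) and $\frac{1}{\varepsilon}\|D\bm{\xi}_1^{(n+1/2)}\|^2$, whereas the boundary part of this third summation by parts, evaluated through the third and fourth identities of Lemma \ref{lem:5.1}, produces the products $\delta_n^+e_{u,0}^{(n)}\,\xi_{1,0}^{(n+1/2)}$ and $\delta_n^+e_{u,K}^{(n)}\,\xi_{1,K}^{(n+1/2)}$, which I absorb using the reserved non-positive squares and which leave the endpoint terms $|\xi_{1,0}^{(n+1/2)}|^2+|\xi_{1,K}^{(n+1/2)}|^2$ in $R^{(n+1/2)}$.

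It then remains to bound $\|D\bm{\Delta}^F\|$. Using Lemma \ref{lem:4.2} I would write $\Delta_k^F=\frac12\bar{F}''(U_k^{(n+1)},u_k^{(n+1)};U_k^{(n)},u_k^{(n)})e_{u,k}^{(n+1)}+\frac12\bar{F}''(U_k^{(n)},u_k^{(n)};U_k^{(n+1)},u_k^{(n+1)})e_{u,k}^{(n)}$, apply the discrete product rule of Lemma \ref{lem:4.3}, control $\|\bar{F}''\|_{L_{\rm d}^\infty}$ via Lemma \ref{lem:4.1} and $\|D\bar{F}''\|$ via Lemma \ref{lem:4.4}, and invoke the uniform bounds \eqref{Bou}; this produces the constant $C_3$ in front of $\|D\bm{e}_u^{(n+1)}\|+\|D\bm{e}_u^{(n)}\|$. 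Since the mass of the error is not conserved, the factor $\|\bm{e}_u^{(n)}\|_{L_{\rm d}^\infty}$ entering Lemma \ref{lem:4.3} I would control by summing the first identity of Lemma \ref{lem:5.1} with $\sum^{\prime\prime}$, annihilating the $\delta^{\langle2\rangle}\bm{e}_p$ contribution by Lemma \ref{lem:2.3} together with the Neumann identity, and using $\bm{e}_u^{(0)}=\bm{0}$ to get $M_{\rm d}(\bm{e}_u^{(n+1)})=\Delta t\sum_{j=0}^{n}\sum^{\prime\prime}\xi_{1,k}^{(j+1/2)}\Delta x$; the discrete Poincar\'e--Wirtinger inequality (Lemma \ref{lem:3.2}) then bounds $\|\bm{e}_u^{(n)}\|_{L_{\rm d}^\infty}$ by $L^{1/2}\|D\bm{e}_u^{(n)}\|$ plus the accumulated quantity $\Delta t\sum_{j=0}^{n}\|\bm{\xi}_1^{(j+1/2)}\|_{L_{\rm d}^\infty}$, which is exactly the summand appearing inside the square in \eqref{Rn}. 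Collecting everything, bounding $\frac{1}{2\gamma}\|D(\bm{\Delta}^F+\bm{\xi}_2^{(n+1/2)})\|^2$ by a weighted Young inequality whose parameter is tuned to $\varepsilon$ (generating the factor $1+\frac{2C_3^2}{\varepsilon\gamma}$ and, via $(a+b)^2\le 2a^2+2b^2$, the coefficient $\frac{C_3^2}{\gamma}$), multiplying by $\Delta t$, and moving the $\|D\bm{e}_u^{(n+1)}\|^2$ part to the left-hand side give the claimed inequality. The careful bookkeeping of which Young parameter multiplies which remainder is where I expect to need the most care to land precisely on the stated form of $R^{(n+1/2)}$.
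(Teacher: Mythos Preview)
Your proposal is correct and follows essentially the same energy-method argument as the paper's proof (given in Appendix~C as Lemma~\ref{Ap:G}): summation by parts via Corollary~\ref{col:2.1}, substitution of the error identities of Lemma~\ref{lem:5.1}, absorption of the $\|D\bm{e}_p\|$ cross term by the dissipation through Young's inequality, and control of the nonlinear difference via Lemmas~\ref{lem:4.1}--\ref{lem:4.4} combined with the Poincar\'e--Wirtinger bound on $\|\bm{e}_u\|_{L_{\rm d}^\infty}$.

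The one unnecessary complication is the ``bare $e_p$'' obstacle you single out, which is self-inflicted. The paper inserts the first identity of Lemma~\ref{lem:5.1} into the interior sum first, splitting it into a $\delta_k^{\langle 2\rangle}e_{p}$ part and a $\xi_1$ part, and then applies the second identity \emph{only to the former}, while the term $-\sum_{k}^{\prime\prime}\xi_{1,k}^{(n+1/2)}\delta_k^{\langle 2\rangle}\bigl(\tfrac{e_{u,k}^{(n+1)}+e_{u,k}^{(n)}}{2}\bigr)\Delta x$ is kept intact and integrated by parts directly. No undifferentiated $e_p$ ever appears. Your detour---substituting the second identity globally, generating $\tfrac{2}{\gamma}\sum_{k}^{\prime\prime} e_{p,k}^{(n)}\xi_{1,k}^{(n+1/2)}\Delta x$, and then re-inserting that same identity to remove it---works only because the ``residual products'' $(\Delta^F+\xi_2)\xi_1$ you propose to fold into $R$ cancel \emph{exactly} against the ``lower-order products'' produced by the re-insertion. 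If you track this cancellation rather than estimating both pieces separately, the bookkeeping lands on the stated $R^{(n+1/2)}$ without extra terms.
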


\begin{theorem} \label{thm:5.1}
Assume that $u \in C^{5}([0,L] \times [0,T])$. 
Furthermore, we suppose that the potential function $F$ is in $C^{4}$. 
In the same manner, as Lemma \ref{lem:5.2}, denote the bounds by \eqref{Bou}. 
Fix $B \in (0,(\gamma / C_{3}^{2}) )$. 
If $\Delta t$ satisfies 
\begin{equation}
\Delta t < B \left( < \frac{\gamma}{ C_{3}^{2} } \right), \label{dt_ass}
\end{equation}
then, there exists a constant $C := C(B)$ dependent on $B$ and independent of $k$ and $n$ such that 
\begin{equation*}
\left\| (\varPi_{\Delta x, \Delta t}U)(\cdot, t) - u(\cdot, t) \right\|_{L^{\infty}(0,L)} \leq C\left( (\Delta x)^{2} + (\Delta t)^{2} \right) \quad \mbox{for\ all\ } t \in [0,T], 
\end{equation*}
where $\varPi_{\Delta x, \Delta t}U$ is the function which interpolates the grid value point $U_{k}^{(n)}$ and is defined by 
\begin{align*}
(\varPi_{\Delta x, \Delta t}U)(x,t) 
	& := \left( \varPi_{\Delta x}(\varPi_{\Delta t}U_{k}) \right) (x,t) \\
	& = \left( \varPi_{\Delta t}(\varPi_{\Delta x}U^{(n)}) \right) (x,t) \\
	& = \left( k+1 - \frac{x}{\Delta x}\right) \left( n+1 - \frac{t}{\Delta t}\right) U_{k}^{(n)} + \left( k+1 - \frac{x}{\Delta x}\right) \left( \frac{t}{\Delta t} - n \right) U_{k}^{(n+1)} \\
	& \quad + \left( \frac{x}{\Delta x} - k \right) \left( n+1 - \frac{t}{\Delta t}\right) U_{k+1}^{(n)} + \left( \frac{x}{\Delta x} - k\right)\left( \frac{t}{\Delta t} - n\right) U_{k+1}^{(n+1)}
\end{align*}
for $(x,t) \in [k\Delta x, (k+1)\Delta x] \times [n\Delta t, (n+1)\Delta t], \ k = 0,1, \ldots, K-1, \ n = 0,1,\ldots, N-1$. 
Also, $\varPi_{\Delta x}$ is the function that interpolates the grid value point $f_{k}$ and is defined as follows: 
\begin{align*}
(\varPi_{\Delta x}f)(x) 
	& := f_{k} + \frac{ f_{k+1} - f_{k} }{ \Delta x } (x - k\Delta x) \\
	& = \left( k+1 - \frac{x}{\Delta x}\right) f_{k} + \left( \frac{x}{\Delta x} - k\right) f_{k+1} \quad  \mbox{for\ } x \in [k\Delta x, (k+1)\Delta x], \quad k=0,1,\ldots, K-1, 
\end{align*}
and $\varPi_{\Delta t}$ is the function that interpolates the grid value point $f^{(n)}$ and is defined as follows: 
\begin{align*}
(\varPi_{\Delta t}f)(t) 
	& := f^{(n)} + \frac{ f^{(n+1)} - f^{(n)} }{ \Delta t } (t - n\Delta t) \\
	& = \left(\! n+1 - \frac{t}{\Delta t}\!\right) f^{(n)} + \left(\! \frac{t}{\Delta t} - n\!\right) f^{(n+1)} \quad \mbox{for\ } t \in [n\Delta t, (n+1)\Delta t], \quad n=0,1,\ldots, N-1. 
\end{align*}
\end{theorem}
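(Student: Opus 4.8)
The plan is to convert the recursive energy inequality of Lemma \ref{lem:5.2} into a bound on $\|D\bm{e}_{u}^{(n)}\|$ by a discrete Gronwall argument, and then to upgrade this Dirichlet semi-norm estimate to the stated $L^{\infty}$ estimate via the discrete Poincar\'e--Wirtinger inequality and a standard interpolation-error bound. First I would check that the constants in \eqref{Bou} are legitimate: $C_{2}$ exists because $u$ is continuous on the compact set $[0,L]\times[0,T]$ and $\{\bm{U}^{(n)}\}$ is globally bounded by Theorem \ref{thm:3.1}, while $C_{1}$ follows from Lemma \ref{lem:3.1} for the numerical solution and from the $C^{1}$-regularity of $\tilde{u}$ for the exact one; with these fixed, $C_{3}$ is determined.

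The next, and most laborious, step is a consistency analysis of the truncation terms. Using the $C^{5}$-regularity of $u$ and the $C^{4}$-regularity of $F$, I would Taylor-expand each of $\xi_{1}$, $\xi_{2}$, $\xi_{3}$ from Lemma \ref{lem:5.1} about the half-step point $(k\Delta x,(n+\tfrac12)\Delta t)$: the central differences $\delta_{k}^{\langle 2\rangle}$, $\delta_{k}^{\langle 1\rangle}$ reproduce $\partial_{x}^{2}$ and $\partial_{x}$ to $O((\Delta x)^{2})$, the Crank--Nicolson-type averaging contributes $O((\Delta t)^{2})$, and the difference quotient $dF/d(u_{k}^{(n+1)},u_{k}^{(n)})$ approximates $F'(u_{k}^{(n+\frac12)})$ to $O((\Delta t)^{2})$. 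The extensions $\tilde{p}$ and $\tilde{u}$ in \eqref{pb} and \eqref{util} are constructed precisely so that the central boundary stencils retain this order. This yields $\max_{k}\|\bm{\xi}_{i}^{(n+\frac12)}\|_{L_{\rm d}^{\infty}}$ and $\|D\bm{\xi}_{i}^{(n+\frac12)}\|$ all of size $O((\Delta x)^{2}+(\Delta t)^{2})$, whence, by \eqref{Rn}, $R^{(n+\frac12)}\le C((\Delta x)^{2}+(\Delta t)^{2})^{2}$ uniformly in $n\le N-1$. The boundary terms $\xi_{3,0}$, $\xi_{3,K}$ are the delicate part, since one must confirm that the central-difference approximation of the outward normal derivative stays second-order there; this is where the smoothness of the extension $\tilde u$ is essential.

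I would then invoke Lemma \ref{lem:5.2}. Under \eqref{dt_ass}, fix any $\varepsilon\in(0,\,1/B-C_{3}^{2}/\gamma)$, which is nonempty because $B<\gamma/C_{3}^{2}$; then $\Delta t(C_{3}^{2}/\gamma+\varepsilon)<B(C_{3}^{2}/\gamma+\varepsilon)<1$, so $1-\Delta t(C_{3}^{2}/\gamma+\varepsilon)$ is positive and bounded below away from $0$. Dividing the inequality of Lemma \ref{lem:5.2} through gives
\begin{equation*}
\left\| D\bm{e}_{u}^{(n+1)}\right\|^{2} \le \frac{1+\Delta t(C_{3}^{2}/\gamma+\varepsilon)}{1-\Delta t(C_{3}^{2}/\gamma+\varepsilon)}\left\| D\bm{e}_{u}^{(n)}\right\|^{2} + \frac{\Delta t\,R^{(n+\frac12)}}{1-\Delta t(C_{3}^{2}/\gamma+\varepsilon)}.
\end{equation*}
Since the amplification factor is $\le e^{C\Delta t}$ for $\Delta t<B$, iterating and using $\bm{e}_{u}^{(0)}=\bm{0}$ (because $U_{k}^{(0)}=\tilde{u}(k\Delta x,0)$ for all $k$) gives $\|D\bm{e}_{u}^{(n)}\|^{2}\le e^{CT}\,N\Delta t\max_{j}R^{(j+\frac12)}\le C(B)\big((\Delta x)^{2}+(\Delta t)^{2}\big)^{2}$ for every $n\le N$, so $\|D\bm{e}_{u}^{(n)}\|=O((\Delta x)^{2}+(\Delta t)^{2})$.

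Finally I would control the error mass and pass to $L^{\infty}$. Summing the first identity of Lemma \ref{lem:5.1} with $\sum_{k=0}^{K}{}^{\prime\prime}$ and applying Lemma \ref{lem:2.3} together with the discrete Neumann condition $\delta_{k}^{\langle 1\rangle}e_{p,k}^{(n)}=0$ $(k=0,K)$ makes the $\delta_{k}^{\langle 2\rangle}e_{p,k}^{(n)}$ contribution vanish, leaving $\delta_{n}^{+}M_{\rm d}(\bm{e}_{u}^{(n)})=\sum_{k=0}^{K}{}^{\prime\prime}\xi_{1,k}^{(n+\frac12)}\Delta x$; telescoping from $M_{\rm d}(\bm{e}_{u}^{(0)})=0$ yields $|M_{\rm d}(\bm{e}_{u}^{(n)})|\le TL\max_{j}\|\bm{\xi}_{1}^{(j+\frac12)}\|_{L_{\rm d}^{\infty}}=O((\Delta x)^{2}+(\Delta t)^{2})$. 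Then Lemma \ref{lem:3.2} gives $\|\bm{e}_{u}^{(n)}\|_{L_{\rm d}^{\infty}}\le \tfrac1L|M_{\rm d}(\bm{e}_{u}^{(n)})|+L^{1/2}\|D\bm{e}_{u}^{(n)}\|=O((\Delta x)^{2}+(\Delta t)^{2})$. To conclude I would split $\varPi_{\Delta x,\Delta t}U-u=(\varPi_{\Delta x,\Delta t}U-\varPi_{\Delta x,\Delta t}\tilde{u})+(\varPi_{\Delta x,\Delta t}\tilde{u}-u)$: the first term is bounded in $L^{\infty}$ by $\max_{n}\|\bm{e}_{u}^{(n)}\|_{L_{\rm d}^{\infty}}$, since on each cell the bilinear interpolant of the grid values of $\bm{e}_{u}$ is a convex combination of them, while the second is the standard bilinear-interpolation error of the $C^{4}$ function $u$, namely $O((\Delta x)^{2}+(\Delta t)^{2})$. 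Adding the two gives the claimed estimate, with $C$ depending on $B$ through the Gronwall factor.
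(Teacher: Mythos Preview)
Your proposal is correct and follows essentially the same approach as the paper: consistency analysis of $\xi_{1},\xi_{2},\xi_{3}$ via Taylor expansion (using the extensions $\tilde u,\tilde p$ to keep second order at the boundary), a discrete Gronwall iteration of Lemma~\ref{lem:5.2} with $\varepsilon$ chosen so that $\Delta t(C_{3}^{2}/\gamma+\varepsilon)<1$, a telescoped mass bound for $M_{\rm d}(\bm e_{u}^{(n)})$ via Lemma~\ref{lem:2.3} and the discrete Neumann condition on $e_{p}$, the Poincar\'e--Wirtinger step (Lemma~\ref{lem:3.2}) to reach $\|\bm e_{u}^{(n)}\|_{L_{\rm d}^{\infty}}$, and finally the split $\varPi_{\Delta x,\Delta t}U-u=(\varPi_{\Delta x,\Delta t}U-\varPi_{\Delta x,\Delta t}u)+(\varPi_{\Delta x,\Delta t}u-u)$. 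The only cosmetic difference is ordering: the paper first runs the Gronwall argument to reduce everything to bounding $R^{(n+\frac12)}$ and only then carries out the Taylor estimates, whereas you do the consistency analysis first.
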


\begin{proof}
{\bf Step1.} 
Let $\varepsilon$ be an arbitrarily fixed positive number satisfying 
\begin{equation*}
\varepsilon < \frac{1}{B}\left( 1 - \frac{C_{3}^{2}}{\gamma}B \right). 
\end{equation*}
In other words, we have $B < 1/C_{4}$ for $C_{4} := (C_{3}^{2}/\gamma) + \varepsilon$. 
Let $\tilde{C}_{4} := (2C_{4})/( 1 - C_{4}B )$. 
Then, it follows from \eqref{dt_ass} that 
\begin{equation}
\frac{1 + C_{4}\Delta t}{1 - C_{4}\Delta t} < 1 + \tilde{C}_{4}\Delta t < \exp(\tilde{C}_{4}\Delta t). \label{dt_ineq}
\end{equation}
Actually, since $C_{4}$ is positive, it holds from \eqref{dt_ass} that $1 - C_{4}\Delta t > 1 - C_{4}B$. 
Also, from the definition of $\tilde{C}_{4}$, we obtain $\tilde{C}_{4}( 1 - C_{4}B ) = 2C_{4}$. 
Thus, we have 
\begin{equation*}
(1 + \tilde{C}_{4}\Delta t)(1 - C_{4}\Delta t) = 1 - C_{4}\Delta t + \tilde{C}_{4}\Delta t(1 - C_{4}\Delta t) > 1 - C_{4}\Delta t + \tilde{C}_{4}\Delta t(1 - C_{4}B) = 1 + C_{4}\Delta t. 
\end{equation*}
From this inequality, the first inequality in \eqref{dt_ineq} holds. 
The second inequality in \eqref{dt_ineq} holds from the following inequality: $1 + x < \exp(x)$ for all $x >0$. 
Using Lemma \ref{lem:5.2}, \eqref{dt_ass}, and \eqref{dt_ineq}, we obtain 
\begin{equation}
\left\| D\bm{e}_{u}^{(n+1)}\right\|^{2} \leq \frac{ 1 \! + \! C_{4}\Delta t }{ 1 \! - \! C_{4}\Delta t }\left\| D\bm{e}_{u}^{(n)}\right\|^{2} + \frac{\Delta t}{1 \! - \! C_{4}\Delta t} R^{\left( n + \frac{1}{2} \right)} \leq \exp(\tilde{C}_{4}\Delta t)\left\| D\bm{e}_{u}^{(n)}\right\|^{2} + \frac{\Delta t}{1 \! - \! C_{4}B} R^{\left( n + \frac{1}{2} \right)} \label{tool} 
\end{equation}
for $n=0,1,\ldots, N-1$. 
Using \eqref{tool} repeatedly, we have 
\begin{align*}
\left\| D\bm{e}_{u}^{(n)}\right\|^{2} 
	& \leq \exp(\tilde{C}_{4}\Delta t)\left\| D\bm{e}_{u}^{(n-1)}\right\|^{2} + \frac{\Delta t}{1 - C_{4}B} R^{\left( n - 1 + \frac{1}{2} \right)} \\
	& \leq \exp(2\tilde{C}_{4}\Delta t)\left\| D\bm{e}_{u}^{(n-2)}\right\|^{2} + \frac{\Delta t}{1 - C_{4}B}\left[ \left\{\exp(\tilde{C}_{4}\Delta t) \right\} R^{\left( n - 2 + \frac{1}{2} \right)} + R^{\left( n - 1 + \frac{1}{2} \right)}\right] \\
	& \leq \cdots \\
	& \leq \exp(n\tilde{C}_{4}\Delta t)\left\| D\bm{e}_{u}^{(0)}\right\|^{2} + \frac{\Delta t}{1 - C_{4}B}\sum_{j=1}^{n}\left[\exp\{(j-1)\tilde{C}_{4}\Delta t\}\right]R^{\left( n - j + \frac{1}{2} \right)} \\
	& = \frac{\Delta t}{1 - C_{4}B}\sum_{j=1}^{n}\left[\exp\{\tilde{C}_{4}(j-1)\Delta t\}\right]R^{\left( n - j + \frac{1}{2} \right)} \quad (n=1, \ldots, N), 
\end{align*} 
where the last equality holds from $\bm{e}_{u}^{(0)} = \bm{0}$. 
For any $j=1,2,\ldots, n$, it holds from $j-1 \leq n -1 < N$ that 
\begin{equation*}
\exp\{\tilde{C}_{4}(j-1)\Delta t\} < \exp(\tilde{C}_{4}N\Delta t) = \exp\left(\tilde{C}_{4}N \cdot \frac{T}{N} \right) = \exp(\tilde{C}_{4}T). 
\end{equation*}
Therefore, we obtain 
\begin{equation}
\left\| D\bm{e}_{u}^{(n)}\right\|^{2} \leq \frac{\exp(\tilde{C}_{4}T)}{1 - C_{4}B}\Delta t\sum_{j=1}^{n}R^{\left( n - j + \frac{1}{2} \right)} \quad (n=1,\ldots, N). \label{Derr_est} 
\end{equation}
Now, using Lemma \ref{lem:5.1} and Lemma \ref{lem:2.3}, we have 
\begin{equation*}
\delta_{n}^{+}\!\!\left(\sum_{k=0}^{K}\!{}^{\prime\prime}e_{u,k}^{(\! n\! )}\Delta x\!\right) \!
	= \! \sum_{k=0}^{K}\!{}^{\prime\prime}\delta_{k}^{\langle 2 \rangle}e_{p,k}^{(\! n\! )}\Delta x + \sum_{k=0}^{K}\!{}^{\prime\prime}\xi_{1,k}^{\left(\! n + \frac{1}{2}\!\right)}\!\Delta x \! 
	\leq \! \left[\delta_{k}^{\langle 1 \rangle}e_{p,k}^{(\! n\! )}\right]_{0}^{K} + \max_{0 \leq k \leq K} \left|\xi_{1,k}^{\left(\! n + \frac{1}{2}\!\right)}\right|\sum_{k=0}^{K}\!{}^{\prime\prime}\Delta x \!
	= \! L\left\|\bm{\xi}_{1}^{\left(\! n + \frac{1}{2}\!\right)}\right\|_{L_{\rm d}^{\infty}} 
\end{equation*}
for $n = 0,1,\ldots, N-1$. 
That is, 
\begin{equation*}
\sum_{k=0}^{K}{}^{\prime\prime}e_{u,k}^{(n+1)}\Delta x \leq \sum_{k=0}^{K}{}^{\prime\prime}e_{u,k}^{(n)}\Delta x + \Delta tL\left\|\bm{\xi}_{1}^{\left( n + \frac{1}{2}\right)}\right\|_{L_{\rm d}^{\infty}} \quad (n=0,1,\ldots, N-1). 
\end{equation*}
Using this inequality iteratively, we obtain 
\begin{align*}
\sum_{k=0}^{K}{}^{\prime\prime}e_{u,k}^{(n)}\Delta x 
	& \leq \sum_{k=0}^{K}{}^{\prime\prime}e_{u,k}^{(n-1)}\Delta x + \Delta tL\left\|\bm{\xi}_{1}^{\left( n - 1 + \frac{1}{2}\right)}\right\|_{L_{\rm d}^{\infty}} \\
	& \leq \sum_{k=0}^{K}{}^{\prime\prime}e_{u,k}^{(n-2)}\Delta x + \Delta tL\left\|\bm{\xi}_{1}^{\left( n - 2 + \frac{1}{2}\right)}\right\|_{L_{\rm d}^{\infty}} + \Delta tL\left\|\bm{\xi}_{1}^{\left( n - 1 + \frac{1}{2}\right)}\right\|_{L_{\rm d}^{\infty}} \\
	& \leq \cdots \\
	& \leq \sum_{k=0}^{K}{}^{\prime\prime}e_{u,k}^{(0)}\Delta x + \Delta tL\sum_{j=0}^{n-1}\left\|\bm{\xi}_{1}^{\left( j + \frac{1}{2}\right)}\right\|_{L_{\rm d}^{\infty}} \\
	& = \Delta tL\sum_{j=0}^{n-1}\left\|\bm{\xi}_{1}^{\left( j + \frac{1}{2}\right)}\right\|_{L_{\rm d}^{\infty}} \quad (n=1,\ldots, N), 
\end{align*}
where the last equality holds from $\bm{e}_{u}^{(0)} = \bm{0}$. 
Hence, from Lemma \ref{lem:3.2}, the above inequality, and \eqref{Derr_est}, we have 
\begin{align}
\left\|\bm{e}_{u}^{(n)}\right\|_{L_{\rm d}^{\infty}} 
	& \leq \frac{1}{L}\sum_{k=0}^{K}{}^{\prime\prime}e_{u,k}^{(n)}\Delta x + L^{\frac{1}{2}}\left\| D\bm{e}_{u}^{(n)}\right\| \nonumber\\
	& \leq \Delta t\sum_{j=0}^{n-1}\left\|\bm{\xi}_{1}^{\left(\! j + \frac{1}{2} \!\right)}\right\|_{L_{\rm d}^{\infty}} + L^{\frac{1}{2}}\left\| D\bm{e}_{u}^{(n)}\right\| \nonumber\\
	& \leq \Delta t\sum_{j=0}^{n-1}\left\|\bm{\xi}_{1}^{\left(\! j + \frac{1}{2} \!\right)}\right\|_{L_{\rm d}^{\infty}} \! + \! \left\{ \frac{L\exp(\tilde{C}_{4}T)}{1 - C_{4}B}\Delta t\sum_{j=1}^{n}R^{\left( n - j + \frac{1}{2} \right)} \right\}^{ \frac{1}{2} } \quad (n=1,\ldots, N). \label{err_Linf1}
\end{align}
Next, we estimate $\bm{\xi}_{i}^{(n + 1/2)} \ (i=1,2)$ and $\xi_{3,k}^{(n + 1/2)} \ (k=0,K)$.  
Let us define 
\begin{gather*}
M_{i,j}(v) := \max\left\{\left|\frac{\partial^{i+j}v}{\partial x^{i}\partial t^{j}}\right| ; (x,t) \in [0,L] \times [0,T]\right\} \quad {\rm for\ all\ } i,j \in \mathbb{Z}, \\
\tilde{M}_{i,j}(\tilde{v}) := \max\left\{\left|\frac{\partial^{i+j}\tilde{v} }{\partial x^{i}\partial t^{j}}\right| ; (x,t) \in [-\Delta x,L + \Delta x] \times [0,T]\right\} \quad {\rm for\ all\ } i,j \in \mathbb{Z}, \\
C_{F,i} := \max_{|\eta | \leq C_{2}}\left| F^{(i)}(\eta )\right| \quad (i=2,3,4). 
\end{gather*} 
Firstly, we consider $\xi_{3,0}$ and $\xi_{3,K}$. 
Applying the Taylor theorem to $\tilde{u}$ and using \eqref{util}, we obtain the following estimate (for details, see \cite{Oku}): 
\begin{equation*}
\left| \delta_{k}^{\langle 1 \rangle}\!\!\left(\!\frac{\tilde{u}_{k}^{(n+1)} \! + \! \tilde{u}_{k}^{(n)}}{2}\!\right) \! - \! \partial_{x}u_{k}^{\left(\! n + \frac{1}{2} \!\right)} \right| 
	\leq C\left\{ (\Delta x)^{2}M_{3,0}(u) \! + \! (\Delta t)^{2}M_{1,2}(u) \! + \! (\Delta t)^{2}(\Delta x)^{2}M_{3,2}(u) \right\} \quad (k=0,K). 
\end{equation*} 
As a remark, throughout this proof, we need the reader to keep in mind that the meaning of $C$ changes from line to line, whereas $C$ always denote those constants. 
From the assumption \eqref{dt_ass} on $\Delta t$, we obtain the following estimate: 
\begin{equation*}
\left| \delta_{k}^{\langle 1 \rangle}\!\!\left(\!\frac{\tilde{u}_{k}^{(n+1)} + \tilde{u}_{k}^{(n)}}{2}\!\right) \! - \! \partial_{x}u_{k}^{\left( n + \frac{1}{2}\right)} \right| 
	\leq C\left\{ (\Delta x)^{2}\!\left( M_{3,0}(u) + \frac{ \gamma^{2} }{ C_{3}^{4} }M_{3,2}(u) \!\right) \! + \! (\Delta t)^{2}M_{1,2}(u) \right\} \quad (k = 0,K). 
\end{equation*}
Furthermore, using the Taylor theorem, we have the following estimate: 
\begin{equation}
\left| \partial_{t}u_{k}^{\left( n + \frac{1}{2}\right)}  - \frac{u_{k}^{(n+1)} - u_{k}^{(n)}}{\Delta t} \right| \leq CM_{0,3}(u)(\Delta t)^{2} \quad (k=0,\ldots, K). \label{t_u_est}
\end{equation}
From the above, we estimate $\xi_{3,0}$ and $\xi_{3,K}$ as follows: 
\begin{align*}
\left| \xi_{3,k}^{\left( n + \frac{1}{2}\right)} \right| 
	& \leq \left| \partial_{t}u_{k}^{\left( n + \frac{1}{2}\right)}  - \frac{u_{k}^{(n+1)} - u_{k}^{(n)}}{\Delta t} \right| + \left| \delta_{k}^{\langle 1 \rangle}\!\!\left(\!\frac{\tilde{u}_{k}^{(n+1)} + \tilde{u}_{k}^{(n)}}{2}\!\right) - \partial_{x}u_{k}^{\left( n + \frac{1}{2}\right)} \right| \\
	& \leq C(\Delta x)^{2}\!\left( M_{3,0}(u) + \frac{\gamma^{2} }{C_{3}^{4} }M_{3,2}(u) \right) + C(\Delta t)^{2}\left( M_{0,3}(u) + M_{1,2}(u) \right) \quad (k = 0,K). 
\end{align*}
Next, we consider $\bm{\xi}_{1}$. 
For any $t \in [0,T]$ and $k=0,\ldots, K$, applying the Taylor theorem to $\tilde{p}$, there exists $\theta_{1} \in (0,1)$ such that  
\begin{align}
\frac{ \tilde{p}((k \! + \! 1)\Delta x, t) -2\tilde{p}(k\Delta x, t)  + \tilde{p}((k \! - \! 1)\Delta x, t) }{(\Delta x)^{2}} 
	& = \partial_{x}^{2}\tilde{p}( k\Delta x, t) \nonumber\\
	& \quad + \frac{(\Delta x)^{2}}{24}\!\left\{ \partial_{x}^{4}\tilde{p}( (k \! + \! \theta_{1})\Delta x, t) \! + \! \partial_{x}^{4}\tilde{p}( (k \! - \! \theta_{1})\Delta x, t) \right\}. \label{cd2}
\end{align}
Substituting $(n + 1/2)\Delta t$ into $t$ in \eqref{cd2}, we obtain 
\begin{equation}
\delta_{k}^{\langle 2 \rangle}\tilde{p}_{k}^{ \left(\! n + \frac{1}{2}\!\right) } - \partial_{x}^{2}p_{k}^{ \left(\! n + \frac{1}{2}\!\right)} 
	= \delta_{k}^{\langle 2 \rangle}\tilde{p}_{k}^{ \left(\! n + \frac{1}{2}\!\right) } - \partial_{x}^{2}\tilde{p}_{k}^{\left(\! n + \frac{1}{2}\!\right)} 
	= \frac{( \Delta x )^{2}}{24}\!\!\left(\! \partial_{x}^{4}\tilde{p}_{k+\theta_{1}}^{ \left(\! n + \frac{1}{2}\!\right) } + \partial_{x}^{4}\tilde{p}_{k-\theta_{1}}^{ \left(\! n + \frac{1}{2}\!\right) } \!\right) \quad (k=0,\ldots, K). \label{lap_err_1}
\end{equation}
Therefore, we have 
\begin{equation}
\left| \delta_{k}^{\langle 2 \rangle}\tilde{p}_{k}^{ \left(\! n + \frac{1}{2}\!\right) } - \partial_{x}^{2}p_{k}^{ \left(\! n + \frac{1}{2}\!\right)} \right| \leq \frac{(\Delta x)^{2}}{12}\tilde{M}_{4,0}(\tilde{p}) \quad (k=0, \ldots, K). \label{lap_p_est}
\end{equation}
Hence, using \eqref{t_u_est} and \eqref{lap_p_est}, the following estimate holds: 
\begin{equation*}
\left| \xi_{1,k}^{\left( n + \frac{1}{2}\right)} \right| 
	\leq \left| \partial_{t}u_{k}^{\left(\! n + \frac{1}{2} \!\right)}  - \frac{u_{k}^{(n+1)} - u_{k}^{(n)}}{\Delta t} \right| + \left| \delta_{k}^{\langle 2 \rangle}\tilde{p}_{k}^{ \left(\! n + \frac{1}{2}\!\right) } - \partial_{x}^{2}p_{k}^{ \left(\! n + \frac{1}{2}\!\right)} \right| 
	\leq CM_{0,3}(u)(\Delta t)^{2} + \frac{(\Delta x)^{2}}{12}\tilde{M}_{4,0}(\tilde{p})
\end{equation*}
for $k=0, \ldots, K$. 
Next, for $k=0, \ldots, K-1$, from \eqref{lap_err_1}, we have 
\begin{align}
\delta_{k}^{+} \! \left(\! \delta_{k}^{\langle 2 \rangle}\tilde{p}_{k}^{ \left(\! n + \frac{1}{2}\!\right) } \! - \! \partial_{x}^{2}p_{k}^{\left(\! n + \frac{1}{2}\!\right)} \!\right) 
	& = \frac{1}{\Delta x}\left\{ \left( \delta_{k}^{\langle 2 \rangle}\tilde{p}_{k+1}^{ \left(\! n + \frac{1}{2}\!\right) } \! - \! \partial_{x}^{2}p_{k+1}^{\left(\! n + \frac{1}{2}\!\right)} \right)  - \left( \delta_{k}^{\langle 2 \rangle}\tilde{p}_{k}^{ \left(\! n + \frac{1}{2}\!\right) } \! - \! \partial_{x}^{2}p_{k}^{\left(\! n + \frac{1}{2}\!\right)} \right) \right\} \nonumber\\
	& = \frac{1}{\Delta x}\left\{ \frac{(\Delta x)^{2}}{24}\!\!\left(\! \partial_{x}^{4}\tilde{p}_{k+1+\theta_{1}}^{ \left(\! n + \frac{1}{2}\!\right) } + \partial_{x}^{4}\tilde{p}_{k+1-\theta_{1}}^{ \left(\! n + \frac{1}{2}\!\right) } \!\right) - \frac{(\Delta x)^{2}}{24}\!\!\left(\! \partial_{x}^{4}\tilde{p}_{k+\theta_{1}}^{ \left(\! n + \frac{1}{2}\!\right) } + \partial_{x}^{4}\tilde{p}_{k-\theta_{1}}^{ \left(\! n + \frac{1}{2}\!\right) } \!\right) \right\} \nonumber\\
	& = \frac{(\Delta x)^{2}}{24} \frac{ \partial_{x}^{4}\tilde{p}_{k+1+\theta_{1}}^{ \left(\! n + \frac{1}{2}\!\right) } - \partial_{x}^{4}\tilde{p}_{k+\theta_{1}}^{ \left(\! n + \frac{1}{2}\!\right) } }{\Delta x} + \frac{(\Delta x)^{2}}{24}\frac{ \partial_{x}^{4}\tilde{p}_{k+1-\theta_{1}}^{ \left(\! n + \frac{1}{2}\!\right) } - \partial_{x}^{4}\tilde{p}_{k-\theta_{1}}^{ \left(\! n + \frac{1}{2}\!\right) }}{\Delta x}. \label{D_lap}
\end{align}
Since $\tilde{p}$ satisfies $\tilde{p}(\cdot, t) \in C^{5}([0,L])$ for any fixed $t \in [0,T]$, applying the mean value theorem to $\partial_{x}^{4}\tilde{p}(\cdot, t)$ and using \eqref{D_lap}, we obtain 
\begin{equation*}
\left| \delta_{k}^{+} \left( \delta_{k}^{\langle 2 \rangle}\tilde{p}_{k}^{ \left(\! n + \frac{1}{2}\!\right) } \! - \! \partial_{x}^{2}p_{k}^{\left(\! n + \frac{1}{2}\!\right)} \right) \right| 
	\leq \frac{(\Delta x)^{2}}{12}\tilde{M}_{5,0}(\tilde{p}) \quad (k=0, \ldots, K-1). 
\end{equation*}
Besides, applying the Taylor theorem to $u$ and using the mean value theorem, we have 
\begin{equation*}
\left| \delta_{k}^{+}\left(\partial_{t}u_{k}^{\left( n + \frac{1}{2}\right)}  - \frac{u_{k}^{(n+1)} - u_{k}^{(n)}}{\Delta t}\right) \right| \leq CM_{1,3}(u)(\Delta t)^{2} \quad (k=0,\ldots, K-1). 
\end{equation*}
Hence, we have the following estimate: 
\begin{equation*}
\left| \delta_{k}^{+}\xi_{1,k}^{\left(\! n + \frac{1}{2} \!\right)} \!\right| 
	\leq \left| \delta_{k}^{+}\!\!\left(\! \partial_{t}u_{k}^{\left(\! n + \frac{1}{2} \!\right)} \! - \! \frac{u_{k}^{(\! n+1\! )} \! - \! u_{k}^{(\! n\! )}}{\Delta t} \!\right) \!\right| + \left| \delta_{k}^{+} \!\! \left(\! \delta_{k}^{\langle 2 \rangle}\tilde{p}_{k}^{ \left(\! n + \frac{1}{2}\!\right) } \! - \! \partial_{x}^{2}p_{k}^{\left(\! n + \frac{1}{2}\!\right)} \!\right) \!\right| 
	\leq CM_{1,3}(u)(\Delta t)^{2} + \frac{(\Delta x)^{2}}{12}\tilde{M}_{5,0}(\tilde{p})
\end{equation*}
for $k=0, \ldots, K-1$. 
Similarly, from the Taylor theorem and the mean value theorem, we see that 
\begin{gather}
\left| \delta_{k}^{+}\!\left\{\! \partial_{x}^{2}u_{k}^{\left( n + \frac{1}{2}\right)} - \delta_{k}^{\langle 2 \rangle}\!\!\left( \frac{ \tilde{u}_{k}^{ (n + 1) } + \tilde{u}_{k}^{ (n) } }{2}\right) \!\right\} \right| \leq C (M_{5,0}(u) (\Delta x)^{2} + M_{3,2}(u) (\Delta t)^{2} )\quad (k=0,\ldots, K-1), \label{lap_u}\\
\begin{split}
\left| \delta_{k}^{+}\!\left(\! \frac{dF}{d( u_{k}^{(n+1)},u_{k}^{(n)})} - F'( u_{k}^{\left( n + \frac{1}{2}\right)} ) \!\right) \right| 
	& \leq C\left\{ C_{F,2}M_{1,2}(u) + C_{F,3}\left( M_{1,1}(u)M_{0,1}(u) \! + \! M_{0,2}(u) M_{1,0}(u) \right) \right. \\
	& \quad \left. + C_{F,4}M_{1,0}(u)\left( M_{0,1}(u) \right)^{2} \right\}(\Delta t)^{2} \quad (k=0,\ldots, K-1). 
\end{split} \nonumber
\end{gather}
For detail about the above estimate \eqref{lap_u}, see Lemma \ref{Ap:H} in Appendix C. 
From the regularity assumption of the solution $u$ and the potential $F$, we see that $C_{F,i} \ (i=2,3,4)$, $M_{i,j}(u) \ (i,j \in \mathbb{Z}, 0 \leq i+j \leq 5)$, and $\tilde{M}_{i,0}(\tilde{p}) \ (i=4,5)$ are bounded. 
Thus, we obtain the following estimates: 
\begin{gather}
\left| \xi_{1,k}^{\left(\! n + \frac{1}{2} \!\right)} \right| \leq C_{5}( (\Delta x)^{2} \! + \! (\Delta t)^{2} ) \quad (k=0, \ldots, K), \nonumber\\
\left| \xi_{3,k}^{\left(\! n + \frac{1}{2} \!\right)} \right| \leq C_{5}( (\Delta x)^{2} + (\Delta t)^{2} ) \quad (k=0,K), \label{xi3_inf}\\
\left| \delta_{k}^{+}\xi_{i,k}^{\left(\! n + \frac{1}{2} \!\right)} \right| \leq C_{5}( (\Delta x)^{2} \! + \! (\Delta t)^{2} ) \quad (k=0, \ldots, K-1,\ i=1,2), \nonumber
\end{gather}
where $C_{5}$ is a constant independent of $\Delta x$ and $\Delta t$. 
Therefore, the following estimates hold: 
\begin{gather} 
\left\|\bm{\xi}_{1}^{\left( n + \frac{1}{2}\right)}\right\|_{L_{\rm d}^{\infty}} \leq C_{5}( (\Delta x)^{2} + (\Delta t)^{2} ), \label{xi1_inf}\\
\left\| D\bm{\xi}_{i}^{\left(\! n + \frac{1}{2} \!\right)}\right\|^{2} = \sum_{k=0}^{K-1}\left| \delta_{k}^{+}\xi_{i,k}^{\left(\! n + \frac{1}{2} \!\right)} \right|^{2}\Delta x \leq C_{5}^{2}( (\Delta x)^{2} \! + \! (\Delta t)^{2} )^{2}\sum_{k=0}^{K-1}\Delta x = LC_{5}^{2}( (\Delta x)^{2} \! + \! (\Delta t)^{2} )^{2} \quad (i=1,2). \label{xi12_D}
\end{gather}
Furthermore, using \eqref{xi1_inf}, we obtain 
\begin{equation}
\Delta t\sum_{j=0}^{n-1}\left\|\bm{\xi}_{1}^{\left( j + \frac{1}{2}\right)}\right\|_{L_{\rm d}^{\infty}} \leq C_{5}( (\Delta x)^{2} \! + \! (\Delta t)^{2} ) n \Delta t \leq C_{5}T ( (\Delta x)^{2} \! + \! (\Delta t)^{2} ) \quad (n=1, \ldots, N). \label{sum_xi1_inf}
\end{equation}
Hence, from \eqref{Rn}, \eqref{xi3_inf}--\eqref{sum_xi1_inf}, it holds that 
\begin{align*}
R^{\left( n + \frac{1}{2}\right)} 
	& = \frac{1}{2\gamma}\left( 1 + \frac{2C_{3}^{2}}{\varepsilon \gamma} \right)\left( C_{1}C_{F,3}\Delta t\sum_{j=0}^{n}\left\|\bm{\xi}_{1}^{\left( j + \frac{1}{2}\right)}\right\|_{L_{\rm d}^{\infty}} + \left\| D\bm{\xi}_{2}^{\left( n + \frac{1}{2}\right)}\right\| \right)^{2} + \frac{1}{\varepsilon}\left\| D\bm{\xi}_{1}^{\left( n + \frac{1}{2}\right)}\right\|^{2}\\
	& \quad + \left|\xi_{1,0}^{\left( n + \frac{1}{2}\right)}\right|^{2} + \left|\xi_{1,K}^{\left( n + \frac{1}{2}\right)}\right|^{2} + \left|\xi_{3,0}^{\left( n + \frac{1}{2}\right)}\right|^{2} + \left|\xi_{3,K}^{\left( n + \frac{1}{2}\right)}\right|^{2} \\
	& \leq \frac{1}{2\gamma}\left( 1 + \frac{2C_{3}^{2}}{\varepsilon \gamma} \right)\left\{ \left( C_{1}C_{F,3}C_{5}T\right) ((\Delta x)^{2} + (\Delta t)^{2}) + L^{\frac{1}{2} }C_{5}((\Delta x)^{2} + (\Delta t)^{2})\right\}^{2} \\
	& \quad + \frac{C_{5}^{2}L}{\varepsilon}((\Delta x)^{2} + (\Delta t)^{2})^{2} + 4C_{5}^{2}((\Delta x)^{2} + (\Delta t)^{2})^{2} \\
	& \leq C_{5}^{2}\left\{ \frac{1}{2\gamma}\left( 1 + \frac{2C_{3}^{2}}{\varepsilon \gamma} \right)\left( C_{1}C_{F,3}T + L^{\frac{1}{2} } \right)^{2} + \frac{L}{\varepsilon} + 4 \right\} ((\Delta x)^{2} + (\Delta t)^{2})^{2} \quad (n=0,\ldots, N-1). 
\end{align*}
Now, let us define the constant $C_{6}$ as follows: 
\begin{equation*}
C_{6} := C_{5}^{2}\left\{ \frac{1}{2\gamma}\left( 1 + \frac{2C_{3}^{2}}{\varepsilon \gamma} \right)\left( C_{1}C_{F,3}T + L^{\frac{1}{2} } \right)^{2} + \frac{L}{\varepsilon} + 4 \right\}. \vspace{-1mm}
\end{equation*}
Then, we obtain 
\begin{equation}
\frac{L\exp(\tilde{C}_{4}T)}{1 - C_{4}B}\Delta t\sum_{j=1}^{n}\! R^{\left(\! n - j + \frac{1}{2} \!\right)} 
	\leq \frac{C_{6}L\exp(\tilde{C}_{4}T)}{1 - C_{4}B} n\Delta t ((\Delta x)^{2} \! + \! (\Delta t)^{2})^{2} 
	\leq \frac{C_{6}LT\exp(\tilde{C}_{4}T)}{1 - C_{4}B} ((\Delta x)^{2} \! + \! (\Delta t)^{2})^{2} \label{R} \vspace{-1mm}
\end{equation}
for $n=1,\ldots, N$.
From the above, using \eqref{err_Linf1}, \eqref{sum_xi1_inf}, and \eqref{R}, we conclude that \vspace{-1mm}
\begin{equation}
\left\|\bm{e}_{u}^{(n)}\right\|_{L_{\rm d}^{\infty}} \! \leq \! C_{5}T((\Delta x)^{2} + (\Delta t)^{2}) + \left\{\!\! \frac{C_{6}LT\!\exp(\tilde{C}_{4}T)}{1 - C_{4}B} \!\right\}^{\!\! \frac{1}{2}} \!\! ((\Delta x)^{2} + (\Delta t)^{2}) \! = \! C_{7}((\Delta x)^{2} + (\Delta t)^{2}) \quad (n \! = \! 1,\ldots, N), \label{err_est} \vspace{-1mm} 
\end{equation} 
where the constant $C_{7}$ is defined by \vspace{-1mm}
\begin{equation*}
C_{7} := C_{7}(B) := C_{5}T + \left\{ \frac{C_{6}LT\exp(\tilde{C}_{4}T)}{1 - C_{4}B} \right\}^{\frac{1}{2}}. \vspace{-1mm} 
\end{equation*}
{\bf Step2.} 
It holds from the triangle inequality that 
\begin{equation}
\left\| (\! \varPi_{\! \Delta x, \Delta t}U) \! (\cdot, t) \! - \! u(\cdot, t) \right\|_{L^{\! \infty}\! (0,L)} \! 
	\leq \! \left\| (\! \varPi_{\! \Delta x, \Delta t}U) \! (\cdot, t) \! - \! (\! \varPi_{\! \Delta x, \Delta t}u) \! (\cdot, t) \right\|_{L^{\! \infty}\! (0,L)} + \left\| (\! \varPi_{\! \Delta x, \Delta t}u) \! (\cdot, t) \! - \! u(\cdot, t) \right\|_{L^{\! \infty}\! (0,L)} \label{all_err_est_1}
\end{equation} 
for all $t \in [0,T]$. 
Firstly, we estimate the first term on the right-hand side of \eqref{all_err_est_1}. 
For $t \in [n\Delta t, (n+1)\Delta t]$, $n=0,1,\ldots,N-1$, 
there exists $\eta \in [0,1]$ satisfying $t = (n+\eta)\Delta t$. 
Hence, using \eqref{err_est} and the following inequality $\| \varPi_{\Delta x}f \|_{L^{\infty}(0,L)} \leq \| \bm{f}\|_{L_{\rm d}^{\infty}}$ for all $\{f_{k}\}_{k=0}^{K} \in \mathbb{R}^{K+1}$, 
we obtain 
\begin{align}
& \left\| (\varPi_{\Delta x, \Delta t}U)(\cdot, t) - (\varPi_{\Delta x, \Delta t}u)(\cdot, t) \right\|_{L^{\infty}(0,L)} \nonumber\\
	& = \left\| \left\{ \left( n+1 - \frac{(n+\eta)\Delta t}{\Delta t}\right) \left(\varPi_{\Delta x}U^{(n)}\right) + \left( \frac{(n+\eta)\Delta t}{\Delta t} - n\right) \left(\varPi_{\Delta x}U^{(n+1)}\right)\right\} \right. \nonumber\\
	& \quad \left. - \left\{ \left( n+1 - \frac{(n+\eta)\Delta t}{\Delta t}\right) \left(\varPi_{\Delta x}u^{(n)}\right) + \left( \frac{(n+\eta)\Delta t}{\Delta t} - n\right) \left(\varPi_{\Delta x}u^{(n+1)}\right)\right\} \right\|_{L^{\infty}(0,L)} \nonumber\\
	& = \left\| ( 1 - \eta ) \left\{ \left(\varPi_{\Delta x}U^{(n)}\right) - \left(\varPi_{\Delta x}u^{(n)}\right) \right\} + \eta \left\{ \left(\varPi_{\Delta x}U^{(n+1)}\right) - \left(\varPi_{\Delta x}u^{(n+1)}\right) \right\} \right\|_{L^{\infty}(0,L)} \nonumber\\
	& \leq ( 1 - \eta )\left\|  \varPi_{\Delta x}\left( U^{(n)} - u^{(n)} \right) \right\|_{L^{\infty}(0,L)} + \eta\left\| \varPi_{\Delta x}\left( U^{(n+1)} - u^{(n+1)} \right) \right\|_{L^{\infty}(0,L)} \nonumber\\
	& \leq ( 1 - \eta )\left\| \bm{U}^{(n)} - \bm{u}^{(n)} \right\|_{L_{\rm d}^{\infty}} + \eta\left\| \bm{U}^{(n+1)} - \bm{u}^{(n+1)} \right\|_{L_{\rm d}^{\infty}} \nonumber\\
	& \leq C_{7}\left( (\Delta x)^{2} + (\Delta t)^{2} \right). \label{all_est_1}
\end{align}
Next, we estimate the second term on the right-hand side of \eqref{all_err_est_1}. 
For any fixed $(x,t) \in [0,L] \times [0,T]$, 
there exists $k_{0} \in \{0,1, \ldots, K-1\}$ satisfying $x \in [k_{0}\Delta x, (k_{0}+1)\Delta x]$, 
and there exists $n_{0} \in \{0,1, \ldots, N-1\}$ satisfying $t \in [n_{0}\Delta t, (n_{0}+1)\Delta t]$. 
Hence, we have 
\begin{align*}
(\varPi_{\Delta x, \Delta t}u)(x, t) - u(x,t) 
	& = \left( k_{0}+1 - \frac{x}{\Delta x}\right) \left( n_{0}+1 - \frac{t}{\Delta t}\right) \{ u(k_{0}\Delta x, n_{0}\Delta t) - u(x,t) \} \\
	& \quad + \left( k_{0}+1 - \frac{x}{\Delta x}\right) \left( \frac{t}{\Delta t} - n_{0} \right) \{ u(k_{0}\Delta x, (n_{0}+1)\Delta t) - u(x,t) \} \\
	& \quad + \left( \frac{x}{\Delta x} - k_{0} \right) \left( n_{0}+1 - \frac{t}{\Delta t}\right) \{ u((k_{0}+1)\Delta x, n_{0}\Delta t) - u(x,t) \} \\
	& \quad + \left( \frac{x}{\Delta x} - k_{0}\right)\left( \frac{t}{\Delta t} - n_{0}\right) \{ u((k_{0}+1)\Delta x, (n_{0}+1)\Delta t) - u(x,t) \}. 
\end{align*}
Let $C_{8} := (1/8)(M_{2,0}(u) + M_{0,2}(u))$. 
Then, using the Taylor theorem, we obtain 
\begin{equation*}
\left| (\varPi_{\Delta x, \Delta t}u)(x, t) - u(x,t) \right| \leq C_{8}( (\Delta x)^{2} + (\Delta t)^{2} ). 
\end{equation*}
Therefore, we estimate the second term on the right-hand side of \eqref{all_err_est_1} as follows: 
\begin{equation}
\left\| (\varPi_{\Delta x, \Delta t}u)(\cdot, t) - u(\cdot, t) \right\|_{L^{\infty}(0,L)} 
	\leq C_{8}( (\Delta x)^{2} + (\Delta t)^{2} ) \quad \mbox{for\ all\ } t \in [0,T]. \label{all_est_2}
\end{equation}
Hence using \eqref{all_err_est_1}--\eqref{all_est_2}, we conclude that 
\begin{equation*}
\left\| (\varPi_{\Delta x, \Delta t}U)(\cdot, t) - u(\cdot, t) \right\|_{L^{\infty}(0,L)} \leq (C_{7} + C_{8}) ( (\Delta x)^{2} + (\Delta t)^{2} ) \quad \mbox{for\ all\ } t \in [0,T]. 
\end{equation*}
This completes the proof. \hfill $\Box$\\
\end{proof}

\section{Computation examples}
In this section, we demonstrate through computation examples that the numerical solution of our proposed scheme is efficient and that the scheme inherits the conservative property and the dissipative property from the original problem in a discrete sense. 
Also, we compare our scheme with the previous structure-preserving scheme proposed by Fukao--Yoshikawa--Wada \cite{A}. 
Throughout the computation examples, we consider the double-well potential $F(s) = (1/4)s^{4} - (1/2)s^{2}$. 
In the same manner as Section 5, we use the following notation $T = N\Delta t$. 

\subsection{Computation example 1}
As the initial condition, we consider 
\begin{equation*}
u(x,0) = u_{0}(x) = 0.01\cos\left( \frac{\pi}{2}x \right).  
\end{equation*}
We choose $N=20000$ and fix $T=400$ so that $\Delta t = 1/50$. 
Also, we choose $K=40$ and fix $L=20$ so that $\Delta x = 1/2$. 
Besides, we fix the parameter $\gamma = 2.0$. 
Figure \ref{fig:Our1} shows the time development of the solution obtained by our proposed structure-preserving scheme. 
Figure \ref{fig:FYW1} shows the one by the previous structure-preserving scheme proposed by Fukao--Yoshikawa--Wada. 
\begin{figure}[H]
 \begin{minipage}{0.495\hsize}
  \begin{center}
   \includegraphics[width=78mm]{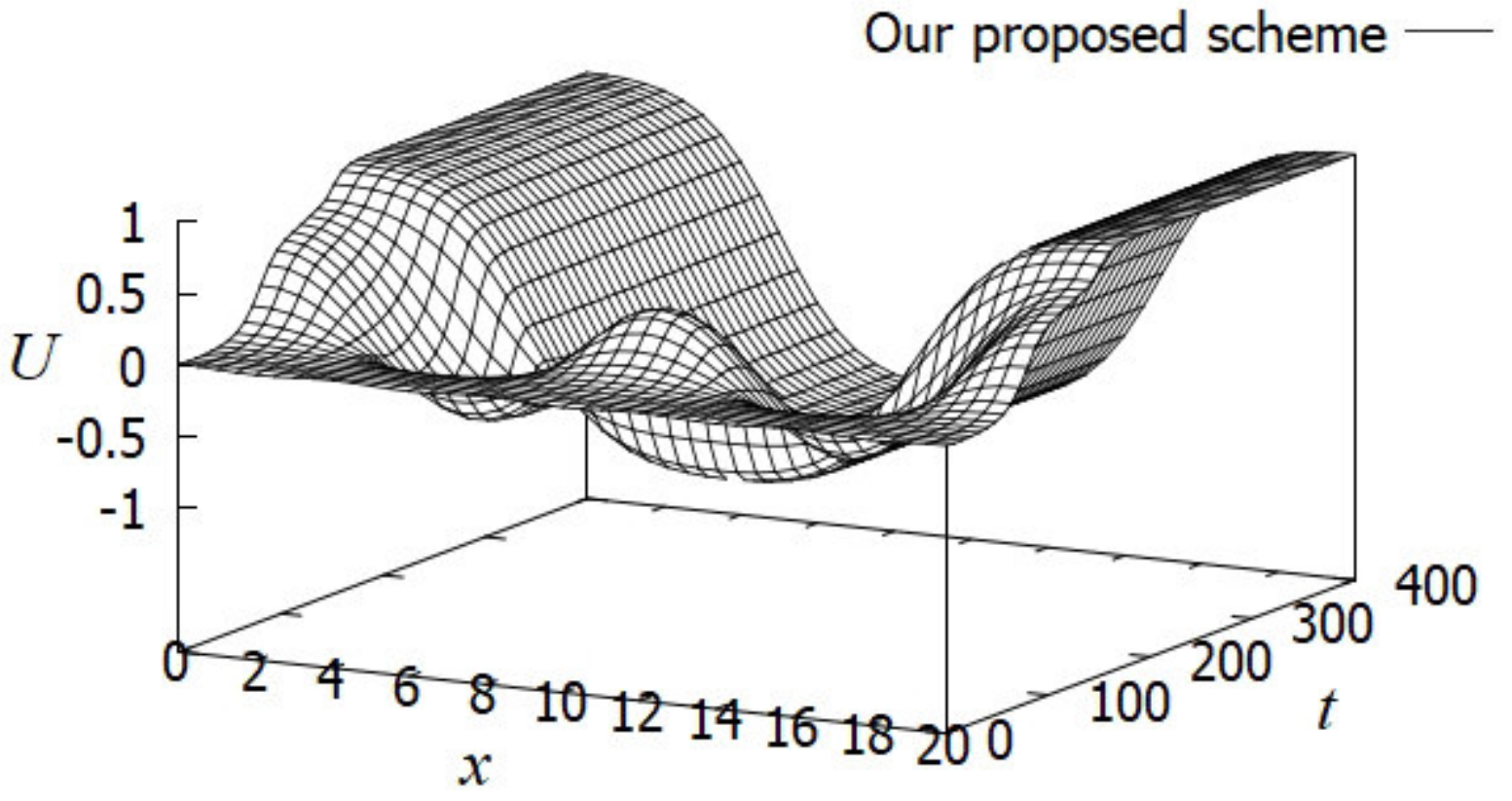} \vspace{-2mm}
  \end{center}
  \caption{Numerical solution by our scheme with $\Delta x = 1/2$.}
  \label{fig:Our1}
 \end{minipage}
 \begin{minipage}{0.495\hsize}
  \begin{center}
   \includegraphics[width=78mm]{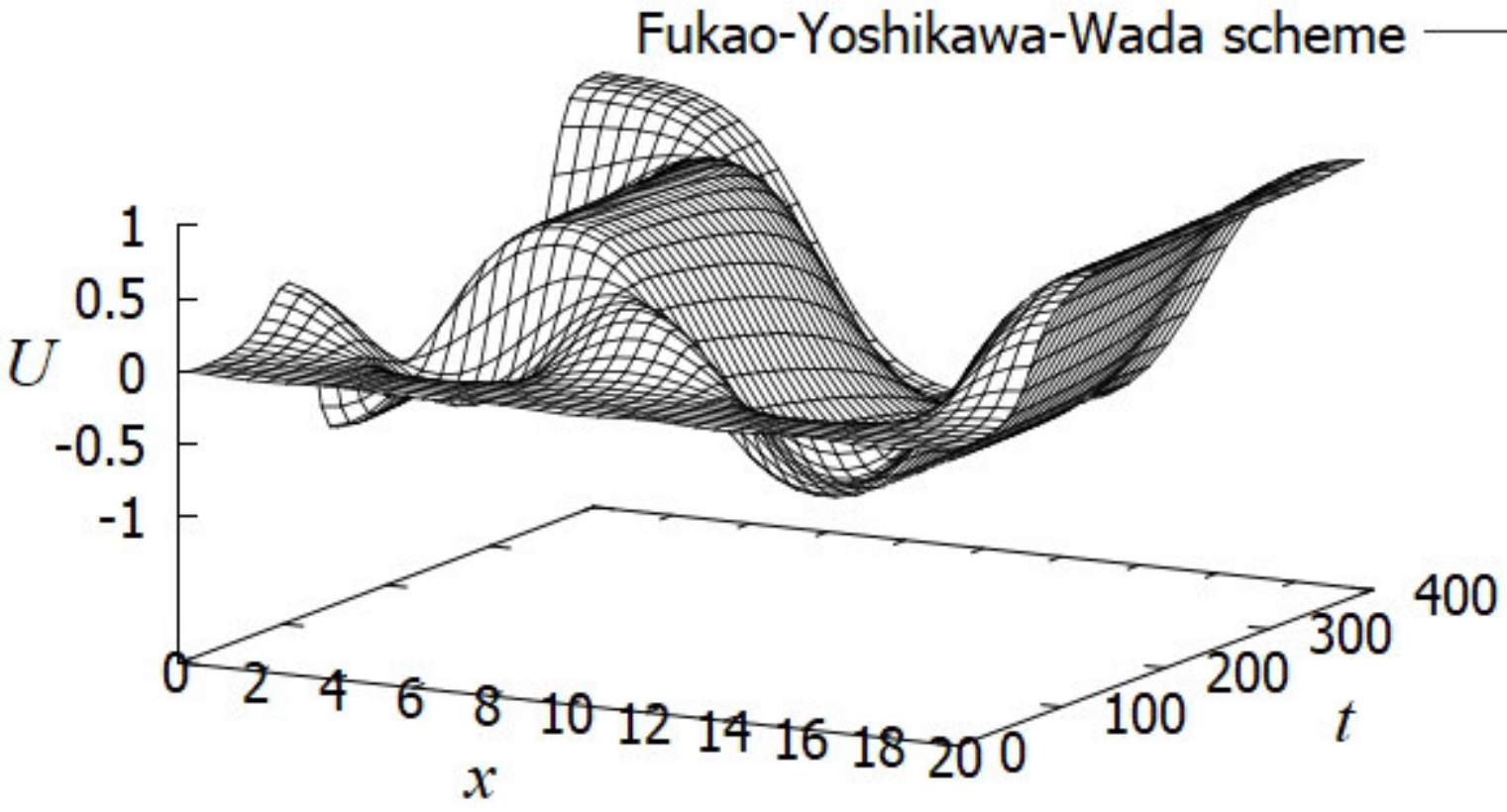} \vspace{-2mm}
  \end{center}
  \caption{Numerical solution by Fukao--Yoshikawa--Wada scheme with $\Delta x = 1/2$.}
  \label{fig:FYW1}
 \end{minipage}
\end{figure}
\noindent 
The behavior of the solution obtained by our scheme is different from the one by the Fukao--Yoshikawa--Wada scheme.
In order to analyze the difference in these results, we refine the space mesh size. 
Specifically, in the following results, we choose $K=800$ so that $\Delta x = 1/40$. 
In this case, the result of the Fukao--Yoshikawa--Wada scheme improves. 
Figure \ref{fig:Our2} shows the time development of the solution obtained by our scheme. 
Also, Figure \ref{fig:FYW2} shows the one by the Fukao--Yoshikawa--Wada scheme. 
Both results are similar to the result obtained by our scheme with $\Delta x = 1/2$. 
Note that we can obtain a valid numerical solution by our proposed scheme even when the space mesh size $\Delta x$ is coarse. 
\begin{figure}[H]
 \begin{minipage}{0.495\hsize}
  \begin{center}
   \includegraphics[width=78mm]{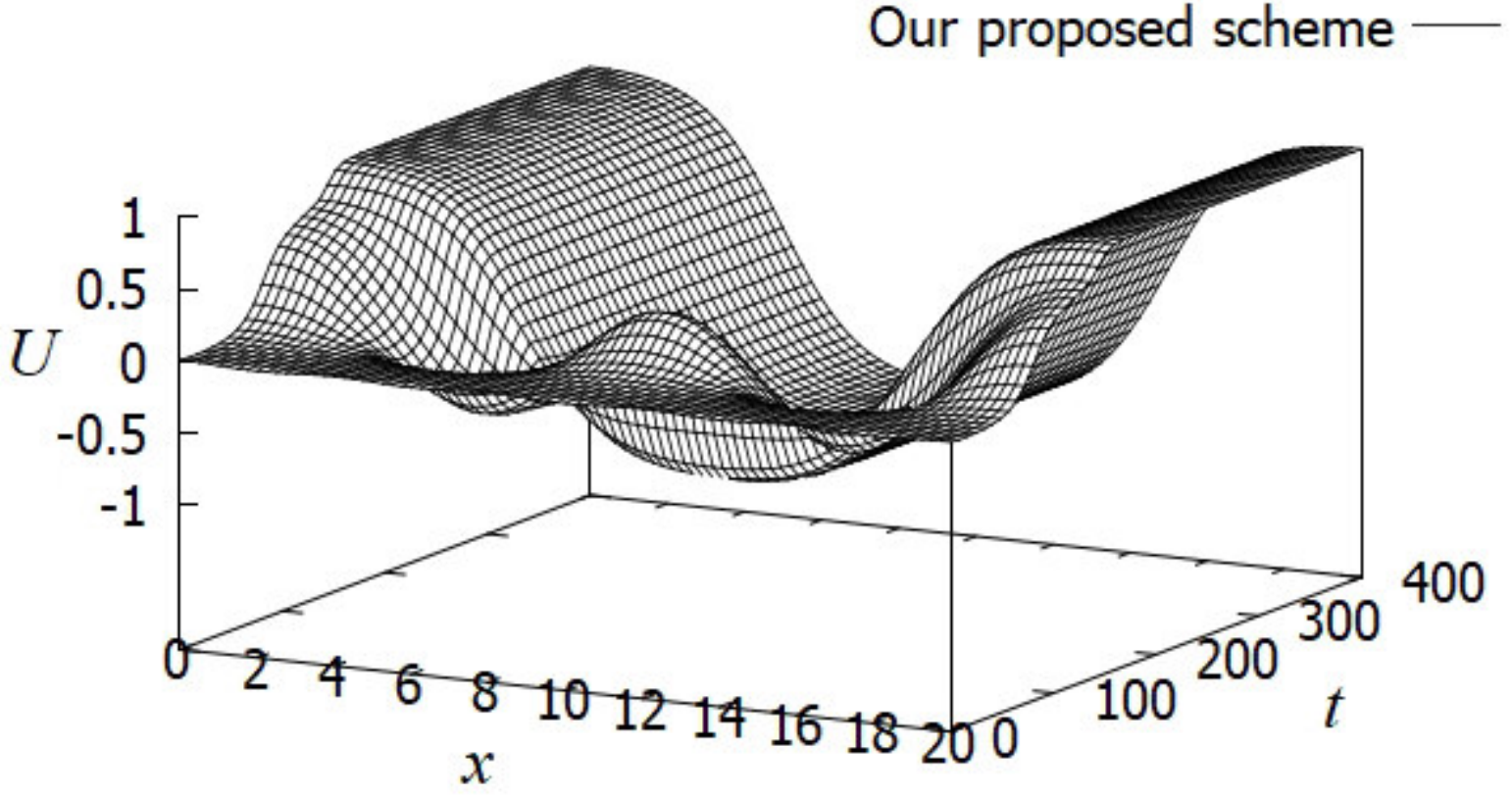} \vspace{-2mm}
  \end{center}
  \caption{Numerical solution by our scheme with $\Delta x = 1/40$.}
  \label{fig:Our2}
 \end{minipage}
 \begin{minipage}{0.495\hsize}
  \begin{center}
   \includegraphics[width=78mm]{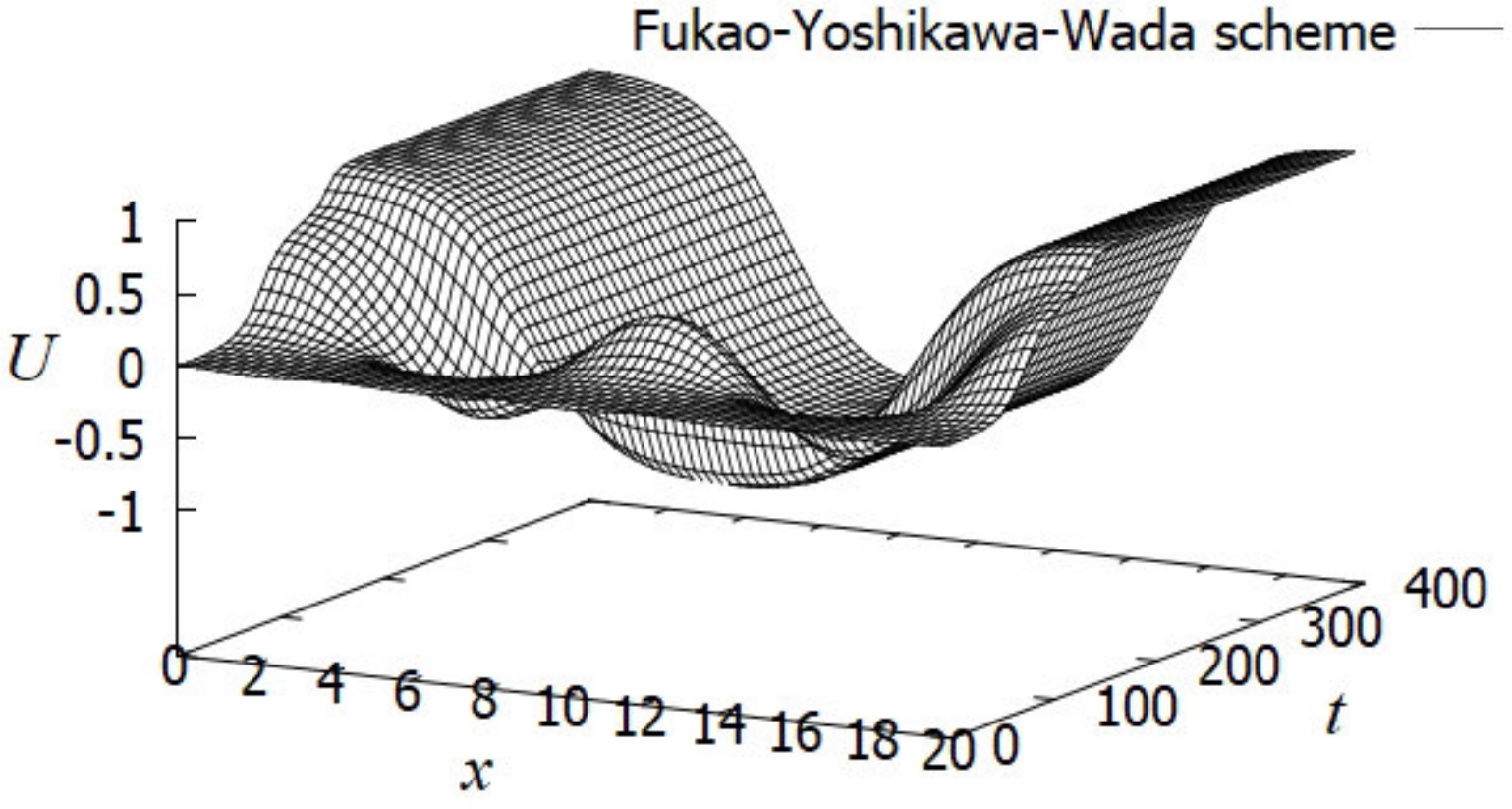} \vspace{-2mm}
  \end{center}
  \caption{Numerical solution by Fukao--Yoshikawa--Wada scheme with $\Delta x = 1/40$.}
  \label{fig:FYW2}
 \end{minipage}
\end{figure}
Next, we confirm the conservative property and the dissipative property. 
Figure \ref{fig:mass1} shows the time development of $M_{\rm d}(\bm{U}^{(n)})$ obtained by our scheme with $\Delta x = 1/40$. 
Figure \ref{fig:energy1} shows the time development of $E_{\rm d}^{(n)} - J_{\rm d}(\bm{U}^{(0)})$ obtained by our scheme with $\Delta x = 1/40$, where 
\begin{equation*}
E_{\rm d}^{(n)} 
	:= J_{\rm d}( \bm{U}^{(n)} )  +  \sum_{l=0}^{n-1}\left\{ \gamma \left| \delta_{n}^{+}U_{0}^{( l )} \right|^{2}  + \gamma \left| \delta_{n}^{+}U_{K}^{( l )} \right|^{2} + \sum_{k=0}^{K-1}\left| \delta_{k}^{+} P_{k}^{( l )} \right|^{2} \Delta x \right\}\Delta t \quad (n=1,2, \ldots). 
\end{equation*}
We remark that the following equality holds from Theorem \ref{thm:2.1} (the dissipative property): \vspace{-1mm}
\begin{equation*}
E_{\rm d}^{(n)} = J_{\rm d}(\bm{U}^{(0)}) \quad (n=1,2,\ldots). \vspace{-5mm}
\end{equation*}
 
\begin{figure}[H]
 \begin{minipage}{0.495\hsize}
  \begin{center}
   \includegraphics[width=78mm]{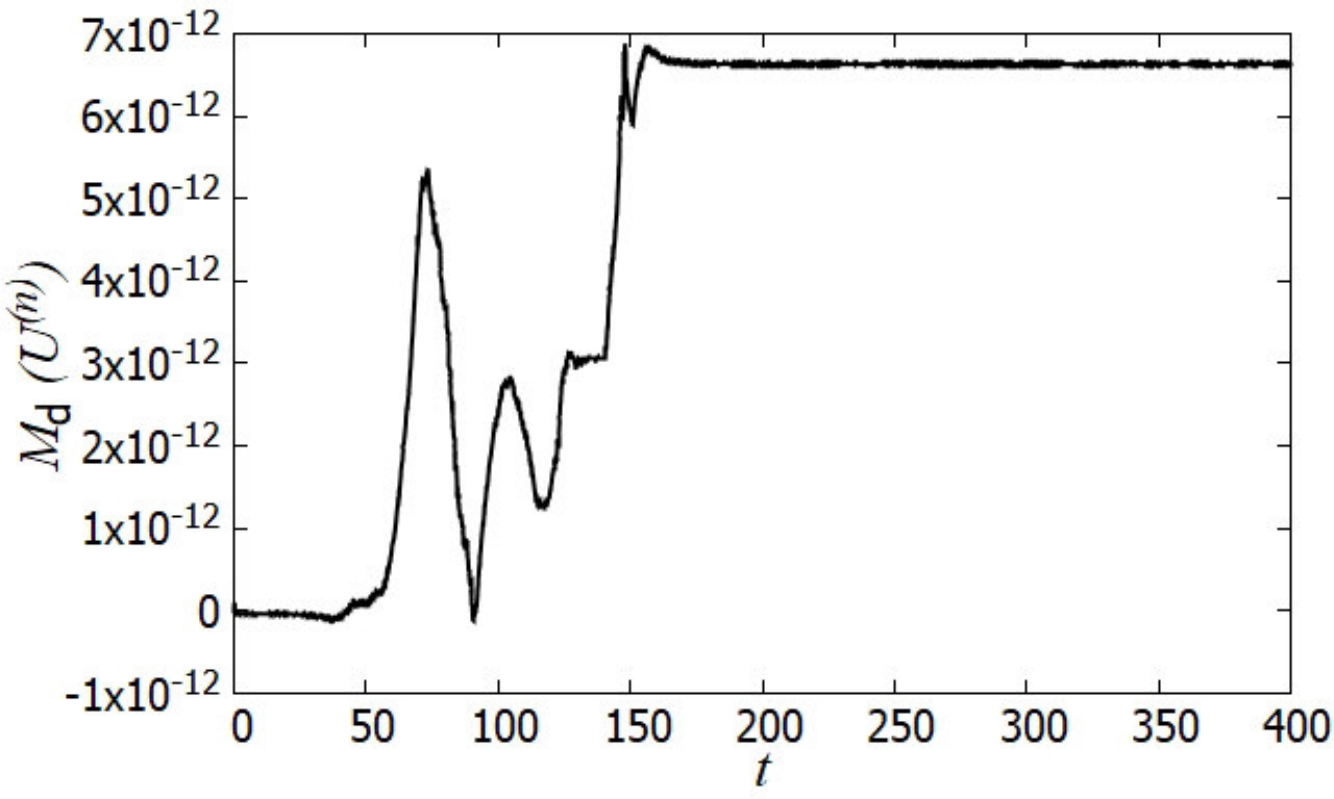} \vspace{-5mm} 
  \end{center}
  \caption{Time development of $M_{\rm d}(\bm{U}^{(n)})$ obtained by our scheme with $\Delta x = 1/40$. $M_{\rm d}(\bm{U}^{(n)})$ does not change by about $11$ orders of magnitude.}
  \label{fig:mass1}
 \end{minipage}
 \begin{minipage}{0.495\hsize}
  \begin{center}
   \includegraphics[width=78mm]{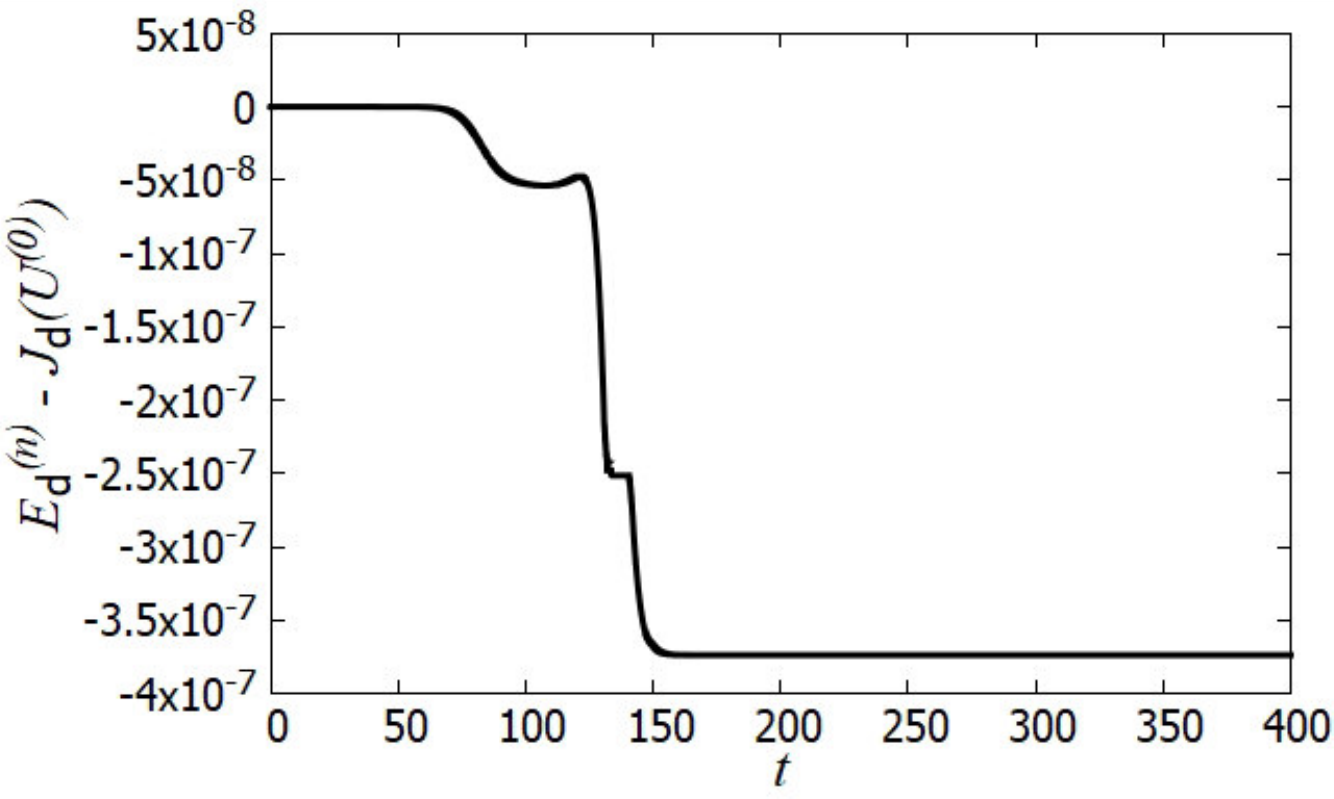} \vspace{-5mm} 
  \end{center}
  \caption{Time development of $E_{\rm d}^{(n)} - J_{\rm d}(\bm{U}^{(0)})$ obtained by our scheme with $\Delta x = 1/40$. $E_{\rm d}^{(n)}$ does not change by about $6$ orders of magnitude.}
  \label{fig:energy1}
 \end{minipage} \vspace{-2mm}
\end{figure}
\noindent 
These graphs show that the quantities $M_{\rm d}(\bm{U}^{(n)})$ and $E_{\rm d}^{(n)}$ are conserved numerically. 
More precisely, $M_{\rm d}(\bm{U}^{(n)})$ does not change by about $11$ orders of magnitude, and $E_{\rm d}^{(n)}$ does not change by about $6$ orders of magnitude. 

\subsection{Computation example 2}
As the initial condition, we consider \vspace{-0.5mm}
\begin{equation*}
u(x,0) = u_{0}(x) = 0.01\sin( 2 \pi x ) + 0.001\cos( 4 \pi x ) + 0.006\sin( 4 \pi x ) + 0.002\cos( 10 \pi x ). \vspace{-0.5mm}
\end{equation*}
We choose $N=50000$ and fix $T=1000$ so that $\Delta t = 1/50$. 
Also, we choose $K=250$ and fix $L=10$ so that $\Delta x = 1/25$. 
In addition, we fix the parameter $\gamma = 1.0$. 
Figure \ref{fig:Our3} shows the time development of the solution obtained by our scheme. 
Figure \ref{fig:FYW3} shows the one by the Fukao--Yoshikawa--Wada scheme. \vspace{-2mm}
\begin{figure}[H]
 \begin{minipage}{0.495\hsize}
  \begin{center}
   \includegraphics[width=78mm]{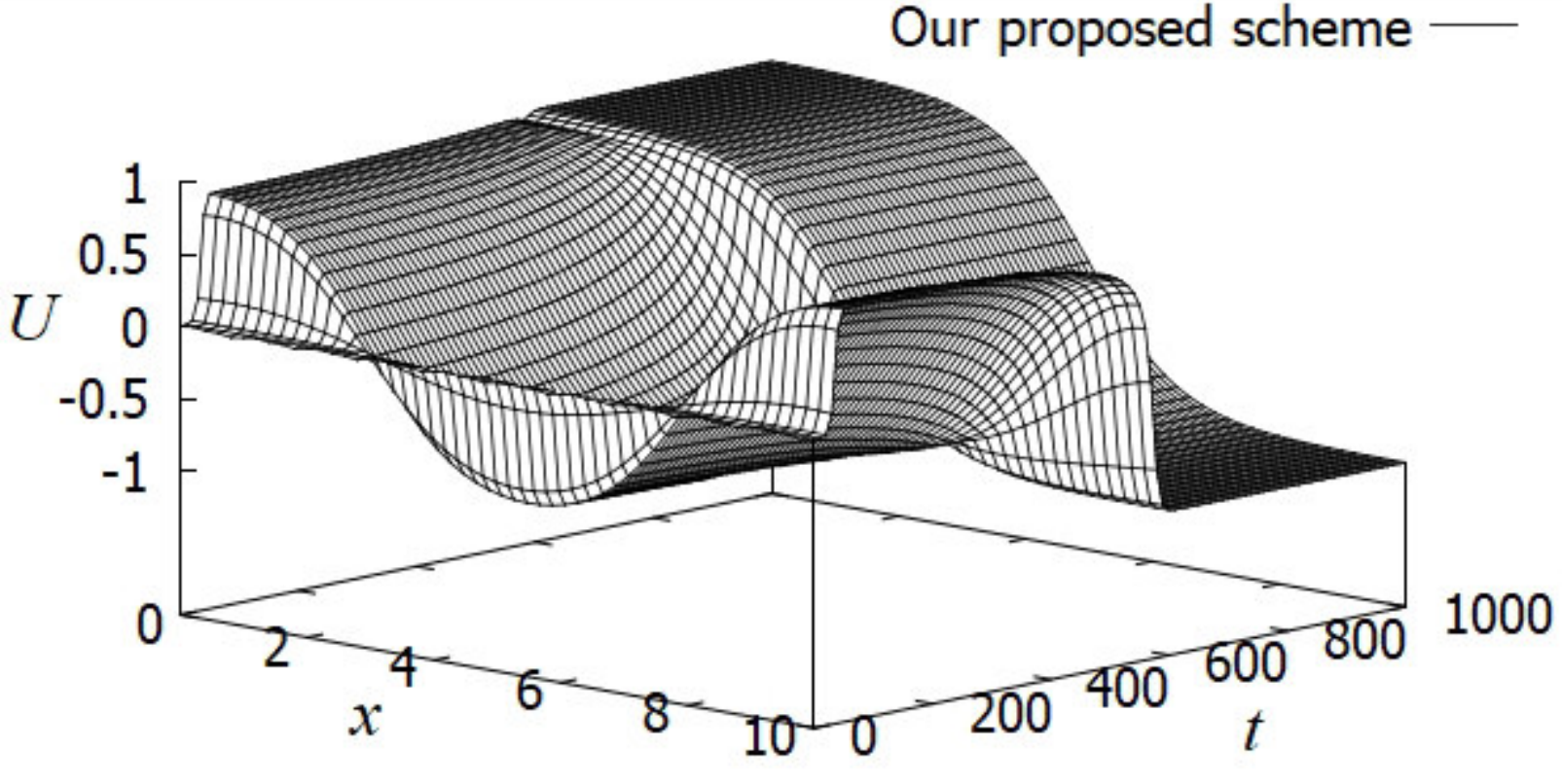} \vspace{-4mm}
  \end{center}
  \caption{Numerical solution by our scheme with $\Delta x = 1/25$.}
  \label{fig:Our3}
 \end{minipage}
 \begin{minipage}{0.495\hsize}
  \begin{center}
   \includegraphics[width=78mm]{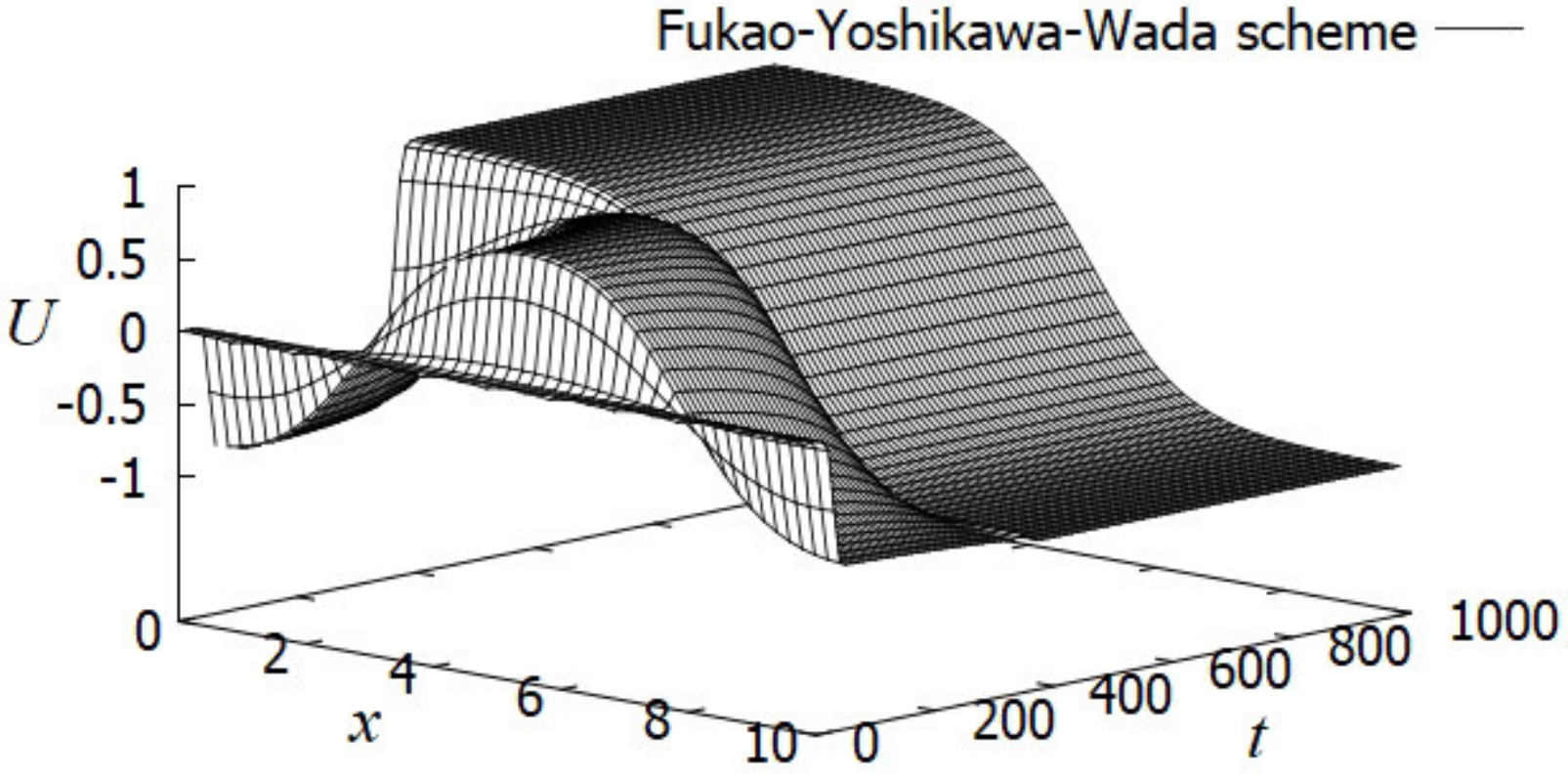} \vspace{-4mm} 
  \end{center}
  \caption{Numerical solution by Fukao--Yoshikawa--Wada scheme with $\Delta x = 1/25$.}
  \label{fig:FYW3}
 \end{minipage} \vspace{-2mm}
\end{figure}
\noindent
The behavior of the solution obtained by our scheme ranging from $t = 0$ to $t = 600$ is different from the one by the Fukao--Yoshikawa--Wada scheme.
In order to analyze the difference in these results, we refine the space mesh size. 
To be specific, in the following results, we choose $K=500$ so that $\Delta x = 1/50$. 
In this case, the result of the Fukao--Yoshikawa--Wada scheme improves, too. 
Also, we remark that we can obtain a valid numerical solution by our proposed scheme even when the space mesh size $\Delta x$ is coarse. \vspace{-2mm}
\begin{figure}[H]
 \begin{minipage}{0.495\hsize}
  \begin{center}
   \includegraphics[width=78mm]{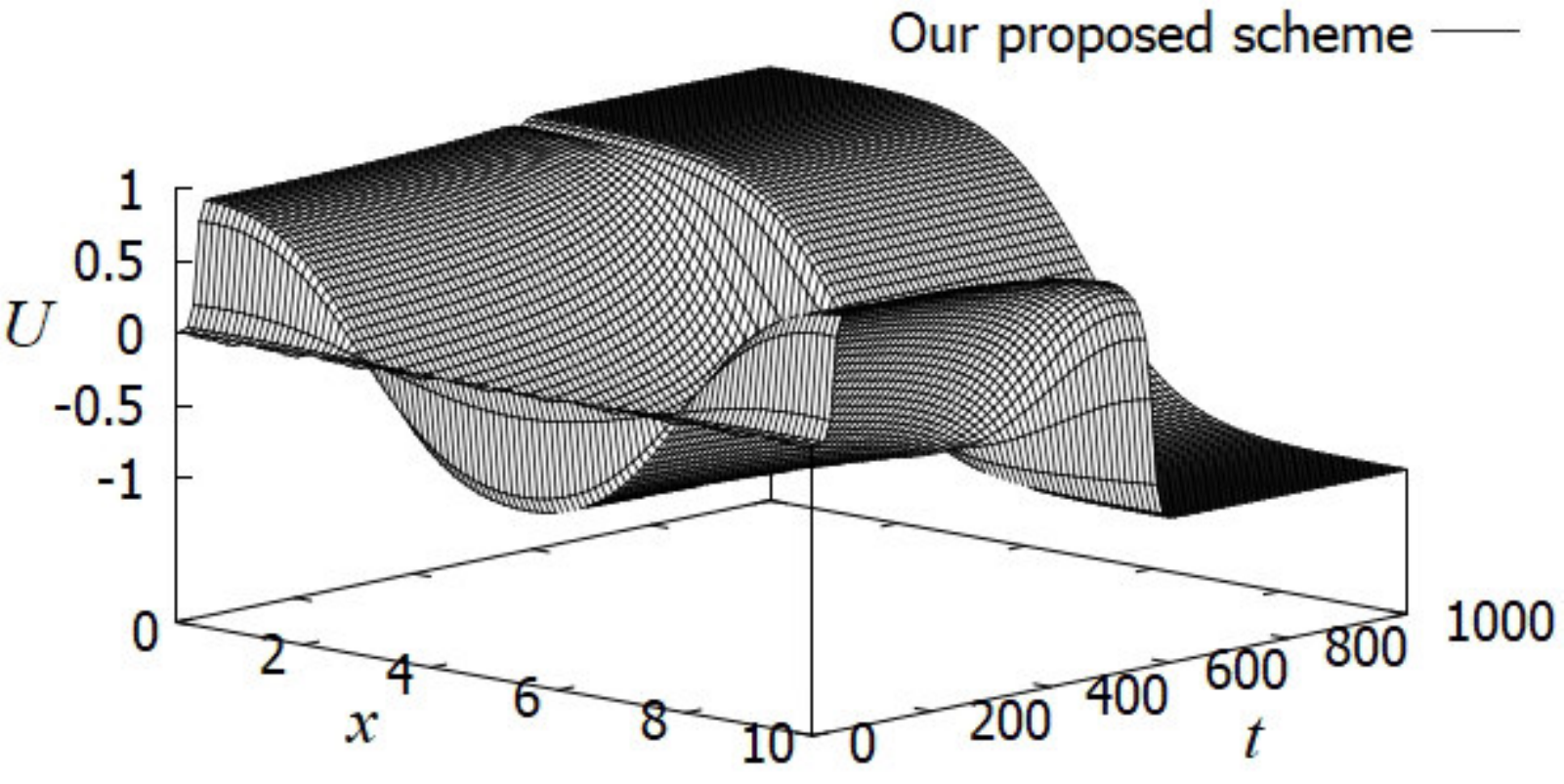} \vspace{-4mm} 
  \end{center}
  \caption{Numerical solution by our scheme with $\Delta x = 1/50$.}
  \label{fig:Our4}
 \end{minipage}
 \begin{minipage}{0.495\hsize}
  \begin{center}
   \includegraphics[width=78mm]{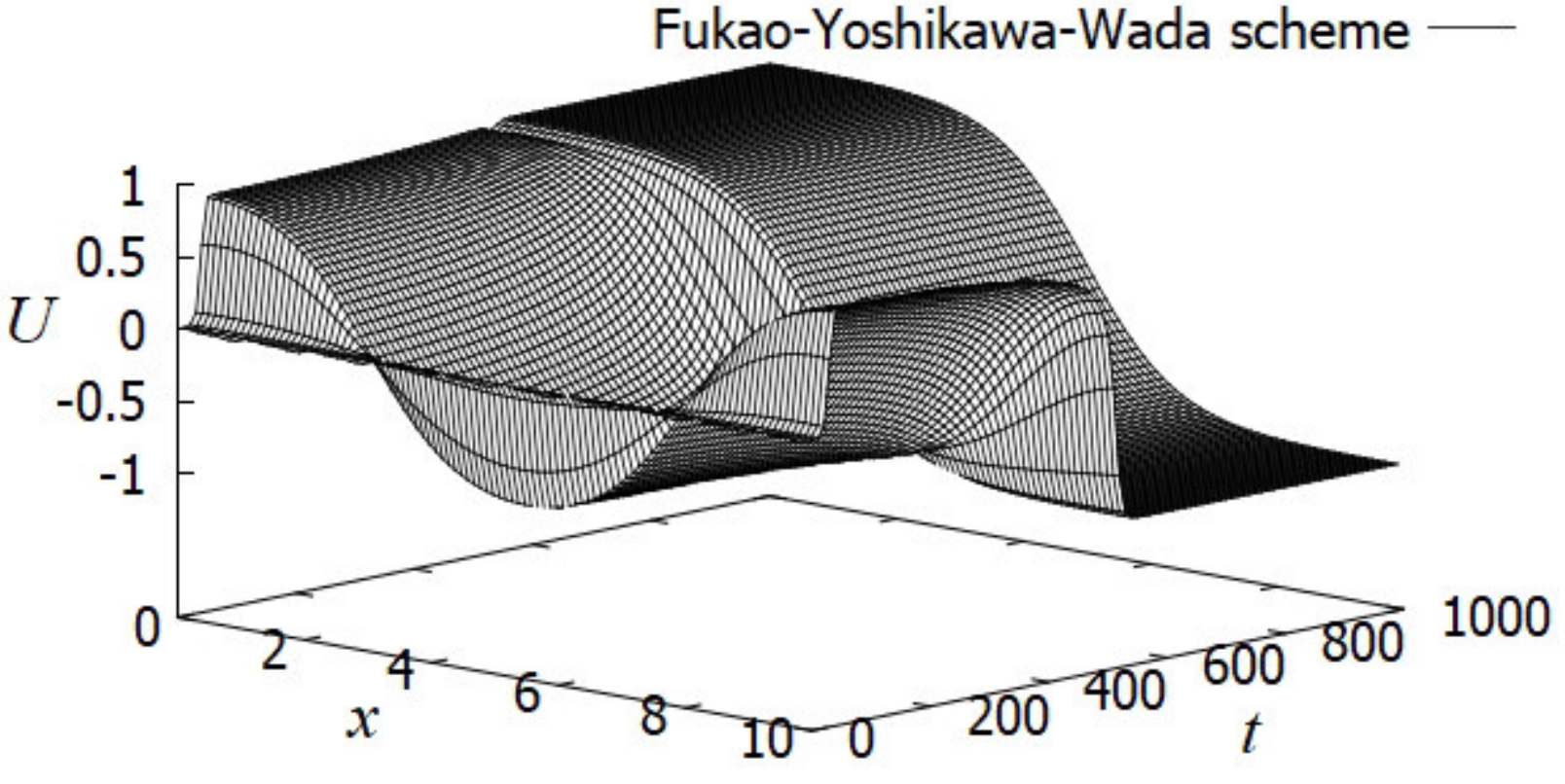} \vspace{-4mm}
  \end{center}
  \caption{Numerical solution by Fukao--Yoshikawa--Wada scheme with $\Delta x = 1/50$.}
  \label{fig:FYW4}
 \end{minipage} \vspace{-2mm}
\end{figure} 
\noindent 
Figure \ref{fig:Our4} shows the time development of the solution obtained by our scheme. 
Also, Figure \ref{fig:FYW4} shows the one by the Fukao--Yoshikawa--Wada scheme. 
Both results are similar to the result obtained by our scheme with $\Delta x = 1/25$. 
Hence, as can be seen from Figures \ref{fig:Our1}--\ref{fig:FYW2} and Figures \ref{fig:Our3}--\ref{fig:FYW4}, we expect that the solution obtained by our proposed scheme is more reliable than that by the Fukao--Yoshikawa--Wada scheme when the space mesh size is coarse.  

Next, we confirm the conservative property and the dissipative property. 
Figure \ref{fig:mass2} shows the time development of $M_{\rm d}(\bm{U}^{(n)})$ obtained by our scheme with $\Delta x = 1/50$. 
Figure \ref{fig:energy2} shows the time development of $E_{\rm d}^{(n)} - J_{\rm d}(\bm{U}^{(0)})$ obtained by our scheme with $\Delta x = 1/50$. \vspace{-2mm}
\begin{figure}[H]
 \begin{minipage}{0.495\hsize}
  \begin{center}
   \includegraphics[width=78mm]{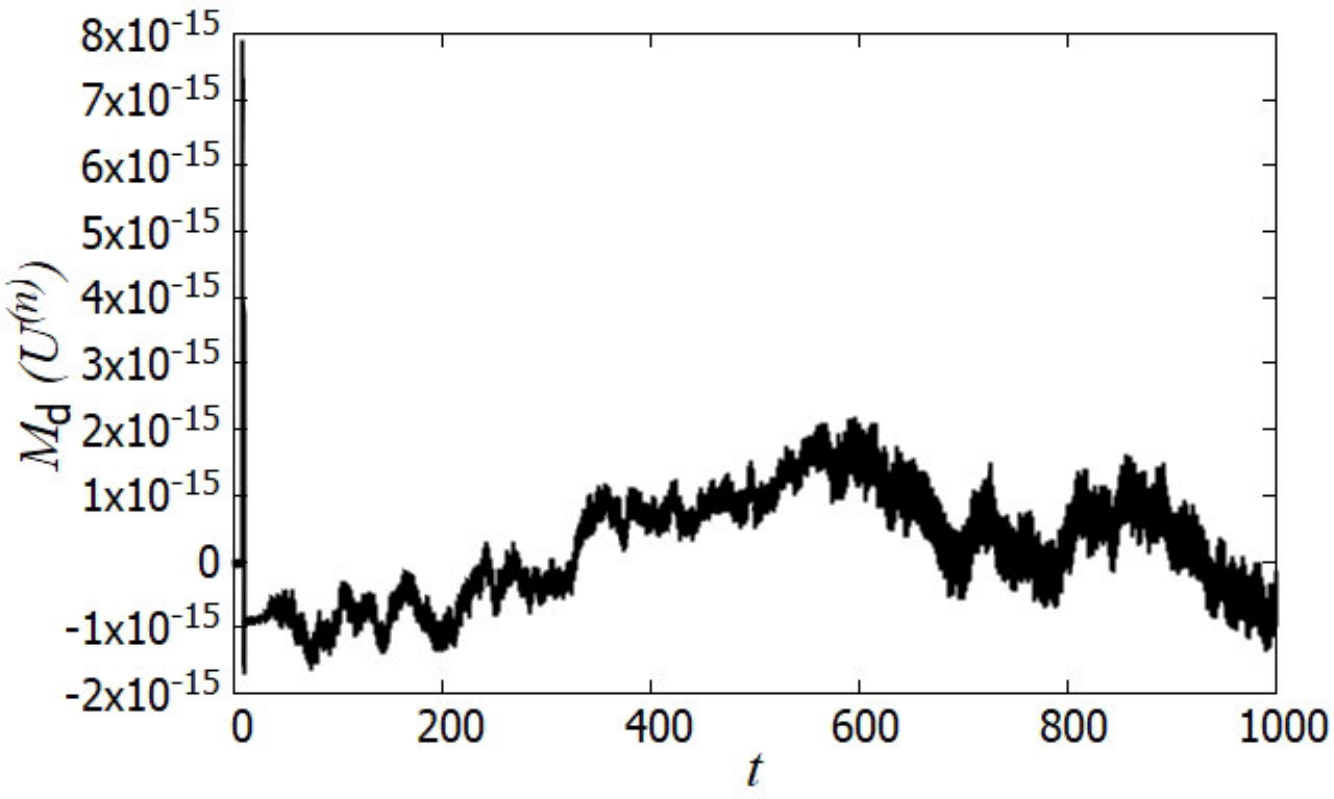} \vspace{-4mm}
  \end{center}
  \caption{Time development of $M_{\rm d}(\bm{U}^{(n)})$ obtained by our scheme with $\Delta x = 1/50$. $M_{\rm d}(\bm{U}^{(n)})$ does not change by about $14$ orders of magnitude.}
  \label{fig:mass2}
 \end{minipage}
 \begin{minipage}{0.495\hsize}
  \begin{center}
   \includegraphics[width=78mm]{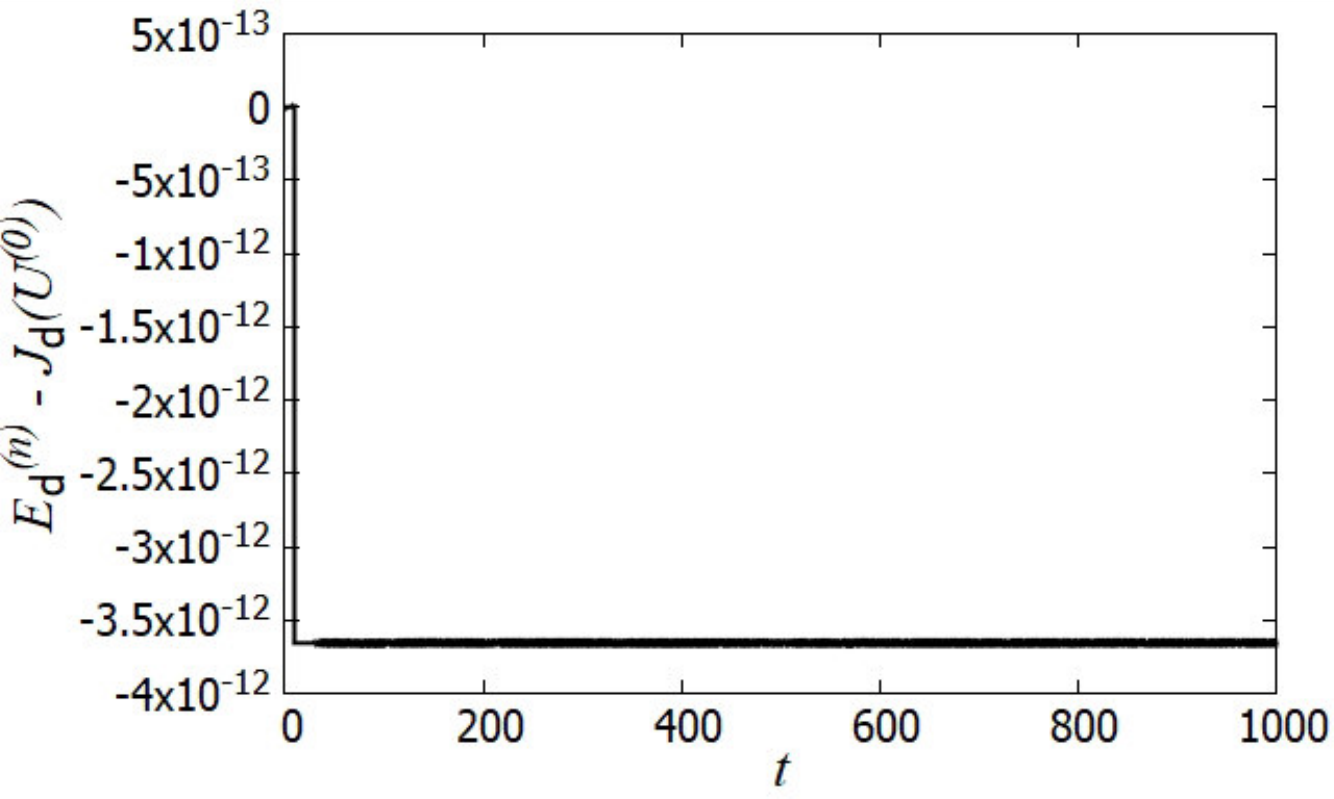} \vspace{-4mm}
  \end{center}
  \caption{Time development of $E_{\rm d}^{(n)} - J_{\rm d}(\bm{U}^{(0)})$ obtained by our scheme with $\Delta x = 1/50$. $E_{\rm d}^{(n)}$ does not change by about $11$ orders of magnitude.}
  \label{fig:energy2}
 \end{minipage} \vspace{-2mm}
\end{figure}
\noindent 
These graphs show that the quantities $M_{\rm d}(\bm{U}^{(n)})$ and $E_{\rm d}^{(n)}$ are conserved numerically. 
More precisely, $M_{\rm d}(\bm{U}^{(n)})$ does not change by about $14$ orders of magnitude, and $E_{\rm d}^{(n)}$ does not change by about $11$ orders of magnitude. 

\subsection{Computation example 3}
We consider the following dynamic boundary condition for the order parameter $u$: 
\begin{align}
\begin{cases}
\varepsilon_{\rm ex}\partial_{t}u(0,t) = \left. \partial_{x}u(x,t)\right|_{x=0} & {\rm in} \ (0,\infty), \\
\varepsilon_{\rm ex}\partial_{t}u(L,t) = -\left. \partial_{x}u(x,t)\right|_{x=L} & {\rm in} \ (0,\infty),  
\end{cases} \label{DBC'}
\end{align}
where $\varepsilon_{\rm ex}$ is a positive constant. 
For the chemical potential $p$, we consider the same homogeneous Neumann boundary condition as before. 
In this computation example, we fix $\varepsilon_{\rm ex} = 1000$. 
We consider 
\begin{equation*}
u(x,0) = u_{0}(x) = 0.05\sin( 2\pi x ) 
\end{equation*}
as the initial condition. 
We choose $K=50$ and fix $L=1$ so that $\Delta x = 1/50$. 
Also, we choose $N=500000$ and fix $T=1000$ so that $\Delta t = 1/500$. 
Besides, we fix the parameter $\gamma = 0.001$. 
Figure \ref{fig:Our5} shows the time development of the solution obtained by our scheme. \vspace{-2mm}
\begin{figure}[H]
  \begin{center}
   \includegraphics[width=78mm]{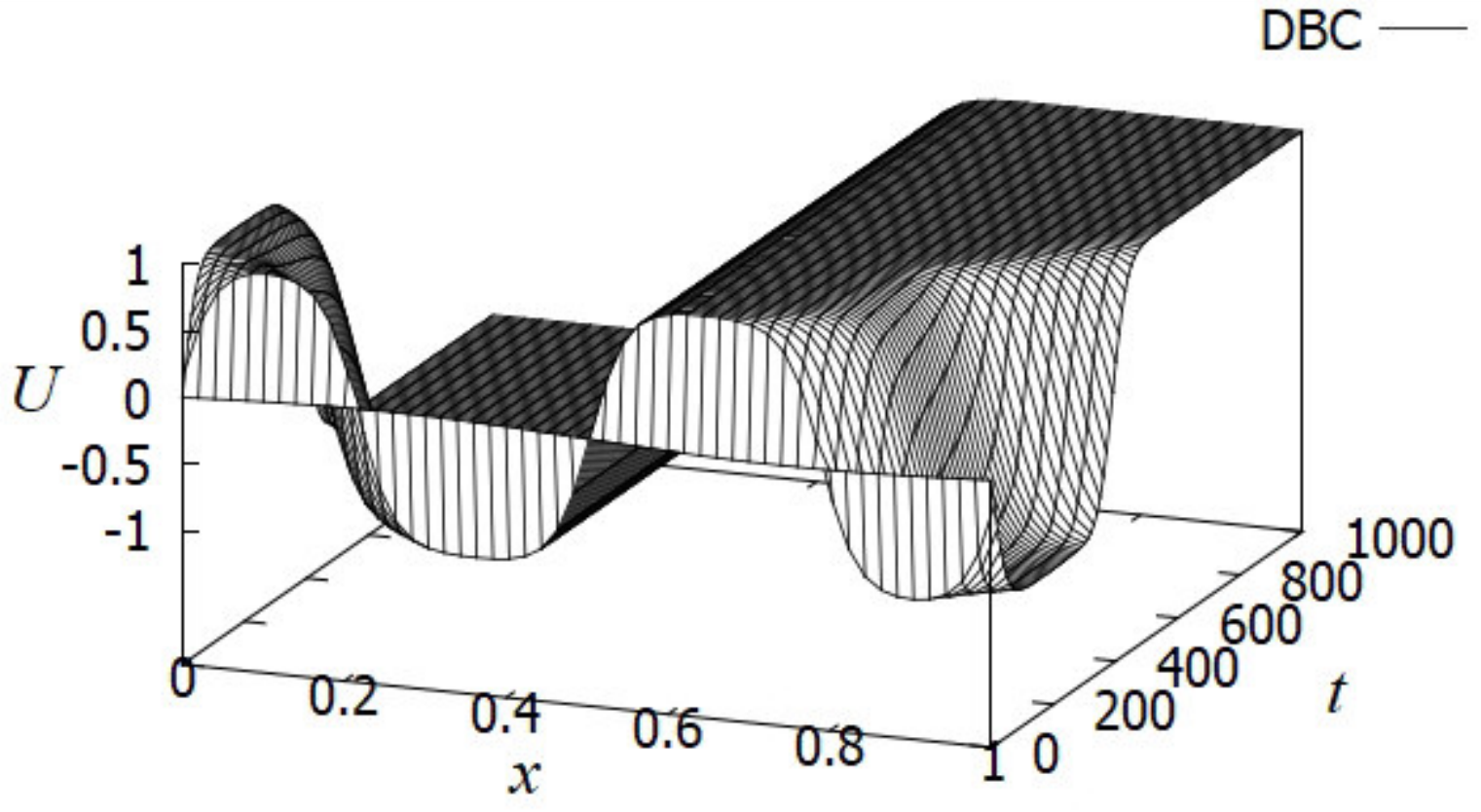} \vspace{-3mm}
  \end{center}
  \caption{Numerical solution to \eqref{Eq_i}--\eqref{Eq_ii} with \eqref{BCii} and \eqref{DBC'} obtained by our scheme.}
  \label{fig:Our5} \vspace{-2mm}
\end{figure}
\noindent 
Since there is a term of the time derivative on the boundary, it is natural that the long-time behavior of the solution to \eqref{Eq_i}--\eqref{Eq_ii} with \eqref{BCii} and \eqref{DBC'} may differ from that to \eqref{Eq_i}--\eqref{Eq_ii} with the homogeneous Neumann boundary conditions for the order parameter and the chemical potential. 
In order to assure that the difference occurs, we present the computation example of our structure-preserving scheme for \eqref{Eq_i}--\eqref{Eq_ii} with the Neumann boundary conditions (see Appendix D for details). 

Next, we confirm the conservative property and the dissipative property. 
Figure \ref{fig:mass3} shows the time development of $M_{\rm d}(\bm{U}^{(n)})$ obtained by our scheme. 
Figure \ref{fig:energy3} shows the time development of $E_{\rm d}^{(n)} - J_{\rm d}(\bm{U}^{(0)})$ obtained by our scheme. \vspace{-2mm}
\begin{figure}[H]
 \begin{minipage}{0.495\hsize}
  \begin{center}
   \includegraphics[width=78mm]{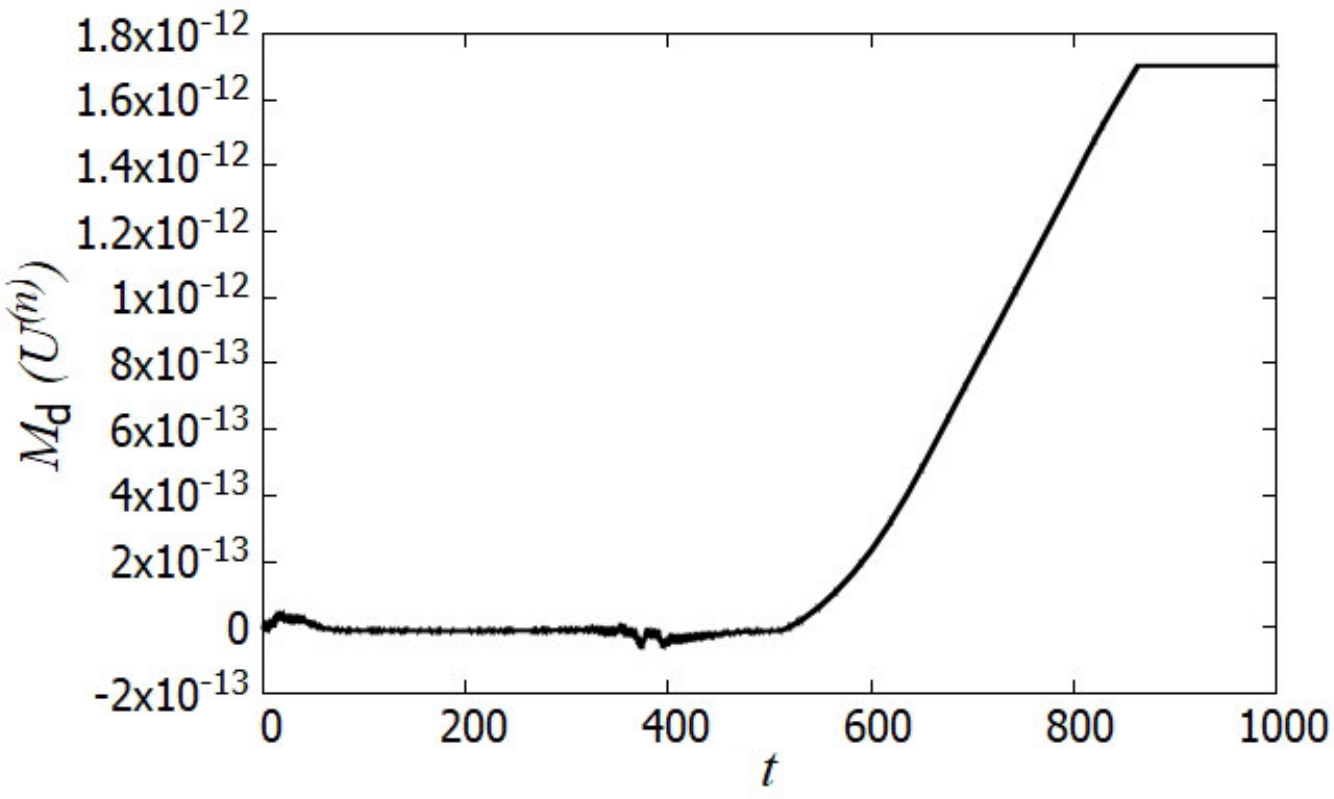} \vspace{-3mm}
  \end{center}
  \caption{Time development of $M_{\rm d}(\bm{U}^{(n)})$ obtained by our scheme. $M_{\rm d}(\bm{U}^{(n)})$ does not change by about $11$ orders of magnitude.}
  \label{fig:mass3}
 \end{minipage}
 \begin{minipage}{0.495\hsize}
  \begin{center}
   \includegraphics[width=78mm]{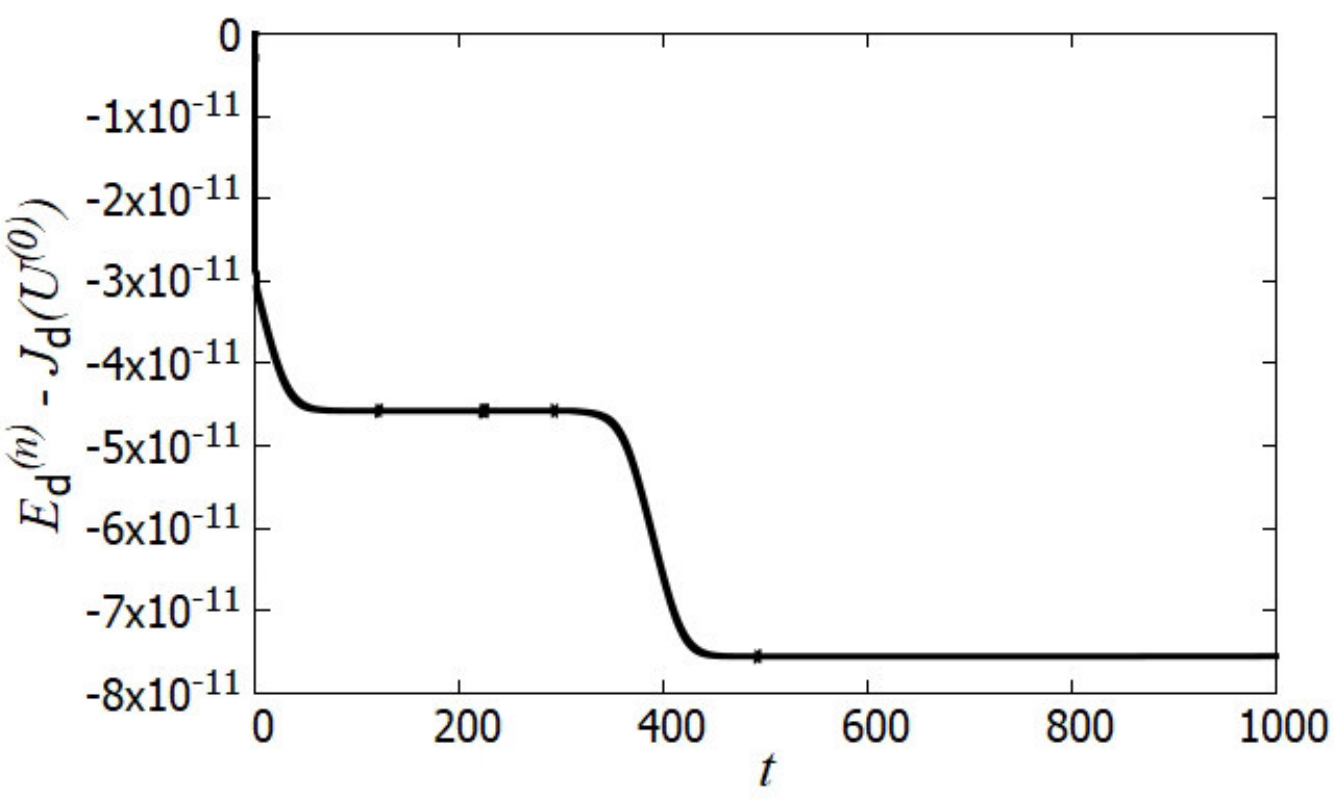} \vspace{-3mm}
  \end{center}
  \caption{Time development of $E_{\rm d}^{(n)} - J_{\rm d}(\bm{U}^{(0)})$ obtained by our scheme. $E_{\rm d}^{(n)}$ does not change by about $10$ orders of magnitude.}
  \label{fig:energy3}
 \end{minipage} \vspace{-2mm}
\end{figure}
\noindent 
These graphs show that the quantities $M_{\rm d}(\bm{U}^{(n)})$ and $E_{\rm d}^{(n)}$ are conserved numerically. 
More precisely, $M_{\rm d}(\bm{U}^{(n)})$ does not change by about $11$ orders of magnitude, and $E_{\rm d}^{(n)}$ does not change by about $10$ orders of magnitude. 
From the above, we can obtain the expected results.

\section{Summary}
We have proposed a structure-preserving finite difference scheme for the Cahn--Hilliard equation with a dynamic boundary condition using the discrete variational derivative method. 
By modifying the conventional manner and using an appropriate summation-by-parts formula, we can use a standard central difference operator as an approximation of an outward normal derivative on the boundary. 
Moreover, we have shown the stability, the solvability of the proposed scheme, and the error estimate. 
Especially, we have shown that our proposed scheme is second-order accurate in space, although the previous structure-preserving scheme by Fukao--Yoshikawa--Wada is first-order accurate in space. 
Also, computation examples have demonstrated the effectiveness of the proposed scheme. 
In particular, through computation examples, we have confirmed that we can obtain a valid numerical solution by our proposed scheme even when the space mesh size is coarse. 

\section*{Appendix A}
\renewcommand{\thelemma}{A.\arabic{lemma}} 
\renewcommand{\thecorollary}{A.\arabic{corollary}} 
\newtheorem*{Remarkap}{Remark} 
In this appendix A, we give a more precise evaluation of the discrete $L^\infty$-norm of the solution of our proposed scheme than Theorem \ref{thm:3.1} by evaluating errors of the discrete quantities when the initial data is sufficiently smooth.
Note that we use the same notations in Sections 1--3. 

\begin{lemma} \label{lem:a}
If $U_{k}^{(0)} = u_{0}(k\Delta x) \ (k = 0,\ldots, K)$ for a function $u_{0} \in C^{3}([0,L])$, then there exist constants $C_{J}, C_{M} >0$ independent of $\Delta x$ and $\Delta t$ such that \vspace{-1mm}
\begin{equation*}
\left| J_{\rm d}(\bm{U}^{(0)}) - J(u_{0}) \right| \leq C_{J}, \quad \left| M_{\rm d}(\bm{U}^{(0)}) - M(u_{0}) \right| \leq C_{M}. \vspace{-1mm}
\end{equation*}
\end{lemma}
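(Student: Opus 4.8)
The plan is to recognise both differences as quadrature errors and to exploit that $J_{\rm d}(\bm{U}^{(0)})$ and $M_{\rm d}(\bm{U}^{(0)})$ depend only on the samples $u_0(k\Delta x)$ of the initial datum; in particular neither depends on $\Delta t$, so the asserted $\Delta t$-independence is immediate. Since $\Delta x = L/K \le L$, it suffices to show that each difference is $O((\Delta x)^2)$ with a constant governed by sup-norms of the derivatives of $u_0$ and of $F$. The resulting error bounds are increasing in $\Delta x$, so they are majorised by their values at $\Delta x = L$, which furnishes the constants $C_J, C_M$.

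First I would dispatch the mass. Here $M_{\rm d}(\bm{U}^{(0)}) = \sum_{k=0}^{K}{}^{\prime\prime}u_0(k\Delta x)\Delta x$ is exactly the composite trapezoidal approximation of $M(u_0) = \int_0^L u_0\,dx$, so the classical estimate $|M_{\rm d}(\bm{U}^{(0)}) - M(u_0)| \le (L(\Delta x)^2/12)\max_{[0,L]}|u_0''|$ applies (only $u_0 \in C^2$ is needed here). The potential part of the energy is handled identically: rewriting $J_{\rm d}$ via Lemma \ref{lem:2.5}, its second term $\sum_{k=0}^{K}{}^{\prime\prime}F(u_0(k\Delta x))\Delta x$ is the trapezoidal rule for $\int_0^L F(u_0)\,dx$, and since $F\circ u_0 \in C^2$ (as $F \in C^2$ and $u_0 \in C^3$), the same bound with $\max_{[0,L]}|(F\circ u_0)''|$ in place of $\max|u_0''|$ controls it.

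The main obstacle is the gradient term $\sum_{k=0}^{K-1}\tfrac{\gamma}{2}(\delta_k^{+}u_0(k\Delta x))^2\Delta x$ versus $\int_0^L \tfrac{\gamma}{2}|\partial_x u_0|^2\,dx$, because the scheme samples the forward difference quotient at the grid point $k\Delta x$ rather than the exact derivative there, so a naive left-endpoint comparison aligns poorly with the integral. The key observation is that $\delta_k^{+}u_0(k\Delta x)$ is a \emph{symmetric} approximation of $\partial_x u_0$ at the interval midpoint $x_{k+\frac12} := (k+\tfrac12)\Delta x$: a Taylor expansion about $x_{k+\frac12}$ gives $\delta_k^{+}u_0(k\Delta x) = \partial_x u_0(x_{k+\frac12}) + O((\Delta x)^2)$, where the quadratic remainder is controlled by $\max_{[0,L]}|\partial_x^3 u_0|$, and this is exactly where the $C^3$ hypothesis enters. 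Squaring and using that $\partial_x u_0$ and $\delta_k^{+}u_0$ are bounded by $\max|\partial_x u_0|$ (up to a lower-order term), the sum becomes the composite midpoint rule $\sum_{k=0}^{K-1}(\partial_x u_0(x_{k+\frac12}))^2\Delta x$ for $\int_0^L|\partial_x u_0|^2\,dx$ up to an $O((\Delta x)^2)$ error; since $(\partial_x u_0)^2 \in C^2$ (again by $u_0 \in C^3$), the midpoint quadrature error is itself $O((\Delta x)^2)$. Combining the two estimates bounds the gradient term by a constant multiple of $(\Delta x)^2$.

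Finally I would sum the three pieces and replace every occurrence of $(\Delta x)^2$ by its upper bound $L^2$, which yields an explicit $C_J$ (a weighted combination of $\max|\partial_x^3 u_0|$, $\max|\partial_x u_0|$, and $\max|(F\circ u_0)''|$ with powers of $L$ and $\gamma$) together with $C_M = (L^3/12)\max_{[0,L]}|u_0''|$, both independent of $\Delta x$ and $\Delta t$, completing the proof.
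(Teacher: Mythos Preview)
Your argument is correct, and in fact yields a sharper rate ($O((\Delta x)^2)$ throughout) than the paper's proof. The two approaches differ mainly in the treatment of the gradient term $\sum_{k=0}^{K-1}\tfrac{\gamma}{2}(\delta_k^+ U_k^{(0)})^2\Delta x$. The paper does not exploit the midpoint structure: it inserts the trapezoidal sum $\sum_{k=0}^{K}{}^{\prime\prime}G(u_0(k\Delta x),\partial_x u_0(k\Delta x))\Delta x$ as an intermediate quantity, compares it to the integral by Euler--Maclaurin (this is where $C^3$ enters, to make $G(u_0,\partial_x u_0)\in C^2$), and then compares it to $J_{\rm d}(\bm U^{(0)})$ by Taylor-expanding $\delta_k^{\pm}u_0(k\Delta x)$ about the \emph{left/right endpoints}, which gives only a first-order remainder $\tfrac{\Delta x}{2}\partial_x^2 u_0$. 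After summing and bounding $\Delta x\le L$, this yields $C_J = \tfrac{L^2}{8}\int_0^L|\partial_x^2 G(u_0,\partial_x u_0)|\,dx + \tfrac{\gamma L^2}{2}(A_1A_2+\tfrac{L}{4}A_2^2)$ with $A_i=\max|\partial_x^i u_0|$. Your route---recognising $\delta_k^+ u_0(k\Delta x)$ as a second-order approximation at $x_{k+1/2}$ and then invoking the composite midpoint rule on $(\partial_x u_0)^2$---is a bit more delicate but buys the uniform $O((\Delta x)^2)$ estimate; the paper's endpoint expansion is cruder but entirely sufficient since the lemma only asks for a constant bound.
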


\begin{proof}
From the triangle inequality, we see that 
\begin{align}
\left| J_{\rm d}(\bm{U}^{(0)}) - J(u_{0}) \right| 
	& = \left| \frac{1}{2}\left(\sum_{k=0}^{K-1}G_{{\rm d}, k}^{+}(\bm{U}^{(0)} )\Delta x + \sum_{k=1}^{K}G_{{\rm d}, k}^{-}(\bm{U}^{(0)} )\Delta x \right) - \int_{0}^{L}G(u_{0},\partial_{x}u_{0})dx \right| \nonumber\\
	& \leq \left| \frac{1}{2}\!\left(\sum_{k=0}^{K-1}G_{{\rm d}, k}^{+}(\bm{U}^{(0)} )\Delta x + \sum_{k=1}^{K}G_{{\rm d}, k}^{-}(\bm{U}^{(0)} )\Delta x \!\right) \! - \! \sum_{k=0}^{K}{}^{\prime \prime}G(u_{0}(k\Delta x),\partial_{x}u_{0}(k\Delta x) )\Delta x \right| \nonumber\\
	& \quad + \left| \sum_{k=0}^{K}{}^{\prime \prime}G(u_{0}(k\Delta x),\partial_{x}u_{0}(k\Delta x) )\Delta x - \int_{0}^{L}G(u_{0},\partial_{x}u_{0})dx \right| . \label{En1}
\end{align} 
Since $G(u_{0},\partial_{x}u_{0}) \in C^{2}([0,L])$ from the assumption $u_{0} \in C^{3}([0,L])$, by using the Euler--Maclaurin summation formula and $\Delta x \leq L$, we estimate the second term on the right-hand side of \eqref{En1} as follows: 
\begin{align}
\left| \sum_{k=0}^{K}{}^{\prime \prime}G(u_{0}(k\Delta x),\partial_{x}u_{0}(k\Delta x) )\Delta x - \int_{0}^{L}G(u_{0},\partial_{x}u_{0})dx \right| 
	& \leq \frac{ (\Delta x)^{2} }{8}\int_{0}^{L}\left| \partial_{x}^{2}G(u_{0}, \partial_{x}u_{0})\right| dx \nonumber\\ 
	& \leq \frac{ L^{2} }{8}\int_{0}^{L}\left| \partial_{x}^{2}G(u_{0}, \partial_{x}u_{0})\right| dx. \label{second}
\end{align}
Next, we estimate the first term on the right-hand side of \eqref{En1}. 
By using Lemma \ref{lem:2.1}, we have 
\begin{align*}
& \left| \frac{1}{2}\!\left(\sum_{k=0}^{K-1}G_{{\rm d}, k}^{+}(\bm{U}^{(0)} )\Delta x + \sum_{k=1}^{K}G_{{\rm d}, k}^{-}(\bm{U}^{(0)} )\Delta x \!\right) \! - \! \sum_{k=0}^{K}{}^{\prime \prime}G(u_{0}(k\Delta x),\partial_{x}u_{0}(k\Delta x) )\Delta x \right| \\
	& \leq \frac{1}{2}\!\left( \sum_{k=0}^{K-1}\!\left| G_{{\rm d}, k}^{+}(\bm{U}^{(0)} ) \! - \! G(u_{0}(k\Delta x),\partial_{x}u_{0}(k\Delta x) ) \right| \Delta x + \sum_{k=1}^{K}\!\left| G_{{\rm d}, k}^{-}(\bm{U}^{(0)} ) \! - \! G(u_{0}(k\Delta x),\partial_{x}u_{0}(k\Delta x) ) \right| \Delta x \!\right). 
\end{align*}
We obtain from the assumption $U_{k}^{(0)} = u_{0}(k\Delta x) \ (k = 0,\ldots, K)$ that 
\begin{gather*}
G_{{\rm d}, k}^{+}(\bm{U}^{(0)} ) - G(u_{0}(k\Delta x),\partial_{x}u_{0}(k\Delta x) ) = \frac{\gamma}{2}\left( \left| \delta_{k}^{+}u_{0}(k\Delta x)\right|^{2} - \left| \partial_{x}u_{0}(k\Delta x) \right|^{2} \right) \quad (k=0, \ldots, K-1), \\
G_{{\rm d}, k}^{-}(\bm{U}^{(0)} ) - G(u_{0}(k\Delta x),\partial_{x}u_{0}(k\Delta x) ) = \frac{\gamma}{2}\left( \left| \delta_{k}^{-}u_{0}(k\Delta x)\right|^{2} - \left| \partial_{x}u_{0}(k\Delta x) \right|^{2} \right) \quad (k=1, \ldots, K). 
\end{gather*}
For $k=0, \ldots, K-1$, applying the Taylor theorem to $u_{0}$, there exists $\zeta_{1} \in (0,1)$ such that 
\begin{equation*}
\frac{ u_{0}( (k+1)\Delta x) - u_{0}(k\Delta x) }{\Delta x} = \partial_{x}u_{0}(k\Delta x) + \frac{\Delta x}{2}\partial_{x}^{2}u_{0}( (k+\zeta_{1})\Delta x). 
\end{equation*}
Similarly, for $1, \ldots, K$, there exists $\zeta_{2} \in (0,1)$ such that 
\begin{equation*}
\frac{ u_{0}(k\Delta x) - u_{0}( (k-1)\Delta x) }{\Delta x} = \partial_{x}u_{0}(k\Delta x) - \frac{\Delta x}{2}\partial_{x}^{2}u_{0}( (k-\zeta_{2})\Delta x). 
\end{equation*}
Hence, we have 
\begin{align*}
& G_{{\rm d}, k}^{+}(\bm{U}^{(0)} ) - G(u_{0}(k\Delta x),\partial_{x}u_{0}(k\Delta x) ) \\
	& = \frac{\gamma}{2}\left\{ \Delta x \partial_{x}u_{0}(k\Delta x)\partial_{x}^{2}u_{0}( (k+\zeta_{1})\Delta x) + \frac{ (\Delta x)^{2} }{4}\left( \partial_{x}^{2}u_{0}( (k+\zeta_{1})\Delta x) \right)^{2} \right\} \quad (k=0, \ldots, K-1), \\
& G_{{\rm d}, k}^{-}(\bm{U}^{(0)} ) - G(u_{0}(k\Delta x),\partial_{x}u_{0}(k\Delta x) ) \\
	& = -\frac{\gamma}{2}\left\{ \Delta x \partial_{x}u_{0}(k\Delta x)\partial_{x}^{2}u_{0}( (k-\zeta_{2})\Delta x) - \frac{ (\Delta x)^{2} }{4}\left( \partial_{x}^{2}u_{0}( (k-\zeta_{2})\Delta x) \right)^{2} \right\} \quad (k=1, \ldots, K).  
\end{align*}
Therefore, from $\Delta x \leq L$ and $K\Delta x = L$, we obtain that 
\begin{align*}
& \sum_{k=0}^{K-1}\!\left| G_{{\rm d}, k}^{+}(\bm{U}^{(0)} ) - G(u_{0}(k\Delta x),\partial_{x}u_{0}(k\Delta x) ) \right| \Delta x \\
	& \leq \frac{\gamma}{2}\left\{ \Delta x\sum_{k=0}^{K-1}\left| \partial_{x}u_{0}(k\Delta x)\partial_{x}^{2}u_{0}( (k+\zeta_{1})\Delta x) \right|\Delta x + \frac{ (\Delta x)^{2} }{4}\sum_{k=0}^{K-1}\left| \partial_{x}^{2}u_{0}( (k+\zeta_{1})\Delta x)\right|^{2}\Delta x \right\} \\
	& \leq \frac{\gamma L^{2} }{2}\left( A_{1}A_{2} + \frac{L}{4}A_{2}^{2} \right), 
\end{align*}
where $A_{i} := \max_{x \in [0,L]}| \partial_{x}^{i}u_{0}(x) | \ (i=1,2)$. 
Similarly, we have 
\begin{equation*}
\sum_{k=1}^{K}\!\left| G_{{\rm d}, k}^{-}(\bm{U}^{(0)} ) - G(u_{0}(k\Delta x),\partial_{x}u_{0}(k\Delta x) ) \right| \Delta x \leq \frac{\gamma L^{2} }{2}\left( A_{1}A_{2} + \frac{L}{4}A_{2}^{2} \right). 
\end{equation*}
Thus, we see that 
\begin{equation}
\left| \frac{1}{2}\!\left(\sum_{k=0}^{K-1}G_{{\rm d}, k}^{+}(\bm{U}^{(0)} )\Delta x + \sum_{k=1}^{K}G_{{\rm d}, k}^{-}(\bm{U}^{(0)} )\Delta x \!\right) \! - \! \sum_{k=0}^{K}{}^{\prime \prime}G(u_{0}(k\Delta x),\partial_{x}u_{0}(k\Delta x) )\Delta x \right|  
	\leq \frac{\gamma L^{2} }{2}\!\!\left(\! A_{1}A_{2} + \frac{L}{4}A_{2}^{2} \right). \label{first}
\end{equation}
From \eqref{En1}, \eqref{second}, and \eqref{first}, we conclude that 
\begin{equation}
\left| J_{\rm d}(\bm{U}^{(0)}) - J(u_{0}) \right| \leq \frac{ L^{2} }{8}\int_{0}^{L}\left| \partial_{x}^{2}G(u_{0}, \partial_{x}u_{0})\right| dx + \frac{\gamma L^{2} }{2}\!\left(\! A_{1}A_{2} + \frac{L}{4}A_{2}^{2} \right). \label{J_est}
\end{equation}
Also, from the Euler--Maclaurin summation formula, we obtain
\begin{equation}
\left| M_{\rm d}(\bm{U}^{(0)}) - M(u_{0}) \right| = \left| \sum_{k=0}^{K}{}^{\prime \prime}u_{0}(k\Delta x)\Delta x - \int_{0}^{L}u_{0}dx \right| \leq \frac{L^{2} }{8}\int_{0}^{L}\left| \partial_{x}^{2}u_{0}\right| dx. \label{M_est}
\end{equation}
The right-hand sides of \eqref{J_est} and \eqref{M_est} are the desired constants $C_{J}$ and $C_{M}$, respectively. \hfill $\Box$\\
\end{proof}
\noindent From Theorem \ref{thm:3.1} and Lemma \ref{lem:a}, we have the following corollary. 
\begin{corollary} 
If $U_{k}^{(0)} = u_{0}(k\Delta x) \ (k = 0,\ldots, K)$ for a function $u_{0} \in C^{3}([0,L])$, then the solution of the scheme \eqref{dCH1}--\eqref{dNBC} satisfies the following inequality: 
\begin{equation}
\left\| \bm{U}^{(n)}\right\|_{L_{\rm d}^{\infty}} \leq \frac{1}{L}( | M(u_{0}) | + C_{M}) + \left\{\! \frac{2L}{\gamma}\!\left(\! J(u_{0}) + C_{J} + L\left| \min\!\left\{\inf_{\xi \in \mathbb{R}}F(\xi), 0\right\} \!\right| \right) \!\right\}^{\! \frac{1}{2}} \quad (n=0,1,\ldots), \label{GB2}
\end{equation}
where 
\begin{gather*}
C_{J} := L^{2}\left\{ \frac{1}{8}\int_{0}^{L}\left| \partial_{x}^{2}G(u_{0}, \partial_{x}u_{0})\right| dx + \frac{\gamma}{2}\left( A_{1}A_{2} + \frac{L}{4}A_{2}^{2} \right) \right\}, \quad A_{i} := \max_{x \in [0,L]}\left| \partial_{x}^{i}u_{0}(x) \right| \quad (i=1,2), \\
C_{M} := \frac{L^{2} }{8}\int_{0}^{L}\left| \partial_{x}^{2}u_{0}\right| dx. 
\end{gather*}
\end{corollary}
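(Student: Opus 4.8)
The plan is to derive \eqref{GB2} by combining the global boundedness estimate (Theorem \ref{thm:3.1}) with the two approximation bounds in Lemma \ref{lem:a}; no new analysis is required beyond a careful application of the triangle inequality and a monotonicity observation. First I would recall that Theorem \ref{thm:3.1} already furnishes
\begin{equation*}
\left\| \bm{U}^{(n)}\right\|_{L_{\rm d}^{\infty}} \leq \frac{1}{L}\left| M_{\rm d}(\bm{U}^{(0)}) \right| + \left\{ \frac{2L}{\gamma}\left( J_{\rm d}(\bm{U}^{(0)}) + L\left| \min\left\{\inf_{\xi \in \mathbb{R}}F(\xi), 0\right\} \right| \right) \right\}^{\frac{1}{2}} \quad (n=0,1,\ldots),
\end{equation*}
so it suffices to replace the two discrete quantities $M_{\rm d}(\bm{U}^{(0)})$ and $J_{\rm d}(\bm{U}^{(0)})$ by the corresponding continuous ones plus the explicit error constants.

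For the mass term I would apply the triangle inequality together with Lemma \ref{lem:a} to obtain $\left| M_{\rm d}(\bm{U}^{(0)}) \right| \leq \left| M(u_0) \right| + \left| M_{\rm d}(\bm{U}^{(0)}) - M(u_0) \right| \leq \left| M(u_0) \right| + C_M$, where $C_M$ is exactly the bound in \eqref{M_est}. For the energy term I would use the one-sided estimate $J_{\rm d}(\bm{U}^{(0)}) \leq J(u_0) + \left| J_{\rm d}(\bm{U}^{(0)}) - J(u_0) \right| \leq J(u_0) + C_J$, with $C_J$ the bound from \eqref{J_est}; one checks that the constant displayed in the statement coincides with the right-hand side of \eqref{J_est} after factoring out $L^2$. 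Substituting these two inequalities into the estimate from Theorem \ref{thm:3.1} then yields \eqref{GB2}, since the coefficient $1/L$ multiplying the mass term is positive and the map $t \mapsto \left\{ (2L/\gamma)\left( t + L\left| \min\{\inf_{\xi} F(\xi), 0\} \right| \right) \right\}^{1/2}$ is nondecreasing in $t$.

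The only point requiring a moment of care is the direction of the monotonicity and the well-definedness of the square root: after the substitution the argument of the root can only increase, and it remains nonnegative because, as already observed in the proof of Lemma \ref{lem:3.1}, one has $J_{\rm d}(\bm{U}^{(0)}) + L\left| \min\{\inf_{\xi \in \mathbb{R}} F(\xi), 0\} \right| \geq 0$. Consequently there is no genuine obstacle in this corollary; the substantive work has been done in Theorem \ref{thm:3.1} and Lemma \ref{lem:a}, and \eqref{GB2} follows as a direct consequence of chaining those two results.
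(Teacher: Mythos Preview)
Your proposal is correct and matches the paper's approach exactly: the paper itself does not give a detailed proof but simply states that the corollary follows from Theorem~\ref{thm:3.1} and Lemma~\ref{lem:a}, which is precisely the combination you carry out via the triangle inequality and the monotonicity of the square root. Your additional remark about the well-definedness of the square root is a helpful observation that the paper leaves implicit.
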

\section*{Appendix B}
\renewcommand{\thelemma}{B.\arabic{lemma}} 
In this appendix B, we prove that the $(K+1) \times (K+1)$ matrix $A$ defined in Section 4 is nonsingular. 
Since proofs of the following lemmas can be found in \cite{Tre,Zha}, we omit them. 
\setcounter{lemma}{0}
\begin{lemma}[{\cite[Theorem 2.8]{Zha}}] \label{Ap:A} 
Let $A$ and $B$ be $n \times n$ complex matrices. 
Then, $AB$ and $BA$ have the same eigenvalues, counting multiplicity.  
\end{lemma}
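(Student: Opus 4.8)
The plan is to prove the stronger statement that $AB$ and $BA$ have identical characteristic polynomials, i.e.\ $\det(\lambda I_{n} - AB) = \det(\lambda I_{n} - BA)$ as polynomials in $\lambda$. Since the algebraic multiplicity of an eigenvalue is precisely its multiplicity as a root of the characteristic polynomial, this equality immediately yields that $AB$ and $BA$ share the same eigenvalues counting multiplicity. Thus the entire task reduces to one determinant identity.

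First I would dispose of the easy case in which $A$ is nonsingular: then $BA = A^{-1}(AB)A$, so $AB$ and $BA$ are similar and trivially have the same characteristic polynomial. For the general case, where neither $A$ nor $B$ need be invertible, I would use a block-matrix similarity. Consider the two $2n \times 2n$ matrices
\begin{equation*}
M_{1} := \begin{pmatrix} AB & 0 \\ B & 0 \end{pmatrix}, \qquad M_{2} := \begin{pmatrix} 0 & 0 \\ B & BA \end{pmatrix},
\end{equation*}
together with the invertible matrix $P := \begin{pmatrix} I & A \\ 0 & I \end{pmatrix}$, whose inverse is $\begin{pmatrix} I & -A \\ 0 & I \end{pmatrix}$ regardless of the invertibility of $A$ or $B$. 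A direct computation shows $P^{-1} M_{1} P = M_{2}$, so $M_{1}$ and $M_{2}$ are similar and hence have the same characteristic polynomial. Reading the characteristic polynomials off the block-triangular structure gives $\lambda^{n}\det(\lambda I_{n} - AB)$ for $M_{1}$ and $\lambda^{n}\det(\lambda I_{n} - BA)$ for $M_{2}$; equating these and cancelling the factor $\lambda^{n}$ (nonzero as a polynomial) yields the desired identity.

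The only genuine subtlety, and the step I would watch most carefully, is the eigenvalue $0$ together with the requirement that multiplicities match there. A quick density argument (perturbing $A$ so that it becomes invertible, applying the easy case, and passing to the limit) shows agreement of the eigenvalues for $\lambda \neq 0$, but concluding equal multiplicity at $\lambda = 0$ from such a limit needs extra care. The virtue of the block-matrix route is that it delivers the full polynomial identity in one stroke and so handles $\lambda = 0$ automatically. As an alternative I could invoke Sylvester's determinant identity $\det(I_{n} + XY) = \det(I_{n} + YX)$ with $X = -\lambda^{-1}A$ and $Y = B$ to obtain the identity for every $\lambda \neq 0$, and then extend it to $\lambda = 0$ by continuity, since both sides are polynomials in $\lambda$ agreeing on the cofinite set $\{\lambda \neq 0\}$.
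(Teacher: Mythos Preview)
Your proof is correct: the block-matrix similarity $P^{-1}M_{1}P = M_{2}$ checks out, both $M_{1}$ and $M_{2}$ are block lower-triangular so their characteristic polynomials factor as claimed, and cancelling the nonzero polynomial factor $\lambda^{n}$ yields $\det(\lambda I_{n} - AB) = \det(\lambda I_{n} - BA)$ as an identity of polynomials, which is exactly what is needed to conclude equality of eigenvalues counting algebraic multiplicity.

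There is nothing to compare against in the paper itself: the authors state this lemma as a citation of \cite[Theorem~2.8]{Zha} and explicitly omit the proof. Your argument is one of the standard textbook proofs of this fact (and is essentially the one given in Zhang's book), so it fits the bill. Your remarks about the pitfalls of the density/continuity approach at $\lambda = 0$ are accurate, and your alternative via Sylvester's determinant identity is also valid since two polynomials agreeing on a cofinite set are equal.
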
 
\noindent 
In the following lemmas, we denote the Hermitian conjugate or adjoint of a $n \times n$ matrix $A$ by $A^{\ast}$. 
\begin{lemma}[Sylvester's law of inertia {\cite[Theorem 8.3]{Zha}}] \label{Ap:B}
Let $A$ and $B$ be $n \times n$ Hermitian matrices. 
Then, there exists a nonsingular $n \times n$ matrix $S$ such that $B = S^{\ast}AS$ if and only if $A$ and $B$ have the same inertia, i.e., 
\begin{equation*}
{\rm In}(A) = {\rm In}(B), 
\end{equation*} 
where the inertia ${\rm In}(A)$ of $A$ is defined to be the ordered triple $(i_{+}(A), i_{-}(A), i_{0}(A))$, that is, 
\begin{equation*}
{\rm In}(A) := (i_{+}(A), i_{-}(A), i_{0}(A)), 
\end{equation*}
$i_{+}(A)$, $i_{-}(A)$, and $i_{0}(A)$ are the numbers of positive, negative, and zero eigenvalues of $A$, respectively (including multiplicities). 
\end{lemma}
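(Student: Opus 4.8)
The plan is to prove the two implications separately, relying on the spectral theorem for Hermitian matrices together with a variational description of the signature that is manifestly invariant under congruence.

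First I would reduce every Hermitian matrix to a canonical congruent form. Since $A$ is Hermitian, the spectral theorem provides a unitary $U$ with $U^{\ast}AU = \diag(\lambda_{1},\dots,\lambda_{n})$ and all $\lambda_{j}$ real. Scaling the $j$-th coordinate by $|\lambda_{j}|^{-1/2}$ whenever $\lambda_{j} \neq 0$ (a congruence by a nonsingular diagonal matrix) brings $A$ to the canonical form $D_{A} := \diag(I_{i_{+}(A)}, -I_{i_{-}(A)}, 0_{i_{0}(A)})$, so that $A = S_{A}^{\ast}D_{A}S_{A}$ for some nonsingular $S_{A}$; the analogous reduction holds for $B$ with $D_{B}$. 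This settles the ``if'' direction at once: congruence (declaring $X \sim Y$ when $Y = S^{\ast}XS$ for some nonsingular $S$) is an equivalence relation, and if ${\rm In}(A) = {\rm In}(B)$ then $D_{A} = D_{B}$, so $A \sim D_{A} = D_{B} \sim B$ produces the desired nonsingular $S$.

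For the ``only if'' direction I would show that each component of the inertia is congruence-invariant. The count $i_{0}$ is handled by rank: since $S$ is nonsingular, $\mathrm{rank}(S^{\ast}AS) = \mathrm{rank}(A)$ and $i_{0} = n - \mathrm{rank}$. For $i_{+}$ I would introduce the variational quantity $p(A)$, the largest dimension of a subspace $V \subseteq \mathbb{C}^{n}$ on which $x^{\ast}Ax > 0$ for every nonzero $x \in V$, and prove $p(A) = i_{+}(A)$. The lower bound $p(A) \ge i_{+}(A)$ comes from restricting the form to the span $W_{>0}$ of the eigenvectors with positive eigenvalue; the upper bound follows because any such positive subspace $V$ meets the span $W_{\le 0}$ of the eigenvectors with nonpositive eigenvalue only in $0$, forcing $\dim V + (n - i_{+}(A)) \le n$. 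Congruence-invariance is then immediate: if $B = S^{\ast}AS$ then $x^{\ast}Bx = (Sx)^{\ast}A(Sx)$, and the bijection $x \mapsto Sx$ carries a positive subspace for $B$ to one of equal dimension for $A$, whence $p(B) = p(A)$, i.e. $i_{+}(B) = i_{+}(A)$. Applying the same reasoning to $-A$ and $-B$ gives $i_{-}(B) = i_{-}(A)$, and together with the rank statement this yields ${\rm In}(A) = {\rm In}(B)$.

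The main obstacle is the identity $p(A) = i_{+}(A)$: the lower bound is routine, but the upper bound rests on the transversality observation that a subspace on which the form is strictly positive cannot meet the nonpositive eigenspace, and it is precisely here that the spectral decomposition does the essential work. Once the congruence-invariance of the full signature is established, both implications close immediately.
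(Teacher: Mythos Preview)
Your proof is correct and follows the standard route: reduce to a canonical diagonal form via the spectral theorem plus scaling for the ``if'' direction, and use the variational characterisation $p(A)=\max\{\dim V: x^\ast Ax>0\text{ for all nonzero }x\in V\}$ together with rank invariance for the ``only if'' direction. The transversality argument you flag as the main obstacle is handled correctly.

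There is no comparison to make with the paper's own proof, because the paper does not prove this lemma at all: it is stated as a citation of \cite[Theorem~8.3]{Zha} with the explicit remark that ``proofs of the following lemmas can be found in \cite{Tre,Zha}, we omit them.'' Your argument is a standard textbook proof and would be perfectly appropriate to fill that gap.
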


\begin{lemma}[Cholesky factorization {\cite[Theorem 23.1]{Tre}}] \label{Ap:C} 
For any $n \times n$ Hermitian positive definite matrix $A$, there exists a unique $n \times n$ upper-triangular matrix $R$ whose diagonal components are all positive such that 
\begin{equation*}
A = R^{\ast}R. 
\end{equation*}
\end{lemma}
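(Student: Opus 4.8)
The plan is to prove existence and uniqueness separately: existence by induction on the order $n$ of the matrix using the Schur complement, and uniqueness by exploiting the rigidity of triangular matrices. Throughout I would write $A = A^{\ast}$ for the given Hermitian positive definite matrix.

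For existence, the base case $n = 1$ is immediate, since positive definiteness forces the single entry $a_{11}$ to be real and positive, so $R = ( \sqrt{a_{11}} )$ is the required factor. For the inductive step I would partition
\begin{equation*}
A = \begin{pmatrix} a_{11} & \bm{b}^{\ast} \\ \bm{b} & A' \end{pmatrix},
\end{equation*}
where $a_{11} > 0$, $\bm{b} \in \mathbb{C}^{n-1}$, and $A'$ is the trailing $(n-1) \times (n-1)$ principal submatrix, and record the block identity
\begin{equation*}
A = \begin{pmatrix} \sqrt{a_{11}} & \bm{0}^{\ast} \\ \bm{b}/\sqrt{a_{11}} & I \end{pmatrix} \begin{pmatrix} 1 & \bm{0}^{\ast} \\ \bm{0} & S \end{pmatrix} \begin{pmatrix} \sqrt{a_{11}} & \bm{b}^{\ast}/\sqrt{a_{11}} \\ \bm{0} & I \end{pmatrix}, \qquad S := A' - \frac{\bm{b}\bm{b}^{\ast}}{a_{11}}.
\end{equation*}
The crucial point is that the Schur complement $S$ is again Hermitian positive definite: Hermitian symmetry follows from $A = A^{\ast}$, and positivity from $\bm{y}^{\ast}S\bm{y} = \bm{z}^{\ast}A\bm{z} > 0$ for the nonzero vector $\bm{z} := ( -\bm{b}^{\ast}\bm{y}/a_{11}, \bm{y}^{\top} )^{\top}$ built from any nonzero $\bm{y} \in \mathbb{C}^{n-1}$. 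Applying the inductive hypothesis to $S$ gives $S = \tilde{R}^{\ast}\tilde{R}$ with $\tilde{R}$ upper-triangular and positive-diagonal, and substituting back assembles the factor
\begin{equation*}
R = \begin{pmatrix} \sqrt{a_{11}} & \bm{b}^{\ast}/\sqrt{a_{11}} \\ \bm{0} & \tilde{R} \end{pmatrix},
\end{equation*}
which is upper-triangular with positive diagonal and satisfies $A = R^{\ast}R$, closing the induction.

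For uniqueness, suppose $A = R_{1}^{\ast}R_{1} = R_{2}^{\ast}R_{2}$ with both $R_{i}$ upper-triangular and positive-diagonal; invertibility of $A$ makes each $R_{i}$ invertible. Rewriting the equality as $(R_{2}^{\ast})^{-1}R_{1}^{\ast} = R_{2}R_{1}^{-1}$ exhibits a matrix that is simultaneously lower-triangular (left side) and upper-triangular (right side), hence diagonal, say $D$. From $R_{2} = DR_{1}$ together with the conjugate relation $R_{1} = D^{\ast}R_{2}$ I would derive $DD^{\ast} = I$, while comparing the (positive) diagonals in $R_{2} = DR_{1}$ forces the diagonal entries of $D$ to be positive; the two facts together give $D = I$ and therefore $R_{1} = R_{2}$.

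The main obstacle I anticipate is confirming that the Schur complement $S$ inherits positive definiteness, which is the one step where the structure of $A$ must genuinely be used; the explicit test vector $\bm{z}$ handles this, after which the inductive assembly and the triangular-rigidity argument for uniqueness are both routine.
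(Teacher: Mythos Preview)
Your proof is correct. The paper itself does not give a proof of this lemma at all; it simply cites \cite[Theorem~23.1]{Tre} and omits the argument. Your inductive Schur-complement construction for existence and the triangular-rigidity argument for uniqueness are exactly the standard proof (and, in fact, the one given in the cited reference), so there is nothing further to compare.
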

\noindent 
Using the above lemmas, we obtain the following lemma: 

\begin{lemma} \label{Ap:D} 
Let $A$ be an arbitrary $n \times n$ Hermitian positive semi-definite matrix and let $B$ be an arbitrary $n \times n$ Hermitian positive definite matrix. 
Then, the eigenvalues of $AB$ are all real and nonnegative. 
\end{lemma}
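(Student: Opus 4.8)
The plan is to reduce the non-Hermitian product $AB$ to a genuinely Hermitian matrix whose spectrum can be controlled directly, and then transfer the conclusion back to $AB$ by means of Lemma \ref{Ap:A}.

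First, since $B$ is Hermitian positive definite, I would invoke the Cholesky factorization (Lemma \ref{Ap:C}) to write $B = R^{\ast}R$, where $R$ is upper-triangular, nonsingular, and has positive diagonal entries. Then $AB = AR^{\ast}R = (AR^{\ast})R$, and by Lemma \ref{Ap:A} the product $(AR^{\ast})R$ has the same eigenvalues (counting multiplicity) as $R(AR^{\ast}) = RAR^{\ast}$. Hence it suffices to analyze the spectrum of $RAR^{\ast}$.

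Next, I would observe that $RAR^{\ast}$ is Hermitian: since $A^{\ast} = A$, we have $(RAR^{\ast})^{\ast} = R A^{\ast} R^{\ast} = RAR^{\ast}$. Therefore all eigenvalues of $RAR^{\ast}$, and hence, by the previous step, all eigenvalues of $AB$, are real. To obtain nonnegativity, I would set $S := R^{\ast}$, which is nonsingular because $R$ is, so that $RAR^{\ast} = S^{\ast} A S$. Thus $RAR^{\ast}$ and $A$ are congruent, and Sylvester's law of inertia (Lemma \ref{Ap:B}) gives that they share the same inertia; in particular $i_{-}(RAR^{\ast}) = i_{-}(A)$. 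Since $A$ is positive semi-definite we have $i_{-}(A) = 0$, so $RAR^{\ast}$ has no negative eigenvalues. Combining this with Lemma \ref{Ap:A}, the eigenvalues of $AB$ coincide with those of $RAR^{\ast}$ and are therefore real and nonnegative, which completes the argument.

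I do not anticipate a serious obstacle in this proof; the single point demanding care is the bookkeeping in the congruence relation $RAR^{\ast} = S^{\ast} A S$, namely identifying $S = R^{\ast}$ correctly so that it is the inertia of $A$ (and not that of $B$) which transfers to $RAR^{\ast}$. Getting this orientation right is exactly what forces the nonnegativity rather than merely reality of the spectrum.
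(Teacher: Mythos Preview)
Your proof is correct and follows essentially the same route as the paper's: Cholesky-factorize $B = R^{\ast}R$, use Lemma \ref{Ap:A} to pass from $AB = (AR^{\ast})R$ to $RAR^{\ast}$, and then invoke Sylvester's law of inertia with $S = R^{\ast}$ to transfer the inertia of $A$ to $RAR^{\ast}$. If anything, your presentation is slightly cleaner in explicitly noting that $RAR^{\ast}$ is Hermitian before appealing to its inertia, whereas the paper writes ``$\mathrm{In}(AB)$'' somewhat informally for a matrix that is not itself Hermitian.
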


\begin{proof}
Applying Lemma \ref{Ap:C} to the Hermitian positive definite matrix $B$, there exists a unique $n \times n$ upper-triangular matrix $R$ such that 
\begin{equation*}
B = R^{\ast}R, \quad r_{ii} > 0 \quad (i = 1, \ldots, n), 
\end{equation*}
where $r_{ii} \ (i = 1, \ldots, n)$ are diagonal components of $R$. 
Hence, we have 
\begin{equation}
AB = A(R^{\ast}R) = (AR^{\ast})R. \label{AB}
\end{equation}
Incidentally, it holds from $r_{ii} > 0 \ (i = 1, \ldots, n)$ that $\det R^{\ast} = r_{11} \cdots r_{nn} > 0$. 
That is, $R^{\ast}$ is nonsingular. 
Therefore, using Lemma \ref{Ap:B}, we obtain 
\begin{equation}
{\rm In}(A) = {\rm In}( (R^{\ast})^{\ast}AR^{\ast}) = {\rm In}( RAR^{\ast}). \label{InA}
\end{equation}
Since $(AR^{\ast})R$ and $R(AR^{\ast})$ have the same eigenvalues from Lemma \ref{Ap:A}, by using \eqref{AB} and \eqref{InA}, we obtain 
\begin{equation*}
{\rm In}(A) = {\rm In}( RAR^{\ast}) = {\rm In}((AR^{\ast})R) = {\rm In}(AB).
\end{equation*}
Since $A$ is positive semi-definite, the eigenvalues of $A$ are all real and nonnegative. 
Namely, the eigenvalues of $AB$ are all real and nonnegative, too. \hfill $\Box$\\
\end{proof}
\noindent 
From Lemma \ref{Ap:D}, we have the following lemma: 

\begin{lemma} \label{Ap:E}
The $(K+1) \times (K+1)$ matrix $A$ defined by \eqref{matrixA} is nonsingular. 
\end{lemma}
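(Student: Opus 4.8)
The plan is to write $A = I + \beta\,B$ with $\beta = \gamma\Delta t/(2(\Delta x)^4) > 0$ and the matrix $B$ read off from \eqref{matrixA}, and then to show that every eigenvalue of $B$ is real and nonnegative. Granting that, each eigenvalue of $A$ is $1 + \beta\lambda$ with $\lambda \ge 0$, so $\det A = \prod_i(1+\beta\lambda_i) \ge 1 > 0$ and $A$ is nonsingular. The instrument for the spectral claim about $B$ is Lemma \ref{Ap:D}, so the real work is to exhibit $B$ as (similar to) a product of a Hermitian positive semi-definite matrix and a Hermitian positive definite one.

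First I would record a factorization of $B$. Comparing the interior band and the four corners of \eqref{matrixA}, one verifies that $B = \mathcal{L}\bigl(\mathcal{L} - \tfrac{1}{\alpha}E\bigr)$, where $\mathcal{L}$ is the $(K+1)\times(K+1)$ discrete Neumann Laplacian with rows $(-2,2),(1,-2,1),\dots,(1,-2,1),(2,-2)$ (the corner $2$'s encode the reflection arising from \eqref{U_-1}--\eqref{U_K+1}) and $E = \diag(1,0,\dots,0,1)$ is the boundary projector carrying the $1/\alpha$ contributions of the dynamic boundary condition. Indeed $\mathcal{L}^2$ produces the $1,-4,6,-4,1$ interior stencil together with the corner entries $6,-8,2$ and $-4,7,-4,1$, while $-\tfrac{1}{\alpha}\mathcal{L}E$ supplies precisely the extra $2/\alpha$ at $(0,0)$ and $(K,K)$ and the $-1/\alpha$ at $(1,0)$ and $(K-1,K)$.

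Next I would symmetrize with the diagonal matrix $W = \diag(\tfrac12,1,\dots,1,\tfrac12)$ of trapezoidal weights. The second-order summation-by-parts identity in Corollary \ref{col:2.1} is exactly the statement that $S := W\mathcal{L}$ is symmetric and negative semi-definite, with $\ker S = \mathrm{span}(\bm 1)$ since $\mathcal{L}\bm 1 = \bm 0$. Because $B$ is similar to $W^{1/2}BW^{-1/2}$, and since $W^{1/2}\mathcal{L}W^{-1/2} = W^{-1/2}SW^{-1/2} =: \tilde S$ is symmetric while $W^{1/2}EW^{-1/2} = E$, I obtain $W^{1/2}BW^{-1/2} = \tilde S\,\tilde T$ with $\tilde T := \tilde S - \tfrac{1}{\alpha}E$. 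Both factors are symmetric; $-\tilde S$ is positive semi-definite, and $-\tilde T = -\tilde S + \tfrac{1}{\alpha}E$ is positive definite, because the only null vector of $-\tilde S$ is a positive multiple of $W^{1/2}\bm 1$, whose endpoint entries are nonzero and hence not annihilated by $E$. Writing $\tilde S\tilde T = (-\tilde S)(-\tilde T)$ and applying Lemma \ref{Ap:D} with positive semi-definite factor $-\tilde S$ and positive definite factor $-\tilde T$ shows that the eigenvalues of $W^{1/2}BW^{-1/2}$, and therefore of $B$, are real and nonnegative, which finishes the proof.

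The main obstacle is the bookkeeping of the first two steps: confirming the factorization $B = \mathcal{L}(\mathcal{L}-\tfrac{1}{\alpha}E)$ at the four corners (matching the $2/\alpha$ and $-1/\alpha$ entries exactly) and, more delicately, proving that $-\tilde T$ is genuinely positive definite rather than merely semi-definite, i.e.\ that the boundary operator $E$ eliminates the constant near-null mode of the discrete Neumann Laplacian. Everything else (the sign of $S$ via summation by parts, the similarity through $W^{1/2}$, and the final deduction $1+\beta\lambda \ge 1$) is routine once the factorization and this definiteness are established.
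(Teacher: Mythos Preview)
Your proof is correct and follows essentially the same route as the paper: factor $B=D_2\tilde D_2$ with $\tilde D_2=D_2-\tfrac1\alpha E$, conjugate by a diagonal matrix to symmetrize both factors, and then invoke Lemma~\ref{Ap:D}. Your symmetrizer $W^{1/2}=\diag(1/\sqrt2,1,\dots,1,1/\sqrt2)$ is the same as the paper's $V=\diag(1,\sqrt2,\dots,\sqrt2,1)$ up to the scalar $1/\sqrt2$, so $\tilde S=VD_2V^{-1}$ and $\tilde T=V\tilde D_2V^{-1}$ coincide with the paper's $X,Y$; the only cosmetic difference is that you read off the semi-definiteness of $-\tilde S$ and the definiteness of $-\tilde T$ from the summation-by-parts structure and a null-space argument, whereas the paper completes the square directly.
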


\begin{proof}
We show that the determinant of $A$ is positive. 
Firstly, let us define the $(K+1) \times (K+1)$ matrices $D_{2}$ and $\tilde{D}_{2}$ by 
\begin{equation*}
D_{2} := \begin{pmatrix}
-2 & 2 & & &  \\
1 & -2 & 1 & &  \\
& \ddots & \ddots & \ddots & \\
& & 1 & -2 & 1 \\
& & & 2 & -2 \\
\end{pmatrix}, \quad
\tilde{D}_{2} := \begin{pmatrix}
\displaystyle -2-\frac{1}{\alpha} & 2 & & &  \\
1 & -2 & 1 & &  \\
& \ddots & \ddots & \ddots & \\
& & 1 & -2 & 1 \\
& & & 2 & \displaystyle -2 -\frac{1}{\alpha} \\
\end{pmatrix}. 
\end{equation*}
Then, we obtain 
\begin{equation*}
D_{2}\tilde{D}_{2} 
= \begin{pmatrix}
\displaystyle 6 + \frac{2}{\alpha} & -8 & 2 & & & & & & \\[5pt]
\displaystyle -4-\frac{1}{\alpha} & 7 & -4 & 1 & & & & & \\
1 & -4 & 6 & -4 & 1 & & & \\
& 1 & -4 & 6 & -4 & 1 & & & \\
& & \ddots & \ddots & \ddots & \ddots & \ddots & & \\
& & & 1 & -4 & 6 & -4 & 1 & \\
& & & & 1 & -4 & 6 & -4 & 1 \\
& & & & & 1 & -4 & 7 & \displaystyle -4-\frac{1}{\alpha} \\[5pt]
& & & & & & 2 & -8 & \displaystyle 6 + \frac{2}{\alpha} \\
\end{pmatrix}. 
\end{equation*}
Namely, $A = I +\beta D_{2}\tilde{D}_{2}$. 
We remark that $D_{2}$ and $\tilde{D}_{2}$ are not symmetric. 
Thus, following the procedure of the proof for Lemma 0.1.1 in \cite{Fal}, we show that $D_{2}$ and $\tilde{D}_{2}$ are similar to some symmetric tridiagonal matrices, respectively. 
Let us define $b_{1} := 2$, $b_{i} := 1 \ (i=2,\ldots,K)$, $c_{i} := 1 \ (i=1,\ldots,K-1)$, and $c_{K} := 2$. 
Then, $D_{2}$ and $\tilde{D}_{2}$ are expressed as follows: 
\begin{equation*}
D_{2} = \begin{pmatrix}
-2 & b_{1} & & &  \\
c_{1} & -2 & b_{2} & &  \\
& \ddots & \ddots & \ddots & \\
& & c_{K-1} & -2 & b_{K} \\
& & & c_{K} & -2 \\
\end{pmatrix}, \quad 
\tilde{D}_{2} = \begin{pmatrix}
\displaystyle -2-\frac{1}{\alpha} & b_{1} & & &  \\
c_{1} & -2 & b_{2} & &  \\
& \ddots & \ddots & \ddots & \\
& & c_{K-1} & -2 & b_{K} \\
& & & c_{K} & \displaystyle -2-\frac{1}{\alpha} \\
\end{pmatrix}. 
\end{equation*}
Moreover, let $v_{11} := 1$ and $v_{ii} := \sqrt{(b_{1}b_{2}\cdots b_{i-1})/(c_{1}c_{2}\cdots c_{i-1})} \ (i=2,\ldots,K+1)$. 
Then, we have 
\begin{equation*}
v_{ii} = \sqrt{\frac{2 \cdot 1 \cdots 1}{1 \cdot 1 \cdots 1}} = \sqrt{2} \quad (i=2,\ldots, K), \quad v_{K+1K+1} = \sqrt{\frac{2 \cdot 1 \cdots 1 \cdot 1}{1 \cdot 1 \cdots 1 \cdot 2}} = 1. 
\end{equation*}
Furthermore, let us define the $(K+1) \times (K+1)$ matrix $V$ by \vspace{-1mm}
\begin{equation*}
\displaystyle V := \diag_{1 \leq i \leq K+1}v_{ii} = 
\begin{pmatrix}
v_{11} & & & & \\
& v_{22} & & & \\
& & \ddots & &\\
& & & v_{KK} &  \\
& & & & v_{K+1K+1} \\
\end{pmatrix}
= \begin{pmatrix}
1 & & & & \\
& \sqrt{2} & & & \\
& & \ddots & & \\
& & & \sqrt{2} & \\
& & & & 1 \\
\end{pmatrix}. \vspace{-1mm}
\end{equation*}
Then, we have \vspace{-1mm}
\begin{equation*}
V D_{2} V^{-1} 
= \left( \begin{array}{@{\hskip0pt}c@{\hskip5pt}c@{\hskip5pt}c@{\hskip5pt}c@{\hskip5pt}c@{\hskip5pt}c@{\hskip5pt}c@{\hskip0pt}} 
-2 & \sqrt{2} & & & & & \\
\sqrt{2} & -2 & 1 & & & & \\
& 1 & -2 & 1 & & & \\
& & \ddots & \ddots & \ddots & & \\
& & & 1 & -2 & 1 & \\
& & & & 1 & -2 & \sqrt{2} \\
& & & & & \sqrt{2} & -2 \\
\end{array} \right) , \quad 
V \tilde{D}_{2} V^{-1} 
= \left( \begin{array}{@{\hskip0pt}c@{\hskip7pt}c@{\hskip5pt}c@{\hskip5pt}c@{\hskip5pt}c@{\hskip5pt}c@{\hskip7pt}c@{\hskip0pt}} 
\displaystyle -2-\frac{1}{\alpha} & \sqrt{2} & & & & & \\
\sqrt{2} & -2 & 1 & & & & \\
& 1 & -2 & 1 & & & \\
& & \ddots & \ddots & \ddots & & \\
& & & 1 & -2 & 1 & \\
& & & & 1 & -2 & \sqrt{2} \\
& & & & & \sqrt{2} & \displaystyle -2-\frac{1}{\alpha} \\
\end{array} \right) . \vspace{-1mm}
\end{equation*}
Here, let $X := V D_{2} V^{-1} $ and $Y := V \tilde{D}_{2} V^{-1}$. 
Then, $X$ and $Y$ are $(K+1) \times (K+1)$ symmetric matrices. 
Furthermore, it holds that \vspace{-0.5mm}
\begin{equation*}
XY = V D_{2} V^{-1} V \tilde{D}_{2} V^{-1} = V D_{2} \tilde{D}_{2} V^{-1}. \vspace{-0.5mm}
\end{equation*}
That is, $D_{2}\tilde{D}_{2}$ is similar to $XY$. 
Hence, $D_{2}\tilde{D}_{2}$ and $XY$ have the same eigenvalues. 
Moreover, $-X$ is positive semi-definite, and $-Y$ is positive definite. 
Actually, for any non-zero vector $\bm{x} = (x_{1},x_{2},\ldots,x_{K+1})^{\top} \! \in \mathbb{R}^{K+1}$, it holds that \vspace{-1mm}
\begin{equation*}
(-Y)\bm{x} = \left(
    \begin{array}{c}
      \displaystyle \left( 2 + \frac{1}{\alpha} \right) x_{1} - \sqrt{2}x_{2} \\
      -\sqrt{2}x_{1} + 2x_{2} - x_{3} \\
      -x_{2} + 2x_{3} - x_{4} \\
      \vdots \\
      -x_{K-2} + 2x_{K-1} - x_{K} \\
      -x_{K-1} + 2x_{K} - \sqrt{2}x_{K+1} \\
      \displaystyle -\sqrt{2}x_{K} + \left( 2 + \frac{1}{\alpha} \right) x_{K+1}
    \end{array}
    \right) . \vspace{-1mm}
\end{equation*}
Thus, we have \vspace{-1mm}
\begin{align*}
\bm{x}^{\! \top}\! (-Y)\bm{x} \! 
	& = \left( 2 + \frac{1}{\alpha} \right)x_{1}^{2} - \sqrt{2}x_{1}x_{2} - \sqrt{2}x_{1}x_{2} + 2x_{2}^{2} - x_{2}x_{3} - x_{2}x_{3} + 2x_{3}^{2} - x_{3}x_{4} - \cdots \\
	& \quad \cdots - \! x_{K-2}x_{K \! - 1} \! + \! 2x_{K \! -1}^{2} \! - \! x_{K \! -1}x_{K} \! - \! x_{K \! -1}x_{K} \! + \! 2x_{K}^{2} \! - \! \sqrt{2}x_{K}x_{K+1} \! - \! \sqrt{2}x_{K}x_{K+1} \! + \!\! \left(\! 2 \! + \! \frac{1}{\alpha} \!\right)\! x_{K+1}^{2} \\
	& = \frac{1}{\alpha}x_{1}^{2} \! + \! 2\!\left(\! x_{1} \! - \! \frac{\sqrt{2}}{2}x_{2} \!\right)^{\! 2} \! + \! (x_{2} \! - \! x_{3})^{2} \! + \! \cdots \! + \! (x_{K-1} \! - \! x_{K})^{2} \! + \! 2\!\left(\! x_{K+1} \! - \! \frac{\sqrt{2}}{2}x_{K} \!\right)^{\! 2} \! + \! \frac{1}{\alpha}x_{K+1}^{2} \geq 0. 
\end{align*}
Suppose that $\bm{x}^{\top}(-Y)\bm{x} = 0$. 
Then, we get $x_{1} = \cdots = x_{K+1} = 0$. 
This is contradictory to $\bm{x} \neq \bm{0}$. 
Namely, $-Y$ is positive definite. 
Similarly, by direct calculation, we can see that $-X$ is positive semi-definite. 
Therefore, from Lemma \ref{Ap:D}, eigenvalues of $XY = (-X)(-Y)$ are all real and nonnegative. 
Hence, eigenvalues of $A$ are all positive from two facts that eigenvalues of $D_{2} \tilde{D}_{2}$ are all real and nonnegative and that $\beta$ is positive. 
From the above, $\det A > 0$, i.e., $A$ is nonsingular. \hfill $\Box$\\
\end{proof}

\section*{Appendix C}
\renewcommand{\thelemma}{C.\arabic{lemma}} 
\setcounter{lemma}{0}
In this appendix C, we prove Lemma \ref{lem:5.1} and Lemma \ref{lem:5.2} we have used in Section 5. 
Note that we use the same notations, assumptions, lemmas, and the corollary in Sections 1--5. 
\begin{lemma} \label{Ap:F} 
Assume that $u \in C^{4}([0,L] \times [0,T])$. 
Then, we obtain the following equations on the errors: \vspace{-2mm}
\begin{gather}
\frac{e_{u,k}^{(n+1)} - e_{u,k}^{(n)}}{\Delta t} = \delta_{k}^{\langle 2 \rangle}e_{p,k}^{(n)} + \xi_{1,k}^{\left( n + \frac{1}{2}\right)} \quad (k=0,\ldots, K,\ n=0,1,\ldots, N-1), \label{eu1}\\
e_{p,k}^{(\! n\! )} \! = \! - \gamma \delta_{k}^{\langle 2 \rangle}\!\!\left(\!\! \frac{e_{u,k}^{(\! n+1\! )} \!\! + \! e_{u,k}^{(\! n\! )}}{2}\!\!\right) \! + \! \left(\! \frac{dF}{d(U_{k}^{(\! n+1\! )}\! ,U_{k}^{(\! n\! )})} \! - \! \frac{dF}{d(u_{k}^{(\! n+1\! )}\! ,u_{k}^{(\! n\! )})} \!\right) \! + \! \xi_{2,k}^{\left(\! n + \frac{1}{2}\!\right)} \quad (k \! = \! 0,\ldots, K,\ n \! = \! 0,1,\ldots, N \! - \! 1), \label{ep1}
\end{gather}
\begin{gather}
\frac{e_{u,0}^{(n+1)} - e_{u,0}^{(n)}}{\Delta t} = \delta_{k}^{\langle 1 \rangle}\left(\frac{e_{u,0}^{(n+1)} + e_{u,0}^{(n)}}{2}\right) + \xi_{3,0}^{\left( n + \frac{1}{2}\right)} \quad (n=0,1,\ldots, N-1), \label{eu0}\\
\frac{e_{u,K}^{(n+1)} - e_{u,K}^{(n)}}{\Delta t} = -\delta_{k}^{\langle 1 \rangle}\left(\frac{e_{u,K}^{(n+1)} + e_{u,K}^{(n)}}{2}\right) + \xi_{3,K}^{\left( n + \frac{1}{2}\right)} \quad (n=0,1,\ldots, N-1), \label{euK}\\
\delta_{k}^{\langle 1 \rangle}e_{p,k}^{(n)} = 0 \quad (k=0,K,\ n=0,1,\ldots, N-1). \label{ep_bc}
\end{gather}
\end{lemma}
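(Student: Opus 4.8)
The plan is to derive each of the five error identities \eqref{eu1}--\eqref{ep_bc} by subtracting, from the exact scheme relation satisfied by the numerical data $\{U_k^{(n)},P_k^{(n)}\}$, the same discrete expression evaluated on the extended exact solution $\tilde u,\tilde p$, and then to recognize the resulting defect as precisely the consistency term $\xi_i$ by invoking the continuous equations \eqref{Eq_i}--\eqref{BCii}. Since every discrete operator involved ($\delta_n^{+}$, $\delta_k^{\langle 1 \rangle}$, $\delta_k^{\langle 2 \rangle}$, and the time averaging) is linear, the errors $e_{u}=U-\tilde u$ and $e_{p}=P-\tilde p$ inherit the structural form of the scheme with an added source; the only nonlinear term, the difference quotient $dF/d(\cdot,\cdot)$, enters merely as the difference of two such quotients and needs no special treatment. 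Throughout I would use that the extensions satisfy $\tilde u_k^{(n)}=u_k^{(n)}$ and $\tilde p_k^{(n+1/2)}=p_k^{(n+1/2)}$ for $k=0,\dots,K$, so the exact grid values on $[0,L]$ coincide with $u$ and $p$.

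For the interior evolution equation \eqref{eu1}, I would start from \eqref{dCH1}, write $e_{u,k}^{(n+1)}-e_{u,k}^{(n)}=(U_k^{(n+1)}-U_k^{(n)})-(\tilde u_k^{(n+1)}-\tilde u_k^{(n)})$, and use $\delta_k^{\langle 2 \rangle}e_{p,k}^{(n)}=\delta_k^{\langle 2 \rangle}P_k^{(n)}-\delta_k^{\langle 2 \rangle}\tilde p_k^{(n+1/2)}$. The defect reduces to $-(\tilde u_k^{(n+1)}-\tilde u_k^{(n)})/\Delta t+\delta_k^{\langle 2 \rangle}\tilde p_k^{(n+1/2)}$; inserting the two exact terms $\partial_t u_k^{(n+1/2)}$ and $\partial_x^{2}p_k^{(n+1/2)}$ and cancelling them via \eqref{Eq_i} at the half time level, $\partial_t u_k^{(n+1/2)}=\partial_x^{2}p_k^{(n+1/2)}$, reproduces exactly the defined $\xi_{1,k}^{(n+1/2)}$. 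The chemical-potential identity \eqref{ep1} is handled the same way starting from \eqref{dCH2}: after inserting the definitions of $e_{p}$ and $e_{u}$, the defect becomes $-\tilde p_k^{(n+1/2)}-\gamma\delta_k^{\langle 2 \rangle}((\tilde u_k^{(n+1)}+\tilde u_k^{(n)})/2)+dF/d(u_k^{(n+1)},u_k^{(n)})$, which matches $\xi_{2,k}^{(n+1/2)}$ once one uses \eqref{Eq_ii} at the half time level in the form $-p_k^{(n+1/2)}=\gamma\partial_x^{2}u_k^{(n+1/2)}-F'(u_k^{(n+1/2)})$.

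The two dynamic-boundary identities \eqref{eu0} and \eqref{euK} follow identically from \eqref{dDBC} and \eqref{dDBC'}: subtracting the averaged central difference of $\tilde u$ leaves a defect $-(\tilde u_0^{(n+1)}-\tilde u_0^{(n)})/\Delta t+\delta_k^{\langle 1 \rangle}((\tilde u_k^{(n+1)}+\tilde u_k^{(n)})/2)|_{k=0}$, and inserting and cancelling $\partial_t u_0^{(n+1/2)}$ and $\partial_x u_0^{(n+1/2)}$ through the dynamic boundary condition \eqref{BCi} (respectively \eqref{BCi'} at $k=K$, whose sign is absorbed by the minus in \eqref{dDBC'}) yields $\xi_{3,0}^{(n+1/2)}$ (respectively $\xi_{3,K}^{(n+1/2)}$). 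Finally \eqref{ep_bc} requires no consistency term: from the discrete Neumann condition \eqref{dNBC} one has $\delta_k^{\langle 1 \rangle}P_k^{(n)}=0$ at $k=0,K$, while the extension property \eqref{pb}, namely $\tilde p_{-1}=\tilde p_1$ and $\tilde p_{K+1}=\tilde p_{K-1}$, gives $\delta_k^{\langle 1 \rangle}\tilde p_k^{(n+1/2)}=0$ at the same points, so $\delta_k^{\langle 1 \rangle}e_{p,k}^{(n)}=0$ exactly. The whole argument is thus linear bookkeeping; the only point demanding care is tracking the artificial values at $k=-1,K+1$ that enter the central second difference at $k=0,K$ and verifying that the prescribed extensions \eqref{util} and \eqref{pb} make the boundary contributions collapse as claimed, which is where I expect the bulk of the (routine) effort to lie.
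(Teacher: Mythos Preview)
Your plan is correct and follows essentially the same subtraction/linearity strategy as the paper: derive each error identity by writing the discrete scheme relation for $U,P$, subtract the same discrete expression applied to $\tilde u,\tilde p$, and identify the residual as the appropriate $\xi_i$ after invoking the continuous equations at the half time level; the treatment of \eqref{ep_bc} via \eqref{dNBC} and \eqref{pb} is exactly the paper's argument.

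The one point where the paper is more explicit than your sketch concerns the endpoints $k=0,K$ in \eqref{eu1} and \eqref{ep1}. You apply \eqref{Eq_i}--\eqref{Eq_ii} directly at the boundary grid points, but those equations are stated only on the open interval $(0,L)$; the paper therefore first derives the identities for $k=1,\ldots,K-1$, and then, for $k=0,K$, writes the analogous relations at the shifted point $\varepsilon\Delta x$ (resp.\ $(K-\varepsilon)\Delta x$) and lets $\varepsilon\to 0$, using $u\in C^{4}([0,L]\times[0,T])$ (and the smoothness of $F$) to pass to the limit. Under the $C^{4}$ hypothesis this limiting step is equivalent to invoking continuity, so your direct application is legitimate, but the paper's presentation makes the justification explicit. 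Your final remark about tracking the artificial values at $k=-1,K+1$ is apt, and indeed that is where the extensions \eqref{pb} and \eqref{util} enter, but the more delicate point at $k=0,K$ is the validity of $\partial_t u=\partial_x^{2}p$ and $p=-\gamma\partial_x^{2}u+F'(u)$ at the spatial endpoints rather than the handling of the ghost values themselves.
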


\begin{proof}
For any fixed $n=0,1,\ldots, N-1$, from the definition of $\bm{e}_{u}$, \eqref{Eq_i}, and \eqref{dCH1}, we have 
\begin{align}
\frac{e_{u,k}^{(n+1)} - e_{u,k}^{(n)}}{\Delta t} 
	& = \frac{U_{k}^{(n+1)} - U_{k}^{(n)}}{\Delta t} - \frac{u_{k}^{(n+1)} - u_{k}^{(n)}}{\Delta t} \nonumber\\
	& = \frac{U_{k}^{(n+1)} - U_{k}^{(n)}}{\Delta t} - \partial_{t}u_{k}^{\left( n + \frac{1}{2}\right)} + \partial_{t}u_{k}^{\left( n + \frac{1}{2}\right)} - \frac{u_{k}^{(n+1)} - u_{k}^{(n)}}{\Delta t} \nonumber\\
	& = \delta_{k}^{\langle 2 \rangle} P_{k}^{(n)} - \partial_{x}^{2}p_{k}^{\left( n + \frac{1}{2}\right)} + \partial_{t}u_{k}^{\left( n + \frac{1}{2}\right)} - \frac{u_{k}^{(n+1)} - u_{k}^{(n)}}{\Delta t} \nonumber\\
	& = \delta_{k}^{\langle 2 \rangle} P_{k}^{(n)} - \delta_{k}^{\langle 2 \rangle}p_{k}^{\left( n + \frac{1}{2}\right)} + \delta_{k}^{\langle 2 \rangle}p_{k}^{\left( n + \frac{1}{2}\right)} - \partial_{x}^{2}p_{k}^{\left( n + \frac{1}{2}\right)} + \partial_{t}u_{k}^{\left( n + \frac{1}{2}\right)} - \frac{u_{k}^{(n+1)} - u_{k}^{(n)}}{\Delta t} \nonumber\\
	& = \delta_{k}^{\langle 2 \rangle}e_{p,k}^{(n)} + \xi_{1,k}^{\left( n + \frac{1}{2}\right)} \quad (k=1,\ldots, K-1). \label{eu}
\end{align}
Similarly, from \eqref{Eq_ii}, \eqref{dCH2}, and the definitions of $\bm{e}_{u}$ and $\bm{e}_{p}$, we obtain 
\begin{align}
e_{p,k}^{(n)} 
	& = -\gamma \delta_{k}^{\langle 2 \rangle}\left(\frac{U_{k}^{(n+1)} + U_{k}^{(n)}}{2}\right) + \frac{dF}{d(U_{k}^{(n+1)},U_{k}^{(n)})} + \gamma \partial_{x}^{2}u_{k}^{\left( n + \frac{1}{2}\right)} - F'( u_{k}^{\left( n + \frac{1}{2}\right)}) \nonumber\\
	& = - \gamma \delta_{k}^{\langle 2 \rangle}\left(\frac{e_{u,k}^{(n+1)} + u_{k}^{(n+1)} + e_{u,k}^{(n)} + u_{k}^{(n)}}{2}\right) + \gamma \partial_{x}^{2}u_{k}^{\left( n + \frac{1}{2}\right)} \nonumber\\
	& \quad + \frac{dF}{d(U_{k}^{(n+1)},U_{k}^{(n)})} - \frac{dF}{d(u_{k}^{(n+1)},u_{k}^{(n)})} + \frac{dF}{d(u_{k}^{(n+1)},u_{k}^{(n)})} - F'( u_{k}^{\left( n + \frac{1}{2}\right)}) \nonumber\\
	& = - \gamma \delta_{k}^{\langle 2 \rangle}\left(\frac{e_{u,k}^{(n+1)} + e_{u,k}^{(n)}}{2}\right) + \gamma\left\{\partial_{x}^{2}u_{k}^{\left( n + \frac{1}{2}\right)} - \delta_{k}^{\langle 2 \rangle}\left(\frac{u_{k}^{(n+1)} + u_{k}^{(n)}}{2}\right)\right\} \nonumber\\
	& \quad + \left(\frac{dF}{d(U_{k}^{(n+1)},U_{k}^{(n)})} - \frac{dF}{d(u_{k}^{(n+1)},u_{k}^{(n)})}\right) + \left(\frac{dF}{d(u_{k}^{(n+1)},u_{k}^{(n)})} - F'( u_{k}^{\left( n + \frac{1}{2}\right)})\right) \nonumber\\
	& = - \gamma \delta_{k}^{\langle 2 \rangle}\!\!\left(\! \frac{e_{u,k}^{(n+1)} \! + \! e_{u,k}^{(n)}}{2} \!\right) \! + \! \left(\! \frac{dF}{d(U_{k}^{(n+1)},U_{k}^{(n)})} - \frac{dF}{d(u_{k}^{(n+1)},u_{k}^{(n)})} \!\right) \! + \! \xi_{2,k}^{\left(\! n + \frac{1}{2} \!\right)} \quad (k=1,\ldots, K-1). \label{ep}
\end{align}
We show that the equalities \eqref{eu} and \eqref{ep} hold at $k=0,K$. 
We remark that the equations \eqref{Eq_i}--\eqref{Eq_ii} hold in the interior of the domain $(0,L)$ only. 
Hence, we cannot apply the equations \eqref{Eq_i}--\eqref{Eq_ii} directly in the calculation of \eqref{eu} and \eqref{ep} on the boundary. 
Therefore, we consider points slightly inside from the boundary of the domain, and we take the limit of them to show that \eqref{eu} and \eqref{ep} hold at $k = 0,K$. 
For any $\varepsilon \! \in \! (0,1)$, let 
\begin{gather*}
e_{u,0,\varepsilon}^{(n)} := U_{0}^{(n)} - u(\varepsilon \Delta x, n\Delta t), \quad e_{u,K,-\varepsilon}^{(n)} := U_{K}^{(n)} - u((K - \varepsilon) \Delta x, n\Delta t) \quad (n=0,1,\ldots, N), \\
e_{p,0,\varepsilon}^{(n)} \! := \! P_{0}^{(n)} \! - p\!\left(\! \varepsilon\Delta x, \left(\! n \! + \! \frac{1}{2}\!\right)\!\Delta t\!\right), \quad e_{p,K,-\varepsilon}^{(n)} \! := \! P_{K}^{(n)} \! - p\!\left(\! (K \! - \! \varepsilon)\Delta x, \left(\! n \! + \! \frac{1}{2}\!\right)\!\Delta t\!\right) \quad (n=0,1,\ldots, N-1).
\end{gather*}
Furthermore, for $n=0,1,\ldots, N-1$, let 
\begin{gather*}
\xi_{1,\varepsilon }^{\left( n + \frac{1}{2}\right)} := \partial_{t}u_{\varepsilon}^{\left( n + \frac{1}{2}\right)}  - \frac{u_{\varepsilon}^{(n+1)} - u_{\varepsilon}^{(n)}}{\Delta t} + \delta_{k}^{\langle 2 \rangle}\tilde{p}_{\varepsilon}^{\left( n + \frac{1}{2}\right)}  - \partial_{x}^{2}p_{\varepsilon}^{\left( n + \frac{1}{2}\right)} , \\
\xi_{1,K-\varepsilon}^{\left( n + \frac{1}{2}\right)} := \partial_{t}u_{K-\varepsilon}^{\left( n + \frac{1}{2}\right)}  - \frac{u_{K-\varepsilon}^{(n+1)} - u_{K-\varepsilon}^{(n)}}{\Delta t} + \delta_{k}^{\langle 2 \rangle}\tilde{p}_{K-\varepsilon}^{\left( n + \frac{1}{2}\right)}  - \partial_{x}^{2}p_{K-\varepsilon}^{\left( n + \frac{1}{2}\right)} , \\
\xi_{2,\varepsilon}^{\left( n + \frac{1}{2}\right)} := \gamma\left\{\partial_{x}^{2}u_{\varepsilon}^{\left( n + \frac{1}{2}\right)} - \delta_{k}^{\langle 2 \rangle}\left(\frac{\tilde{u}_{\varepsilon}^{(n+1)} + \tilde{u}_{\varepsilon}^{(n)}}{2}\right) \right\} + \frac{dF}{d(u_{\varepsilon}^{(n+1)},u_{\varepsilon}^{(n)})} - F'( u_{\varepsilon}^{\left( n + \frac{1}{2}\right)}), \\
\xi_{2,K-\varepsilon}^{\left( n + \frac{1}{2}\right)} := \gamma\left\{\partial_{x}^{2}u_{K-\varepsilon}^{\left( n + \frac{1}{2}\right)} - \delta_{k}^{\langle 2 \rangle}\left(\frac{\tilde{u}_{K-\varepsilon}^{(n+1)} + \tilde{u}_{K-\varepsilon}^{(n)}}{2}\right) \right\} + \frac{dF}{d(u_{K-\varepsilon}^{(n+1)},u_{K-\varepsilon}^{(n)})} - F'( u_{K-\varepsilon}^{\left( n + \frac{1}{2}\right)}). 
\end{gather*}
In a similar way as \eqref{eu}, we have 
\begin{equation}
\frac{e_{u,0,\varepsilon}^{(n+1)} - e_{u,0,\varepsilon}^{(n)}}{\Delta t} 
	= \delta_{k}^{\langle 2 \rangle}e_{p,0,\varepsilon}^{(n)} + \xi_{1,\varepsilon}^{\left( n + \frac{1}{2}\right)}, \quad 
\frac{e_{u,K,-\varepsilon}^{(n+1)} - e_{u,K,-\varepsilon}^{(n)}}{\Delta t} 
	= \delta_{k}^{\langle 2 \rangle}e_{p,K,-\varepsilon}^{(n)} + \xi_{1,K-\varepsilon}^{\left( n + \frac{1}{2}\right)}. \label{eu_ep0K}
\end{equation}
From the smoothness assumption of $u$, letting $\varepsilon$ tend to zero in \eqref{eu_ep0K}, we obtain 
\begin{equation*}
\frac{e_{u,0}^{(n+1)} - e_{u,0}^{(n)}}{\Delta t} = \delta_{k}^{\langle 2 \rangle}e_{p,0}^{(n)} + \xi_{1,0}^{\left( n + \frac{1}{2}\right)}, \quad \frac{e_{u,K}^{(n+1)} - e_{u,K}^{(n)}}{\Delta t} = \delta_{k}^{\langle 2 \rangle}e_{p,K}^{(n)} + \xi_{1,K}^{\left( n + \frac{1}{2}\right)} \quad (n=0,1,\ldots, N-1). 
\end{equation*}
In a similar way as \eqref{ep}, we get 
\begin{gather}
e_{p,0,\varepsilon}^{(n)} 
	= - \gamma \delta_{k}^{\langle 2 \rangle}\!\left(\frac{e_{u,0,\varepsilon}^{(n+1)} + e_{u,0,\varepsilon}^{(n)}}{2}\right) + \left( \frac{dF}{d(U_{0}^{(n+1)},U_{0}^{(n)})} - \frac{dF}{d(u_{\varepsilon}^{(n+1)},u_{\varepsilon}^{(n)})}\right) + \xi_{2,\varepsilon}^{\left( n + \frac{1}{2}\right)}, \label{ep_ep0}\\ 
e_{p,K,-\varepsilon}^{(n)} 
	= - \gamma \delta_{k}^{\langle 2 \rangle}\!\left(\frac{e_{u,K,-\varepsilon}^{(n+1)} + e_{u,K,-\varepsilon}^{(n)}}{2}\right) + \left( \frac{dF}{d(U_{K}^{(n+1)},U_{K}^{(n)})} - \frac{dF}{d(u_{K-\varepsilon}^{(n+1)},u_{K-\varepsilon}^{(n)})}\right) + \xi_{2,K-\varepsilon}^{\left( n + \frac{1}{2}\right)}. \label{ep_epK}
\end{gather}
From the smoothness assumptions of $u$ and $F$, letting $\varepsilon$ tend to zero in \eqref{ep_ep0} and \eqref{ep_epK}, we obtain 
\begin{gather*}
e_{p,0}^{(n)} 
	= - \gamma \delta_{k}^{\langle 2 \rangle}\!\left(\frac{e_{u,0}^{(n+1)} + e_{u,0}^{(n)}}{2}\right) + \left(\frac{dF}{d( U_{0}^{(n+1)},U_{0}^{(n)})} - \frac{dF}{d( u_{0}^{(n+1)},u_{0}^{(n)})}\right) + \xi_{2,0}^{\left( n + \frac{1}{2}\right)}, \\
e_{p,K}^{(n)} 
	= - \gamma \delta_{k}^{\langle 2 \rangle}\!\left(\frac{e_{u,K}^{(n+1)} + e_{u,K}^{(n)}}{2}\right) + \left(\frac{dF}{d( U_{K}^{(n+1)},U_{K}^{(n)})} - \frac{dF}{d( u_{K}^{(n+1)},u_{K}^{(n)})}\right) + \xi_{2,K}^{\left( n + \frac{1}{2}\right)} 
\end{gather*}
for $n=0,1,\ldots, N-1$. 
Next, from the definition of $\bm{e}_{u}$, \eqref{BCi}, and \eqref{dDBC}, we have 
\begin{align*}
\frac{e_{u,0}^{(\! n+1\! )} \! - \! e_{u,0}^{(\! n\! )}}{\Delta t}
	& = \frac{U_{0}^{(n+1)} - U_{0}^{(n)}}{\Delta t} - \partial_{t}u_{0}^{\left( n + \frac{1}{2}\right)} + \partial_{t}u_{0}^{\left( n + \frac{1}{2}\right)} - \frac{u_{0}^{(n+1)} - u_{0}^{(n)}}{\Delta t} \\
	& = \delta_{k}^{\langle 1 \rangle}\left(\frac{U_{0}^{(n+1)} + U_{0}^{(n)}}{2}\right) - \partial_{x}u_{0}^{\left( n + \frac{1}{2}\right)} + \partial_{t}u_{0}^{\left( n + \frac{1}{2}\right)} - \frac{u_{0}^{(n+1)} - u_{0}^{(n)}}{\Delta t} \\
	& = \delta_{k}^{\langle 1 \rangle}\!\!\left(\!\! \frac{U_{0}^{(\! n+1\! )} \!\! + \! U_{0}^{(\! n\! )}}{2} \!\!\right) \! - \! \delta_{k}^{\langle 1 \rangle}\!\!\left(\!\! \frac{\tilde{u}_{0}^{(\! n+1\! )} \!\! + \! \tilde{u}_{0}^{(\! n\! )}}{2} \!\!\right) \! + \! \delta_{k}^{\langle 1 \rangle}\!\!\left(\!\! \frac{\tilde{u}_{0}^{(\! n+1\! )} \!\! + \! \tilde{u}_{0}^{(\! n\! )}}{2} \!\!\right) \! - \! \partial_{x}u_{0}^{\! \left(\! n + \frac{1}{2} \!\right)} \! + \! \partial_{t}u_{0}^{\! \left(\! n + \frac{1}{2} \!\right)} \! - \! \frac{u_{0}^{(\! n+1\! )} \! - \! u_{0}^{(\! n\! )}}{\Delta t} \\
	& = \delta_{k}^{\langle 1 \rangle}\left(\frac{e_{u,0}^{(n+1)} + e_{u,0}^{(n)}}{2}\right) + \xi_{3,0}^{\left( n + \frac{1}{2}\right)} \quad (n=0,1,\ldots, N-1). 
\end{align*}
In the same manner, from the definition of $\bm{e}_{u}$, \eqref{BCi'}, and \eqref{dDBC'}, we get 
\begin{equation*}
\frac{e_{u,K}^{(n+1)} - e_{u,K}^{(n)}}{\Delta t}
	= -\delta_{k}^{\langle 1 \rangle}\left(\frac{e_{u,K}^{(n+1)} + e_{u,K}^{(n)}}{2}\right) + \xi_{3,K}^{\left( n + \frac{1}{2}\right)} \quad (n=0,1,\ldots, N-1).
\end{equation*}
Lastly, it holds from the definition of $\bm{e}_{p}$, \eqref{dNBC}, and \eqref{pb} that 
\begin{equation*}
0 = \delta_{k}^{\langle 1 \rangle}P_{k}^{(n)} = \delta_{k}^{\langle 1 \rangle}e_{p,k}^{(n)} + \delta_{k}^{\langle 1 \rangle}\tilde{p}_{k}^{\left( n + \frac{1}{2}\right)} = \delta_{k}^{\langle 1 \rangle}e_{p,k}^{(n)} \quad (k=0,K,\ n=0,1,\ldots, N-1). 
\end{equation*}
From the above, equations \eqref{eu1}--\eqref{ep_bc} on the errors $\bm{e}_{u}$ and $\bm{e}_{p}$ hold. \hfill $\Box$\\
\end{proof}

\begin{lemma} \label{Ap:G} 
Assume that $u \in C^{4}([0,L] \times [0,T])$.  
Furthermore, we suppose that the potential function $F$ is in $C^{3}$.  
Denote the bounds by \eqref{Bou}. 
Then, for any fixed $\varepsilon >0$, the following inequality holds: 
\begin{equation*}
\left\{ 1 - \Delta t\!\left( \frac{C_{3}^{2}}{\gamma} + \varepsilon \right) \right\} \left\| D\bm{e}_{u}^{(n+1)}\right\|^{2} 
	\leq \left\{ 1 + \Delta t\left( \frac{C_{3}^{2}}{\gamma} + \varepsilon \right) \right\}\left\| D\bm{e}_{u}^{(n)}\right\|^{2} + \Delta t R^{\left( n + \frac{1}{2}\right)} \quad (n=0,1, \ldots, N-1).
\end{equation*}
\end{lemma}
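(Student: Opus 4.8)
The plan is to establish the inequality as a one-step energy estimate, reproducing the discrete computation already carried out for the dissipation law (Theorem~\ref{thm:2.1}) and for the contraction bound in Section~4, but now starting from the error equations \eqref{eu1}--\eqref{ep_bc}. First I would form the increment of the squared Dirichlet seminorm and factor it via $a^2-b^2=(a+b)(a-b)$ as
\[
\|D\bm e_u^{(n+1)}\|^2-\|D\bm e_u^{(n)}\|^2
=2\sum_{k=0}^{K-1}\Bigl\{\delta_k^{+}\Bigl(\tfrac{e_{u,k}^{(n+1)}+e_{u,k}^{(n)}}{2}\Bigr)\Bigr\}\Bigl\{\delta_k^{+}\bigl(e_{u,k}^{(n+1)}-e_{u,k}^{(n)}\bigr)\Bigr\}\Delta x ,
\]
and then apply Corollary~\ref{col:2.1} to move one forward difference, which produces an interior sum $-2\sum_{k=0}^{K}{}^{\prime\prime}\{\delta_k^{\langle2\rangle}(\tfrac{e_{u,k}^{(n+1)}+e_{u,k}^{(n)}}{2})\}(e_{u,k}^{(n+1)}-e_{u,k}^{(n)})\Delta x$ together with endpoint terms at $k=0,K$.

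Next I would substitute the error relations. Using \eqref{ep1} I replace $\delta_k^{\langle2\rangle}(\tfrac{e_{u,k}^{(n+1)}+e_{u,k}^{(n)}}{2})$ by $\tfrac1\gamma(-e_{p,k}^{(n)}+D_k+\xi_{2,k}^{(n+1/2)})$, where $D_k$ abbreviates the difference of the discrete variational derivatives at $\bm U$ and at $\bm u$, and using \eqref{eu1} I replace the time increment $e_{u,k}^{(n+1)}-e_{u,k}^{(n)}$ by $\Delta t(\delta_k^{\langle2\rangle}e_{p,k}^{(n)}+\xi_{1,k}^{(n+1/2)})$; the boundary relations \eqref{eu0}--\eqref{euK} are inserted in the endpoint terms. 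A second application of Corollary~\ref{col:2.1}, together with the discrete Neumann condition $\delta_k^{\langle1\rangle}e_{p,k}^{(n)}=0$ from \eqref{ep_bc}, turns the pure $e_p$-contribution into the dissipative term $-\tfrac{2\Delta t}{\gamma}\|D\bm e_p^{(n)}\|^2$, while the endpoints contribute the negative squares $-\tfrac{2}{\Delta t}(e_{u,0}^{(n+1)}-e_{u,0}^{(n)})^2$ and $-\tfrac2{\Delta t}(e_{u,K}^{(n+1)}-e_{u,K}^{(n)})^2$. The one genuinely awkward term, $\tfrac{2\Delta t}{\gamma}\sum_{k=0}^{K}{}^{\prime\prime}e_{p,k}^{(n)}\xi_{1,k}^{(n+1/2)}\Delta x$, cannot be absorbed by $\|D\bm e_p\|^2$ because the chemical-potential error appears undifferentiated; I would dispose of it by re-inserting \eqref{ep1} for $e_{p,k}^{(n)}$ and integrating by parts once more, which cancels the $(D_k+\xi_{2,k})\xi_{1,k}$ pieces and leaves $2\Delta t\sum_{k=0}^{K-1}(\delta_k^{+}\tfrac{e_{u,k}^{(n+1)}+e_{u,k}^{(n)}}{2})(\delta_k^{+}\xi_{1,k}^{(n+1/2)})\Delta x$ plus further endpoint terms. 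This is precisely the step that forces the free parameter $\varepsilon$ into the estimate and, after Young, simultaneously produces the $\varepsilon(\|D\bm e_u^{(n+1)}\|^2+\|D\bm e_u^{(n)}\|^2)$ growth and the $\tfrac1\varepsilon\|D\bm\xi_1^{(n+1/2)}\|^2$ contribution in \eqref{Rn}.

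Then I would close the estimate with Young's inequality throughout. The term $\tfrac{2\Delta t}{\gamma}\sum_{k=0}^{K-1}(\delta_k^{+}e_{p,k}^{(n)})\delta_k^{+}(D_k+\xi_{2,k})\Delta x$ is split so that its $\|D\bm e_p\|^2$ part is exactly absorbed by the dissipative term, leaving $\tfrac{\Delta t}{2\gamma}\|D(\bm D+\bm\xi_2^{(n+1/2)})\|^2$. To bound $\|D\bm D\|$ I would expand $D_k$ by Lemma~\ref{lem:4.2} into two $\bar F''$-weighted factors, apply the product rule Lemma~\ref{lem:4.3} and the bounds Lemma~\ref{lem:4.1} and Lemma~\ref{lem:4.4} with the a~priori bounds \eqref{Bou}, and control $\|\bm e_u^{(m)}\|_{L_{\rm d}^{\infty}}$ through Lemma~\ref{lem:3.2}; since $\bm e_u$ does \emph{not} have vanishing discrete mass, this last step splits $\|\bm e_u^{(m)}\|_{L_{\rm d}^{\infty}}$ into $\tfrac1L|M_{\rm d}(\bm e_u^{(m)})|+L^{1/2}\|D\bm e_u^{(m)}\|$, the first summand being estimated by the accumulated consistency error $\Delta t\sum_{j}\|\bm\xi_1^{(j+1/2)}\|_{L_{\rm d}^{\infty}}$ (obtained from \eqref{eu1}, \eqref{ep_bc} and Lemma~\ref{lem:2.3}). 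This yields $\|D\bm D\|\le C_3(\|D\bm e_u^{(n+1)}\|+\|D\bm e_u^{(n)}\|)+\Delta t\,C_1\max_{|\xi|\le C_2}|F'''(\xi)|\sum_{j=0}^{n}\|\bm\xi_1^{(j+1/2)}\|_{L_{\rm d}^{\infty}}$, with $C_3$ arising exactly as in its definition. A final Young split of $\tfrac{\Delta t}{2\gamma}\|D(\bm D+\bm\xi_2)\|^2$ with parameter $\theta=\varepsilon\gamma/(2C_3^2)$ produces the coefficient $\Delta t(\tfrac{C_3^2}{\gamma}+\tfrac\varepsilon2)$ in front of $\|D\bm e_u^{(n+1)}\|^2+\|D\bm e_u^{(n)}\|^2$ and the weight $\tfrac1{2\gamma}(1+\tfrac{2C_3^2}{\varepsilon\gamma})$ in $R^{(n+1/2)}$. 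Combined with the $\varepsilon$ contribution from the previous paragraph, the total coefficient is $\Delta t(\tfrac{C_3^2}{\gamma}+\varepsilon)$; moving the $(n+1)$-terms to the left-hand side gives the stated inequality.

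The main obstacle will be the boundary and cross-term bookkeeping rather than any single hard idea. Concretely, I expect the delicate points to be: (i) disposing of $\tfrac{2\Delta t}{\gamma}\sum_{k=0}^{K}{}^{\prime\prime}e_p\xi_1$, which must be converted back through \eqref{ep1} before it can be controlled; (ii) collecting all endpoint contributions — those from the two summations by parts and from \eqref{eu0}--\eqref{euK} — and completing squares against the negative boundary squares $-\tfrac2{\Delta t}(e_{u,0}^{(n+1)}-e_{u,0}^{(n)})^2$, $-\tfrac2{\Delta t}(e_{u,K}^{(n+1)}-e_{u,K}^{(n)})^2$ so that only $|\xi_{1,0}|^2,|\xi_{1,K}|^2,|\xi_{3,0}|^2,|\xi_{3,K}|^2$ survive; and (iii) tracking the Young parameters so that the surviving source terms reproduce exactly the constant combination prescribed in \eqref{Rn}.
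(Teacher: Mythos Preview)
Your plan is essentially the paper's own argument and would go through; the differences are cosmetic. First, the paper avoids your detour in step~(i): rather than substituting \eqref{ep1} everywhere and then back-substituting to eliminate the undifferentiated $e_p\,\xi_1$ term, it inserts \eqref{eu1} first, so that the $\xi_1$-piece is paired from the outset with $\delta_k^{\langle2\rangle}\bigl(\tfrac{e_{u,k}^{(n+1)}+e_{u,k}^{(n)}}{2}\bigr)$ and can be summed by parts directly; \eqref{ep1} is then used only against the factor $\delta_k^{\langle2\rangle}e_{p,k}^{(n)}$. Your route is algebraically identical---the back-substitution undoes the extra step---but the paper's order is cleaner. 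Second, at the boundary the paper substitutes \eqref{eu0}--\eqref{euK} for the \emph{time increment}, producing the negative squares $-\bigl\{\delta_k^{\langle1\rangle}\bigl(\tfrac{e_{u}^{(n+1)}+e_{u}^{(n)}}{2}\bigr)\bigr\}^2$ at $k=0,K$; the $\xi_1$- and $\xi_3$-cross terms then both carry the same factor $\delta_k^{\langle1\rangle}(\cdots)$, so a single Young inequality absorbs them into the negative square with remainder exactly $\tfrac12\bigl(|\xi_{1}|^2+|\xi_{3}|^2\bigr)$ at each endpoint. Your stated substitution (solving instead for $\delta_k^{\langle1\rangle}$ and keeping $-\tfrac{2}{\Delta t}(e_{u}^{(n+1)}-e_{u}^{(n)})^2$) also works but generates an extra cross term $-2\Delta t\,\xi_{1}\xi_{3}$ at each endpoint, costing at most a harmless factor of two on the boundary contributions to $R^{(n+1/2)}$; if you want to reproduce the constants in \eqref{Rn} verbatim, switch to the paper's substitution. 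Otherwise your outline is correct.
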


\begin{proof}
For any fixed $n=0,1,\ldots, N-1$, using Corollary \ref{col:2.1}, we have 
\begin{align}
& \frac{1}{2\Delta t}\left( \left\| D\bm{e}_{u}^{(n+1)}\right\|^{2} - \left\| D\bm{e}_{u}^{(n)}\right\|^{2} \right) 
	= \sum_{k=0}^{K-1}\left\{\delta_{k}^{+}\left( \frac{e_{u,k}^{(n+1)} - e_{u,k}^{(n)}}{\Delta t}\right)\right\}\left\{\delta_{k}^{+}\left( \frac{e_{u,k}^{(n+1)} + e_{u,k}^{(n)}}{2} \right)\right\}\Delta x \nonumber\\
	& = -\sum_{k=0}^{K}{}^{\prime\prime}\frac{e_{u,k}^{(n+1)} - e_{u,k}^{(n)}}{\Delta t}\delta_{k}^{\langle 2\rangle}\left( \frac{e_{u,k}^{(n+1)} + e_{u,k}^{(n)}}{2} \right)\Delta x + \left[\frac{e_{u,k}^{(n+1)} - e_{u,k}^{(n)}}{\Delta t}\delta_{k}^{\langle 1\rangle}\left( \frac{e_{u,k}^{(n+1)} + e_{u,k}^{(n)}}{2} \right)\right]_{0}^{K}. \label{Derr1}
\end{align}
Firstly, we consider the first term on the right-hand side of \eqref{Derr1}. 
From \eqref{eu1}, \eqref{ep1}, \eqref{ep_bc}, Corollary \ref{col:2.1}, and H\"older inequality, we obtain 
\begin{align*}
& -\sum_{k=0}^{K}{}^{\prime\prime}\frac{e_{u,k}^{(n+1)} - e_{u,k}^{(n)}}{\Delta t}\delta_{k}^{\langle 2\rangle}\left( \frac{e_{u,k}^{(n+1)} + e_{u,k}^{(n)}}{2} \right)\Delta x \\
	& = -\sum_{k=0}^{K}{}^{\prime\prime}\left(\delta_{k}^{\langle 2 \rangle}e_{p,k}^{(n)}\right)\delta_{k}^{\langle 2\rangle}\left( \frac{e_{u,k}^{(n+1)} + e_{u,k}^{(n)}}{2} \right)\Delta x -\sum_{k=0}^{K}{}^{\prime\prime}\xi_{1,k}^{\left( n + \frac{1}{2}\right)}\delta_{k}^{\langle 2\rangle}\left( \frac{e_{u,k}^{(n+1)} + e_{u,k}^{(n)}}{2} \right)\Delta x \\
	& = \frac{1}{\gamma}\sum_{k=0}^{K}{}^{\prime\prime}\!\left(\delta_{k}^{\langle 2 \rangle}e_{p,k}^{(n)}\right) e_{p,k}^{(n)} \Delta x - \frac{1}{\gamma}\sum_{k=0}^{K}{}^{\prime\prime}\!\left(\delta_{k}^{\langle 2 \rangle}e_{p,k}^{(n)}\right)\!\left\{\! \left(\frac{dF}{d( U_{k}^{(n+1)},U_{k}^{(n)})} - \frac{dF}{d( u_{k}^{(n+1)},u_{k}^{(n)})}\right) + \xi_{2,k}^{\left( n + \frac{1}{2}\right)}\!\right\}\Delta x \\
	& \quad - \sum_{k=0}^{K}{}^{\prime\prime}\xi_{1,k}^{\left( n + \frac{1}{2}\right)}\delta_{k}^{\langle 2\rangle}\left( \frac{e_{u,k}^{(n+1)} + e_{u,k}^{(n)}}{2} \right)\Delta x 
\end{align*}
\begin{align*}
	& = \! -\frac{1}{\gamma}\!\sum_{k=0}^{K-1}\!\!\left(\! \delta_{k}^{+}e_{p,k}^{(\! n\! )} \!\right)^{\!\! 2} \!\! \Delta x \! + \! \frac{1}{\gamma}\!\left[\! \left(\! \delta_{k}^{\langle 1 \rangle}e_{p,k}^{(\! n\! )} \!\right)\! e_{p,k}^{(\! n\! )} \!\right]_{0}^{K} \!\! + \! \frac{1}{\gamma}\!\sum_{k=0}^{K-1}\!\!\left(\! \delta_{k}^{+}e_{p,k}^{(\! n\! )} \!\right)\!\!\left\{\! \delta_{k}^{+}\!\!\left(\! \frac{dF}{d( U_{k}^{(\! n+1\! )}\! ,\! U_{k}^{(\! n\! )})} \! - \! \frac{dF}{d( u_{k}^{(\! n+1\! )}\! ,\! u_{k}^{(\! n\! )})} \! + \! \xi_{2,k}^{\left(\! n + \frac{1}{2} \!\!\right)} \!\right)\!\!\right\}\!\Delta x \\
	& \quad - \frac{1}{\gamma}\left[\left(\delta_{k}^{\langle 1 \rangle}e_{p,k}^{(n)}\right)\left(\frac{dF}{d( U_{k}^{(n+1)},U_{k}^{(n)})} - \frac{dF}{d( u_{k}^{(n+1)},u_{k}^{(n)})} + \xi_{2,k}^{\left( n + \frac{1}{2}\right)}\right)\right]_{0}^{K} \\
	& \quad + \sum_{k=0}^{K-1}\left(\delta_{k}^{+}\xi_{1,k}^{\left( n + \frac{1}{2}\right)}\right)\left\{\delta_{k}^{+}\left( \frac{e_{u,k}^{(n+1)} + e_{u,k}^{(n)}}{2} \right)\right\}\Delta x - \left[\xi_{1,k}^{\left( n + \frac{1}{2}\right)}\delta_{k}^{\langle 1 \rangle}\left( \frac{e_{u,k}^{(n+1)} + e_{u,k}^{(n)}}{2} \right)\right]_{0}^{K} \\
	& \leq -\frac{1}{\gamma}\left\| D\bm{e}_{p}^{(n)}\right\|^{2} + \frac{1}{\gamma}\left\| D\bm{e}_{p}^{(n)}\right\|\left\| D\left(\frac{dF}{d\left( \bm{U}^{(n+1)},\bm{U}^{(n)}\right)} - \frac{dF}{d\left( \bm{u}^{(n+1)},\bm{u}^{(n)}\right)} + \bm{\xi}_{2}^{\left( n + \frac{1}{2}\right)}\right)\right\| \\
	& \quad + \left\| D\bm{\xi}_{1}^{\left( n + \frac{1}{2}\right)}\right\|\left\| D\left( \frac{\bm{e}_{u}^{(n+1)} + \bm{e}_{u}^{(n)}}{2} \right)\right\| - \left[\xi_{1,k}^{\left( n + \frac{1}{2}\right)}\delta_{k}^{\langle 1 \rangle}\left( \frac{e_{u,k}^{(n+1)} + e_{u,k}^{(n)}}{2} \right)\right]_{0}^{K}. 
\end{align*}
Next, we consider the second term on the right-hand side of \eqref{Derr1}. 
It follows from \eqref{eu0} and \eqref{euK} that 
\begin{align*}
\left[\frac{e_{u,k}^{(n+1)} - e_{u,k}^{(n)}}{\Delta t}\delta_{k}^{\langle 1\rangle}\left( \frac{e_{u,k}^{(n+1)} + e_{u,k}^{(n)}}{2} \right)\right]_{0}^{K}
	& = - \left\{\! \delta_{k}^{\langle 1 \rangle}\!\!\left(\! \frac{e_{u,K}^{(n+1)} \! + \! e_{u,K}^{(n)}}{2} \!\right) \!\right\}^{\!\! 2} - \left\{\! \delta_{k}^{\langle 1 \rangle}\!\!\left(\! \frac{e_{u,0}^{(n+1)} \! + \! e_{u,0}^{(n)}}{2} \!\right) \!\right\}^{\!\! 2} \\
	& \quad + \xi_{3,K}^{\left( n + \frac{1}{2} \right)}\delta_{k}^{\langle 1\rangle}\!\!\left(\! \frac{e_{u,K}^{(n+1)} \! + \! e_{u,K}^{(n)}}{2} \!\right) - \xi_{3,0}^{\left( n + \frac{1}{2} \right)}\delta_{k}^{\langle 1\rangle}\!\!\left(\! \frac{e_{u,0}^{(n+1)} \! + \! e_{u,0}^{(n)}}{2} \!\right). 
\end{align*}
From the above, we obtain 
\begin{align*}
& \frac{1}{2\Delta t}\left( \left\| D\bm{e}_{u}^{(n+1)}\right\|^{2} - \left\| D\bm{e}_{u}^{(n)}\right\|^{2} \right) \\
	& \leq -\frac{1}{\gamma}\left\| D\bm{e}_{p}^{(n)}\right\|^{2} + \frac{1}{\gamma}\left\| D\bm{e}_{p}^{(n)}\right\|\left\| D\left(\frac{dF}{d\left( \bm{U}^{(n+1)},\bm{U}^{(n)}\right)} - \frac{dF}{d\left( \bm{u}^{(n+1)},\bm{u}^{(n)}\right)} + \bm{\xi}_{2}^{\left( n + \frac{1}{2}\right)}\right)\right\| \\
	& \quad + \left\| D\bm{\xi}_{1}^{\left( n + \frac{1}{2}\right)}\right\|\left\| D\left( \frac{\bm{e}_{u}^{(n+1)} + \bm{e}_{u}^{(n)}}{2} \right)\right\| \! - \!\left\{\!\delta_{k}^{\langle 1 \rangle}\!\!\left(\!\frac{e_{u,K}^{( n+1 )} \! + \! e_{u,K}^{(n)}}{2}\!\right)\!\right\}^{\!\! 2} \! - \! \left\{\! \delta_{k}^{\langle 1 \rangle}\!\!\left(\! \frac{e_{u,0}^{( n+1 )} \! + \! e_{u,0}^{(n)}}{2}\!\right)\!\right\}^{\!\! 2} \\
	& \quad + \left( - \xi_{1,K}^{\left( n + \frac{1}{2}\right)} + \xi_{3,K}^{\left( n + \frac{1}{2}\right)} \right) \delta_{k}^{\langle 1 \rangle}\left( \frac{e_{u,K}^{(n+1)} + e_{u,K}^{(n)}}{2} \right) + \left(\xi_{1,0}^{\left( n + \frac{1}{2}\right)} - \xi_{3,0}^{\left( n + \frac{1}{2}\right)} \right)\delta_{k}^{\langle 1 \rangle}\left( \frac{e_{u,0}^{(n+1)} + e_{u,0}^{(n)}}{2} \right).  
\end{align*}
From the above inequality, the Young inequality, and the inequality: $(a+b)^{2} \leq 2(a^{2} + b^{2})$ for all $a, b \in \mathbb{R}$, it holds that 
\begin{align*}
& \frac{1}{2\Delta t}\left( \left\| D\bm{e}_{u}^{(n+1)}\right\|^{2} - \left\| D\bm{e}_{u}^{(n)}\right\|^{2} \right) + \frac{1}{\gamma}\left\| D\bm{e}_{p}^{(n)}\right\|^{2} + \left\{\delta_{k}^{\langle 1 \rangle}\left(\frac{e_{u,K}^{(n+1)} + e_{u,K}^{(n)}}{2}\right)\right\}^{2} + \left\{\delta_{k}^{\langle 1 \rangle}\left(\frac{e_{u,0}^{(n+1)} + e_{u,0}^{(n)}}{2}\right)\right\}^{2} \\
	& \leq \frac{1}{\gamma}\left\| D\bm{e}_{p}^{(n)}\right\|\left\| D\left(\frac{dF}{d\left( \bm{U}^{(n+1)},\bm{U}^{(n)}\right)} - \frac{dF}{d\left( \bm{u}^{(n+1)},\bm{u}^{(n)}\right)} + \bm{\xi}_{2}^{\left( n + \frac{1}{2}\right)}\right)\right\| + \left\| D\bm{\xi}_{1}^{\left( n + \frac{1}{2}\right)}\right\|\left\| D\left(\! \frac{\bm{e}_{u}^{(n+1)} \! + \! \bm{e}_{u}^{(n)}}{2} \!\right)\right\| \\
	& \quad + \left( - \xi_{1,K}^{\left( n + \frac{1}{2}\right)} + \xi_{3,K}^{\left( n + \frac{1}{2}\right)} \right) \delta_{k}^{\langle 1 \rangle}\left( \frac{e_{u,K}^{(n+1)} + e_{u,K}^{(n)}}{2} \right) + \left(\xi_{1,0}^{\left( n + \frac{1}{2}\right)} - \xi_{3,0}^{\left( n + \frac{1}{2}\right)} \right)\delta_{k}^{\langle 1 \rangle}\left( \frac{e_{u,0}^{(n+1)} + e_{u,0}^{(n)}}{2} \right) 
\end{align*}
\begin{align*}
	& \leq \frac{1}{\gamma}\!\left\| D\bm{e}_{p}^{(n)}\right\|^{2} \!\! + \! \frac{1}{4\gamma}\!\left\| D\!\!\left(\!\frac{dF}{d\!\left( \bm{U}^{(n+1)},\bm{U}^{(n)}\right)} - \frac{dF}{d\!\left( \bm{u}^{(n+1)},\bm{u}^{(n)}\right)} + \bm{\xi}_{2}^{\left( n + \frac{1}{2}\right)}\!\right)\!\right\|^{2} \!\! + \! \left\| D\bm{\xi}_{1}^{\left( n + \frac{1}{2}\right)}\right\|\!\left\| D\!\!\left(\! \frac{\bm{e}_{u}^{(n+1)} \! + \! \bm{e}_{u}^{(n)}}{2} \!\right)\!\right\| \\
	& \quad + \frac{1}{2}\!\left\{ \left| \xi_{1,K}^{\left(\! n + \frac{1}{2} \!\right)} \right|^{2} \! + \! \left| \xi_{3,K}^{\left(\! n + \frac{1}{2} \!\right)} \right|^{2} \right\} \! + \! \left\{\! \delta_{k}^{\langle 1 \rangle}\!\!\left(\! \frac{e_{u,K}^{(\! n+1\! )} \! + \! e_{u,K}^{(n)}}{2} \!\right) \!\!\right\}^{\!\! 2} \! + \! \frac{1}{2}\!\left\{ \left| \xi_{1,0}^{\left(\! n + \frac{1}{2} \!\right)}\right|^{2} \! + \! \left| \xi_{3,0}^{\left(\! n + \frac{1}{2} \!\right)} \right|^{2} \right\} \! + \! \left\{\! \delta_{k}^{\langle 1 \rangle}\!\!\left(\! \frac{e_{u,0}^{(\! n+1\! )} \! + \! e_{u,0}^{(n)}}{2} \!\right) \!\!\right\}^{\!\! 2}\! .
\end{align*}
Namely, 
\begin{align}
& \frac{1}{2\Delta t}\left( \left\| D\bm{e}_{u}^{(n+1)}\right\|^{2} - \left\| D\bm{e}_{u}^{(n)}\right\|^{2} \right) \nonumber\\
& \leq \frac{1}{4\gamma}\left\| D\left(\frac{dF}{d\left( \bm{U}^{(n+1)},\bm{U}^{(n)}\right)} - \frac{dF}{d\left( \bm{u}^{(n+1)},\bm{u}^{(n)}\right)} + \bm{\xi}_{2}^{\left( n + \frac{1}{2}\right)}\right)\right\|^{2} + \left\| D\bm{\xi}_{1}^{\left( n + \frac{1}{2}\right)}\right\|\left\| D\left( \frac{\bm{e}_{u}^{(n+1)} + \bm{e}_{u}^{(n)}}{2} \right)\right\| \nonumber\\
& \quad + \frac{1}{2}\left\{\left|\xi_{1,0}^{\left( n + \frac{1}{2}\right)}\right|^{2} + \left|\xi_{1,K}^{\left( n + \frac{1}{2}\right)}\right|^{2} + \left|\xi_{3,0}^{\left( n + \frac{1}{2}\right)}\right|^{2} + \left|\xi_{3,K}^{\left( n + \frac{1}{2}\right)}\right|^{2}\right\}. \label{Derr2}
\end{align}
We consider the difference quotient of $F$. 
Now, using Lemma \ref{lem:4.2}, we have 
\begin{equation*}
\frac{dF}{d( U_{k}^{(\! n+1\! )}, U_{k}^{(\! n\! )})} - \frac{dF}{d( u_{k}^{(\! n+1\! )}, u_{k}^{(\! n\! )})} \! = \! \frac{1}{2}\bar{F}'' \!\! \left( U_{k}^{(\! n+1\! )}, u_{k}^{(\! n+1\! )}; U_{k}^{(\! n\! )}, u_{k}^{(\! n\! )}\right) \! e_{u,k}^{(\! n+1\! )} + \frac{1}{2}\bar{F}'' \!\! \left( U_{k}^{(\! n\! )}, u_{k}^{(\! n\! )}; U_{k}^{(\! n+1\! )}, u_{k}^{(\! n+1\! )}\right) \! e_{u,k}^{(\! n\! )} 
\end{equation*}
for $k=0,\ldots, K$. 
Hence, it follows from Lemma \ref{lem:4.3} that 
\begin{align*}
& \left\| D\left(\frac{dF}{d\left( \bm{U}^{(n+1)},\bm{U}^{(n)}\right)} - \frac{dF}{d\left( \bm{u}^{(n+1)},\bm{u}^{(n)}\right)}\right)\right\| \\
& \leq \frac{1}{2}\left\| D\left( \bar{F}'' \left( \bm{U}^{(n+1)}, \bm{u}^{(n+1)}; \bm{U}^{(n)}, \bm{u}^{(n)}\right) \bm{e}_{u}^{(n+1)}\right)\right\| + \frac{1}{2}\left\| D\left( \bar{F}'' \left( \bm{U}^{(n)}, \bm{u}^{(n)}; \bm{U}^{(n+1)}, \bm{u}^{(n+1)}\right) \bm{e}_{u}^{(n)} \right)\right\| \\
& \leq \frac{1}{2}\left\| D \bar{F}''\!\left( \bm{U}^{(n+1)}, \bm{u}^{(n+1)}; \bm{U}^{(n)}, \bm{u}^{(n)}\!\right)\!\right\|\!\left\|\bm{e}_{u}^{(n+1)}\right\|_{L_{\rm d}^{\infty}} \! + \frac{1}{2}\left\| \bar{F}'' \!\left( \bm{U}^{(n+1)}, \bm{u}^{(n+1)}; \bm{U}^{(n)}, \bm{u}^{(n)}\!\right)\!\right\|_{L_{\rm d}^{\infty}}\!\left\| D\bm{e}_{u}^{(n+1)}\right\|\\
& \quad + \frac{1}{2}\left\| D \bar{F}''\!\left( \bm{U}^{(n)}, \bm{u}^{(n)}; \bm{U}^{(n+1)}, \bm{u}^{(n+1)}\right) \right\|\!\left\|\bm{e}_{u}^{(n)}\right\|_{L_{\rm d}^{\infty}} + \frac{1}{2}\left\| \bar{F}''\!\left( \bm{U}^{(n)}, \bm{u}^{(n)}; \bm{U}^{(n+1)}, \bm{u}^{(n+1)}\right) \right\|_{L_{\rm d}^{\infty}}\!\left\| D\bm{e}_{u}^{(n)}\right\|. 
\end{align*}
From \eqref{Bou} and Lemma \ref{lem:4.1}, we have 
\begin{equation*}
\left\| \bar{F}'' \!\!\left(\! \bm{U}^{(n+1)}, \bm{u}^{(n+1)}; \bm{U}^{(n)}, \bm{u}^{(n)}\!\right)\!\right\|_{L_{\rm d}^{\infty}} \leq C_{F,2}, \quad \left\| \bar{F}''\!\!\left(\! \bm{U}^{(n)}, \bm{u}^{(n)}; \bm{U}^{(n+1)}, \bm{u}^{(n+1)}\!\right) \!\right\|_{L_{\rm d}^{\infty}} \leq C_{F,2}.
\end{equation*}
Moreover, from \eqref{Bou} and Lemma \ref{lem:4.4}, we obtain 
\begin{gather*}
\left\| D \bar{F}''\left( \bm{U}^{(n+1)}, \bm{u}^{(n+1)}; \bm{U}^{(n)}, \bm{u}^{(n)}\right)\right\| \leq \frac{C_{F,3}}{6}\left( 2C_{1} + 2C_{1} + C_{1}+ C_{1}\right) = C_{1}C_{F,3}, \\
\left\| D \bar{F}''\left( \bm{U}^{(n)}, \bm{u}^{(n)}; \bm{U}^{(n+1)}, \bm{u}^{(n+1)}\right) \right\| \leq \frac{C_{F,3}}{6}\left( 2C_{1} + 2C_{1} + C_{1}+ C_{1}\right) = C_{1}C_{F,3}. 
\end{gather*} 
From the above, it holds that 
\begin{equation}
\left\| D\!\left(\! \frac{dF}{d\!\left( \bm{U}^{(\! n+1\! )},\bm{U}^{(\! n\! )}\right)} \! - \! \frac{dF}{d\!\left( \bm{u}^{(\! n+1\! )},\bm{u}^{(\! n\! )}\right)} \!\right) \!\right\| 
	\! \leq \! \frac{ C_{1}C_{F,3} }{2}\!\left(\! \left\|\bm{e}_{u}^{(\! n+1\! )}\right\|_{L_{\rm d}^{\infty}} \!\! + \! \left\|\bm{e}_{u}^{(\! n\! )}\right\|_{L_{\rm d}^{\infty}} \!\right) + \frac{ C_{F,2} }{2}\!\left( \left\| D\bm{e}_{u}^{(\! n+1\! )}\right\| \! + \left\| D\bm{e}_{u}^{(\! n\! )}\right\| \right). \label{Ddq}
\end{equation}
Next, we consider $\|\bm{e}_{u}^{(n+1)}\|_{L_{\rm d}^{\infty}}$ and $\|\bm{e}_{u}^{(n)}\|_{L_{\rm d}^{\infty}}$. 
From the same argument as \eqref{err_Linf1} in Theorem \ref{thm:5.1}, we have 
\begin{equation}
\left\|\bm{e}_{u}^{(n)}\right\|_{L_{\rm d}^{\infty}} 
\leq \Delta t\sum_{j=0}^{n-1}\left\|\bm{\xi}_{1}^{\left( j + \frac{1}{2}\right)}\right\|_{L_{\rm d}^{\infty}} + L^{\frac{1}{2}}\left\| D\bm{e}_{u}^{(n)}\right\| \quad (n=1,\ldots, N). \label{err_Linf}
\end{equation}
Applying \eqref{err_Linf} to \eqref{Ddq}, we obtain 
\begin{equation*}
\left\| D\left(\frac{dF}{d\left( \bm{U}^{(n+1)},\bm{U}^{(n)}\right)} - \frac{dF}{d\left( \bm{u}^{(n+1)},\bm{u}^{(n)}\right)}\right)\right\| 
	\leq C_{3}\left( \left\| D\bm{e}_{u}^{(n+1)}\right\| + \left\| D\bm{e}_{u}^{(n)}\right\| \right) + \Delta t C_{1}C_{F,3} \sum_{j=0}^{n}\left\|\bm{\xi}_{1}^{\left( j + \frac{1}{2}\right)}\right\|_{L_{\rm d}^{\infty}} 
\end{equation*}
for $n=0,1,\ldots, N-1$.
Therefore, we have 
\begin{align}
& \frac{1}{4\gamma}\left\| D\left(\frac{dF}{d\left( \bm{U}^{(n+1)},\bm{U}^{(n)}\right)} - \frac{dF}{d\left( \bm{u}^{(n+1)},\bm{u}^{(n)}\right)} + \bm{\xi}_{2}^{\left( n + \frac{1}{2}\right)} \right)\right\|^{2} \nonumber\\
	& \leq \frac{1}{4\gamma}\left( \left\| D\left(\frac{dF}{d\left( \bm{U}^{(n+1)},\bm{U}^{(n)}\right)} - \frac{dF}{d\left( \bm{u}^{(n+1)},\bm{u}^{(n)}\right)} \right)\right\| + \left\| D\bm{\xi}_{2}^{\left( n + \frac{1}{2}\right)}\right\| \right)^{2} \nonumber\\
	& \leq \frac{1}{4\gamma}\left\{ C_{3}\left( \left\| D\bm{e}_{u}^{(n+1)}\right\| + \left\| D\bm{e}_{u}^{(n)}\right\| \right) + \left(\Delta t C_{1}C_{F,3} \sum_{j=0}^{n}\left\|\bm{\xi}_{1}^{\left( j + \frac{1}{2}\right)}\right\|_{L_{\rm d}^{\infty}} + \left\| D\bm{\xi}_{2}^{\left( n + \frac{1}{2}\right)}\right\| \right) \right\}^{2} . \label{dFxi}
\end{align}
for $n=0,1,\ldots, N-1$. 
For simplicity, let 
\begin{equation*}
R_{1}^{(n)} := \Delta t C_{1}C_{F,3} \sum_{j=0}^{n}\left\|\bm{\xi}_{1}^{\left( j + \frac{1}{2}\right)}\right\|_{L_{\rm d}^{\infty}} + \left\| D\bm{\xi}_{2}^{\left( n + \frac{1}{2}\right)}\right\| \quad (n=0,1,\ldots, N-1). 
\end{equation*}
Let $\varepsilon >0$ be an arbitrarily fixed number. 
From \eqref{dFxi} and the inequality: $(a + b + c)^{2} \leq 2(a^{2} + b^{2}) + (\tilde{\varepsilon}/2)(a^{2} + b^{2}) + ( 1 + (4/\tilde{\varepsilon}) )c^{2}$ for all $a,b,c \in \mathbb{R}$, and $\tilde{\varepsilon} >0$, we obtain 
\begin{align}
& \frac{1}{4\gamma}\left\| D\left(\frac{dF}{d\left( \bm{U}^{(n+1)},\bm{U}^{(n)}\right)} - \frac{dF}{d\left( \bm{u}^{(n+1)},\bm{u}^{(n)}\right)} + \bm{\xi}_{2}^{\left( n + \frac{1}{2}\right)} \right)\right\|^{2} \nonumber\\
	& \leq \frac{1}{4\gamma}\left( C_{3}\left\| D\bm{e}_{u}^{(n+1)}\right\| + C_{3}\left\| D\bm{e}_{u}^{(n)}\right\| + R_{1}^{(n)} \right)^{2} \nonumber\\
	& \leq \frac{1}{4\gamma}\left\{ 2\left( C_{3}^{2}\left\| D\bm{e}_{u}^{(n+1)}\right\|^{2} \! + \! C_{3}^{2}\left\| D\bm{e}_{u}^{(n)}\right\|^{2} \right) \! + \! \frac{\varepsilon\gamma}{C_{3}^{2}}\!\left( C_{3}^{2}\left\| D\bm{e}_{u}^{(n+1)}\right\|^{2} \! + \! C_{3}^{2}\left\| D\bm{e}_{u}^{(n)}\right\|^{2} \right) \! + \! \left(\! 1 \! + \! \frac{2C_{3}^{2}}{\varepsilon \gamma} \!\right) \!\! \left( R_{1}^{(n)} \right)^{2}\right\} \nonumber\\
	& = \frac{C_{3}^{2}}{2\gamma}\left( \left\| D\bm{e}_{u}^{(n+1)}\right\|^{2} + \left\| D\bm{e}_{u}^{(n)}\right\|^{2} \right) + \frac{\varepsilon}{4}\left( \left\| D\bm{e}_{u}^{(n+1)}\right\|^{2} + \left\| D\bm{e}_{u}^{(n)}\right\|^{2} \right) + \frac{1}{4\gamma}\left( 1 + \frac{2C_{3}^{2}}{\varepsilon \gamma} \right) \!\! \left( R_{1}^{(n)} \right)^{2}. \label{Derr2_1st}
\end{align}
In addition, it follows from the Young inequality and the inequality: $(a+b)^{2} \! \leq \! 2(a^{2} + b^{2})$ for all $a, b \in \mathbb{R}$ that 
\begin{align}
\left\| D\bm{\xi}_{1}^{\left( n + \frac{1}{2}\right)}\right\|\left\| D\left( \frac{\bm{e}_{u}^{(n+1)} + \bm{e}_{u}^{(n)}}{2} \right)\right\| 
	& \leq \frac{1}{2}\left\| D\bm{\xi}_{1}^{\left( n + \frac{1}{2}\right)}\right\| \left( \left\| D\bm{e}_{u}^{(n+1)} \right\| + \left\| D\bm{e}_{u}^{(n)}\right\| \right) \nonumber\\
	& \leq \frac{1}{2}\left\{\frac{\varepsilon}{4}\left( \left\| D\bm{e}_{u}^{(n+1)} \right\| + \left\| D\bm{e}_{u}^{(n)}\right\| \right)^{2} + \frac{1}{\varepsilon}\left\| D\bm{\xi}_{1}^{\left( n + \frac{1}{2}\right)}\right\|^{2}\right\} \nonumber\\
	& \leq \frac{\varepsilon}{4}\left( \left\| D\bm{e}_{u}^{(n+1)} \right\|^{2} + \left\| D\bm{e}_{u}^{(n)}\right\|^{2} \right) + \frac{1}{2\varepsilon}\left\| D\bm{\xi}_{1}^{\left( n + \frac{1}{2}\right)}\right\|^{2}. \label{Derr2_2nd}
\end{align}
Consequently, using \eqref{Derr2}, \eqref{Derr2_1st}, and \eqref{Derr2_2nd}, we obtain 
\begin{align*}
& \frac{1}{2\Delta t}\left( \left\| D\bm{e}_{u}^{(n+1)}\right\|^{2} - \left\| D\bm{e}_{u}^{(n)}\right\|^{2} \right) \\
	& \leq \frac{C_{3}^{2}}{2\gamma}\left( \left\| D\bm{e}_{u}^{(n+1)}\right\|^{2} + \left\| D\bm{e}_{u}^{(n)}\right\|^{2} \right) + \frac{\varepsilon}{4}\left( \left\| D\bm{e}_{u}^{(n+1)}\right\|^{2} + \left\| D\bm{e}_{u}^{(n)}\right\|^{2} \right) + \frac{1}{4\gamma}\left( 1 + \frac{2C_{3}^{2}}{\varepsilon \gamma} \right) \!\! \left( R_{1}^{(n)} \right)^{2} \\
	& \quad + \frac{\varepsilon}{4}\left( \left\| D\bm{e}_{u}^{(n+1)} \right\|^{2} \! + \! \left\| D\bm{e}_{u}^{(n)}\right\|^{2} \right) \! + \! \frac{1}{2\varepsilon}\left\| D\bm{\xi}_{1}^{\left(\! n + \frac{1}{2} \!\right)}\right\|^{2} \! + \! \frac{1}{2}\left\{\! \left|\xi_{1,0}^{\left(\! n + \frac{1}{2}\!\right)}\right|^{2} \! + \! \left|\xi_{1,K}^{\left(\! n + \frac{1}{2} \!\right)}\right|^{2} \! + \! \left|\xi_{3,0}^{\left(\! n + \frac{1}{2} \!\right)}\right|^{2} \! + \! \left|\xi_{3,K}^{\left(\! n + \frac{1}{2} \!\right)}\right|^{2} \!\right\} \\
	& = \frac{C_{3}^{2}}{2\gamma}\left( \left\| D\bm{e}_{u}^{(n+1)}\right\|^{2} + \left\| D\bm{e}_{u}^{(n)}\right\|^{2} \right) + \frac{\varepsilon}{2}\left( \left\| D\bm{e}_{u}^{(n+1)}\right\|^{2} + \left\| D\bm{e}_{u}^{(n)}\right\|^{2} \right) + \frac{1}{2}R^{(n)}. 
\end{align*}
Multiplying both sides of the above inequality by $2\Delta t$, we conclude that 
\begin{equation*}
\left\{ 1 - \left( \frac{C_{3}^{2}}{\gamma} + \varepsilon \right)\Delta t \right\} \left\| D\bm{e}_{u}^{(n+1)}\right\|^{2} 
	\leq \left\{ 1 + \left( \frac{C_{3}^{2}}{\gamma} + \varepsilon \right)\Delta t \right\}\left\| D\bm{e}_{u}^{(n)}\right\|^{2} + \Delta t R^{(n)}
\end{equation*}
for $n = 0,1,\ldots, N-1$. \hfill $\Box$\\
\end{proof}

\begin{lemma} \label{Ap:H} 
We impose the same assumption as in Theorem \ref{thm:5.1}. 
Then, the following estimate holds: 
\begin{equation*}
\left| \delta_{k}^{+}\left\{ \partial_{x}^{2}u_{k}^{\left( n + \frac{1}{2}\right)} - \delta_{k}^{\langle 2 \rangle}\!\!\left( \frac{ \tilde{u}_{k}^{ (n + 1) } + \tilde{u}_{k}^{ (n) } }{2}\right) \right\} \right| \leq \frac{ (\Delta x)^{2} }{12} M_{5,0}(u) + \frac{ (\Delta t)^{2} }{4} M_{3,2}(u) \quad (k=0,\ldots, K-1). 
\end{equation*}
\end{lemma}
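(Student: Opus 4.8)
The plan is to split the bracketed consistency error into a purely temporal part and a purely spatial part, apply the forward difference $\delta_k^+$ to each, and use the mean value theorem to convert the extra $\delta_k^+$ into one additional $x$-derivative (this is what bumps the derivative orders from $4$ to $5$ in space and from $2$ to $3$ in time). For $k=0,\dots,K$ I would set
\[
T_k:=\partial_x^2u_k^{\left(n+\frac12\right)}-\tfrac12\!\left(\partial_x^2u_k^{(n+1)}+\partial_x^2u_k^{(n)}\right),\quad
S_k:=\tfrac12\!\left(\partial_x^2u_k^{(n+1)}+\partial_x^2u_k^{(n)}\right)-\delta_k^{\langle2\rangle}\!\left(\frac{\tilde u_k^{(n+1)}+\tilde u_k^{(n)}}{2}\right),
\]
where I have used $\tilde u=u$ on $[0,L]$, so that $\partial_x^2\tilde u_k^{(m)}=\partial_x^2u_k^{(m)}$ for $k=0,\dots,K$ and the bracket in the statement is exactly $T_k+S_k$. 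It then suffices to bound $|\delta_k^+T_k|$ and $|\delta_k^+S_k|$ separately for $k=0,\dots,K-1$.

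For the temporal part I would introduce the single-variable function $\Phi(x):=\tfrac12\big(\partial_x^2u(x,(n+1)\Delta t)+\partial_x^2u(x,n\Delta t)\big)-\partial_x^2u\big(x,(n+\tfrac12)\Delta t\big)$, so that $T_k=\Phi(k\Delta x)$. Since $[k\Delta x,(k+1)\Delta x]\subset[0,L]$ and $u\in C^5$, the mean value theorem gives $\delta_k^+T_k=\Phi'(\zeta_k)$ for some interior $\zeta_k$, and $\Phi'(x)=\tfrac12\big(\partial_x^3u(x,(n+1)\Delta t)+\partial_x^3u(x,n\Delta t)\big)-\partial_x^3u\big(x,(n+\tfrac12)\Delta t\big)$ is precisely the midpoint-versus-average error in time of $\partial_x^3u$. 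A Taylor expansion in $t$ about $(n+\tfrac12)\Delta t$, legitimate because $\partial_t^2\partial_x^3u$ is continuous and bounded by $M_{3,2}(u)$, yields $|\delta_k^+T_k|\le\frac{(\Delta t)^2}{8}M_{3,2}(u)\le\frac{(\Delta t)^2}{4}M_{3,2}(u)$, the time part of the claim.

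For the spatial part I would note that $-S_k=\delta_k^{\langle2\rangle}w_k-\partial_x^2w_k$, with $w(x):=\tfrac12\big(\tilde u(x,(n+1)\Delta t)+\tilde u(x,n\Delta t)\big)$, has exactly the form of the Laplacian consistency error already estimated for $\tilde p$ in \eqref{cd2}--\eqref{lap_p_est}. I would repeat that computation verbatim with $w$ in place of $\tilde p$: Taylor's theorem as in \eqref{cd2}--\eqref{lap_err_1} gives $S_k=-\frac{(\Delta x)^2}{24}\big(\partial_x^4w_{k+\theta}+\partial_x^4w_{k-\theta}\big)$, and then forming $\delta_k^+$ and applying the mean value theorem to $\partial_x^4w$ exactly as in \eqref{D_lap} produces $|\delta_k^+S_k|\le\frac{(\Delta x)^2}{12}L_w$, where $L_w$ is a Lipschitz bound for $\partial_x^4w$ along the stencil. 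Since $\partial_x^4w=\tfrac12\big(\partial_x^4\tilde u(\cdot,(n+1)\Delta t)+\partial_x^4\tilde u(\cdot,n\Delta t)\big)$, everything reduces to controlling the slope of $\partial_x^4\tilde u$.

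The hard part will be this last control, because the extension $\tilde u$ of \eqref{util} is only $C^4$ in $x$: its fifth $x$-derivative jumps across $x=0$ and $x=L$, so neither a global bound $\tilde M_{5,0}(\tilde u)$ nor the required constant in terms of $M_{5,0}(u)$ is available naively. This is relevant only for the two boundary indices $k=0$ and $k=K-1$, whose stencils reach the ghost nodes $x_{-1}$ and $x_{K+1}$. The resolution is structural: \eqref{util} is an even reflection of $u$ corrected by odd polynomials, arranged so that $\partial_x^4\tilde u(x,t)=\partial_x^4u(-x,t)$ on $[-\Delta x,0]$ and $\partial_x^4\tilde u(x,t)=\partial_x^4u(2L-x,t)$ on $[L,L+\Delta x]$; hence $\partial_x^4\tilde u(\cdot,t)$ is Lipschitz in $x$ with constant $\le M_{5,0}(u)$ even though $\partial_x^5\tilde u$ is discontinuous, so $L_w\le M_{5,0}(u)$. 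Equivalently, one may simply treat $1\le k\le K-2$, where the whole stencil lies in $[0,L]$ and $u\in C^5$ makes the estimate immediate, and expand $k=0,K-1$ directly from \eqref{util}. Adding the two bounds then gives $|\delta_k^+(T_k+S_k)|\le\frac{(\Delta x)^2}{12}M_{5,0}(u)+\frac{(\Delta t)^2}{4}M_{3,2}(u)$, which is the asserted inequality.
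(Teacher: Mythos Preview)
Your proof is correct and proceeds by a genuinely different decomposition from the paper's. The paper first applies a Taylor expansion in $t$ to $\tilde u$ itself (equation \eqref{aveu}), obtaining
\[
\delta_k^{\langle 2\rangle}\!\left(\tfrac{\tilde u_k^{(n+1)}+\tilde u_k^{(n)}}{2}\right)-\partial_x^2u_k^{(n+\frac12)}
=\bigl(\delta_k^{\langle 2\rangle}\tilde u_k^{(n+\frac12)}-\partial_x^2u_k^{(n+\frac12)}\bigr)
+\tfrac{(\Delta t)^2}{16}\bigl\{\delta_k^{\langle 2\rangle}(\partial_t^2\tilde u_k^{\,\cdots})+\delta_k^{\langle 2\rangle}(\partial_t^2\tilde u_k^{\,\cdots})\bigr\},
\]
and then treats the two pieces via the case-by-case formulas \eqref{cd_u_2'} and \eqref{ut2_lap}, with separate boundary analysis for $k=0,K$ in each. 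Your split $T_k+S_k$ instead isolates the temporal error at the level of $\partial_x^2u$ (not $\tilde u$), so $T_k$ lives entirely on $[0,L]$ and is handled by a single mean-value-plus-Taylor step; this in fact yields the sharper constant $\tfrac{(\Delta t)^2}{8}M_{3,2}(u)$, which you then relax to $\tfrac{(\Delta t)^2}{4}$. The price is that your spatial piece $S_k$ carries the time-averaged $w$, and its stencil touches the ghost nodes; you resolve this cleanly by observing from \eqref{util} that $\partial_x^4\tilde u(x,t)=\partial_x^4u(|x|,t)$ near $x=0$ (and the mirror statement at $x=L$), so $\partial_x^4\tilde u(\cdot,t)$ is globally Lipschitz with constant $M_{5,0}(u)$ even though $\partial_x^5\tilde u$ jumps. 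The paper reaches the same spatial bound but via the explicit boundary expansions \eqref{cd_u_2'} (referring to \cite{Oku}) rather than your Lipschitz argument. Overall your route is slightly more economical on the time side and replaces the paper's boundary case analysis by a single structural observation about the extension.
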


\begin{proof}
For any $x \in [0,L]$, applying the Taylor theorem to $\tilde{u}$, there exists $\theta_{2} \in (0,1)$ such that  
\begin{align}
\frac{\tilde{u}(x, (n+1)\Delta t) + \tilde{u}(x, n\Delta t)}{2} 
	& = \tilde{u}\!\left( x, \left( n+\frac{1}{2}\right)\!\Delta t\right) \nonumber\\
	& \quad + \frac{(\Delta t)^{2}}{16}\!\left\{\! \partial_{t}^{2}\tilde{u}\!\left( x, \left( n \! + \! \frac{1 \! + \! \theta_{2}}{2}\right)\!\Delta t\right) + \partial_{t}^{2}\tilde{u}\!\left( x, \left( n \! + \! \frac{1 \! - \! \theta_{2}}{2}\right)\!\Delta t\right) \!\right\}. \label{aveu}
\end{align}
Substituting $k\Delta x \ (k=0,\ldots,K)$ into $x$ in \eqref{aveu}, we obtain
\begin{align}
\delta_{k}^{\langle 2 \rangle}\!\!\left(\! \frac{\tilde{u}_{k}^{(\! n+1\! )} \! + \! \tilde{u}_{k}^{(\! n\! )}}{2} \!\right)  \!\! - \! \partial_{x}^{2}u_{k}^{\left(\! n + \frac{1}{2}\!\right)} 
	= \delta_{k}^{\langle 2 \rangle}\tilde{u}_{k}^{ \left(\! n + \frac{1}{2}\!\right) } \! - \! \partial_{x}^{2}u_{k}^{\left(\! n + \frac{1}{2}\!\right)} + \frac{(\Delta t)^{2}}{16}\!\left\{\! \delta_{k}^{\langle 2 \rangle}\!\!\left(\! \partial_{t}^{2}\tilde{u}_{k}^{ \left(\! n + \frac{ 1 + \theta_{2} }{2}\!\right) } \!\right) \! + \! \delta_{k}^{\langle 2 \rangle}\!\!\left(\! \partial_{t}^{2}\tilde{u}_{k}^{ \left(\! n + \frac{ 1 - \theta_{2} }{2}\!\right) } \!\right) \!\!\right\} \label{lap_u_eq}
\end{align}
for $k=0,\ldots, K$. 
Also, for $k=0,\ldots, K$, applying the Taylor theorem to $\tilde{u}$ and using \eqref{util}, there exists $\theta_{3} \in (0,1)$ such that  
\begin{equation}
\delta_{k}^{\langle 2 \rangle}\tilde{u}_{k}^{ \left( n + \frac{1}{2}\right) } - \partial_{x}^{2}u_{k}^{\left( n + \frac{1}{2}\right)} 
	= \begin{cases}
		\displaystyle \frac{ (\Delta x)^{2} }{12}\partial_{x}^{4}u_{\theta_{3}}^{ \left( n + \frac{1}{2}\right) } & (k=0), \\[5pt]
		\displaystyle \frac{ (\Delta x)^{4} }{24}\left(\! \partial_{x}^{4}u_{k+\theta_{3}}^{ \left( n + \frac{1}{2}\right) } + \partial_{x}^{4}u_{k-\theta_{3}}^{ \left( n + \frac{1}{2}\right) } \!\right) & (k=1,\ldots, K-1), \\[5pt]
		\displaystyle \frac{ (\Delta x)^{2} }{12}\partial_{x}^{4}u_{K-\theta_{3}}^{ \left( n + \frac{1}{2}\right) } & (k=K). \label{cd_u_2'}
	\end{cases}
\end{equation}
For details, see \cite{Oku}.  
Since $u \in C^{5}([0,L] \times [0,T])$ from the regularity assumption of $u$, applying the mean value theorem to $\partial_{x}^{4}u$ and using \eqref{cd_u_2'}, we obtain 
\begin{equation}
\left| \delta_{k}^{+}\left( \delta_{k}^{\langle 2 \rangle}\tilde{u}_{k}^{ \left( n + \frac{1}{2}\right) } - \partial_{x}^{2}u_{k}^{\left( n + \frac{1}{2}\right)} \right) \right| \leq \frac{ (\Delta x)^{2} }{12} M_{5,0}(u) \quad (k=0,\ldots , K-1). \label{est1}
\end{equation}
In the same manner, for $k=0,\ldots, K$, applying the Taylor theorem to $\partial_{t}^{2}\tilde{u}$ and using \eqref{util}, there exists $\theta_{4} \in (0,1)$ such that 
\begin{equation}
\delta_{k}^{\langle 2 \rangle}\!\!\left(\! \partial_{t}^{2}\tilde{u}_{k}^{ \left( n + \frac{1 \pm \theta_{2}}{2}\right) } \!\right) 
	= \begin{cases}
		\displaystyle \partial_{x}^{2}\partial_{t}^{2}u_{\theta_{4}}^{ \left( n + \frac{1 \pm \theta_{2}}{2}\right) } \! - \theta_{4}\Delta x\partial_{t}^{2}\partial_{x}^{3}u_{0}^{ \left( n + \frac{1 \pm \theta_{2}}{2}\right) } & (k \! = \! 0), \\[5pt]
		\displaystyle \frac{1}{2}\left(\! \partial_{x}^{2}\partial_{t}^{2}u_{k + \theta_{4}}^{ \left( n + \frac{1 \pm \theta_{2}}{2}\right) } \! + \! \partial_{x}^{2}\partial_{t}^{2}u_{k - \theta_{4}}^{ \left( n + \frac{1 \pm \theta_{2}}{2}\right) } \!\right) & (k \! = \! 1,\ldots, K \! - \! 1), \\[6pt]
		\displaystyle \partial_{x}^{2}\partial_{t}^{2}u_{K - \theta_{4}}^{ \left( n + \frac{1 \pm \theta_{2}}{2}\right) } \! + \theta_{4}\Delta x\partial_{t}^{2}\partial_{x}^{3}u_{K}^{ \left( n + \frac{1 \pm \theta_{2}}{2}\right) } & (k \! = \! K). \\
	\end{cases} \label{ut2_lap}
\end{equation}
Applying the mean value theorem to $\partial_{x}^{2}\partial_{t}^{2}u$ and using \eqref{ut2_lap}, we obtain 
\begin{gather}
\left| \delta_{k}^{+}\! \left\{ \delta_{k}^{\langle 2 \rangle}\!\left( \partial_{t}^{2}\tilde{u}_{k}^{ \left( n + \frac{ 1 \pm \theta_{2} }{2}\right) } \right) \right\} \right| \leq M_{3,2}(u) \quad (k=1,\ldots, K-2), \label{est2}\\
\left| \delta_{k}^{+}\! \left\{ \delta_{k}^{\langle 2 \rangle}\!\left( \partial_{t}^{2}\tilde{u}_{k}^{ \left( n + \frac{ 1 \pm \theta_{2} }{2}\right) } \right) \right\} \right| \leq M_{3,2}(u) + \theta_{4}M_{3,2}(u) \leq 2M_{3,2}(u) \quad (k=0,K-1). \label{est3}
\end{gather}
Hence, from \eqref{lap_u_eq}, \eqref{est1}, \eqref{est2}, and \eqref{est3}, we conclude that 
\begin{align*}
& \left| \delta_{k}^{+}\left\{ \partial_{x}^{2}u_{k}^{\left( n + \frac{1}{2}\right)} - \delta_{k}^{\langle 2 \rangle}\left(\frac{\tilde{u}_{k}^{(n+1)} + \tilde{u}_{k}^{(n)}}{2}\right) \right\} \right| \\
	& \leq \left| \delta_{k}^{+}\!\left(\! \partial_{x}^{2}u_{k}^{\left( n + \frac{1}{2}\right)} - \delta_{k}^{\langle 2 \rangle}\tilde{u}_{k}^{ \left( n + \frac{1}{2}\right) } \!\right) \right| + \frac{(\Delta t)^{2}}{16}\!\left( \left| \delta_{k}^{+}\!\left\{ \delta_{k}^{\langle 2 \rangle}\!\!\left( \partial_{t}^{2}\tilde{u}_{k}^{ \left( n + \frac{ 1 + \theta_{2} }{2}\right) } \right) \right\} \right| + \left| \delta_{k}^{+}\!\left\{ \delta_{k}^{\langle 2 \rangle}\!\!\left( \partial_{t}^{2}\tilde{u}_{k}^{ \left( n + \frac{ 1 - \theta_{2} }{2}\right) } \!\right) \right\} \right| \right) \\
	& \leq \frac{(\Delta x)^{2}}{12}M_{5,0}(u) + \frac{(\Delta t)^{2}}{4}M_{3,2}(u) 
\end{align*}
for $k=0,\ldots, K-1$. \hfill $\Box$\\
\end{proof}

\section*{Appendix D}
In this appendix D, we present the computation example under the Neumann boundary condition in order to compare the long-time behavior of solutions. 
Note that we use the same notations as in Section 1 and Section 2. 
\subsubsection*{Numerical results for the Neumann boundary condition} 
As stated in Section 6, in order to verify that the difference in the long-time behavior of the solutions occurs, we present the computation example for \eqref{Eq_i}--\eqref{Eq_ii} with the following homogeneous Neumann boundary conditions: \vspace{-1mm} 
\begin{align}
\begin{cases}
\left. \partial_{x}u(x,t)\right|_{x=0} = \left. \partial_{x}u(x,t)\right|_{x=L} = 0 & {\rm in} \ (0,\infty), \\
\left. \partial_{x}p(x,t) \right|_{x=0} = \left. \partial_{x}p(x,t) \right|_{x=L} = 0 & {\rm in} \ (0,\infty), \label{NBC}
\end{cases}
\end{align}
in the same setting as Computation example 3 in Section 6. 
We remark that the solution of \eqref{Eq_i}--\eqref{Eq_ii} with \eqref{NBC} also satisfies the conservative property \eqref{MC} and the dissipative property. 
However, in this case, the dissipative property is slightly different from \eqref{D}. 
More precisely, the solution of \eqref{Eq_i}--\eqref{Eq_ii} with \eqref{NBC} satisfies the following dissipative property: \vspace{-1.5mm} 
\begin{equation*}
\frac{d}{dt}J(u(t)) = - \int_{0}^{L}|\partial_{x}p(x,t)|^{2}dx \leq 0. \vspace{-1.5mm} 
\end{equation*}
Since there are no computation examples in the same setting as Computation example 3 in previous studies, we carry out the computation example by the following structure-preserving scheme.  
By using DVDM (see \cite{B}), the scheme is derived as follows: \vspace{-1mm}
\begin{gather}
\frac{U_ {k}^{(n+1)} - U_ {k}^{(n)}}{\Delta t} = \delta _ {k}^{\langle 2 \rangle}P_ {k}^{(n)} \quad (k=0,\ldots, K,\ n=0,1, \ldots), \nonumber\\[-1pt]
P_ {k}^{(n)} = -\gamma \delta_{k}^{\langle 2 \rangle}\!\left( \frac{U_{k}^{(n+1)} + U_{k}^{(n)}}{2}\right) + \frac{dF}{d(U_{k}^{(n+1)},U_{k}^{(n)})} \quad (k=0,\ldots, K,\ n=0,1, \ldots), \nonumber\\[-1pt]
\delta _ {k}^{\langle 1 \rangle}U_ {k}^{(n)} = 0 \quad (k=0,K,\ n=0,1, \ldots), \label{UNBC}\\[-1pt] 
\delta _ {k}^{\langle 1 \rangle}P_ {k}^{(n)} = 0 \quad (k=0,K,\ n=0,1, \ldots). \nonumber
\end{gather}~\vspace{-5.5mm}\\
Figure \ref{fig:NBC} shows the time development of the solution obtained by the above scheme. \vspace{-3.5mm}
\begin{figure}[H]
  \begin{center}
   \includegraphics[width=78mm]{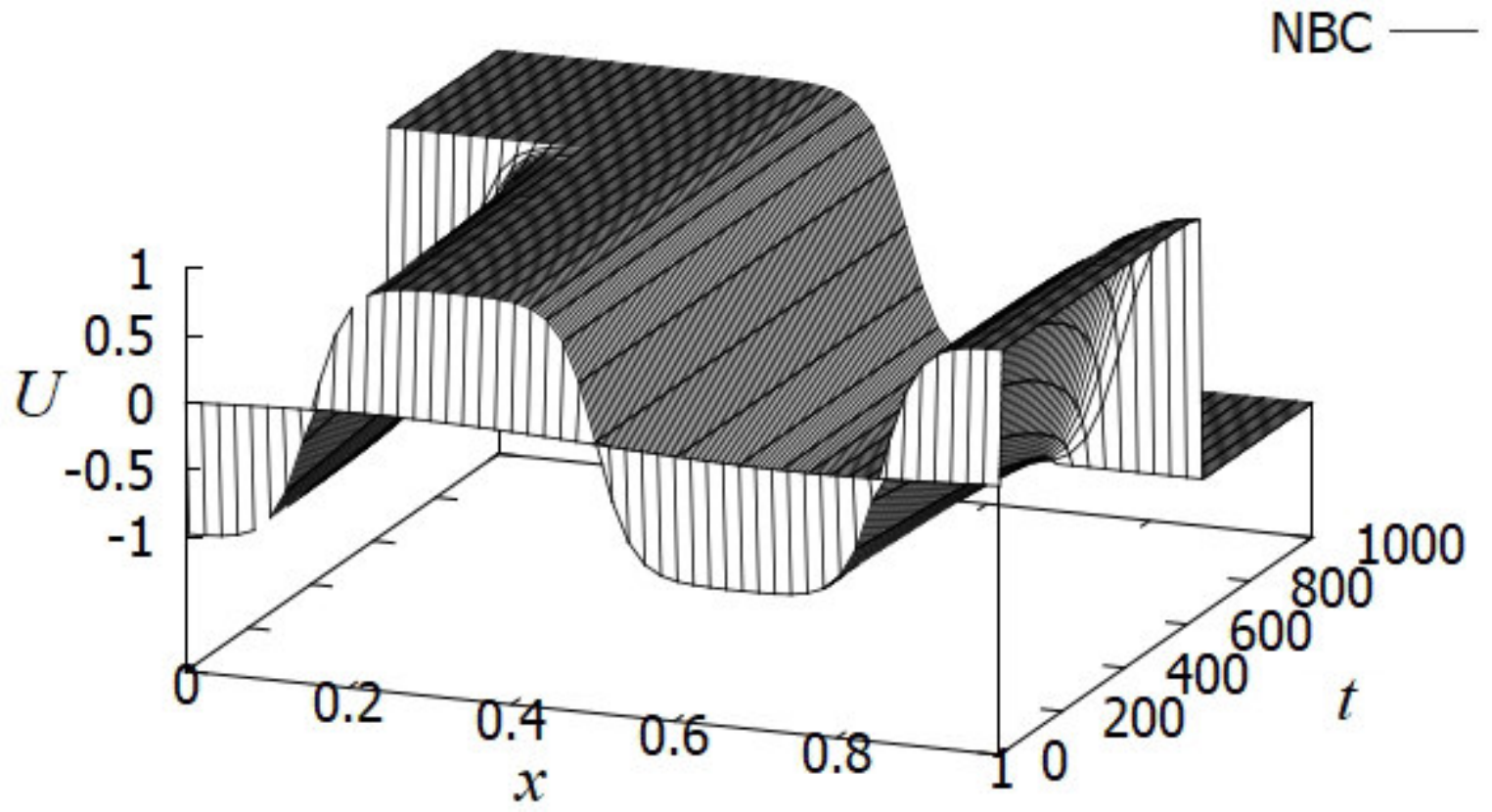} \vspace{-6mm} 
  \end{center}
  \caption{Numerical solution to \eqref{Eq_i}--\eqref{Eq_ii} with \eqref{NBC} obtained by the discrete variational derivative scheme.}
  \label{fig:NBC}
\end{figure}
\noindent
As can be seen from Figure \ref{fig:Our5} and Figure \ref{fig:NBC}, the solution to \eqref{Eq_i}--\eqref{Eq_ii} with \eqref{NBC} arrives at the different state from that to \eqref{Eq_i}--\eqref{Eq_ii} with \eqref{BCii} and \eqref{DBC'}. 
Thus, the results assure that the difference in the long-time behavior of the solutions occurs. 

Next, Figure \ref{fig:NBC_mass} shows the time development of $M_{\rm d}(\bm{U}^{(n)})$ obtained by the above scheme. 
Figure \ref{fig:NBC_energy} shows the time development of $A_{\rm d}^{(n)} - \bar{J}_{\rm d}(\bm{U}^{(0)})$ obtained by the above scheme, where \vspace{-2mm}
\begin{gather*}
A_{\rm d}^{(n)} := \bar{J}_{\rm d}( \bm{U}^{(n)} )  +  \sum_{l=0}^{n-1}\sum _ {k=0}^{K}{}^{\prime \prime}\frac{ \left| \delta_{k}^{+} P_{k}^{( l )} \right|^{2} + \left| \delta_{k}^{-} P_{k}^{( l )} \right|^{2} }{2}\Delta x\Delta t \quad (n=1,2,\ldots), \\[-2pt]
\bar{J}_{\rm d}( \bm{U}^{(n)} ) := \sum _ {k=0}^{K}{}^{\prime \prime}\left\{ \frac{\gamma}{2}\frac{ \left| \delta_{k}^{+} U_{k}^{( n )} \right|^{2} + \left| \delta_{k}^{-} U_{k}^{( n )} \right|^{2} }{2} + F(U_{k}^{(n)}) \right\}\Delta x \quad (n=0,1,\ldots). 
\end{gather*}~\vspace{-6mm}\\
\begin{Remarkap} 
For any $\{f_{k}\}_{k=-1}^{K+1} \in \mathbb{R}^{K+3}$ satisfying the discrete homogeneous Neumann boundary condition $\delta_{k}^{\langle 1 \rangle}f_{k} = 0 \ (k=0,K)$, 
the following equality holds: \vspace{-1.5mm}
\begin{equation*}
\sum _ {k=0}^{K}{}^{\prime \prime}\frac{ |\delta_{k}^{+} f_{k}|^{2} + | \delta_{k}^{-} f_{k} |^{2} }{2}\Delta x = \sum _ {k=0}^{K-1} |\delta_{k}^{+} f_{k}|^{2} \Delta x. \vspace{-1.5mm} 
\end{equation*}
From this equality and \eqref{UNBC}, we obtain $\bar{J}_{\rm d}( \bm{U}^{(n)} ) = J_{\rm d}( \bm{U}^{(n)} ) \ (n=0,1, \ldots)$. \vspace{-3.5mm} 
\end{Remarkap}
\begin{figure}[H]
 \begin{minipage}{0.495\hsize}
  \begin{center}
   \includegraphics[width=78mm]{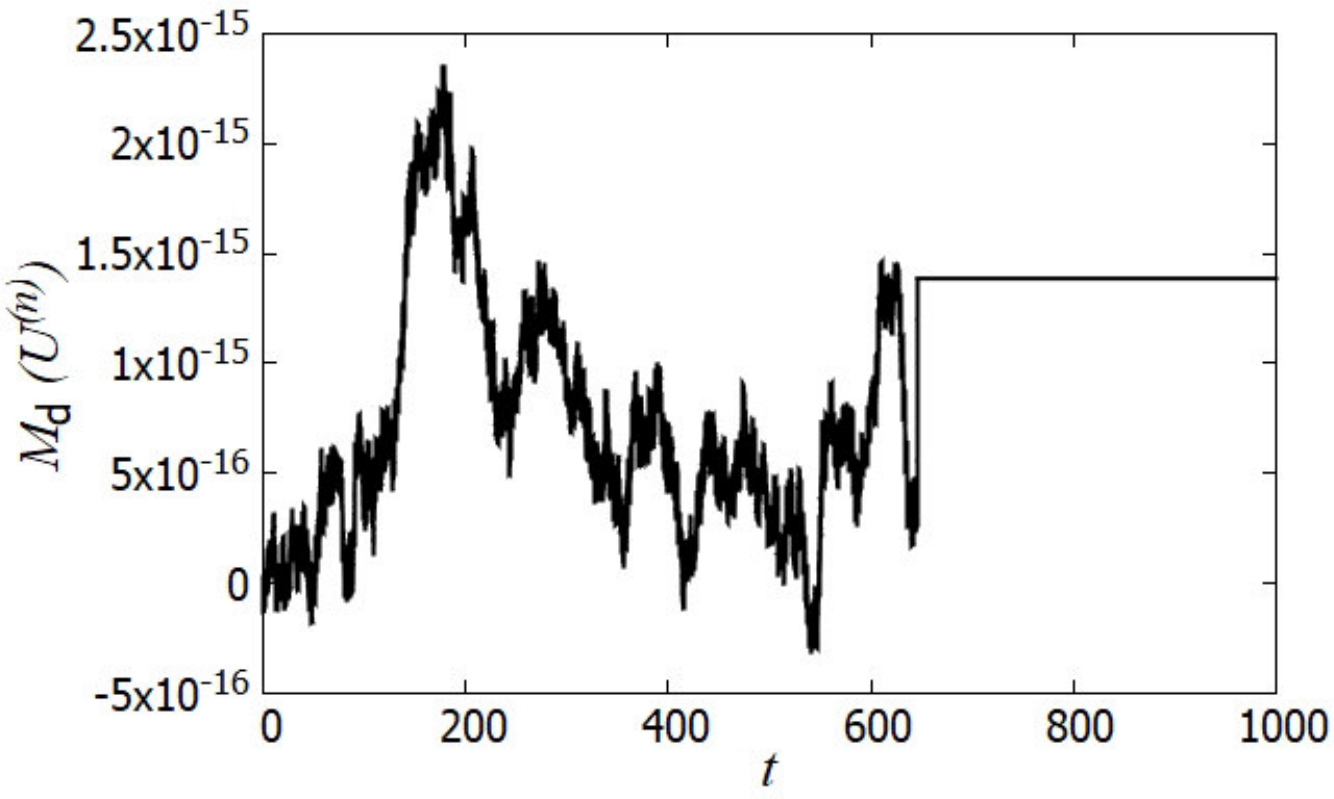} \vspace{-6mm} 
  \end{center}
  \caption{Time development of $M_{\rm d}(\bm{U}^{(n)})$ obtained by the discrete variational derivative scheme. $M_{\rm d}(\bm{U}^{(n)})$ does not change by about $14$ orders of magnitude.}
  \label{fig:NBC_mass}
 \end{minipage}
 \begin{minipage}{0.495\hsize}
  \begin{center}
   \includegraphics[width=78mm]{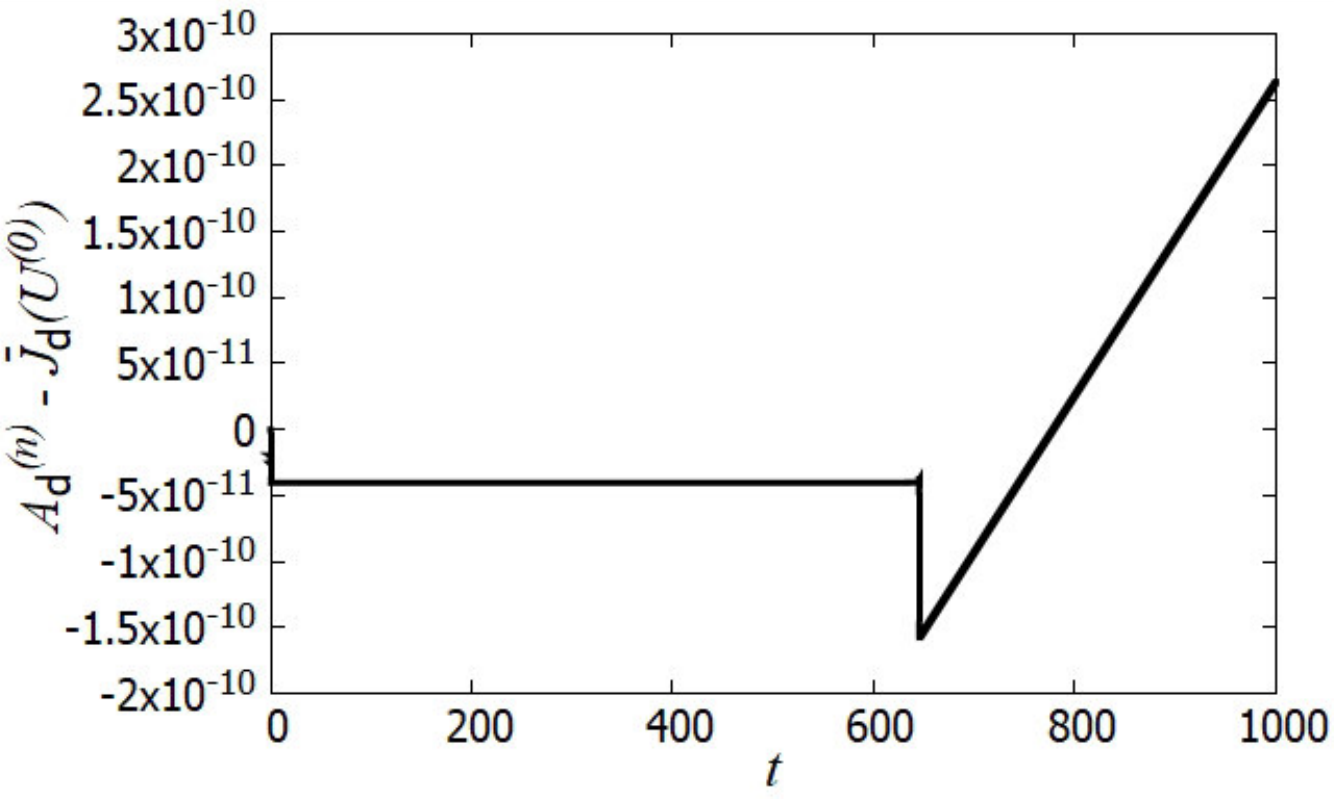} \vspace{-6mm} 
  \end{center}
  \caption{Time development of $A_{\rm d}^{(n)} - \bar{J}_{\rm d}(\bm{U}^{(0)})$ obtained by the discrete variational derivative scheme. $A_{\rm d}^{(n)}$ does not change by about $9$ orders of magnitude.}
  \label{fig:NBC_energy}
 \end{minipage}
\end{figure}
\noindent 
These graphs show that the quantities $M_{\rm d}(\bm{U}^{(n)})$ and $A_{\rm d}^{(n)}$ are conserved numerically. 
More precisely, $M_{\rm d}(\bm{U}^{(n)})$ does not change by about $14$ orders of magnitude, and $A_{\rm d}^{(n)}$ does not change by about $9$ orders of magnitude.

\end{document}